\newtheorem{proposition}{Proposition}[section]
\newtheorem{lemma}[proposition]{Lemma}
\newtheorem{corollary}[proposition]{Corollary}
\newtheorem{theorem}[proposition]{Theorem}
\theoremstyle{definition}
\newtheorem{remark}[proposition]{Remark}
\newtheorem*{assumption}{Assumption}
\numberwithin{equation}{section}
\DeclareMathOperator{\mdeg}{mdeg}
\newcommand{\customlabel}[2]{%
   \protected@write \@auxout {}{\string \newlabel {#1}{{#2}{\thepage}{#2}{#1}{}} }%
   \hypertarget{#1}{#2}%
}
\def\csgp{\mathrel{
\mathrlap{\trianglelefteq}{\raisebox{1pt}{$\blacktriangleleft$}}
}}
\begin{document}
\title{A CFSG-free explicit Jordan's theorem over arbitrary fields}
\date{\today}

\author{Jitendra Bajpai}
\address{Department of Mathematics, Christian-Albrechts-University of Kiel, 24118 Kiel, Germany}
\email{jitendra@math.uni-kiel.de}

\author{Daniele Dona}
\address{Hun-Ren Alfr\'ed R\'enyi Institute of Mathematics, 1053 Budapest, Hungary}
\email{dona@renyi.hu}

\subjclass[2020]{Primary: 20G07, 20G15; Secondary: 14A10}  
\keywords{Jordan's theorem, subgroup structure, groups of Lie type, algebraic groups}

\begin{abstract}
We prove a version of Jordan's classification theorem for finite subgroups of $\mathrm{GL}_{n}(K)$ that is at the same time quantitatively explicit, CFSG-free and valid for arbitrary $K$. This is the first proof to satisfy all three properties at once. Our overall strategy follows Larsen and Pink~\cite{LP11}, with explicit computations based on techniques developed by the authors and Helfgott~\cite{BDH21, BDH24}, particularly in relation to dimensional estimates.
\end{abstract}

\maketitle
\tableofcontents  

\section{Introduction}\label{se:intro}

Results about the structure of subgroups of $\mathrm{GL}_{n}(\mathbb{C})$ have been known for a long time, at least since Jordan proved the following result~\cite[Thm.~40]{Jor78}.

\begin{theorem}[Jordan's theorem]\label{th:jordan}
Let $\Gamma$ be a finite subgroup of $\mathrm{GL}_{n}(\mathbb{C})$. Then there is a normal abelian subgroup $A\unlhd\Gamma$ of index bounded by a constant $J(n)$ depending only on $n$.
\end{theorem}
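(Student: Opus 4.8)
The plan is to run the classical Jordan argument via small commutators: first pass to the unitary case, then exploit the fact that commutators of elements near the identity are \emph{quadratically} closer to the identity, and finally bound the index by a compactness count.

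First I would reduce to $\Gamma\subseteq\mathrm{U}(n)$ by averaging the standard Hermitian form over $\Gamma$ and conjugating; the operator norm $\|\cdot\|$ on $M_n(\C)$ is then invariant under left and right translation by elements of $\Gamma$ (indeed by all of $\mathrm{U}(n)$). For $g=I+X$ and $h=I+Y$ in $\mathrm{U}(n)$ one has $gh-hg=XY-YX$, hence $ghg^{-1}h^{-1}-I=(XY-YX)g^{-1}h^{-1}$, and since $\|g^{-1}\|=\|h^{-1}\|=1$ this gives $\|[g,h]-I\|\le 2\|g-I\|\,\|h-I\|$. Fixing $\varepsilon=\tfrac14$ and setting $N=\{g\in\mathrm{U}(n):\|g-I\|<\varepsilon\}$, one then has $\|[g,h]-I\|<\tfrac12\min(\|g-I\|,\|h-I\|)$ for $g,h\in N$: commutators of elements of $N$ stay in $N$ and move strictly closer to $I$.

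Let $A=\langle\Gamma\cap N\rangle$. Invariance of the norm under $\Gamma$-conjugation shows that $\Gamma\cap N$, hence $A$, is stable under conjugation by $\Gamma$, so $A\unlhd\Gamma$. Since $\Gamma$ is finite there is a smallest distance $\delta>0$ realized by a nontrivial element of $\Gamma\cap N$, and the contraction estimate forces every left-normed commutator of length $k$ in elements of $\Gamma\cap N$ to lie within $2^{-(k-1)}\varepsilon$ of $I$, hence to be trivial once $k$ is large enough (how large may depend on $\Gamma$, which is harmless); as these commutators generate the terms of the lower central series of $A$, the group $A$ is nilpotent. For the index: cover the compact group $\mathrm{U}(n)$ by $M(n)$ balls of radius $\varepsilon/2$, a number depending only on $n$; if $g,h\in\Gamma$ lie in one such ball then $\|g^{-1}h-I\|=\|h-g\|<\varepsilon$, so $g^{-1}h\in\Gamma\cap N\subseteq A$ and $gA=hA$. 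Hence $[\Gamma:A]\le M(n)$.

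It remains to replace the nilpotent $A$ by an abelian subgroup whose index in $\Gamma$ is still bounded in terms of $n$ alone, and \emph{this}, not the commutator estimate, is the main obstacle. Decompose the finite nilpotent group $A$ as the direct product of its Sylow $p$-subgroups. For $p>n$ every irreducible constituent of $\C^n$ has $p$-power dimension at most $n$, hence dimension $1$, so $A_p$ is diagonalizable and abelian. For each of the finitely many primes $p\le n$ one argues that $A_p$ has an abelian subgroup of index bounded purely in terms of $n$: e.g.\ by induction on $n$, using that either $A_p$ is scalar (hence abelian) or it has a non-scalar abelian normal subgroup, whose eigenspace decomposition of $\C^n$ splits the problem into strictly smaller dimensions; alternatively one applies Schur's lemma to the irreducible constituents. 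Taking the product over $p$ and then the normal core in $\Gamma$ yields an abelian normal subgroup of index at most some $J(n)$. Throughout, the real point is to keep every quantity --- the covering number $M(n)$, the Sylow bounds, and ultimately $J(n)$ --- independent of $\Gamma$; producing such explicit, field-independent, CFSG-free bounds is exactly the difficulty that the Larsen--Pink-type strategy of this paper is built to handle.
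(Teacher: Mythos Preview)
The paper does not prove Theorem~\ref{th:jordan}; it is quoted as classical background, with references to Jordan's original paper and to \cite[Thm.~14.12]{Isa76} for an explicit CFSG-free bound $J(n)=e^{O(n^{2}/\log n)}$. Your argument is the standard Frobenius--Bieberbach proof and is essentially correct. One remark on step~6: once you know $A$ is finite nilpotent, it is supersolvable and hence an $M$-group, so after a change of basis $A$ lies in the group of monomial matrices; its intersection with the diagonal torus is then an abelian normal subgroup of index at most $n!$. This replaces the prime-by-prime Sylow induction with a single clean bound.

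Your final paragraph, however, misreads the paper. The difficulty the paper addresses is \emph{not} producing explicit CFSG-free bounds for Jordan's theorem over $\mathbb{C}$; that is already done in the references the paper cites, and the paper explicitly remarks (in the footnote after Proposition~\ref{pr:run}) that it does not bother reproving it. The Larsen--Pink machinery is deployed for Theorem~\ref{th:main}, the structure theorem over \emph{arbitrary} fields: in positive characteristic the conclusion of Theorem~\ref{th:jordan} is simply false, and one needs the layered statement with a $p$-group, an abelian-of-order-prime-to-$p$ layer, and a product of finite simple groups of Lie type. Your commutator-contraction argument relies essentially on the compactness of $\mathrm{U}(n)$ and has no analogue over~$\overline{\mathbb{F}_p}$, which is precisely why the paper's methods look nothing like yours.
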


Since then, $J(n)$ has been bounded explicitly. A bound of the form $e^{O(n^2/\log n)}$ is given in~\cite[Thm.~14.12]{Isa76}, based on ideas of Frobenius and Blichfeldt, and it does not use the Classification of Finite Simple Groups (CFSG). With the aid of CFSG, Weisfeiler proved the bound $(n+1)!n^{O(\log n)}$ in~\cite{Wei84}, improved by the same author in \cite{Wei12} to $(n+2)!$ for $n\geq 64$, and by Collins in~\cite[Thm.~A]{Collins2007} to $(n+1)!$ for $n\geq 71$, which is tight in general. We refer the reader for an exposition on Jordan's theorem to a recent survey by Breuillard~\cite{Breuillard2023}.

Theorem~\ref{th:jordan} is false if we replace $\mathbb{C}$ with a field of positive characteristic. Nevertheless, there exist results about the structure of finite subgroups of $\mathrm{GL}_{n}(K)$ that generalize Jordan's theorem. Here we prove one such result.

\begin{theorem}\label{th:main}
Let $K$ be any field, and let $\Gamma$ be a finite subgroup of $\mathrm{GL}_{n}(\overline{K})$. Then there are $\Gamma_{3}\unlhd\Gamma_{2}\unlhd\Gamma_{1}\unlhd\Gamma$, each of them normal inside $\Gamma$, such that
\begin{enumerate}[(a)]
\item\label{th:main-small} $|\Gamma/\Gamma_{1}|\leq J'(n):=n^{n^{2^{23}n^{10}}}$;
\item\label{th:main-lie} either $\Gamma_{1}=\Gamma_{2}$, or $\mathrm{char}(K)=p>0$ and $\Gamma_{1}/\Gamma_{2}$ is a product of finite simple groups of Lie type of characteristic $p$;
\item\label{th:main-ab} $\Gamma_{2}/\Gamma_{3}$ is abelian of size not divisible by $\mathrm{char}(K)$;
\item\label{th:main-p} either $\Gamma_{3}=\{e\}$, or $\mathrm{char}(K)=p>0$ and $\Gamma_{3}$ is a $p$-group.
\end{enumerate}
\end{theorem}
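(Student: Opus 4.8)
\emph{Strategy and first reductions.} The plan is to follow the argument of Larsen and Pink \cite{LP11}, replacing each use of the Classification of Finite Simple Groups by a quantitative dimensional estimate of the type established in \cite{BDH21,BDH24}, and keeping all constants explicit throughout. If $\mathrm{char}(K)=0$, the matrix entries of the elements of $\Gamma$ generate a finitely generated subfield of $\overline{K}$, which embeds into $\C$, so $\Gamma$ is conjugate into $\GL_n(\C)$, and the CFSG-free Jordan--Schur bound \cite[Thm.~14.12]{Isa76} gives an abelian normal subgroup of index $e^{O(n^2/\log n)}\leq J'(n)$, which we take to be $\Gamma_1=\Gamma_2$, with $\Gamma_3=\{e\}$. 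If $\mathrm{char}(K)=p>0$, a standard specialization (reduce modulo a maximal ideal of a finitely generated $\F_p$-subalgebra containing all matrix entries) conjugates $\Gamma$ into $\GL_n(\overline{\F_p})$; if moreover $p\nmid|\Gamma|$, the inclusion lifts to characteristic $0$ and we are back in the previous case. So we may assume $\Gamma\leq\GL_n(\overline{\F_p})$ with $p\mid|\Gamma|$, so that every $p$-power-order element of $\Gamma$ is unipotent; we may also assume $|\Gamma|>J'(n)$, since otherwise $\Gamma_1=\Gamma_2=\Gamma_3=\{e\}$ works.

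\emph{Reduction to an irreducible, primitive, tensor-indecomposable core.} We induct on $n$. If $\Gamma$ stabilizes a proper subspace, refine to a composition series: the kernel $N\unlhd\Gamma$ of the action on the graded module is a finite unipotent group, hence a $p$-group, while $\Gamma/N$ embeds into $\prod_i\GL_{n_i}(\overline{\F_p})$ with all $n_i<n$. Applying the inductive hypothesis to the projections and reassembling the three-step series along the subdirect product gives the conclusion for $\Gamma/N$; pulling back to $\Gamma$, the preimage of the bottom $p$-group is again a $p$-group. Imprimitivity, and nontrivial tensor factorizations of an irreducible $\Gamma$ (the latter after passing to a bounded central extension), likewise reduce $n$, so we may assume $\Gamma$ acts irreducibly, primitively and tensor-indecomposably on $\overline{\F_p}^n$. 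The one genuinely delicate point is that, when the Lie-type layers of the factors are glued across blocks, the outcome must again be a \emph{product of simple} groups; this is what the layer (generalized Fitting structure) and Clifford theory are for. Each of the $O(\log n)$ reductions raises the index bound to a bounded power, which contributes further levels to the tower in $J'(n)$.

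\emph{Recognizing the group of Lie type.} Let $\Gamma^{+}\unlhd\Gamma$ be generated by the unipotent elements of $\Gamma$, so $p\nmid[\Gamma:\Gamma^{+}]$ and $\Gamma^{+}\neq\{e\}$. In the spirit of Nori's saturation, in the uniform form of Larsen and Pink, attach to $\Gamma$ the connected algebraic subgroup $\mathbb{G}\leq\GL_n$ generated by the unipotent one-parameter subgroups through the unipotent elements of $\Gamma$ — defined by $t\mapsto\exp(t\log u)$ when $p>n$, and by the Larsen--Pink substitute when $p\leq n$ and no exponential is available. Then $\mathbb{G}$ is defined over the field $\F_q$ of $\Gamma$, is normalized by $\Gamma$, and — as irreducibility kills its unipotent radical while it is generated by unipotents — is semisimple and nontrivial; choosing it of minimal dimension with these properties, tensor-indecomposability makes $C_{\GL_n}(\mathbb{G})$ scalar. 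The explicit dimensional estimates of \cite{BDH21,BDH24}, applied to $\Gamma^{+}$ and to the unipotents of $\Gamma$ inside $\mathbb{G}$ and its proper subgroups, then force $\Gamma^{+}$ to contain, with bounded index, the subgroup $\mathbb{G}(\F_q)^{+}$ generated by the $\F_q$-rational unipotents of $\mathbb{G}$: were $\Gamma^{+}$ concentrated in a proper subvariety of $\mathbb{G}$, minimality would be contradicted. Now let $\Gamma^{\flat}\unlhd\Gamma$ be the normal subgroup (of bounded index) of elements acting on $\mathbb{G}$ by inner automorphisms, with conjugation homomorphism $\phi\colon\Gamma^{\flat}\to\mathbb{G}_{\mathrm{ad}}(\overline{\F_p})$, and set $\Gamma_1:=\phi^{-1}(\mathbb{G}_{\mathrm{ad}}(\F_q)^{+})$, $\Gamma_2:=\ker\phi$ (scalar, hence abelian of order prime to $p$), and $\Gamma_3:=\{e\}$. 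Then $[\Gamma:\Gamma_1]$ is bounded in terms of $n$ (by $|\mathrm{Out}(\mathbb{G})|$ and the component group $\mathbb{G}_{\mathrm{ad}}(\F_q)/\mathbb{G}_{\mathrm{ad}}(\F_q)^{+}$), and $\Gamma_1/\Gamma_2\cong\mathbb{G}_{\mathrm{ad}}(\F_q)^{+}$ is a product of finite simple groups of Lie type of characteristic $p$ by the CFSG-free structure theory of Chevalley groups (Steinberg, Tits) — the finitely many small non-quasisimple exceptions, which arise only when $q$ lies below a threshold depending on $n$ (whence $|\Gamma|\leq q^{n^2+1}\leq J'(n)$ anyway), being absorbed into $\Gamma/\Gamma_1$. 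All three subgroups are normal in $\Gamma$; splicing this onto the earlier reductions, in which the $p$-group kernels accumulate into $\Gamma_3$, finishes the proof.

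\emph{Main obstacle and the explicit bound.} The heart of the matter, and the main obstacle, is this CFSG-replacement carried out effectively and uniformly in $p$: constructing $\mathbb{G}$ from the unipotents when $p\leq n$, and deducing from the dimensional estimates of \cite{BDH24} the sharp statement that for a minimal $\mathbb{G}$ the group $\Gamma^{+}$ must contain $\mathbb{G}(\F_q)^{+}$ up to bounded index (equivalently, that a finite subgroup missing this target is, after all, reducible). Everything that makes $J'(n)$ enormous enters here and in the inductions: the number of conjugacy classes of connected subgroups of $\GL_n$ of bounded complexity, the degrees of the subvarieties occurring in the dimensional estimates, the numerical constant of the estimate in \cite{BDH24} (the source of the exponent $2^{23}$), and the repeated powering of the index bound along the induction on $n$; each of these composes multiplicatively or by exponentiation, which is why the final bound is the iterated tower $n^{n^{2^{23}n^{10}}}$ rather than anything close to the true rate. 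Keeping all of these estimates effective and composable is where the bulk of the work lies.
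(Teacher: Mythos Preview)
Your route is genuinely different from the paper's. You run a Clifford-theoretic induction (pass to irreducible, primitive, tensor-indecomposable) and then invoke a Nori-style saturation group $\mathbb{G}$ built from the unipotents of $\Gamma$; the paper instead never touches Clifford theory, but starts from $G=\mathrm{GL}_{n}$ as an algebraic group and descends along a chain of algebraic subgroups with strictly decreasing $d_{\mathrm{ad}}(G)=\dim((G^{\mathrm{o}}/R_{u}(G^{\mathrm{o}}))^{\mathrm{ad}})$, applying its explicit version of \cite[Thm.~0.5]{LP11} to each almost simple adjoint factor. The layers $\Gamma_{3}\unlhd\Gamma_{2}\unlhd\Gamma_{1}$ come out at the end of that descent as intersections of $\Gamma$ with $R_{u}(G^{\mathrm{o}})$ and with the preimage of the centre of $G^{\mathrm{o}}/R_{u}(G^{\mathrm{o}})$, not from scalar centralizers in an irreducible module.

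There are two concrete gaps in your outline. First, the ``Larsen--Pink substitute'' for Nori saturation when $p\leq n$ does not exist in the form you invoke: \cite{LP11} does \emph{not} build an algebraic envelope from one-parameter groups through unipotents of $\Gamma$; its method is precisely the algebraic-group descent that the present paper makes explicit. So your step ``attach to $\Gamma$ the connected algebraic subgroup $\mathbb{G}$ generated by the unipotent one-parameter subgroups'' has no available construction in small characteristic, and you cannot simply cite LP for it. Second, the dimensional estimates of \cite{BDH24} are proved for sets $A$ with $\langle A\rangle=G(K)$, which your $\Gamma$ will essentially never satisfy; weakening that hypothesis to ``$\Gamma$ is not contained in any proper algebraic subgroup of bounded degree'' is nontrivial and is the content of the paper's Section~\ref{se:dimest} (via Lemma~\ref{le:groupvar} and Proposition~\ref{pr:oberstair}). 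Your sentence ``the explicit dimensional estimates \ldots\ then force $\Gamma^{+}$ to contain, with bounded index, the subgroup $\mathbb{G}(\mathbb{F}_{q})^{+}$'' is thus asserting both this adaptation and the entire recognition machinery of Section~\ref{se:findgf} (finding a regular unipotent, identifying the correct $\mathbb{F}_{q}$ via the minimal unipotent variety $V$, building the representation $\rho$ and the $\Gamma$-stable lattice $M_{0}$, and deducing $[G^{F},G^{F}]\leq\Gamma\leq G^{F}$). That recognition, not the combinatorics of the inductive reductions, is where the explicit constants and the bulk of the argument live; it is not delivered by the dimensional estimate alone.
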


A version of Theorem~\ref{th:main} without any explicit expression for $J'(n)$ was proved by Larsen and Pink~\cite{LP11}, without relying on CFSG. With the use of CFSG, Weisfeiler~\cite{Wei84} showed that we can take $J'(n)=\lfloor(3n+9)/2\rfloor!n^{O(\log n)}$, improved in~\cite{Wei12} to $J'(n)=(n+2)!$ for $n\geq 64$, and in~\cite{Col08} by Collins to $J'(n)=(n+1)!$ for $n\geq 71$ and $\mathrm{char}(K)\nmid n+2$. Our goal is to have at the same time an explicit $J'(n)$ in the statement and a CFSG-free proof. The present paper is the first to have both properties for an arbitrary field $K$.

Our interest in the question stems from our previous work on {\em dimensional estimates}. These tools were developed first in~\cite[\S 4]{LP11} in a non-explicit form, and then used in several papers in the context of Babai's conjecture. Most recently, the authors in joint work with Helfgott gave explicit dimensional estimates in order to achieve sharper diameter bounds for untwisted classical groups~\cite{BDH21,BDH24}, and a natural question was whether the rest of the techniques of~\cite{LP11} could be made equally explicit. In the present paper, we follow the procedure in~\cite{LP11}, sharpening and cleaning the route taken by them through the strategies involved in the proofs of~\cite{BDH21, BDH24}.

\subsection{Outline of the strategy}

The first three sections collect some preliminary facts: Section~\ref{se:var} concerns varieties, Section~\ref{se:linalg} deals with linear algebraic groups in general, and Section~\ref{se:almsim} focuses on almost simple groups. Some of the definitions and properties are standard, some are taken from~\cite{BDH21,BDH24}, and some are new although in line with the spirit of those papers.

Section~\ref{se:dimest} deals with dimensional estimates. A {\em dimensional estimate} is a bound of the form $|A\cap V(K)|\leq C|A^{C}|^{\dim(V)/\dim(G)}$, where $G$ is an algebraic group over $K$, $V\subseteq G$ is a subvariety, $A\subseteq G(K)$ is a finite subset, and $C$ is some constant depending only on the data of $G$ and $V$ (but not on $A$ and $K$). Such an estimate appeared first in~\cite[Thm.~4.2]{LP11}, with $A=\Gamma$ a subgroup and with a non-explicit $C$. The bounds in~\cite[Thm.~4.4]{BDH21} and~\cite[Thm.~1.1]{BDH24} have instead an explicit $C$ and hold for $A$ a generating set of $G(K)$. Our task in this section is to show that the assumption $\langle A\rangle=G(K)$ can be weakened, so that we may have estimates for $A=\Gamma$ with explicit $C$. The section plays the role of~\cite[\S 4]{LP11} through its main result (Theorem~\ref{th:dimest}), and of~\cite[\S 6]{LP11} through its corollary on centralizers (Corollary~\ref{co:estcentr}).

The goal of Section~\ref{se:findgf} is to prove an explicit version of \cite[Thm.~0.5]{LP11}, following the path laid out in~\cite[\S\S 7--11]{LP11}. Its main result (Theorem~\ref{th:05}) shows that, for any $G$ almost simple and any $\Gamma\leq G(\overline{K})$, either $[G^{F},G^{F}]\leq\Gamma\leq G^{F}$ for some appropriate endomorphism $F$, where $G^{F}$ is a group of Lie type and its commutator is simple, or $\Gamma$ is trapped in some substructure: either $|\Gamma|$ is bounded in terms of the rank of $G$, or $|\Gamma|\leq H(\overline{K})$ for some proper subgroup $H\lneq G$ of smaller dimension and bounded degree.

The path of Section~\ref{se:findgf} is articulated in several steps. Starting from the assumption that $\Gamma$ is not trapped as above, we first find regular unipotent elements in $\Gamma$, thus incidentally proving that $\mathrm{char}(K)$ must be positive (Proposition~\ref{pr:run})\footnote{At this stage, a CFSG-free proof of Theorem~\ref{th:jordan} with explicit $J(n)$ is already within reach. We do not bother doing so, since~\cite[Thm.~14.12]{Isa76} already does that and with a better $J(n)$ than ours.}. Then we find a variety $V$ of minimal unipotent elements, representing the finite field $\mathbb{F}_{q}$ that ``correctly determines'' $\Gamma$ (Proposition~\ref{pr:findfield}): when at the end $[G^{F},G^{F}]\leq\Gamma\leq G^{F}$, $F$ will be either the Frobenius map with respect to $\mathbb{F}_{q}$ or a twist of that map.

For now $\mathbb{F}_{q}$ is a good model only for the minimal unipotent elements of $\Gamma$, meaning that $\Gamma\cap V(\overline{K})\simeq\mathbb{F}_{q}$ (as abelian groups). The final step of Section~\ref{se:findgf} is to prove that $\mathbb{F}_{q}$ is a good model for the whole $\Gamma$. We do so in two stages: first, in Propositions~\ref{pr:91base}--\ref{pr:9105} we achieve our goal under some conditions on $\Gamma$ (Assumption~\ref{ass-large1}--\ref{ass-h1}) and on $V$ ($\dim(V)=\mathrm{rk}(G)$); then, we use that partial case and stronger conditions on $\Gamma$ (Assumption~\ref{ass-large2}--\ref{ass-h2}) to complete the proof without any hypothesis on $V$. The general case is Theorem~\ref{th:05}.

In Section~\ref{se:final} we complete the proof of Theorem~\ref{th:main}. In rough terms, the case of $[G^{F},G^{F}]\leq\Gamma\leq G^{F}$ gives rise to~\eqref{th:main-lie}, whereas the case of $|\Gamma|$ small gives rise to~\eqref{th:main-small}. The descent from $\Gamma\leq G(\overline{K})$ to $\Gamma\leq H(\overline{K})$ with $\dim(H)<\dim(G)$ can be repeated until we reach either one of the other cases or $\dim(H)=0$ (in which case $|\Gamma|$ is small again, thanks to the bound on $\deg(H)$). Since $H$ is not necessarily almost simple, at every stage we need to take quotients by the unipotent radical and by the centre: the former is a $p$-group, whence~\eqref{th:main-p}, and the latter is abelian, whence~\eqref{th:main-ab}.

\section{Varieties}\label{se:var}

In this section, we collect basic properties about varieties, morphisms, and degrees.

\subsection{Basic nomenclature}\label{sss:verybasic}

We go over some standard terms, whose definition in the literature can vary.

A {\em variety}\footnote{For us a variety is affine and closed, not necessarily irreducible nor connected nor pure-dimensional.} $V$ in $n$-dimensional affine space $\mathbb{A}^{n}$ is defined by a set of $s$ equations of the form $P_{i}(x_{1},\ldots,x_{n})=0$ (for $1\leq i\leq s$), where all $P_{i}$ are polynomials and $s$ is any non-negative integer. $V$ is {\em defined over} a field $K$ if the coefficients of all $P_{i}$ belong to $K$. The {\em set of points $V(K)$} is
\begin{equation*}
V(K)=\{(k_{1},\ldots,k_{n})\in K^{n}:  P_{i}(k_{1},\ldots,k_{n})=0\;\;(1\leq i\leq s)\}.
\end{equation*}
Two varieties $V,W$ are equal if and only if the ideals generated by their defining polynomials inside the ring $\overline{K}[x_{1},\ldots,x_{n}]$ have the same radical, which by the Nullstellensatz holds if and only if $V(\overline{K})=W(\overline{K})$.

Let $V,W$ be defined by polynomials $\mathcal{P}=\{P_{i}\}_{i\leq s}$ and $\mathcal{Q}=\{Q_{j}\}_{j\leq t}$. The {\em (set-theoretic, or reduced) intersection} $V\cap W$ is the variety defined by $\mathcal{P}\cup\mathcal{Q}$, and the {\em union} $V\cup W$ is defined by $\{P_{i}Q_{j}\}_{i\leq s, j\leq t}$.

The {\em Zariski topology} is the topology whose closed sets are the sets $V(\overline{K})$ for all varieties $V$. The affine space $\mathbb{A}^{n}(\overline{K})$ is Noetherian under this topology. The {\em Zariski closure} $\overline{S}$ of a set $S\subseteq\mathbb{A}^{n}(\overline{K})$ is the smallest set of the form $V(\overline{K})$ containing $S$. We call $V$ itself the Zariski closure of $S$.

A variety $V$ is {\em irreducible} if it is not equal to any union $V_{1}\cup V_{2}$ with $V_{1}\not\subseteq V_{2}$ and $V_{2}\not\subseteq V_{1}$. Every $V$ can be uniquely decomposed into a finite union of irreducible varieties not contained in each other, called the {\em irreducible components} of $V$. The {\em dimension} $\dim(V)$ of an irreducible variety $V$ is the largest $d$ for which we can write a chain of irreducible proper subvarieties $V_{0}\subsetneq V_{1}\subsetneq\ldots\subsetneq V_{d}=V$. For $V$ non-irreducible, $\dim(V)$ is the largest of the dimensions of its irreducible components. A variety is {\em pure-dimensional} when all its components have the same dimension.

A {\em morphism} $f:\mathbb{A}^{n}\rightarrow\mathbb{A}^{m}$ defined over $K$ is an $m$-tuple of polynomials $f_{i}$ on $n$ variables whose coefficients belong to $K$. A morphism $f:X\rightarrow Y$ for $X\subseteq\mathbb{A}^{n},Y\subseteq\mathbb{A}^{m}$ is the restriction of a morphism $g:\mathbb{A}^{n}\rightarrow\mathbb{A}^{m}$ such that $g(x)\in Y(\overline{K})$ for every $x\in X(\overline{K})$, and we write $f=g|_{X}$. We have $g_{1}|_{X}=g_{2}|_{X}$ if and only $g_{1}(x)=g_{2}(x)$ for all $x\in X(\overline{K})$. For a morphism $f:X\to Y$ and a subvariety $V\subseteq Y$ defined by polynomials $P_i(y_{1},\ldots,y_{n})$, the {\em preimage} $f^{-1}(V)$ is the variety defined by the polynomials $P_{i}(f_{1}(\vec{x}),\ldots,f_{n}(\vec{x}))$ and by the polynomials defining $X$. The image $f(X(\overline{K}))$ need not be the set of points of a variety, though it is a {\em constructible set} (Chevalley; see \cite[\S I.8, Cor.~2 to Thm.~3]{Mumford}), meaning a finite union of intersections $U\cap W$, where $U$ is open and $W$ is closed.

\subsection{Degrees}\label{sss:degree}

Let $V\subseteq\mathbb{A}^{n}$ be a pure-dimensional variety over $K$ with $\dim(V)=d$. The {\em degree} $\deg(V)$ of $V$ is the number of points in the set $(V\cap L)(\overline{K})$, where $L$ is a generic $(n-d)$-dimensional affine subspace of $\mathbb{A}^n$ (by \cite[\S II.3.1.2, Thm.]{DS98} the definition makes sense and $\deg(V)$ is finite).

We can extend the definition of degree to general varieties $V$: $\deg(V)$ is the sum of the degrees of the pure-dimensional parts of $V$.
The bound $\deg(V_{1}\cup V_{2})\leq\deg(V_{1})+\deg(V_{2})$ holds for any $V_{1},V_{2}$ directly by definition. 
If $V$ is the union of irreducible components $V_{i}$, then $\deg(V)=\sum_{i}\deg(V_{i})$.

By a generalization of \textit{B\'ezout's theorem} due to Fulton and Macpherson, as in \cite[Ex.~8.4.6]{Ful84}, \cite[(2.26)]{zbMATH03880868}, or \cite[\S II.3.2.2, Thm.]{DS98}, for $V_{1},V_{2}$ pure-dimensional we have
\begin{equation}\label{eq:bezout}
\deg(V_1\cap V_2)\leq \deg(V_{1})\deg(V_{2}).
\end{equation}
By our definition of degree, \eqref{eq:bezout} holds for $V_1,V_2$ not necessarily pure-dimensional as well.

If $V$ is defined by a single polynomial equation $P=0$ with $\deg(P)>0$, then $\deg(V)\leq\deg(P)$, and equality holds if $P$ has no repeated factors. By B\'ezout, if $V$ is defined by many equations $P_{i}=0$, then $\deg(V)\leq\prod_{i}\deg(P_{i})$.

Following \cite{BDH24}, for a morphism $f:\mathbb{A}^{n}\rightarrow\mathbb{A}^{m}$ given by an $m$-tuple of polynomials $f_{i}$, we define the {\em maximum degree} of $f$ to be $\mdeg(f):=\max_{i}\deg(f_{i})$. For a morphism $f:X\rightarrow Y$, we define $\mdeg(f)$ to be the minimum of $\mdeg(g)$ over all $g:\mathbb{A}^{n}\rightarrow\mathbb{A}^{m}$ with $g|_{X}=f$.

We can bound the degree of images and preimages of varieties.

\begin{lemma}\label{le:zarimdeg}
Let $V\subseteq\mathbb{A}^{n}$ and $W\subseteq\mathbb{A}^{m}$ be varieties and $f:V\rightarrow\mathbb{A}^{m}$ a morphism. Then
\begin{enumerate}[(a)]
\item\label{le:zarimdeg-im} $\deg(\overline{f(V)})\leq\deg(V)\mathrm{mdeg}(f)^{\dim(\overline{f(V)})}$, and
\item\label{le:zarimdeg-pre} $\deg(f^{-1}(W))\leq\deg(V)\deg(W)\mathrm{mdeg}(f)^{\dim(\overline{f(V)})}$.
\end{enumerate}
\end{lemma}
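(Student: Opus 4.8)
The plan is to reduce both parts to a single computation about graphs of morphisms together with B\'ezout's theorem \eqref{eq:bezout}, since image and preimage are both slices of the graph $\Gamma_f:=\{(x,f(x)):x\in V\}\subseteq\mathbb{A}^{n+m}$. First I would fix a tuple $g:\mathbb{A}^n\to\mathbb{A}^m$ with $g|_V=f$ and $\mdeg(g)=\mdeg(f)$, and describe $\Gamma_f$ as the variety in $\mathbb{A}^{n+m}$ cut out by the polynomials defining $V$ (in the first $n$ coordinates) together with the $m$ equations $y_i-g_i(\vec x)=0$. These last $m$ equations have degree $\max(1,\mdeg(f))$; applying B\'ezout to the intersection of $V\times\mathbb{A}^m$ with these hypersurfaces gives $\deg(\Gamma_f)\leq\deg(V)\mdeg(f)^{?}$, where the exponent is the number of hypersurfaces that actually matter. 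Here I want the exponent to come out as $\dim(\overline{f(V)})$ rather than $m$, so I would be slightly more careful: project $\Gamma_f$ isomorphically onto its image and note $\dim(\Gamma_f)=\dim(V)$, and observe that only $\dim(\overline{f(V)})$ of the coordinate functions $y_i$ are "independent" on the image — the remaining ones are algebraically dependent and contribute factors of $1$ to the B\'ezout bound. Making this precise is the one delicate point; I expect to phrase it by choosing coordinates on $\mathbb{A}^m$ so that the projection of $\overline{f(V)}$ to the first $d:=\dim(\overline{f(V)})$ coordinates is dominant and generically finite, and cutting only in those directions.

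For part \eqref{le:zarimdeg-im}, once $\deg(\Gamma_f)$ is bounded I would intersect $\Gamma_f$ with a generic affine subspace of $\mathbb{A}^{n+m}$ of the form $L'\times\mathbb{A}^m$ where $L'\subseteq\mathbb{A}^n$ is generic of the right codimension — but actually it is cleaner to pass to the closure of the image under the second projection $\pi:\mathbb{A}^{n+m}\to\mathbb{A}^m$. Since $\pi|_{\Gamma_f}$ is a morphism with $\mdeg(\pi)=1$, any reasonable functoriality of degree under linear projections gives $\deg(\overline{f(V)})=\deg(\overline{\pi(\Gamma_f)})\leq\deg(\Gamma_f)\leq\deg(V)\mdeg(f)^{d}$. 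The honest way to see "degree does not increase under generic linear projection to the image dimension" is to intersect with a generic $(m-d)$-plane in the target and pull back: a generic fibre of $\pi|_{\Gamma_f}$ over a generic point of $\overline{\pi(\Gamma_f)}$ meets $\Gamma_f$ in finitely many points, and counting points of $\Gamma_f$ on a generic $(n+m-\dim V)$-plane dominates counting points of the image on a generic $(m-d)$-plane. This is standard but I would spell it out, as the exponent bookkeeping is exactly what the lemma is asserting.

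For part \eqref{le:zarimdeg-pre}, I would write $f^{-1}(W)$ as the slice $\Gamma_f\cap(\mathbb{A}^n\times W)$, composed with the first projection (which is an isomorphism onto $f^{-1}(W)$, so preserves degree exactly). By B\'ezout \eqref{eq:bezout}, $\deg(\Gamma_f\cap(\mathbb{A}^n\times W))\leq\deg(\Gamma_f)\deg(\mathbb{A}^n\times W)=\deg(\Gamma_f)\deg(W)$, using that $\deg(\mathbb{A}^n\times W)=\deg(W)$ (a generic slicing plane can be taken of product form). Combining with the bound on $\deg(\Gamma_f)$ from the first paragraph yields $\deg(f^{-1}(W))\leq\deg(V)\deg(W)\mdeg(f)^{d}$ with $d=\dim(\overline{f(V)})$, as claimed. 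The main obstacle, as noted, is getting the exponent to be $\dim(\overline{f(V)})$ and not $m$ in the graph-degree estimate; everything else is a routine assembly of B\'ezout, the product formula for degrees, and the fact that a graph is isomorphic (via coordinate projection) to its domain.
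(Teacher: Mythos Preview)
The paper does not give a proof here: it simply cites \cite[Lem.~2.3]{BDH21} for \eqref{le:zarimdeg-im} and \cite[Lem.~2.4]{BDH24} for \eqref{le:zarimdeg-pre}. So there is no detailed argument to match against, and your graph-and-B\'ezout outline is a perfectly reasonable reconstruction of what those references likely do. In particular, your ``honest way'' paragraph for \eqref{le:zarimdeg-im} --- pull back a generic $(m-d)$-plane $L\subseteq\mathbb{A}^m$ (with $d=\dim(\overline{f(V)})$) via $f$, so that $V\cap f^{-1}(L)$ is cut by $d$ hypersurfaces of degree $\leq\mdeg(f)$, and then observe that every point of $\overline{f(V)}\cap L$ contributes at least one irreducible component to $V\cap f^{-1}(L)$ --- is correct and is almost certainly the argument in \cite{BDH21}.

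The genuine gap is in your proposed bound $\deg(\Gamma_f)\leq\deg(V)\,\mdeg(f)^{d}$, which you need for \eqref{le:zarimdeg-pre}. Your fix (``choose coordinates so that $\overline{f(V)}\to\mathbb{A}^d$ is finite and cut only in those directions'') does not bound $\deg(\Gamma_f)$: imposing only the $d$ equations $y_i=g_i(x)$ for $i\leq d$ defines $\Gamma_{(g_1,\dots,g_d)}\times\mathbb{A}^{m-d}$, a variety strictly containing $\Gamma_f$, and a degree bound on a supervariety says nothing about the subvariety. Nor does the alternative of intersecting $\Gamma_f$ with a special (non-generic) linear space whose first $\dim V-d$ equations come from $\mathbb{A}^n$: for any fixed complementary $L$ with $\Gamma_f\cap L$ finite one only gets $|\Gamma_f\cap L|\leq\deg(\Gamma_f)$, which is the wrong direction. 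What you \emph{can} get cheaply, once \eqref{le:zarimdeg-im} is proved, is $\deg(\Gamma_f)\leq\deg(V)\,\mdeg(f)^{\dim V}$ by applying \eqref{le:zarimdeg-im} to the embedding $(\mathrm{id},f):V\to\mathbb{A}^{n+m}$; this already yields \eqref{le:zarimdeg-pre} with exponent $\dim V$ in place of $\dim(\overline{f(V)})$, which is what the paper actually uses in every application. Getting the sharper exponent $\dim(\overline{f(V)})$ as stated requires a more careful argument than the one you sketch, and is precisely the content of the cited \cite[Lem.~2.4]{BDH24}.
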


\begin{proof}
See~\cite[Lem.~2.3]{BDH21} and~\cite[Lem.~2.4]{BDH24}.
\end{proof}

An important result of algebraic geometry is that, given an irreducible variety $V$ and a map $f:V\rightarrow\mathbb{A}^{n}$, all fibres $f^{-1}(x)$ have dimension $\geq\dim(V)-\dim(\overline{f(V)})$, with equality holding for a generic fibre. Below is a quantitative version of this statement.

\begin{proposition}\label{pr:degexc}
Let $V$ be an irreducible variety and let $f:V\rightarrow\mathbb{A}^{n}$ be a morphism. Call $u:=\dim(V)-\dim(\overline{f(V)})$. Then for every $x\in f(V)$ we have $\dim(f^{-1}(x))\geq u$. Moreover, there is a proper subvariety $Z\subsetneq\overline{f(V)}$ for which
\begin{equation*}
\deg(Z)\leq\mathrm{mdeg}(f)^{\dim(\overline{f(V)})-1}\deg(V)
\end{equation*}
and for which every $x\in f(V)$ with $\dim(f^{-1}(x))>u$ is contained in $Z$.
\end{proposition}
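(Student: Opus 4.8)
The plan is to build the exceptional locus $Z$ by an explicit induction on $\dim(\overline{f(V)})$, following the standard generic-flatness/generic-equidimensionality argument but tracking degrees via Lemma~\ref{le:zarimdeg} and B\'ezout \eqref{eq:bezout}. The base case $\dim(\overline{f(V)})=0$ is immediate: then $\overline{f(V)}$ is a point, $f$ is constant, $u=\dim(V)$, and every fibre equals $V$, so there is no $x$ with $\dim(f^{-1}(x))>u$ and we may take $Z=\emptyset$ (degree $0$), consistent with the claimed bound. The inequality $\dim(f^{-1}(x))\geq u$ for all $x\in f(V)$ is the classical lower bound on fibre dimension for a dominant morphism of irreducible varieties (applied to $f:V\to\overline{f(V)}$), which I will simply cite.

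For the inductive step, set $d:=\dim(\overline{f(V)})\geq 1$. First I would replace the target by $W_0:=\overline{f(V)}$ and pick a generic hyperplane section: choose a generic linear form $\ell$ on $\mathbb{A}^n$ and let $H=\{\ell=c\}$ for generic $c$, so that $W_0\cap H$ is a pure-dimensional variety of dimension $d-1$ with $\deg(W_0\cap H)\leq\deg(W_0)\leq\mathrm{mdeg}(f)^{d}\deg(V)$ by Lemma~\ref{le:zarimdeg}\eqref{le:zarimdeg-im} and \eqref{eq:bezout} (in fact $\deg(W_0\cap H)=\deg(W_0)$ for generic $H$). Correspondingly, $V':=f^{-1}(H)$ — really the union of those irreducible components of $f^{-1}(H)$ dominating $W_0\cap H$ — is, for generic $H$, equidimensional of dimension $\dim(V)-1$, and $f$ restricts to $f':V'\to W_0\cap H$ with the same deficiency $u$; moreover $\deg(V')\leq\deg(V)\deg(H)\mathrm{mdeg}(f)^{d}=\deg(V)\,\mathrm{mdeg}(f)^{d}$ by Lemma~\ref{le:zarimdeg}\eqref{le:zarimdeg-pre} with $\mathrm{mdeg}(\mathrm{id})=1$ on the hyperplane, and $\mathrm{mdeg}(f')\leq\mathrm{mdeg}(f)$. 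Running the inductive hypothesis on each irreducible component $V_i'$ of $V'$ produces $Z_i\subsetneq\overline{f(V_i')}$ with $\deg(Z_i)\leq\mathrm{mdeg}(f)^{d-2}\deg(V_i')$; the union over $i$, together with the locus where the generic-flatness of the hyperplane slicing itself fails (a proper subvariety of $W_0$ cut out by finitely many polynomial conditions of controlled degree, e.g.\ vanishing of appropriate minors in a Gr\"obner/elimination description of $f$), is the candidate for $Z$. Summing degrees via $\sum_i\deg(V_i')=\deg(V')\leq\mathrm{mdeg}(f)^{d}\deg(V)$ gives a bound of the right shape, and one checks the exponent works out to $\mathrm{mdeg}(f)^{d-1}\deg(V)$ after absorbing the slicing-locus contribution.

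The main obstacle — and the part requiring the most care — is making the ``locus where generic equidimensionality fails'' genuinely explicit in degree, rather than merely proper. Abstractly it is clear that the set of $x$ with $\dim(f^{-1}(x))>u$ is closed and proper in $\overline{f(V)}$; the quantitative content is that it is cut out by polynomials whose degrees are controlled by $\mathrm{mdeg}(f)$ and $\deg(V)$. I would handle this by an elimination-theoretic argument: express $V$ via defining equations of degree $\leq\deg(V)$ (using that $\deg(V)$ bounds the degrees needed to define $V$ up to radical, as in \S\ref{sss:degree}), form the incidence variety $\{(x,y)\in\overline{f(V)}\times V: f(y)=x\}$, and observe that the jump locus is the image of a subvariety defined by the vanishing of Jacobian minors of the fibre — whose degree is controlled — so that Lemma~\ref{le:zarimdeg}\eqref{le:zarimdeg-im} bounds $\deg(Z)$. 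The bookkeeping of which exponent of $\mathrm{mdeg}(f)$ appears is delicate, but the hyperplane-slicing induction is precisely designed so that each slice drops $\dim(\overline{f(V)})$ by one while multiplying $\deg(V)$ by at most $\mathrm{mdeg}(f)$, yielding the stated $\mathrm{mdeg}(f)^{\dim(\overline{f(V)})-1}\deg(V)$; alternatively, one can bypass the induction entirely and read the bound off a single application of Lemma~\ref{le:zarimdeg}\eqref{le:zarimdeg-im} to the projection $Z':=\{(x,\ldots):\text{minors vanish}\}\to\overline{f(V)}$, which is likely the cleanest route and the one I would write up.
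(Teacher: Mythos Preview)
The paper does not give its own argument here; it simply cites \cite[\S I.8]{Mumford} for the lower bound $\dim(f^{-1}(x))\geq u$ and \cite[Prop.~3.2]{BDH24} for the quantitative construction of $Z$. So there is no in-paper proof to compare against, only a pointer.

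That said, your hyperplane-slicing induction has a genuine gap. When you slice the target by a single generic hyperplane $H$ and apply the inductive hypothesis to $f':V'\to W_0\cap H$, the exceptional loci $Z_i$ you produce are subvarieties of $W_0\cap H$, not of $W_0$. Their union therefore lies entirely inside that one hyperplane section and cannot contain the full exceptional set $Z\subsetneq W_0$: a generic $H$ meets $Z$ only in $Z\cap H$, which misses every component of $Z$ of dimension $<d-1$ completely and sees only a slice of the top-dimensional part. Your proposed remedy --- adjoining ``the locus where generic-flatness of the hyperplane slicing itself fails'' --- does not supply the missing points: that locus, however you define it for a fixed $H$, has no reason to contain an arbitrary bad point $x\notin H$ (the fibre over such an $x$ is simply not touched by the slice). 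To make slicing work you would need to sweep over a family of hyperplanes and control the resulting envelope, which is a different and substantially harder argument than the one you sketched.

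Your fallback Jacobian/minor approach is closer in spirit to a workable proof, but as written it is only a gesture: you have not specified which minors (the Jacobian of $f$ alone does not see the tangent space of $V$), how to handle the singular locus of $V$, or why the exponent comes out as $\dim(\overline{f(V)})-1$ rather than something larger. The phrase ``one checks the exponent works out'' is exactly the content that needs proving. If you want a self-contained argument, look at \cite[Prop.~3.2]{BDH24} directly.
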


\begin{proof}
See \cite[\S I.8]{Mumford} and \cite[Prop.~3.2]{BDH24}.
\end{proof}

\subsection{Degree of intersections}

We show now how to bound the degree of intersections of varieties. One can always repeatedly apply B\'ezout, but when the number of intersecting varieties is large (or infinite) the na\"{i}ve bound thus obtained may be unmanageable. The results of this subsection can be seen as explicit versions of \cite[Thm.~1.10, Cor.~1.12]{LP11}. Our technique is already essentially contained in \cite[\S 3.1]{BDH21} and \cite[\S 2.6]{BDH24} (where it was used for a different purpose, namely escape from subvarieties), although we tweak it to make it slightly more general and suited to our needs.

\begin{lemma}\label{le:weighteddeg}
For any $D\geq 1$, define the function $f_{D}$ as follows: for any variety $X\subseteq\mathbb{A}^{n}$, if $\{X_{j}\}_{j}$ is the finite collection of irreducible components of $X$, set
\begin{equation*}
f_{D}(X):=\sum_{j}\deg(X_{j})D^{\dim(X_{j})}.
\end{equation*}
Then, for any two varieties $Y,Z\subseteq\mathbb{A}^{n}$ with $\deg(Z)\leq D$, we have $f_{D}(Y\cap Z)\leq f_{D}(Y)$.
\end{lemma}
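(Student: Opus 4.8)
The plan is to reduce to the case that $Y$ is irreducible, and then trade the dimension drop forced by $Y\cap Z\subsetneq Y$ against the factor $D\geq\deg(Z)$ supplied by B\'ezout.

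First I would write $Y=\bigcup_{i}Y_{i}$ as the union of its irreducible components, so that $Y\cap Z=\bigcup_{i}(Y_{i}\cap Z)$. Any irreducible component $W$ of $Y\cap Z$ lies in some $Y_{i}\cap Z$, hence in some irreducible component $W'$ of $Y_{i}\cap Z$; since $W'\subseteq Y\cap Z$ is irreducible and contains the maximal irreducible subvariety $W$, we get $W=W'$. Thus every component of $Y\cap Z$ appears among the components of the various $Y_{i}\cap Z$, and therefore $f_{D}(Y\cap Z)\leq\sum_{i}f_{D}(Y_{i}\cap Z)$ --- repeated appearances, and components of some $Y_{i}\cap Z$ that fail to be maximal in $Y\cap Z$, can only inflate the right-hand side. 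As $f_{D}(Y)=\sum_{i}f_{D}(Y_{i})$ as well (each $Y_{i}$ being irreducible), it is enough to prove $f_{D}(Y_{i}\cap Z)\leq f_{D}(Y_{i})$ for each $i$. So we may assume $Y$ irreducible, say of dimension $d$.

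Now I would split into two cases. If $Y\subseteq Z$ then $Y\cap Z=Y$ and there is nothing to prove. Otherwise $Y\cap Z$ is a proper closed subvariety of the irreducible variety $Y$, so each of its irreducible components $W$ satisfies $\dim(W)\leq d-1$. Since $D\geq 1$, we then have
\[
f_{D}(Y\cap Z)=\sum_{W}\deg(W)\,D^{\dim(W)}\leq D^{d-1}\sum_{W}\deg(W)=D^{d-1}\deg(Y\cap Z),
\]
using additivity of degree over irreducible components for the last equality. Finally B\'ezout in the form \eqref{eq:bezout} (which, as recalled in Section~\ref{sss:degree}, does not require pure-dimensionality) gives $\deg(Y\cap Z)\leq\deg(Y)\deg(Z)\leq\deg(Y)\,D$, whence $f_{D}(Y\cap Z)\leq\deg(Y)\,D^{d}=f_{D}(Y)$.

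I do not expect a real obstacle: the argument is a short combination of the degree facts of Section~\ref{sss:degree}. The only points needing mild care are the bookkeeping in the reduction to the irreducible case (checking that passing to the components of the $Y_{i}\cap Z$ never loses any component, hence never decreases $f_{D}$), and the observation that the drop $\dim(W)\leq d-1$ is exactly what lets the $D^{d-1}$ absorb the extra $\deg(Z)\leq D$ coming out of B\'ezout. If one later wanted a sharper statement (e.g.\ an equality criterion), one would note that equality forces either $Y\subseteq Z$ on every component or $\deg(Z)=D$ together with the generic-fibre equality in B\'ezout, but this is not needed here.
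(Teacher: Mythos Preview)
Your proof is correct and follows essentially the same route as the paper's: reduce to $Y$ irreducible, then use the dimension drop $\dim(W)\leq d-1$ together with B\'ezout to bound $f_{D}(Y\cap Z)\leq D^{d-1}\deg(Y)\deg(Z)\leq D^{d}\deg(Y)=f_{D}(Y)$. Your reduction to the irreducible case is in fact more carefully justified than the paper's (which asserts the reduction from additivity of $f_{D}$ without spelling out the inequality $f_{D}(Y\cap Z)\leq\sum_{i}f_{D}(Y_{i}\cap Z)$), but otherwise the arguments coincide.
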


\begin{proof}
For any variety $X$, if we partition the collection of its components $\{X_{j}\}_{j}$ into two subsets, say for simplicity $\{X_{j}\}_{j\leq J}$ and $\{X_{j}\}_{j>J}$, and consider their unions $X_{\leq J}$ and $X_{>J}$, we clearly have $f_{D}(X)=f_{D}(X_{\leq J})+f_{D}(X_{>J})$. Thus, it is sufficient to prove the result for $Y$ irreducible.

If $Y=Y\cap Z$ then $f_{D}(Y\cap Z)=f_{D}(Y)$, and if $Y\cap Z=\emptyset$ then $f_{D}(Y\cap Z)=0$. In both cases we are done, so assume otherwise. We must have $0\leq\dim(Y\cap Z)\leq\dim(Y)-1$. If $\{X_{j}\}_{j}$ is the collection of irreducible components of $Y\cap Z$, by B\'ezout
\begin{align*}
f_{D}(Y\cap Z) & =\sum_{j}\deg(X_{j})D^{\dim(X_{j})}\leq D^{\dim(Y)-1}\sum_{j}\deg(X_{j})=D^{\dim(Y)-1}\deg(Y\cap Z) \\
 & \leq D^{\dim(Y)-1}\deg(Y)\deg(Z)\leq D^{\dim(Y)}\deg(Y)=f_{D}(Y),
\end{align*}
proving the result.
\end{proof}

\begin{corollary}\label{co:inters}
Let $\{Z_{i}\}_{i\in I}$ be a (not necessarily finite) collection of varieties inside $\mathbb{A}^{n}$, with $\dim(Z_{i})\leq d$ and $\deg(Z_{i})\leq D$ for all $i\in I$. Let $Z=\bigcap_{i\in I}Z_{i}$. Then
\begin{enumerate}[(a)]
\item\label{co:inters-int} $Z=\bigcap_{i\in I'}Z_{i}$ for some $I'\subseteq I$ with $|I'|\leq 1+(d+1)D^{d+1}$,
\item\label{co:inters-deg} $\deg(Z)\leq D^{d+1}$, and
\item\label{co:inters-extra} for any $Y\subseteq\mathbb{A}^{n}$ with $\dim(Y)=d'$ and $\deg(Y)=D'$, calling $\hat{d}=\min\{d',\dim(Z_{i})\}$, we have $\deg(Y\cap Z)\leq D'D^{\hat{d}+1}$.
\end{enumerate}
\end{corollary}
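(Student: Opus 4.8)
The plan is to deduce all three parts from Lemma~\ref{le:weighteddeg}, applied with $D$ chosen as an upper bound for the degrees of the $Z_i$, by a Noetherian descent argument. First I would set up the descent for part~\eqref{co:inters-int}. Fix any index $i_0\in I$ and start with the partial intersection $W_0:=Z_{i_0}$. Inductively, given $W_k=\bigcap_{i\in I_k}Z_i$ for a finite set $I_k\subseteq I$, if $W_k=Z$ we stop; otherwise there is some $i\in I$ with $Z_i\not\supseteq W_k$, and we set $W_{k+1}=W_k\cap Z_i$ and $I_{k+1}=I_k\cup\{i\}$. The point is that each such step strictly decreases the quantity $f_D(W_k)$: indeed, by Lemma~\ref{le:weighteddeg} we have $f_D(W_{k+1})\le f_D(W_k)$, and the proof of that lemma shows (restricting to an irreducible component of $W_k$ not contained in $Z_i$, which exists since $W_k\not\subseteq Z_i$) that the inequality is strict because that component gets replaced by pieces of strictly smaller dimension, each contributing a factor $D^{\dim}$ with smaller exponent. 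Since $f_D(W_0)=\deg(Z_{i_0})D^{\dim(Z_{i_0})}\le D\cdot D^d=D^{d+1}$ and $f_D$ takes values in the nonnegative integers (degrees are integers, $D\ge 1$ — here I should note that if $D$ is not an integer one replaces $D$ by $\lceil D\rceil$, or simply observes $f_D$ is strictly monotone along the chain regardless), the descent terminates after at most $D^{d+1}$ steps, giving $|I'|\le 1+D^{d+1}$; the slightly weaker stated bound $1+(d+1)D^{d+1}$ is then immediate.

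For part~\eqref{co:inters-deg}, once we know $Z=\bigcap_{i\in I'}Z_i$ with $I'$ finite, I would apply Lemma~\ref{le:weighteddeg} repeatedly along the finite chain $Z_{i_0}\supseteq Z_{i_0}\cap Z_{i_1}\supseteq\ldots\supseteq Z$ to get $f_D(Z)\le f_D(Z_{i_0})\le D^{d+1}$. Since $\deg(Z)=\sum_j\deg(Z_j)$ over the irreducible components $Z_j$ of $Z$, and each term $\deg(Z_j)\le\deg(Z_j)D^{\dim(Z_j)}$ because $D\ge 1$, we conclude $\deg(Z)\le f_D(Z)\le D^{d+1}$.

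For part~\eqref{co:inters-extra}, I would combine part~\eqref{co:inters-int} with a single application of B\'ezout and Lemma~\ref{le:weighteddeg} using a different base. Write $Z=\bigcap_{i\in I'}Z_i$. Using $f_{D'}$ (base $D'=\deg(Y)$) and the fact that $\deg(Z_i)\le D$: actually the cleaner route is to intersect $Y$ first. Note $Y\cap Z=\bigcap_{i\in I'}(Y\cap Z_i)$, and each $Y\cap Z_i$ has dimension at most $\hat d=\min\{d',d\}$ and, by B\'ezout \eqref{eq:bezout}, degree at most $D'D$. Now run the same descent / repeated application of Lemma~\ref{le:weighteddeg} on the collection $\{Y\cap Z_i\}$: with $Y$ playing the role of the starting variety, $f_{D'D}(Y)=\deg(Y)(D'D)^{d'}$ — hmm, that overshoots. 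Instead I would apply Lemma~\ref{le:weighteddeg} with base $D$ directly: $f_D(Y\cap Z)\le f_D(Y\cap Z_{i_0})\le\deg(Y\cap Z_{i_0})D^{\dim(Y\cap Z_{i_0})}\le (D'D)D^{\hat d}=D'D^{\hat d+1}$, where the middle inequality uses B\'ezout, and then $\deg(Y\cap Z)\le f_D(Y\cap Z)$ as before. This requires that $\deg(Z_i)\le D$ which is our hypothesis, so Lemma~\ref{le:weighteddeg} applies at each step of the chain $Y,\,Y\cap Z_{i_0},\,Y\cap Z_{i_0}\cap Z_{i_1},\dots$.

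The main obstacle is the termination argument in part~\eqref{co:inters-int}: one must argue carefully that $f_D$ strictly decreases at each nontrivial intersection step, which is not literally stated in Lemma~\ref{le:weighteddeg} (that lemma only gives $\le$). This needs either a sharpening of the lemma's proof — observing that a proper intersection with an irreducible $Y$ lands in dimension $\le\dim(Y)-1$, so the contribution $\deg(Y)D^{\dim(Y)}$ is replaced by something of the form $(\text{stuff})D^{\dim(Y)-1}$ with $\text{stuff}\le\deg(Y)\deg(Z)\le\deg(Y)D$, hence $\le\deg(Y)D^{\dim(Y)}$, with strict inequality unless $\deg(Z)=D$ and dimension drops by exactly one and nothing else changes — or, more robustly, noting that the vector of (number of components of each dimension, ordered by decreasing dimension) decreases strictly in an appropriate well-order, which forces termination without needing integrality of $D^{d+1}$. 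I would present the latter, phrasing it as: the sequence of values $f_D(W_k)$ is strictly decreasing and bounded below by $0$, and since each $f_D(W_k)$ is a sum of at most (finitely many, boundedly many) terms of the form $(\text{positive integer})D^{j}$ with $0\le j\le d$, the descent cannot continue past $D^{d+1}$ steps. Everything else is a routine bookkeeping of B\'ezout bounds.
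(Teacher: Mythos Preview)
Your arguments for parts~\eqref{co:inters-deg} and~\eqref{co:inters-extra} are essentially the paper's, and correct. The gap is in part~\eqref{co:inters-int}, precisely where you flag it. The claim that $f_D$ \emph{strictly} decreases at each nontrivial intersection step is false: take $W_k$ an irreducible line and $Z_i$ a set of $D$ distinct points on it; then $f_D(W_k)=1\cdot D^{1}=D$ while $f_D(W_{k+1})=D\cdot D^{0}=D$, so equality holds. Your fallback to the lexicographic order on the vector of component counts $(n_d,n_{d-1},\ldots,n_0)$ does give termination, but the length of a strictly decreasing chain in that order is a priori bounded only by something like $\prod_{d'}(1+\sup_k n_{d'}(W_k))$, which is far worse than $D^{d+1}$ and does not recover the stated bound. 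So as written you have termination without the quantitative estimate.

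The paper repairs this not by sharpening Lemma~\ref{le:weighteddeg} but by choosing the order of the $Z_i$ carefully: at each step, among the irreducible components of the current partial intersection not contained in $Z$, pick one $X$ of \emph{maximal} dimension, and then choose the next $Z_{j+1}$ with $X\not\subseteq Z_{j+1}$. With this ordering there are thresholds $1=i_{d+1}\le i_d\le\cdots\le i_0=J$ such that for $i_{d'+1}<j\le i_{d'}$ the number $n(d',j)$ of $d'$-dimensional components strictly decreases in $j$. The inequality $n(d',j)\le D^{d-d'+1}$ (which does follow from $f_D(W_j)\le f_D(W_1)\le D^{d+1}$) then gives $i_{d'}-i_{d'+1}\le D^{d-d'+1}$, and summing over $d'$ yields $J\le 1+(d+1)D^{d+1}$. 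In short, the monotonicity of $f_D$ controls the \emph{size} of each $n(d',\cdot)$, while the specific ordering controls \emph{when} each $n(d',\cdot)$ is forced to drop; your arbitrary choice of $Z_i$ retains the first ingredient but loses the second.
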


\begin{proof}
Since $\mathbb{A}^{n}$ is Noetherian, there is a finite subset of $I$ (say $\{1,2,\ldots,J\}$, after renaming the indices) with $\bigcap_{i\leq J}Z_{i}=Z$. Choose the smallest such subset, and from now on we may assume that $I=\{1,2,\ldots,J\}$. We reorder $I$ as follows. Choose $Z_{1}$ arbitrarily, and assume that we are done ordering up to some $j<J$. If $Z_{(j)}:=\bigcap_{i\leq j}Z_{i}$, there is an irreducible component $X$ of $Z_{(j)}$ not contained in $Z$, otherwise we would contradict the minimality of $I$. Among such components, choose one $X$ having largest dimension, and then choose $Z_{j+1}$ such that $X\subsetneq Z_{j+1}$.

By the minimality of $I$ we must have $Z_{1}=Z_{(1)}\supsetneq Z_{(2)}\supsetneq\ldots\supsetneq Z_{(J)}=Z$. Moreover, by the ordering chosen above, we have the following property: there are indices $1=i_{d+1}\leq i_{d}\leq i_{d-1}\leq\ldots\leq i_{1}\leq i_{0}=J$ such that, if $i_{d'+1}<j\leq i_{d'}$, the number of $d'$-dimensional components in $Z_{(j)}$ is strictly smaller than in $Z_{(j-1)}$, and if $j\geq i_{d'}$ the $d'$-dimensional components in $Z_{(j)}$ are the same as those of $Z$.

Now, $f_{D}(Z_{(j)})\leq f_{D}(Z_{(i)})$ whenever $j\geq i$ by Lemma~\ref{le:weighteddeg}. This allows us to give an upper bound on the number $n(d',j)$ of irreducible components of dimension $d'$ inside $Z_{(j)}$. In fact, since we have the bounds
\begin{equation*}
\sum_{\substack{\text{$X$ irr.comp.of $Y$} \\ \dim(X)=d'}}\deg(X)D^{d'}\leq f_{D}(Y)\leq\deg(Y)D^{\dim(Y)}
\end{equation*}
valid for any $Y$ by definition, and since $\deg(Z_{i})\leq D$ for all $i$, we obtain
\begin{align*}
n(d',j) & =\sum_{\substack{\text{$X$ irr.comp.of $Z_{(j)}$} \\ \dim(X)=d'}}1\leq\sum_{\substack{\text{$X$ irr.comp.of $Z_{(j)}$} \\ \dim(X)=d'}}\deg(X)\leq f_{D}(Z_{(j)})D^{-d'} \\
 & \leq f_{D}(Z_{(1)})D^{-d'}=f_{D}(Z_{1})D^{-d'}\leq D^{d-d'+1}
\end{align*}
for all $d',j$. Moreover, by our ordering of the indices, $n(d',j)<n(d',j-1)$ whenever $i_{d'+1}<j\leq i_{d'}$. Therefore, for every $d'$ we have $i_{d'}-i_{d'+1}\leq n(d',i_{d'+1})\leq D^{d-d'+1}$, and the bound
\begin{equation*}
J=1+\sum_{d'=0}^{d}(i_{d'}-i_{d'+1})\leq 1+\sum_{d'=0}^{d}D^{d-d'+1}\leq 1+(d+1)D^{d+1}
\end{equation*}
proves \eqref{co:inters-int}.

Using again Lemma~\ref{le:weighteddeg},
\begin{equation*}
\deg(Z)=\sum_{\text{$X$ irr.comp.of $Z$}}\deg(X)\leq f_{D}(Z)\leq f_{D}(Z_{1})\leq D^{d+1},
\end{equation*}
proving \eqref{co:inters-deg}. We have $\dim(Y\cap Z_{1})\leq\hat{d}$ and $\deg(Y\cap Z_{1})\leq D'D$, so
\begin{equation*}
\deg(Y\cap Z)=\sum_{\text{$X$ irr.comp.of $Y\cap Z$}}\deg(X)\leq f_{D}(Y\cap Z)\leq f_{D}(Y\cap Z_{1})\leq D'D^{\hat{d}+1},
\end{equation*}
proving \eqref{co:inters-extra}.
\end{proof}

\section{Linear algebraic groups}\label{se:linalg}

In this section we define linear algebraic groups, fix the relative notations, and use the tools of Section~\ref{se:var} to prove some general facts that will be used later. We shall discuss the special case of almost simple groups in more depth in Section~\ref{se:almsim}.

\subsection{Definition and basic properties}\label{se:linalg-basic}

The space of $n\times n$ matrices $\mathrm{Mat}_{n}$ is the affine space $\mathbb{A}^{n^{2}}$ endowed with a {\em multiplication map}, i.e.\ a morphism $\cdot:\mathbb{A}^{n^{2}}\times\mathbb{A}^{n^{2}}\rightarrow\mathbb{A}^{n^{2}}$ defined by the usual matrix multiplication. Clearly, $\mathrm{mdeg}(\cdot)=2$.  The {\em general linear group} $\mathrm{GL}_{n}$ is commonly defined as an open set inside $\mathrm{Mat}_{n}$. We shall however need to work with $\mathrm{GL}_{n}$ as a (Zariski-closed) variety, so we define instead
\begin{equation*}
\mathrm{GL}_{n}:=\left\{ x\in\mathrm{Mat}_{n+1}: \ \ x_{i,n+1}=x_{n+1,i}=0 \ \ (1\leq i\leq n), \ \ \det(x|_{n\times n})\cdot x_{n+1,n+1}=1 \right\},
\end{equation*}
where $x_{i,j}$ is the $(i,j)$-th entry of $x$ and $x|_{n\times n}$ is the restriction to the $n\times n$ upper left corner of $x$. The {\em determinant} is the morphism $\det:\mathrm{GL}_{n}\rightarrow\mathbb{A}^{1}$ given by taking the determinant of the $n\times n$ upper left corner of $x$ (we just write $\det(x)$ for simplicity); it has maximum degree $\mathrm{mdeg}(\det)=n$. The {\em inversion map} is the morphism $^{-1}:\mathrm{GL}_{n}\rightarrow\mathrm{GL}_{n}$ sending $x$ to $y$ with $y|_{n\times n}=\mathrm{adj}(x|_{n\times n})\cdot x_{n+1,n+1}$ and $y_{n+1,n+1}=\det(x|_{n\times n})$, where the {\em adjugate} $\mathrm{adj}(z)$ is defined by $\mathrm{adj}(z)_{i,j}=(-1)^{i+j}M_{j,i}$, with $M_{j,i}$ being the $(j,i)$-th minor of $z$.

We occasionally work with a specified $n$-dimensional $K$-vector space $L$, and denote by $\mathrm{Mat}(L)$ and $\mathrm{GL}(L)$ respectively the space $\mathrm{Mat}_{n}$ and the group $\mathrm{GL}_{n}$ over $K$.

A {\em (linear) algebraic group}\footnote{Since we work exclusively in the affine space, there is no need to distinguish between ``algebraic groups'' and ``linear algebraic groups''. See \cite[\S 8.6]{Hum95b}.} $G$ is a subvariety of $\mathrm{GL}_{n}$ closed under the multiplication and inversion maps \cite[p.~51]{Bor91}, and in this case we write $G\leq\mathrm{GL}_{n}$. The maximum degree of these maps may change when restricted to $G$, and the degree of $G$ and its maps may also differ depending on how we choose to represent $G$. We call $H$ an {\em algebraic subgroup}  (or simply {\em subgroup}) of $G$, and write $H\leq G$, if it is both a subvariety of $G$ and an algebraic group (not necessarily defined over the same field $K$). A {\em normal subgroup} $H\unlhd G$ is a subgroup for which, for every $x\in G(\overline{K})$, the automorphism $\varphi:G\rightarrow G$ defined by $\varphi(g)=xgx^{-1}$ satisfies $\varphi(H)=H$; a {\em characteristic subgroup} $H\csgp G$ is a normal subgroup for which the above holds for all automorphisms $\varphi$, not only the inner ones.

One can define the {\em Lie algebra of $G$} by endowing the tangent space of $G$ at $e$ with a Lie algebra structure: see~\cite[\S 3.5]{Bor91} or \cite[\S 9.1]{Hum95b} for details. We use the conventional fraktur notation, such as $\mathfrak{gl}_{n}$ and $\mathfrak{g}$, to denote Lie algebras. We have $\dim(G)=\dim(\mathfrak{g})$ for all $G$.

We now define several objects inside $G$. The {\em identity component} $G^{\mathrm{o}}$ is the connected component of the identity of $G$. If $G=G^{\mathrm{o}}$, or equivalently if $G$ is connected, then $G$ is irreducible~\cite[Cor.~1.35]{Mil17}. For any element $g\in G(\overline{K})$, any set $\Lambda\subseteq G(\overline{K})$, and any variety $V\subseteq G$, their {\em centralizers} in $G$ are
\begin{align*}
C_{G}(g) & :=\{x\in G:gx=xg\}, & C_{G}(\Lambda) & :=\bigcap_{g\in\Lambda}C_{G}(g), & C_{G}(V) & :=C_{G}(V(\overline{K})).
\end{align*}
Every centralizer is an algebraic subgroup of $G$. The {\em centre} of $G$ is defined as $Z(G):=C_{G}(G)$, and we have $Z(G)\csgp G$.

A {\em torus} $T$ of $G$ is a subgroup of $G$ isomorphic to the product of $m$ copies of $\mathrm{GL}_{1}$ for some $m\geq 1$ \cite[\S 8.5]{Bor91}; $T$ is a {\em maximal torus} if it has maximal $m$ among all tori of $G$. A {\em Cartan subgroup} of $G$ is a subgroup of the form $C_{G}(T)$ for some maximal torus $T$. If $G$ is connected then all maximal tori are conjugate \cite[Cor.~11.3(1)]{Bor91}, and the Cartan subgroups of $G$ have the same dimension, which is called the {\em rank} $\mathrm{rk}(G)$ of $G$. Throughout the rest of the paper, when dealing with an algebraic group $G$ we use
\begin{align}\label{eq:notation}
d & =\dim(G), & D & =\deg(G), & r & =\mathrm{rk}(G), & \iota & =\mathrm{mdeg}(^{-1}:G\rightarrow G).
\end{align}

Let $G\leq\mathrm{GL}_{n}\subseteq\mathrm{Mat}_{n+1}$ be defined over $K$. An element $g\in G(K)$ is {\em unipotent} if $(g|_{n\times n}-\mathrm{Id}_{n})^{m}=0$ for some $m\geq 1$, and it is {\em semisimple} if it is conjugate to some diagonal matrix in $G(\overline{K})$ \cite[\S 4.1]{Bor91}. There are unique elements $g_{s},g_{u}$, respectively semisimple and unipotent, such that $g=g_{s}g_{u}=g_{u}g_{s}$ \cite[p.~81, Cor.~1(1)]{Bor91}. An element $g\in G$ is {\em regular} if $C_{G}(g)$ has the smallest possible dimension, which is $\dim(C_{G}(g))=\mathrm{rk}(G)$. We use the notation
\begin{align*}
G^{\mathrm{rss}} & =\{g\in G:\text{$g$ regular and semisimple}\}, & G^{\mathrm{un}} & =\{g\in G:\text{$g$ unipotent}\}, \\
G^{\mathrm{run}} & =\{g\in G:\text{$g$ regular and unipotent}\}, & G^{\mathrm{irr}} & =\{g\in G:\text{$g$ not regular}\}.
\end{align*}
We adopt the shorthand $V^{\mathrm{rss}}=V\cap G^{\mathrm{rss}}$ for varieties $V\subseteq G$, and $X^{\mathrm{rss}}=X\cap G^{\mathrm{rss}}(\overline{K})$ for subsets $X\subseteq G(\overline{K})$ (and similarly for the other notations).

For $G$ connected, a {\em Borel subgroup} $B$ is a maximal connected solvable subgroup of $G$. Its unipotent part $U=B^{\mathrm{un}}$ is a maximal connected unipotent subgroup of $G$. If $\mathcal{B}$ is the collection of Borel subgroups of $G$, then
\begin{align*}
R(G) & =\left(\bigcap_{B\in\mathcal{B}}B\right)^{\mathrm{o}}, & R_{u}(G) & =R(G)^{\mathrm{un}}
\end{align*}
are respectively the {\em radical} and the {\em unipotent radical} of $G$ \cite[\S 11.21]{Bor91}. The definitions above are invariant under automorphisms of $G$, meaning in particular that $R_{u}(G)\csgp G$. A connected $G$ is called {\em reductive} if $R_{u}(G)=\{e\}$, and {\em semisimple} if $R(G)=\{e\}$. If $G$ is reductive, Cartan subgroups and maximal tori coincide \cite[\S 13.17, Cor.~2(c)]{Bor91}, so $\mathrm{rk}(G)$ is the dimension of any maximal torus. If $G$ is also semisimple, we have further properties on the sets of regular, semisimple, and unipotent elements: $G^{\mathrm{rss}}$ is open and dense \cite[\S 2.5]{Hum95a}, $G^{\mathrm{un}}$ is closed and irreducible of dimension $\dim(G)-\mathrm{rk}(G)$ \cite[\S 4.2]{Hum95a}, $G^{\mathrm{run}}$ is nonempty \cite[\S 4.5]{Hum95a}, and $G^{\mathrm{irr}}$ is closed and proper (since the set of regular elements is open and dense \cite[\S 1.4]{Hum95a}).

The degree of many objects above can be bounded effectively.

\begin{lemma}\label{le:tudeg}
Let $G\leq\mathrm{GL}_{n}$ be a connected algebraic group with $d=\dim(G)$, $D=\deg(G)$, and $\iota=\mathrm{mdeg}(^{-1})$, defined over a field $K$.
\begin{enumerate}[(a)]
\item\label{le:tudeg-t} For any maximal torus $T$ of $G$, $\deg(T)\leq D$.
\item\label{le:tudeg-b} For any Borel subgroup $B$ of $G$, $\deg(B)\leq D$.
\item\label{le:tudeg-unip} For any $H\leq G$ connected unipotent, $\deg(H)\leq\dim(H)^{\dim(H)}$.
\item\label{le:tudeg-u} For any maximal connected unipotent subgroup $U$ of $G$, $\deg(U)\leq\min\{D,d^{d}\}$.
\item\label{le:tudeg-ru} For the unipotent radical $R_{u}(G)$ of $G$, $\deg(R_{u}(G))\leq\min\{(nD)^{d+1},d^{d}\}$.
\end{enumerate}
\end{lemma}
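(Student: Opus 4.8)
The plan is to dispatch \eqref{le:tudeg-t}, \eqref{le:tudeg-b} and the bound $\deg(U)\le D$ of \eqref{le:tudeg-u} by a single template, to get the $d^{d}$-bounds of \eqref{le:tudeg-u} and \eqref{le:tudeg-ru} out of \eqref{le:tudeg-unip}, and to get $\deg(R_{u}(G))\le(nD)^{d+1}$ from Corollary~\ref{co:inters}; the one genuinely separate point is \eqref{le:tudeg-unip}. The template exploits that conjugation $x\mapsto gxg^{-1}$ by a fixed $g\in\mathrm{GL}_{n}(\overline K)$ is a linear automorphism of $\mathrm{Mat}_{n+1}$, hence preserves dimension and degree. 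So in each of these three cases I may replace $G$ by a conjugate so that the relevant subgroup lies inside a fixed affine-linear subspace $M\subseteq\mathrm{Mat}_{n+1}$ — the diagonal matrices, the upper-triangular matrices, the unitriangular matrices respectively — these reductions being legitimate by diagonalisability of tori, by Lie--Kolchin, and by Kolchin's theorem. Such an $M$ is irreducible of degree $1$, so $\deg(G\cap M)\le\deg(G)\deg(M)=D$ by B\'ezout~\eqref{eq:bezout}. Now $(G\cap M)^{\mathrm o}$ is connected and inherits from $M$ the property of being diagonalisable, solvable, or unipotent, it contains the given subgroup, and by the maximality built into the hypotheses (maximal torus / Borel / maximal connected unipotent) it must equal that subgroup. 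Hence the subgroup is the identity component of $G\cap M$, therefore an irreducible component of the variety $G\cap M$, so its degree is $\le\deg(G\cap M)\le D$. This proves \eqref{le:tudeg-t}, \eqref{le:tudeg-b} and the bound $\deg(U)\le D$.

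For \eqref{le:tudeg-unip} the template breaks: a non-maximal connected unipotent $H$ need not be an irreducible component of $G\cap\{\text{unitriangular}\}$. Instead I would use the structure theory of connected unipotent groups over $\overline K$. After conjugating $H$ into the unitriangular matrices, choose a chain of connected subgroups $\{e\}=H_{0}\triangleleft H_{1}\triangleleft\cdots\triangleleft H_{m}=H$ (with $m=\dim H$, $\dim H_{i}=i$, $H_{i}/H_{i-1}\cong\mathbb{G}_{a}$) together with one-parameter homomorphisms $\epsilon_{i}\colon\mathbb{G}_{a}\to H_{i}$ lifting the successive quotients, so that $\phi\colon\mathbb{A}^{m}\to H$, $\phi(t_{1},\dots,t_{m})=\epsilon_{1}(t_{1})\cdots\epsilon_{m}(t_{m})$, is an isomorphism of varieties. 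Since $\deg(\mathbb{A}^{m})=1$, Lemma~\ref{le:zarimdeg}\eqref{le:zarimdeg-im} gives $\deg(H)=\deg(\overline{\phi(\mathbb{A}^{m})})\le\mathrm{mdeg}(\phi)^{m}$, and $\mathrm{mdeg}(\phi)\le\sum_{i}\mathrm{mdeg}(\epsilon_{i})$ because each entry of a matrix product is a sum of products of one entry per factor. The real work is then to pick the chain so that each $\mathrm{mdeg}(\epsilon_{i})$ is bounded by a function of $\dim H$ alone, uniformly in the ambient dimension $n$ and valid in every characteristic (in particular small $p$, where $\exp$ and $\log$ are unavailable); once that is done one obtains $\deg(H)\le\dim(H)^{\dim(H)}$. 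I expect this characteristic-uniform control of the one-parameter subgroups to be the main obstacle of the whole lemma.

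Finally, $R_{u}(G)$ and every maximal connected unipotent $U$ are connected unipotent subgroups of $G$ of dimension $\le d$, so \eqref{le:tudeg-unip} together with the monotonicity of $k\mapsto k^{k}$ gives $\deg(U)\le d^{d}$ and $\deg(R_{u}(G))\le d^{d}$, which together with the first paragraph completes \eqref{le:tudeg-u} and half of \eqref{le:tudeg-ru}. For the remaining bound, write $R_{u}(G)=\bigcap_{B\in\mathcal{B}}B^{\mathrm{un}}$ (the intersection of the unipotent parts of all Borel subgroups, using that unipotent elements of $\bigcap_{B}B$ lie in its identity component $R(G)$): each $B^{\mathrm{un}}$ is a maximal connected unipotent subgroup, hence has degree $\le D$ by \eqref{le:tudeg-u} and dimension $\le d$, so Corollary~\ref{co:inters}\eqref{co:inters-deg} applied to this (possibly infinite) family yields $\deg(R_{u}(G))\le D^{d+1}\le(nD)^{d+1}$.
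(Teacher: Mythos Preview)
Your template for \eqref{le:tudeg-t}, \eqref{le:tudeg-b}, and the bound $\deg(U)\le D$ in \eqref{le:tudeg-u} matches the paper, which shortcuts your maximality argument by citing \cite[Prop.~11.14(2)]{Bor91} directly for $T=(G\cap T')^{\mathrm{o}}$, $B=(G\cap B')^{\mathrm{o}}$, $U=(G\cap U')^{\mathrm{o}}$. For the $(nD)^{d+1}$ bound in \eqref{le:tudeg-ru} the paper intersects the Borels first (Corollary~\ref{co:inters}) and then imposes degree-$n$ unipotency equations, whereas you intersect the $B^{\mathrm{un}}$ directly and obtain the sharper $D^{d+1}$; one small wrinkle is that $\bigcap_B B^{\mathrm{un}}$ need not be connected in positive characteristic, so write $R_u(G)=\bigl(\bigcap_B B^{\mathrm{un}}\bigr)^{\mathrm{o}}$, which only helps the degree bound.

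For \eqref{le:tudeg-unip} the paper's answer to your ``main obstacle'' is this: instead of parametrizing $U_i$ by $\mathbb{G}_a$ and bounding $\mathrm{mdeg}(\epsilon_i)$, conjugate a generator $u_i\in U_i$ to Jordan form $v_i=z_i^{-1}u_iz_i$ and assert that $V_i:=z_i^{-1}U_iz_i$ equals the affine line $\{I+t\sum_{j\in\mathcal{J}_i}E_{j,j+1}:t\in\overline K\}$, where $\mathcal{J}_i$ records the nonzero superdiagonal positions of $v_i$. Then $\deg(V_i)=1$, the product-of-conjugates map $V_1\times\cdots\times V_m\to H$ has maximum degree $m=\dim(H)$, and Lemma~\ref{le:zarimdeg}\eqref{le:zarimdeg-im} yields $\deg(H)\le m^m$. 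In effect the paper is claiming that your $\epsilon_i$ become linear once the target is put in Jordan form.

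Your caution was warranted, however: the paper's description of $V_i$ is only correct when every Jordan block of $v_i$ has size at most $2$, since otherwise $\{I+t\sum_{j\in\mathcal{J}_i}E_{j,j+1}\}$ is not closed under multiplication and hence cannot equal the group $z_i^{-1}U_iz_i$. In fact the bound \eqref{le:tudeg-unip} is false as stated: the one-dimensional group $H=\{I+tE_{12}+tE_{23}+\tfrac{t^{2}-t}{2}E_{13}\}\le\mathrm{GL}_{3}$ in characteristic $0$ has $\deg(H)=2>1^{1}$, and $H=\{I+tE_{12}+t^{p}E_{13}\}\le\mathrm{GL}_{3}$ in characteristic $p$ has $\deg(H)=p$. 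So neither your sketch nor the paper's argument can be completed for \eqref{le:tudeg-unip}, and the $d^{d}$ halves of \eqref{le:tudeg-u} and \eqref{le:tudeg-ru} inherit the same defect.
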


\begin{proof}
Each of the subgroups $T,B,U$ as above is respectively of the form $(G\cap T')^{\mathrm{o}},(G\cap B')^{\mathrm{o}},(G\cap U')^{\mathrm{o}}$ for $T',B',U'$ of the same type in $\mathrm{GL}_{n}$ \cite[Prop.~11.14(2)]{Bor91}. All Borel subgroups $B'$ of $\mathrm{GL}_{n}$ are conjugate to each other \cite[\S 11.1]{Bor91}, and similarly for $T',U'$ \cite[Cor.~11.3]{Bor91}. Thus, it is enough to choose $T',B',U'$ whose degrees we can bound well. Choose respectively the diagonal maximal torus $T'$, the group $B'$ of upper triangular matrices, and the subgroup $U'$ of $B'$ whose elements have all diagonal entries equal to $1$; hence, $T',B',U'$ are intersections of $\mathrm{GL}_{n}$ with varieties of degree $1$. The bound in \eqref{le:tudeg-t} and \eqref{le:tudeg-b} and the first bound in \eqref{le:tudeg-u} all follow from B\'ezout.

By definition, $R_{u}(G)$ is the unipotent part of the radical $R(G)=\left(\bigcap_{B\in\mathcal{B}}B\right)^{\mathrm{o}}$. The unipotent part is defined by equations of degree $\leq n$ since, by what we said about Borel subgroups of $\mathrm{GL}_{n}$, $R(G)$ lies in a conjugate of the set of upper triangular matrices. Furthermore, the Borel subgroups of $G$ have degree bounded by \eqref{le:tudeg-b}. Then, for some varieties $Z_{i}$ of degree $\leq n$, Corollary~\ref{co:inters}\eqref{co:inters-deg}--\eqref{co:inters-extra} yields
\begin{align*}
\deg(R_{u}(G)) & =\deg\left(\left(\bigcap_{B\in\mathcal{B}}B\right)^{\mathrm{o}}\cap\bigcap_{i\in I}Z_{i}\right)\leq\deg\left(\left(\bigcap_{B\in\mathcal{B}}B\right)^{\mathrm{o}}\right)\cdot n^{\dim(R(G))+1} \\
 & \leq D^{\dim(B)+1}n^{\dim(R(G))+1}\leq(nD)^{d+1},
\end{align*}
which gives the first bound in \eqref{le:tudeg-ru}.

Now let $H\leq G$ be connected unipotent. By definition $H$ is also solvable, so there is some $z\in\mathrm{GL}_{n}(\overline{K})$ for which $H'=zHz^{-1}$ is upper triangular by the Lie-Kolchin theorem \cite[Cor.~10.5]{Bor91}. $H'$ is still unipotent of dimension $\dim(H)$, so we can write $H=\overline{U_{1}U_{2}\ldots U_{\dim(H)}}$ for some $1$-dimensional irreducible subgroups $U_{i}$ each generated by a unipotent matrix $u_{i}\in H'$ (see for instance \cite[\S 7.5]{Hum95b}, which is more general and forgoes the Zariski closure at the expense of lengthening the product to $2\dim(H)$ factors). For some $z_{i}\in\mathrm{GL}_{n}(\overline{K})$ we can write $u_{i}=z_{i}v_{i}z_{i}^{-1}$ with $v_{i}$ in Jordan normal form. If $\mathcal{J}_{i}$ is the set of indices $j$ for which $(v_{i})_{j,j+1}$ is nonzero, then the group $V_{i}=z_{i}^{-1}U_{i}z_{i}$ is the variety defined by $x_{j,j}=1$ for all $j$, $x_{j_{1},j_{1}+1}=x_{j_{2},j_{2}+1}$ for all $j_{1},j_{2}\in\mathcal{J}_{i}$, and $x_{j,k}=0$ everywhere else. Therefore $\deg(V_{i})=1$, and using the morphism $f:V_{1}\times\ldots\times V_{\dim(H)}\rightarrow G$ defined by
\begin{equation*}
f(x_{1},\ldots,x_{\dim(H)})=z^{-1}\cdot z_{1}x_{1}z_{1}^{-1}\cdot\ldots\cdot z_{\dim(H)}x_{\dim(H)}z_{\dim(H)}^{-1}\cdot z
\end{equation*}
we conclude that $\overline{f(V_{1}\times\ldots\times V_{\dim(H)})}$ has degree $\leq\dim(H)^{\dim(H)}$ by Lemma~\ref{le:zarimdeg}\eqref{le:zarimdeg-im}. This object is $H$ itself, so we obtain \eqref{le:tudeg-unip}.

Finally, both $U$ and $R_{u}(G)$ are connected unipotent, so \eqref{le:tudeg-unip} implies the second bounds in \eqref{le:tudeg-u} and \eqref{le:tudeg-ru}.
\end{proof}

We can also bound the degree of the centralizer of any set in $G(\overline{K})$. This is an easy application of Corollary~\ref{co:inters}, although a more elementary argument relying on the fact that we intersect linear varieties would also be sufficient.

\begin{corollary}\label{co:centralizer}
Let $G\leq\mathrm{GL}_{n}$ be an algebraic group defined over $K$, with $d=\dim(G)$ and $D=\deg(G)$, and let $\Lambda\subseteq G(\overline{K})$. Then $C_{G}(\Lambda)=C_{G}(\Lambda')$ for some $\Lambda'\subseteq\Lambda$ of size $|\Lambda'|\leq d+1$, and $\deg(C_{G}(\Lambda))\leq D$.
\end{corollary}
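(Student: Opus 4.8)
The plan is to view $C_{G}(\Lambda)$ as an intersection of centralizers of single elements, apply Corollary~\ref{co:inters}, and then upgrade the crude bound it gives to the sharp bound $\deg(C_{G}(\Lambda))\leq D$ by observing that each $C_{G}(g)$ is cut out from $G$ by \emph{linear} equations. First I would recall that for each $g\in G(\overline{K})$ the set $C_{G}(g)=\{x\in G:gx-xg=0\}$ is the intersection of $G$ with the linear subspace $L_{g}\subseteq\mathrm{Mat}_{n+1}$ defined by the $(n+1)^{2}$ linear equations in the entries of $x$ coming from $gx-xg=0$ (the condition is linear in $x$ because $g$ is a fixed matrix). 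In particular $\dim(L_{g})\leq (n+1)^{2}$ and $\deg(L_{g})=1$, so $C_{G}(g)=G\cap L_{g}$ has $\deg(C_{G}(g))\leq\deg(G)\cdot\deg(L_{g})=D$ by B\'ezout \eqref{eq:bezout}, and $C_{G}(\Lambda)=\bigcap_{g\in\Lambda}(G\cap L_{g})=G\cap\bigcap_{g\in\Lambda}L_{g}$.

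The size reduction $|\Lambda'|\leq d+1$ comes from a Noetherian/dimension-drop argument, exactly the mechanism inside the proof of Corollary~\ref{co:inters}\eqref{co:inters-int}, but sharpened because we intersect linear spaces. Explicitly, set $W=\bigcap_{g\in\Lambda}L_{g}$, a linear subspace of $\mathrm{Mat}_{n+1}$. Pick $g_{1}\in\Lambda$; having chosen $g_{1},\ldots,g_{j}$, if $G\cap L_{g_{1}}\cap\cdots\cap L_{g_{j}}\neq C_{G}(\Lambda)$ then some element of $\Lambda$ fails to contain one of its irreducible components, i.e.\ there is $g_{j+1}\in\Lambda$ with $L_{g_{1}}\cap\cdots\cap L_{g_{j+1}}\subsetneq L_{g_{1}}\cap\cdots\cap L_{g_{j}}$; since these are linear spaces, the dimension strictly drops at each step, starting from $\dim(L_{g_{1}}\cap G)\leq d$. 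Hence after at most $d+1$ steps the chain stabilizes and $C_{G}(\Lambda)=C_{G}(\Lambda')$ with $\Lambda'=\{g_{1},\ldots,g_{j}\}$ and $|\Lambda'|\leq d+1$. (Alternatively one can just quote Corollary~\ref{co:inters}\eqref{co:inters-int} with $D=1$, $d$ replaced by $\dim(G)$: the collection $\{G\cap L_{g}\}_{g\in\Lambda}$ has all members of degree $\leq D$ and dimension $\leq d$, but to get the cleaner $|\Lambda'|\leq d+1$ rather than $1+(d+1)D^{d+1}$ one really wants the linear-space version, so I would spell out the short argument above.)

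Finally, having reduced to $\Lambda'$ with $|\Lambda'|\leq d+1$, I conclude $\deg(C_{G}(\Lambda))=\deg\bigl(G\cap\bigcap_{g\in\Lambda'}L_{g}\bigr)\leq\deg(G)\cdot\prod_{g\in\Lambda'}\deg(L_{g})=D$ by B\'ezout, since each $L_{g}$ has degree $1$; equivalently, $G\cap W$ where $W=\bigcap_{g\in\Lambda'}L_{g}$ is a single linear subspace of degree $1$, so $\deg(G\cap W)\leq D$ directly. The statement is then proved. There is no real obstacle here: the only subtlety worth being careful about is the passage from "$C_{G}(\Lambda)$ is genuinely defined inside $G\leq\mathrm{GL}_{n}\subseteq\mathrm{Mat}_{n+1}$" to "the defining equations are linear in the ambient coordinates" — one must use the embedding into $\mathrm{Mat}_{n+1}$ and note that the commutation relations $gx=xg$ are linear in the matrix entries of $x$ for fixed $g$, so that intersecting $G$ with them neither raises the degree (B\'ezout with a degree-$1$ factor) nor requires more than $\dim(G)+1$ of them (dimension drop for linear spaces). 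Everything else is a direct invocation of B\'ezout \eqref{eq:bezout} and Noetherianity.
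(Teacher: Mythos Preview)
Your approach is essentially the paper's: both observe that the commutation condition $gx=xg$ is linear in the matrix entries of $x$, write $C_G(\Lambda)=G\cap\bigcap_{g\in\Lambda}L_g$ with each $L_g$ a linear subspace of $\mathrm{Mat}_{n+1}$, and obtain $\deg(C_G(\Lambda))\leq D$ by B\'ezout with a degree-$1$ factor. The paper packages this as Corollary~\ref{co:inters}\eqref{co:inters-extra} applied with $Y=G$ and the degree-$1$ varieties $Z_{i,\lambda}$, which unwinds to exactly your computation; indeed the paper remarks just before the proof that ``a more elementary argument relying on the fact that we intersect linear varieties would also be sufficient'', and that is what you carry out.

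There is, however, a genuine gap in your argument for $|\Lambda'|\leq d+1$. You track the strictly descending chain of linear subspaces $L_{g_1}\supsetneq L_{g_1}\cap L_{g_2}\supsetneq\cdots$ inside $\mathrm{Mat}_{n+1}$, correctly note that each strict inclusion drops the dimension by at least one, and then assert that the chain ``start[s] from $\dim(L_{g_1}\cap G)\leq d$''. But $L_{g_1}\cap G$ is not the object whose dimension you are tracking: the chain consists of the ambient linear spaces $L_{g_1}\cap\cdots\cap L_{g_j}$, and $\dim(L_{g_1})$ can be of order $(n+1)^2$, not $d=\dim(G)$. You have conflated $\dim(L_{g_1})$ with $\dim(G\cap L_{g_1})$. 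As written, your argument only yields $|\Lambda'|\leq\dim(L_{g_1})+1$, a bound in $n$ rather than in $d$. (The paper's one-line appeal to Corollary~\ref{co:inters}\eqref{co:inters-int} on the hyperplanes $Z_{i,\lambda}$ is similarly terse on this point; read literally it also produces a bound in the ambient dimension. The degree bound, which is what the paper actually relies on downstream, is unaffected.)
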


\begin{proof}
For any $x\in G(\overline{K})$, the centralizer $C_{G}(x)$ is defined as the set of $g\in G$ with $gx=xg$, which yields a finite number of equations of degree $1$. Thus $C_{G}(x)$ is the intersection of $G$ with varieties $Z_{i,x}$ of degree $1$, for some set of indices $i$. In turn, $C_{G}(\Lambda)$ is the intersection of $C_{G}(\lambda)$ for all $\lambda\in\Lambda$. Apply Corollary~\ref{co:inters}\eqref{co:inters-int} to the collection of $Z_{i,\lambda}$ to obtain the bound on $|\Lambda'|$, and Corollary~\ref{co:inters}\eqref{co:inters-extra} to $G$ and the $Z_{i,\lambda}$ to obtain the bound on $\deg(C_{G}(\Lambda))$.
\end{proof}

\subsection{Escaping from a subgroup}

The tools from \cite{BDH21,BDH24} that we are going to use in Section~\ref{se:dimest} rely on the procedure called {\em escape from subvarieties}, which first appeared in \cite{EMO05}. To produce dimensional estimates for a set $A$ of generators of $G(K)$, we need to be able to say that for any proper subvariety $V\subseteq G$ there is some $g\in A^{k}$ with $g\notin V(\overline{K})$, where $k$ is bounded appropriately in terms of $\deg(V)$.

Here we start with a different object, namely a subgroup $\Gamma$, and a weaker hypothesis, namely that we escape from algebraic subgroups. Thus, we have to prove that escaping from subgroups is enough to escape from every subvariety as well, up to paying a price in degree bounds. The following result shows the contrapositive statement: if $\Gamma$ is large enough and is trapped in a subvariety, then it is trapped in a subgroup as well.

\begin{lemma}\label{le:groupvar}
Let $G$ be a linear algebraic group defined over $K$. Let $\Gamma\leq G(\overline{K})$, and let $V\subsetneq G$ be a proper subvariety. Assume that $\Gamma\subseteq V(\overline{K})$.

Then, either $|\Gamma|\leq\deg(V)^{\dim(V)+1}$, or there is a proper algebraic subgroup $H\lneq G$ with $\Gamma\leq H(\overline{K})$ and $\deg(H)\leq\deg(V)^{\dim(V)+1}$.
\end{lemma}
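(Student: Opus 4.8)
The idea is to consider the stabilizer of $V$ under the $\Gamma$-action by right translation, and show that if $\Gamma$ is large then this stabilizer is (essentially) all of $\Gamma$, forcing $\Gamma$ into a proper subgroup. More precisely, I would first reduce to the case where $V$ is the Zariski closure of $\Gamma$: replacing $V$ by $\overline{\Gamma}$ only decreases the dimension and (by Corollary~\ref{co:inters}\eqref{co:inters-deg}, since $\overline{\Gamma}$ is an intersection of the translates $Vg^{-1}$ for $g\in\Gamma$, each of degree $\deg(V)$ and dimension $\leq\dim(V)$) keeps $\deg(\overline{\Gamma})\leq\deg(V)^{\dim(V)+1}$; so it suffices to prove the statement for $V=\overline{\Gamma}$, with the bound $\deg(V)^{\dim(V)+1}$ replaced by $\deg(V)$. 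Now $V$ is $\Gamma$-invariant under right translation, and $\Gamma\subseteq V(\overline{K})$, so $e\in V(\overline{K})$, whence $\Gamma=e\Gamma\subseteq V(\overline{K})$ and moreover every irreducible component of $V$ meets $\Gamma$ in a dense subset.

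Next I would look at $H:=\{g\in G:Vg=V\}$, which is readily checked to be an algebraic subgroup of $G$: it is the set of $g$ such that $V\cdot g\subseteq V$ and $V\cdot g^{-1}\subseteq V$, and these are closed conditions (the condition $Vg\subseteq V$ cuts out a closed subset of $G$ because $V$ is closed and right translation is a morphism). It contains $\Gamma$ because $V$ is the closure of $\Gamma$, which is a group: for $\gamma\in\Gamma$ we have $\Gamma\gamma=\Gamma$, hence $V\gamma=\overline{\Gamma}\gamma=\overline{\Gamma\gamma}=V$. Also $\deg(H)\leq\deg(V)$: indeed $H=\bigcap_{v\in V(\overline{K})}v^{-1}V$ — or better, pick one point $v_0\in\Gamma\subseteq V(\overline{K})$ and note $H\subseteq v_0^{-1}V$, since $g\in H$ gives $v_0 g\in V(\overline{K})$; then $\deg(H)\leq\deg(v_0^{-1}V)=\deg(V)$ because left translation by $v_0^{-1}$ is an automorphism of $\mathrm{GL}_n$ (it is an invertible linear, hence degree-$1$, map) and so preserves degree. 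Thus $\Gamma\leq H(\overline{K})$ with $\deg(H)\leq\deg(V)\leq\deg(V)^{\dim(V)+1}$.

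It remains to handle the dichotomy: either $H$ is a proper subgroup of $G$, in which case we are in the second alternative, or $H=G$. If $H=G$, then $Vg=V$ for all $g\in G(\overline{K})$; applying this with $g$ ranging over $G(\overline{K})$ and using that $e\in V(\overline{K})$ (so $g=eg\in V(\overline{K})$ for every $g$), we get $V(\overline{K})=G(\overline{K})$, i.e.\ $V=G$ — but $V$ was assumed proper. Wait: in the reduced setup $V=\overline{\Gamma}$, so $H=G$ forces $\overline{\Gamma}=G$, and then $G$ is finite (being the closure of the finite set $\Gamma$), so $G=\overline{\Gamma}$ has dimension $0$ and $|\Gamma|=|G(\overline{K})|=\deg(G)=\deg(V)\leq\deg(V)^{\dim(V)+1}$, landing in the first alternative. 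In the original (non-reduced) setup this becomes $|\Gamma|\leq\deg(\overline{\Gamma})\leq\deg(V)^{\dim(V)+1}$, as desired.

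The main obstacle I anticipate is the bookkeeping around degrees under the reduction to $V=\overline{\Gamma}$ and the verification that $H$ is genuinely closed and of controlled degree — in particular making sure that the single-translate bound $\deg(H)\leq\deg(v_0^{-1}V)=\deg(V)$ is legitimate, which hinges on left translation on $\mathrm{GL}_n\subseteq\mathrm{Mat}_{n+1}$ being a degree-$1$ morphism of the ambient affine space (true since it is linear in the matrix entries). Everything else is soft: the group-stabilizer argument and the final case split are formal once $e\in V(\overline{K})$ is in hand, which itself is immediate from $\Gamma\subseteq V(\overline{K})$ and $e\in\Gamma$.
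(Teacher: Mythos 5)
Your reduction to $V=\overline{\Gamma}$ is where the argument breaks down, and the gap cannot be patched: the asserted degree bound $\deg(\overline{\Gamma})\leq\deg(V)^{\dim(V)+1}$ is simply false in general. You justify it by claiming that $\overline{\Gamma}$ \emph{equals} the intersection $\bigcap_{g\in\Gamma}Vg^{-1}$, but only the inclusion $\overline{\Gamma}\subseteq\bigcap_{g\in\Gamma}Vg^{-1}$ holds; the reverse can fail badly. More to the point, when $\Gamma$ is finite, $\overline{\Gamma}$ is just the $|\Gamma|$-point variety $\Gamma$ itself, so $\deg(\overline{\Gamma})=|\Gamma|$, and the degree bound you need on $\overline{\Gamma}$ \emph{is} the first alternative of the lemma --- which cannot hold in every case, since the second alternative is sometimes the only one available. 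Concretely, take $G=\mathrm{GL}_{2}$, let $\Gamma$ be the cyclic group of order $N$ generated by $\mathrm{diag}(\omega,\omega^{-1})$ for $\omega$ a primitive $N$-th root of unity, and let $V$ be the diagonal torus $T$: then $\deg(V)$ and $\dim(V)$ are small absolute constants while $|\Gamma|=\deg(\overline{\Gamma})=N$ is arbitrary, so for $N$ large your reduction already fails. The lemma is still true there, but the required $H$ is the bounded-degree torus $T$, not the huge-degree finite variety $\overline{\Gamma}$. A secondary error feeds into this: you bound $\deg(H)\leq\deg(v_{0}^{-1}V)$ from $H\subseteq v_{0}^{-1}V$, but for degree in the sense used here (a generic intersection count), a subvariety can have larger degree than any variety containing it (a plane curve of degree $d$ sits inside $\mathbb{A}^{2}$, which has degree $1$); containment gives no degree bound.

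The paper avoids both pitfalls by never passing to the closure of $\Gamma$. It instead builds a strictly decreasing chain $V=V_{0}\supsetneq V_{1}\supsetneq\cdots$, where each $V_{i}$ is a finite intersection of right translates $V\gamma$ with $\gamma\in\Gamma$, each $V_{i}$ still containing $\Gamma$, and the descent is possible precisely as long as some $\gamma\in\Gamma$ fails to stabilize $V_{i}$. The process stops either at $\dim(V_{i})=0$ (giving $|\Gamma|\leq\deg(V_{i})$) or at $\Gamma\leq\mathrm{Stab}(V_{i})(\overline{K})$; in the latter case $H=\mathrm{Stab}(V_{i})$ can itself be written as an intersection $\bigcap_{(w,\gamma)}w^{-1}V\gamma$ of translates of $V$, so Corollary~\ref{co:inters}\eqref{co:inters-deg} bounds $\deg(H)$ directly, with no appeal to the (uncontrolled) degree of $\overline{\Gamma}$. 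That is the idea you are missing: $H$ must come from a stabilizer of a controlled intersection, not from the Zariski closure of $\Gamma$.
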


\begin{proof}
For every proper subvariety $W\subsetneq G$, the stabilizer $\mathrm{Stab}(W)=\{g\in G:Wg=W\}$ is a proper algebraic subgroup of $G$.

We build a sequence of $V_{i}$ with the following properties: $\Gamma\subseteq V_{i}(\overline{K})$, $V_{i}=\bigcap_{\gamma\in S_{i}}V\gamma$ for some $S_{i}\subseteq\Gamma$, and $V_{i}\subsetneq V_{i-1}$. We stop constructing the sequence when either $\Gamma\leq\mathrm{Stab}(V_{i})(\overline{K})$ or $\dim(V_{i})=0$. The starting point is $S_{0}=\{e\}$ and $V_{0}=V$. To construct $V_{i+1}$, assume that we have $V_{i}$ as above, and suppose that there is some $\gamma\in\Gamma\setminus\mathrm{Stab}(V_{i})(\overline{K})$. Then $V_{i}\cap V_{i}\gamma\subsetneq V_{i}$ and $\Gamma=\Gamma\gamma\subseteq V_{i}(\overline{K})\gamma$, so we can choose $S_{i+1}=S_{i}\cup S_{i}\gamma$ and obtain $V_{i+1}=V_{i}\cap V_{i}\gamma$ accordingly. The $V_{i}$ are never empty because they contain $\Gamma$, thus since the affine space is Noetherian we will eventually reach some zero-dimensional $V_{i}$ (unless we stopped because $\Gamma\subseteq\mathrm{Stab}(V_{i})(\overline{K})$).

By the above, we obtained that either $\Gamma\subseteq V_{i}(\overline{K})$ with $\dim(V_{i})=0$, or $\Gamma\leq H(\overline{K})$ for $H=\mathrm{Stab}(V_{i})$; in either case, $V_{i}=\bigcap_{\gamma\in S_{i}}V\gamma$ for some $S_{i}\subseteq\Gamma$. Each $V\gamma$ has the same dimension and degree as $V$, so $\deg(V_{i})\leq\deg(V)^{\dim(V)+1}$ by Corollary~\ref{co:inters}\eqref{co:inters-deg}; if $\Gamma\subseteq V_{i}(\overline{K})$ and $\dim(V_{i})=0$, then $|\Gamma|\leq\deg(V_{i})$ and we are done. We can rewrite $H$ as
\begin{equation*}
H=\mathrm{Stab}(V_{i})=\bigcap_{w\in V_{i}(\overline{K})}w^{-1}V_{i}=\bigcap_{(w,\gamma)\in V_{i}(\overline{K})\times S_{i}}w^{-1}V\gamma,
\end{equation*}
so again by Corollary~\ref{co:inters}\eqref{co:inters-deg} we get $\deg(H)\leq\deg(V)^{\dim(V)+1}$.
\end{proof}

\subsection{Quotients}\label{se:quot}

Even if a linear algebraic group $G$ and a normal algebraic subgroup $H\unlhd G$ are naturally defined as varieties, one may not always be able to see the quotient $G/H$ as a variety in any obvious way. Below, we explain how to do so, following \cite[\S 6]{Bor91}.

Let $H\unlhd G$ be both defined over $K$. A \textit{(geometric) quotient} of $G$ by $H$ is a pair $(\pi,W)$ made of an affine variety $W$ and a surjective morphism $\pi:G\rightarrow W$, both defined over $K$, satisfying the following universal property: if $\alpha:G\rightarrow Z$ is a morphism constant on $H$-orbits, there is a unique morphism $\beta:W\rightarrow Z$ such that $\alpha=\beta\circ\pi$, and if $\alpha$ is a $K$-morphism of $K$-varieties then so is $\beta$.

By \cite[Thm.~6.8]{Bor91}, under the conditions above on $G$ and $H$, a geometric quotient always exists and is unique, and $W$ is also an algebraic group defined over $K$. We use the notation $G/H$ for $W$, and we say that $G/H$ is the \textit{quotient} of $G$ by $H$ (forgoing $\pi$). In this case, the quotient $G/H$ also coincides with the \textit{categorical quotient} \cite[\S 6.16]{Bor91}, so we ignore the distinction.

By \cite[Prop.~6.4(b)]{Bor91}, $\dim(G/H)=\dim(G)-\dim(H)$. Furthermore, since $G/H$ is an algebraic group, we can represent it as a variety inside $\mathrm{GL}_{m}$ for some possibly large $m$. The result below gives an upper bound for $m$ and $\deg(G/H)$, at the expense of possibly defining $G/H$ over $\overline{K}$ (which simplifies the matter without being a problem for us later).

\begin{proposition}\label{pr:quot}
Let $G\leq\mathrm{GL}_{n}\subseteq\mathrm{Mat}_{n+1}$ be an algebraic group defined over a field $K$, with $d=\dim(G)$ and $D=\deg(G)$. Let $H\unlhd G$ be a normal subgroup, defined over $K$ by polynomials of degree $\leq\Delta$ (excluding the ones defining $G$ itself).

Then $G/H$ is an algebraic group. More precisely, there are algebraic groups $\hat{H}\unlhd\hat{G}\leq\mathrm{GL}_{2n+3}$ and $Q\leq\mathrm{GL}_{m}$ and a morphism $\hat{\beta}:\hat{G}\rightarrow Q$ (possibly over $\overline{K}$) such that
\begin{enumerate}[(a)]
\item\label{pr:quot-iso} there is a morphism of algebraic groups $\hat{G}\rightarrow G$ of maximal degree $1$ with rational inverse (so in particular $\hat{G}(\overline{K})\simeq G(\overline{K})$ as abstract groups), and the same holds for its restriction $\hat{H}\rightarrow H$,
\item\label{pr:quot-q} $Q\simeq\hat{G}/\hat{H}$, meaning that $Q$ satisfies the aforementioned universal property (possibly over $\overline{K}$), and
\item\label{pr:quot-deg} the following quantitative bounds hold:
\begin{align*}
M & =\binom{n^{2}+\Delta}{\Delta}\leq(n^{2}+\Delta)^{\min\{n^{2},\Delta\}}, & m & \leq 2^{2M}, \\
\deg(\hat{G}) & \leq M2^{M+n^{2}+4}\Delta D, & \deg(\hat{H}) & \leq M2^{M+n^{2}+4}\Delta\deg(H), \\
\deg(Q) & \leq M^{d+1}2^{M+n^{2}+d+5}\Delta^{d+1}D, & \mathrm{mdeg}(\hat{\beta}) & \leq 2M\Delta.
\end{align*}
\end{enumerate}
\end{proposition}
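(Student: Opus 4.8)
Our plan is to effectivize the classical construction of $G/H$: over an algebraically closed field one first embeds $G/H$ into projective space via a Chevalley-type representation, and then, when $H$ is normal, corrects this into an affine group by a conjugation trick; see \cite[\S\S 5--6]{Bor91}. We carry this out keeping track of all dimensions and degrees, the one new device being a preliminary replacement of $(G,H)$ by an isomorphic pair on which inversion is linear. Concretely, set $\hat G:=\{(g,g^{-1}):g\in G\}$, embedded as a closed subgroup of $\mathrm{GL}_{2n+3}\subseteq\mathrm{Mat}_{2n+4}$ (each $g\in G\subseteq\mathrm{Mat}_{n+1}$ carried together with its inverse in a second block). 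The first projection $\hat G\to G$ is a homomorphism of algebraic groups of maximal degree $1$ with rational inverse $g\mapsto(g,g^{-1})$, which gives \eqref{pr:quot-iso}; on $\hat G$ the multiplication still has maximal degree $2$, but inversion $(g,g^{-1})\mapsto(g^{-1},g)$ is now the block swap, of maximal degree $1$. Let $\hat H:=\pi^{-1}(H)$ be the corresponding normal subgroup, so that $\pi$ restricts to a bijection $\hat H\to H$ on $\overline{K}$-points; pulling the degree-$\leq\Delta$ equations of $H$ back along the maximal-degree-$1$ projection shows $\hat H$ is cut out inside $\hat G$ by polynomials of degree $\leq\Delta$. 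Since $G$ has degree $D$ it is cut out by polynomials of degree $\leq D$, and the extra equations $g\cdot g^{-1}=e$ have degree $2$; B\'ezout and Corollary~\ref{co:inters}, together with the preimage bound in Lemma~\ref{le:zarimdeg}\eqref{le:zarimdeg-im}, then yield the asserted estimates for $\deg(\hat G)$ and $\deg(\hat H)$. The sole purpose of $\hat G$ is that linearity of inversion will let us bound $\mathrm{mdeg}(\hat\beta)$ below without the parameter $\iota$.

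Next, the effective Chevalley embedding. Let $\mathcal V$ be the space of restrictions to $\hat G$ of polynomials of degree $\leq\Delta$ in the $n^{2}$ matrix entries of $g\in G$ (these are polynomial functions on $\hat G$ via $\pi$). Right translation $R_{g}f=f(\cdot\,g)$ is precomposition with a coordinate-linear map, so it preserves $\mathcal V$ and acts on it by algebra automorphisms; moreover $\dim\mathcal V\leq M=\binom{n^{2}+\Delta}{\Delta}$ and the representation $\rho\colon\hat G\to\mathrm{GL}(\mathcal V)$ has $\mathrm{mdeg}(\rho)\leq\Delta$. Since $\mathcal V$ contains the coordinate functions, $\rho$ is a closed immersion. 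Put $W:=\mathcal V\cap I(\hat H)$, a subspace of dimension $d_{0}\leq M$ stable under right translation by $\hat H$; as $W$ contains polynomials cutting out $\hat H$ inside $\hat G$, and each $R_{g}$ is an algebra automorphism, stabilizing $W$ forces stabilizing $I(\hat H)$, so $\hat H=\mathrm{Stab}_{\hat G}(W)$. Wedging up, let $V:=\bigwedge^{d_{0}}\mathcal V$ (so $\dim V\leq 2^{M}$), $L:=\bigwedge^{d_{0}}W$ a line, and $\rho_{1}:=\bigwedge^{d_{0}}\rho$; then $\hat H=\mathrm{Stab}_{\hat G}(L)$ and $\mathrm{mdeg}(\rho_{1})\leq d_{0}\Delta\leq M\Delta$. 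Now invoke normality. Let $\chi\colon\hat H\to\mathrm{GL}_{1}$ be the character through which $\hat H$ acts on $L$; since $\hat H\unlhd\hat G$ and the character group of $\hat H$ is finitely generated, $\hat G^{\mathrm o}$ fixes $\chi$, and after replacing $V$ by a bounded tensor power indexed by the finite $\hat G/\hat G^{\mathrm o}$-orbit of $\chi$ we may assume $\chi$ is $\hat G$-invariant. Then $V_{L}:=\{v\in V:\rho_{1}(h)v=\chi(h)v\text{ for all }h\in\hat H\}$ is a $\hat G$-submodule of $V$ containing $L$, on which $\hat H$ acts by the scalar $\chi$. Define $\hat\beta\colon\hat G\to\mathrm{GL}(\mathrm{End}(V_{L}))$ by $\hat\beta(g)\colon\phi\mapsto\rho_{1}(g)\phi\rho_{1}(g)^{-1}$ (restricting $\rho_{1}$ to $V_{L}$). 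Its kernel is $\{g:\rho_{1}(g)|_{V_{L}}\text{ scalar}\}$, which contains $\hat H$ (acting by $\chi\cdot\mathrm{id}$) and is contained in $\mathrm{Stab}_{\hat G}(L)=\hat H$ (a scalar fixes $L$); hence $\ker\hat\beta=\hat H$. The target has dimension $(\dim V_{L})^{2}\leq(\dim V)^{2}\leq 2^{2M}$, whence $m\leq 2^{2M}$, and the entries of $g\mapsto\rho_{1}(g)^{-1}|_{V_{L}}=\rho_{1}(g^{-1})|_{V_{L}}$ have degree $\leq M\Delta$ in the coordinates of $\hat G$ precisely because inversion on $\hat G$ is linear, so $\hat\beta$, being bilinear in one entry of $\rho_{1}(g)|_{V_{L}}$ and one of $\rho_{1}(g^{-1})|_{V_{L}}$, has $\mathrm{mdeg}(\hat\beta)\leq 2M\Delta$.

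Finally we assemble the proof. Set $Q:=\hat\beta(\hat G)$, a closed subgroup of $\mathrm{GL}_{m}$ (image of a homomorphism of algebraic groups) of dimension $\dim\hat G-\dim\hat H\leq d$. By Lemma~\ref{le:zarimdeg}\eqref{le:zarimdeg-im}, $\deg(Q)\leq\deg(\hat G)\,\mathrm{mdeg}(\hat\beta)^{\dim Q}\leq\deg(\hat G)(2M\Delta)^{d}$, and inserting the bound on $\deg(\hat G)$ gives the estimate in \eqref{pr:quot-deg}. Since $\hat\beta$ is a surjective homomorphism of algebraic groups with kernel exactly $\hat H$, the existence and uniqueness of the geometric quotient \cite[Thm.~6.8]{Bor91} identify $(\hat\beta,Q)$ with $\hat G/\hat H$ and its quotient map, proving \eqref{pr:quot-q}; collecting all the displayed bounds yields \eqref{pr:quot-deg}. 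The main obstacle is exactly this last identification in the non-obvious cases: making the passage ``normal subgroup $\Rightarrow$ affine quotient group'' quantitative, which is where normality of $\hat H$ is used (the $\hat G$-invariance of $\chi$ and the construction of $V_{L}$ so that $\ker\hat\beta=\hat H$ with $m$ and $\mathrm{mdeg}(\hat\beta)$ still controlled), and which conceals two wrinkles to be handled with care — the disconnected case, where $\chi$ is only $\hat G^{\mathrm o}$-invariant and one must pass to a bounded auxiliary tensor construction without disturbing the kernel, and separability of $\hat\beta$ in positive characteristic, where a priori $\hat\beta$ could be a purely inseparable isogeny onto $Q$ rather than an isomorphism onto the geometric quotient. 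It is this second point that forces the qualifier ``possibly over $\overline{K}$'' in the statement; it is dealt with either by a direct analysis of differentials (using that $\rho$ is a closed immersion and that passing through $\mathrm{PGL}(V_{L})$ introduces no inseparability) or by working over $\overline{K}$ and appealing again to \cite[Thm.~6.8]{Bor91}.
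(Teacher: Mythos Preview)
Your overall route is the same as the paper's --- effectivize Borel's Chevalley-then-conjugate construction --- but the device you use for $\hat G$ diverges from the paper in a way you have not fully accounted for. Your $\hat G=\{(g,g^{-1}):g\in G\}$ sits naturally in $\mathrm{GL}_{2n+1}$, not $\mathrm{GL}_{2n+3}$; the paper's two extra coordinates carry $y=\det(\beta(g))$ and $y^{-1}$, precisely so that the inverse-determinant slot required by the convention $\mathrm{GL}_m\subseteq\mathrm{Mat}_{m+1}$ is literally a coordinate of $\hat G$ rather than a rational function. You never explain how the last entry of $\hat\beta(\hat g)\in\mathrm{GL}(\mathrm{End}(V_L))$ is polynomial. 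In your setup it happens to be: conjugation on a \emph{full} endomorphism algebra has determinant identically $1$, so that slot is the constant $1$ --- but this observation is the hinge that lets your simpler $\hat G$ succeed, and it must be stated. The paper instead follows \cite[Thm.~5.6]{Bor91} into a proper subspace $V\subsetneq\mathfrak{gl}(E)$, where determinant $1$ cannot be assumed; this is why its $\hat G$ carries the extra coordinates and why $\deg(\hat G)$ picks up the factor from $\deg(F)$ (hence the $M2^{M}$ in the bound), which your construction would not incur.

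The place where your argument is genuinely incomplete is the disconnected case. ``Replacing $V$ by a bounded tensor power indexed by the orbit of $\chi$'' is not a standard step and, as stated, need not preserve $\mathrm{Stab}(L)=\hat H$: for $L'=\bigotimes_i g_i L\subseteq V^{\otimes k}$ the stabilizer can strictly contain $\hat H$ (an element permuting the lines $g_iL$, or any element on which the product character $\prod_i\chi_i$ happens to be trivial, may fix $L'$). The paper does not attempt this; it simply invokes \cite[Thms.~5.6, 6.8]{Bor91} to supply the subspace $V\subseteq\mathfrak{gl}(E)$ with $\ker\beta=H$ and the quotient property, and only then does the quantitative bookkeeping. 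If you wish to be more explicit than the paper here you must actually reproduce Borel's argument for non-connected $G$ rather than gesture at a tensor construction; the same applies to the separability wrinkle, which you flag but do not resolve.
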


\begin{proof}
We follow the path laid out in \cite{Bor91}, pasting together parts of the proofs of \cite[\S 1.9 and Thms.~5.1--5.6--6.8]{Bor91}. We work everywhere over $\overline{K}$ for simplicity.

By our definition of $\mathrm{GL}_{n}$ in Section~\ref{se:linalg-basic}, we are already keeping track of $\det(g)^{-1}$ in the definition of $g\in G$. We will now use a new $\hat{G}$ to keep track of one more determinant (to be defined in the future through some function $F$) and of the entire $g^{-1}$. In brief, using $\oplus$ to denote the diagonal join of matrices, if an element of $\mathrm{GL}_{n}\subseteq\mathrm{Mat}_{n+1}$ is of the form $a\oplus\det(a)^{-1}$, we pass to $a\oplus\det(a)^{-1}\oplus a^{-1}\oplus\det(a)\oplus y^{-1}\oplus y$ inside $\mathrm{GL}_{2n+3}\subseteq\mathrm{Mat}_{2n+4}$, with $y$ defined using $F$.

Let $R_{1}:=\overline{K}[x_{11},x_{12},\ldots,x_{n+1,n+1}]$ and $R_{2}:=\overline{K}[x_{11},x_{12},\ldots,x_{2n+2,2n+2}]$, and fix $F\in R_{2}$ to be chosen later. Call $\pi_{ij}$ the restriction map sending $x\in\mathrm{Mat}_{2n+4}$ to the square submatrix whose corners are the $(i,i)$-th, $(i,j)$-th, $(j,i)$-th, and $(j,j)$-th entries (with $i\leq j$). Define
\begin{align*}
\mathcal{O}:=\ & ([1,n+1]\times[n+2,2n+2])\cup([n+2,2n+2]\times[1,n+1]) \\
 & \cup([n+2,2n+1]\times\{2n+2\})\cup(\{2n+2\}\times[n+2,2n+1]) \\
 & \cup([1,2n+2]\times\{2n+3\})\cup(\{2n+3\}\times[1,2n+2]) \\
 & \cup([1,2n+3]\times\{2n+4\})\cup(\{2n+4\}\times[1,2n+3]), \\
X:=\ & \left\{ x\in\mathrm{Mat}_{2n+4}: \begin{array}{l} x_{ij}=0 \ \ ((i,j)\in\mathcal{O}), \\ \pi_{1,n}(x)\cdot\pi_{n+2,2n+1}(x)=\mathrm{Id}_{n}, \\ x_{n+1,n+1}\cdot x_{2n+2,2n+2}=1, \\ F(\pi_{1,2n+2}(x))\cdot x_{2n+3,2n+3}=1, \\ x_{2n+3,2n+3}\cdot x_{2n+4,2n+4}=1 \end{array} \right\}.
\end{align*}
The variety $\hat{G}:=X\cap\pi_{1,n+1}^{-1}(G)$ is an algebraic group $\hat{G}\leq\mathrm{GL}_{2n+3}\subseteq\mathrm{Mat}_{2n+4}$, since by construction $\det(\pi_{1,2n+3}(x))\cdot x_{2n+4,2n+4}=1$. If $F(g\oplus g^{-1})\neq 0$ for all $g\in G$, then there is a rational map $G\rightarrow\hat{G}$ that is the inverse of $\pi_{1,n+1}|_{\hat{G}}$, yielding a group isomorphism $\hat{G}(\overline{K})\simeq G(\overline{K})$ and proving \eqref{pr:quot-iso}. Let $f_{1},\ldots,f_{k}\in R_{1}$ be the polynomials defining $H$ in $G$, i.e.\
\begin{equation*}
H=\{g=(g_{ij})_{i,j\leq n+1}\in G:\forall k'\leq k\left(f_{k'}(g_{11},g_{12},\ldots,g_{n+1,n+1})=0\right)\}.
\end{equation*}
By hypothesis $\deg(f_{i})\leq\Delta$, and by construction the same polynomials define $\hat{H}:=X\cap\pi_{1,n+1}^{-1}(H)$ in $\hat{G}$. The group isomorphism $\hat{H}(\overline{K})\simeq H(\overline{K})$ follows from restricting $\pi_{1,n+1}|_{\hat{G}}$ and its inverse rational map. By B\'ezout and Lemma~\ref{le:zarimdeg}\eqref{le:zarimdeg-pre}, we get
\begin{align*}
\deg(\hat{G}) & \leq\deg(G)\cdot 2^{n^{2}+2}(\deg(F)+1), & \deg(\hat{H}) & \leq\deg(H)\cdot 2^{n^{2}+2}(\deg(F)+1).
\end{align*}
We shall pass to $\hat{G},\hat{H}$ at the end of our procedure, whereas for now we use the original $G,H$ for our constructions.

Let $L$ be the $\overline{K}$-vector space spanned by the set $\mathcal{L}$ of all monomials of degree $\leq\Delta$ in the variables $x_{ij}$ ($i,j\leq n+1$); then $\dim(L)=|\mathcal{L}|=M$ with $M$ as in the statement. Each $f_{k}$ is an element of $L$. For every $g\in G(\overline{K})$, let $\rho_{g}:R_{1}\rightarrow R_{1}$ be the right translation by $g$, namely $\rho_{g}(f)(x)=f(xg)$. Then $\rho_{e}$ is the identity map, and $\rho_{g_{1}g_{2}}=\rho_{g_{2}}\circ\rho_{g_{1}}$. The latter property implies that the elements $\rho_{g}(f_{k'})$ for all $k'\leq k$ and all $g\in G(\overline{K})$ span a $\overline{K}$-vector subspace $L'\leq L$ that is invariant under the transformations $\rho_{g}$. Fix a basis $\{\ell_{i}\}_{i\leq M}$ of $L$ whose first $\dim(L')$ members form a basis of $L'$. Every $\ell_{i}$ is a polynomial in $R_{1}$ of degree $\leq\Delta$, and by construction there are polynomials $\tilde{f}_{i,j}\in R_{1}$ of degree $\leq\Delta$ for which $\rho_{g}(\ell_{i})=\sum_{j\leq M}\tilde{f}_{i,j}(g)\ell_{j}$.

Therefore, putting together the facts above, we obtain the following: $\rho_{g}$ is a linear transformation of $L$, i.e.\ $\rho_{g}\in\mathrm{Mat}(L)$, the resulting morphism $\rho^{(L)}:G\rightarrow\mathrm{Mat}(L)$ given by $\rho^{(L)}(g)=\rho_{g}$ has $\mathrm{mdeg}(\rho^{(L)})\leq\Delta$, and $\rho^{(L)}$ naturally restricts to $\rho:G\rightarrow\mathrm{Mat}(L')$ by taking the upper left $(\dim(L')\times\dim(L'))$-corner of the matrix, thus giving again $\mathrm{mdeg}(\rho)\leq\Delta$.

If $I\subseteq\overline{K}[G]$ is the ideal of functions vanishing on $H$, the set $W=L'\cap I$ generates $I$ since $L'$ contains all the $f_{k'}$. We have $\dim(W)\leq\dim(L')\leq M$. The proof of \cite[Thm.~5.1]{Bor91} shows that $H=\{g\in G:\rho_{g}(W)=W\}$.

Next, let $E=\bigwedge^{\dim(W)}(L')$. Then $\rho$ induces a map $\alpha:G\rightarrow\mathrm{Mat}(E)$, with $\mathrm{mdeg}(\alpha)\leq\dim(W)\mathrm{mdeg}(\rho)\leq M\Delta$. Furthermore, \cite[Thm.~5.1]{Bor91} shows that there is a $1$-dimensional subspace $E'\subseteq E$ such that $H=\{g\in G:\alpha(g)(E')=E'\}$. Finally, \cite[Thm.~5.6]{Bor91} shows that there is some subspace $V\subseteq\mathfrak{gl}(E)$ such that the rational map $\beta:G\rightarrow\mathrm{Mat}(V)$ given by $\beta(g)(v)=\alpha(g)v\alpha(g)^{-1}$ has the property that $H=\mathrm{Ker}(\beta)$; \cite[Thm.~6.8]{Bor91} then shows that $\beta(G)$ is a closed subgroup of $\mathrm{Mat}(V)$ with $\beta(G)\simeq G/H$. We have $\dim(E)=\binom{\dim(L')}{\dim(W)}<2^{\dim(L')}\leq 2^{M}$ and $\dim(V)\leq\dim(E)^{2}<2^{2M}$.

Our only problems with the construction of $\beta$ are that $\beta$ is a rational map and not a morphism, and that it goes to $\mathrm{Mat}_{\dim(V)}$ rather than to $\mathrm{GL}_{\dim(V)}$. To be clear on the second issue, since $(\rho_{g})^{-1}=\rho_{g^{-1}}$, the maps $\rho^{(L)},\rho,\alpha,\beta$ do indeed send elements of $G$ to invertible matrices, but we are missing the inverse of the determinant in the lower right corner (needed in our definition of $\mathrm{GL}_{n}$ from Section~\ref{se:linalg-basic}). These two problems are why $\hat{G}$ comes into play: we build a new $\hat{\beta}$ that has the same behaviour as $\beta$, but is also a morphism to $\mathrm{GL}_{\dim(V)}$ because the inverses of $g$ and of $\det(\beta(g))$ can be taken directly from the extra variables of $\hat{G}$.

By construction, every element $\hat{g}\in\hat{G}$ is of the form $\hat{g}=b\oplus y_{1}\oplus y_{2}$ with $y_{2}=F(b)=y_{1}^{-1}$, $b=g\oplus g^{-1}$, and $g\in G\leq\mathrm{GL}_{n}$ (so that in turn $g=a\oplus\det(a)^{-1}$ for some invertible $a\in\mathrm{Mat}_{n}$). Define the morphism 
\begin{align*}
\gamma & :\pi_{1,2n+2}(\hat{G})\rightarrow\mathrm{Mat}_{\dim(V)}, & \gamma(\pi_{1,2n+2}(\hat{g}))(v) & =\alpha(\pi_{1,n+1}(\hat{g}))\cdot v\cdot\alpha(\pi_{n+2,2n+2}(\hat{g})).
\end{align*}
Since $\rho_{g^{-1}}=(\rho_{g})^{-1}$ we have $\rho(g^{-1})=\rho(g)^{-1}$ and $\alpha(g^{-1})=\alpha(g)^{-1}$. Therefore $\alpha(\pi_{n+2,2n+2}(\hat{g}))=\alpha(\pi_{1,n+1}(\hat{g}))^{-1}$ for all $\hat{g}\in\hat{G}$.

Choose $F\in R_{2}$ to be the polynomial $F(x)=\det(\gamma(x))$. By construction, for all $\hat{g}\in\hat{G}$ we have $F(g\oplus g^{-1})=\det(\gamma(g\oplus g^{-1}))=\det(\beta(g))$. In particular $F(g\oplus g^{-1})\neq 0$ for all $g\in G$ (because $\beta$ sends $g$ to an invertible matrix), so by what we said before we obtain \eqref{pr:quot-iso}. Finally, define the morphism $\hat{\beta}:\hat{G}\rightarrow\mathrm{GL}_{\dim(V)}\subseteq\mathrm{Mat}_{\dim(V)+1}$ by $\hat{\beta}(\hat{g})=\gamma(\pi_{1,2n+2}(\hat{g}))\oplus y_{1}$. As $\gamma(\pi_{1,2n+2}(\hat{g}))=\beta(g)$ and $y_{1}=\det(\beta(g))^{-1}$, the construction of $\hat{\beta}$ coincides with that of $\beta$ with the addition of the extra entry for the inverse of the determinant. Hence we have again a closed subgroup $Q:=\hat{\beta}(\hat{G})\simeq\hat{G}/\hat{H}$ of $\mathrm{GL}_{\dim(V)}$, but by passing through $\gamma$ we are now only working with polynomials, and we have \eqref{pr:quot-q}.

To obtain \eqref{pr:quot-deg}, it remains to compute degrees. We have
\begin{align*}
\deg(F) & \leq 2\mathrm{mdeg}(\alpha)\dim(E)\leq M2^{M+1}\Delta, \\
\deg(\hat{G}) & \leq\deg(G)\cdot 2^{n^{2}+2}(\deg(F)+1)\leq M2^{M+n^{2}+4}\Delta D, \\
\deg(\hat{H}) & \leq\deg(H)\cdot 2^{n^{2}+2}(\deg(F)+1)\leq M2^{M+n^{2}+4}\Delta\deg(H), \\
\mathrm{mdeg}(\hat{\beta}) & \leq\mathrm{mdeg}(\gamma)\leq 2\mathrm{mdeg}(\alpha)\leq 2M\Delta, \\
\deg(Q) & \leq\deg(\hat{G})\mathrm{mdeg}(\hat{\beta})^{\dim(G)}\leq M^{d+1}2^{M+n^{2}+d+5}\Delta^{d+1}D,
\end{align*}
where the last line used Lemma~\ref{le:zarimdeg}\eqref{le:zarimdeg-im}.
\end{proof}

\section{Almost simple groups}\label{se:almsim}

In this section we restrict our focus to a special class of algebraic groups, the almost simple groups, with particular emphasis on the adjoint ones. We recall their classification and, for any such $G$, describe several related objects that will be fundamental in our main proof: the Weyl group $\mathcal{W}$ of $G$, the Steinberg endomorphisms $F:G(\overline{\mathbb{F}_{p}})\rightarrow G(\overline{\mathbb{F}_{p}})$ in positive characteristic, and a certain representation $\rho$ of $G$ taken from \cite{Pin98}.

\subsection{Almost simple and adjoint groups}\label{se:adjoint}

A connected linear algebraic group $G$ is {\em almost simple}\footnote{There are many different names for these objects. Some authors call these groups {\em simple}, as in \cite[\S 1.11]{Car93}, \cite[p.~168]{Hum95b}, and \cite[p.~xiv]{MT11}, but they are not necessarily simple as abstract groups. Some call them {\em quasi-simple}, as in \cite[Ex.~8.1.12(4a)]{Spr08}, keeping closer to the convention of abstract groups. Authors that put emphasis on the field of definition use {\em geometrically almost simple}, as in \cite[Def.~19.7]{Mil17}, or {\em absolutely almost simple}, as in \cite[\S 5]{BGT11}, and they may drop the ``almost'', as in \cite{Gar10}. Our work is mostly on the algebraic closure $\overline{K}$, and we deal with finite simple groups as well, so we choose to adopt the term {\em almost simple} as in \cite[Prop.~14.10(3)]{Bor91}.} if it is non-abelian and has no connected normal linear algebraic subgroups except $\{e\}$ and $G$. If $G$ is almost simple then it is also semisimple. If not, we would have $R(G)=G$, and in particular $G$ would be connected solvable, so that by \cite[Thm.~10.6(1)]{Bor91} either $G^{\mathrm{un}}=\{e\}$ or $G^{\mathrm{un}}=G$. Then, \cite[Thm.~10.6(2)]{Bor91} implies in the first case that $G$ is a torus, and in the second case that $\dim(G)\leq 1$, contradicting in both cases the fact that $G$ is not abelian (in the latter case by \cite[\S 20]{Hum95b}).

To every connected semisimple algebraic group $G$ one can associate a {\em Dynkin diagram}, a finite multigraph that is connected if and only if $G$ is almost simple. The connected Dynkin diagrams are completely classified, which allows us to characterize the corresponding groups by types: the possibilities are the {\em classical types} $A_{r\geq 1},B_{r\geq 2},C_{r\geq 3},D_{r\geq 4}$ (where $r$ is the same as the rank of the corresponding group), and the {\em exceptional types} $E_{6},E_{7},E_{8},F_{4},G_{2}$. For $G$ connected, an {\em isogeny} is a surjective morphism $\varphi:G\rightarrow H$ with finite kernel. Every connected almost simple algebraic group is uniquely determined by its Dynkin diagram and isogeny type.

See \cite[\S 1.11]{Car93}, \cite[\S 32 and App.]{Hum95b}, \cite[\S 9]{MT11}, or \cite[\S 24]{Mil17} for more details about Dynkin diagrams, isogenies, almost simple groups, and their classification. Here we only spend a few words on roots and weights. The identification between groups and diagrams passes through a {\em root system} $\Phi$, defined for groups in \cite[\S 16.4]{Hum95b} and for diagrams as in \cite[App.]{Hum95b} and put in relation to each other in \cite[\S 27]{Hum95b}. The {\em roots} are (finitely many) elements spanning a vector space that can be identified with $\mathbb{R}\otimes_{\mathbb{Z}}X(T)$, with $X(T)$ the character group of a maximal torus $T$, and the {\em weights} are the elements of the vector space whose inner products with the roots are all integers. Roots and weights span two lattices, $\Lambda_{r}$ and $\Lambda_{w}$, with $\Lambda_{r}\leq X(T)\leq\Lambda_{w}$ and with $[\Lambda_{w}:\Lambda_{r}]$ finite. The groups $\Lambda_{w}/\Lambda_{r}$ and $\Lambda_{w}/X(T)$ are called the {\em fundamental groups} of $\Phi$ and $G$, respectively (see \cite[\S 31.1 and App.]{Hum95b}). For a fixed Dynkin diagram, isogeny types and fundamental groups for $G$ are in 1-to-1 correspondence (here we really need equality of fundamental groups, not just isomorphism: see \cite[\S 32.1, Thm.]{Hum95b}). The group $\Lambda_{w}/\Lambda_{r}$ is given in \cite[App.~(A.9)]{Hum95b} or \cite[Table~9.2]{MT11}; we shall only need the uniform cruder bound
\begin{equation}\label{eq:indexcartan}
|\Lambda_{w}/\Lambda_{r}|\leq r+1,
\end{equation}
where $r$ is the rank of $G$.

Among the (finitely many) connected almost simple groups $G$ with the same Dynkin diagram, i.e.\ isogenous to each other, there are two extremes: the {\em simply connected} group with $X(T)=\Lambda_{w}$ and the {\em adjoint} group with $X(T)=\Lambda_{r}$ \cite[Def.~9.14]{MT11}. For every $G$ of a given type, there are isogenies from the corresponding simply connected group to $G$ and from $G$ to the adjoint group \cite[Prop.~9.15]{MT11}. As in \cite{LP11}, we work almost exclusively with the adjoint groups.

Another definition independent of the concepts above is the following: a semisimple group $G$ is \textit{adjoint} if and only if its centre $Z(G)$ is trivial \cite[\S 17.g]{Mil17}. For $G$ reductive the {\em adjoint representation} of $G$ is the morphism $\mathrm{Ad}_{G}$ defined in \cite[\S 10.d]{Mil17} as follows: 
\begin{align*}
\mathrm{Ad}_{G} & :G\rightarrow\mathrm{GL}(\mathfrak{g}), & g & \mapsto\mathrm{Ad}_{G,g}, & \text{ with \ } \mathrm{Ad}_{G,g} & :\mathfrak{g}\rightarrow\mathfrak{g}, & x & \mapsto gxg^{-1}.
\end{align*}
We abbreviate $\mathrm{Ad}_{G}(G)$ as $G^{\mathrm{ad}}$; it is isomorphic to $G/Z(G)$, which is an adjoint group by \cite[Cor.~17.62(e)]{Mil17}. If $G\leq\mathrm{GL}_{n}$ has $\mathrm{mdeg}(^{-1})=\iota$, it is clear that
\begin{equation}\label{eq:mdegad}
\mathrm{mdeg}(\mathrm{Ad}_{G})\leq\iota+1\leq n+1.
\end{equation}
In fact, in concrete terms, the action of $G$ on $\mathfrak{g}$ is given by conjugation in $\mathrm{Mat}_{n+1}$, so it has maximal degree $\leq\iota+1$; up to a change of basis (not affecting $\mathrm{mdeg}$), $\mathrm{Mat}_{n+1}$ is the sum of the subspace $\mathfrak{g}$ and a complement of it, and the restriction of the conjugation map to $\mathfrak{g}$ has degree $\leq\iota+1$ too. The bound $\iota\leq n$ comes from using the adjugate map to define inverses.

From any $G$ connected we can naturally descend to an adjoint quotient group. Taking the quotient here is essentially what gives \eqref{th:main-ab} and \eqref{th:main-p} in the main theorem.

\begin{lemma}\label{le:rzdeg}
Let $G\leq\mathrm{GL}_{n}$ be a connected algebraic group with $d=\dim(G)$, $D=\deg(G)$, and $\iota=\mathrm{mdeg}(^{-1})$, defined over a field $K$.

There is some characteristic subgroup $Y\csgp G$, defined by the polynomials defining $G$ and by some additional polynomials over $\overline{K}$ of degree $\leq(\iota+1)d^{d}$, such that
\begin{equation*}
G/Y\simeq(G/R_{u}(G))^{\mathrm{ad}}\simeq(G/R_{u}(G))/Z(G/R_{u}(G)).
\end{equation*}
Moreover there are algebraic groups $\hat{G},\hat{Y}$, there is a morphism
\begin{align*}
\lambda & :\hat{G}\rightarrow G, & \lambda|_{\hat{Y}} & :\hat{Y}\rightarrow Y, & \mathrm{mdeg}(\lambda) & =\mathrm{mdeg}(\lambda|_{\hat{Y}})=1,
\end{align*}
having rational inverse, so that $G(\overline{K})\simeq\hat{G}(\overline{K})$ and $Y(\overline{K})\simeq\hat{Y}(\overline{K})$, and there is a morphism $\hat{\beta}:\hat{G}\rightarrow\hat{G}/\hat{Y}\leq\mathrm{GL}_{m}$ satisfying
\begin{align*}
\mathrm{mdeg}(\hat{\beta}) & \leq 2(n^{2}+(\iota+1)d^{d})^{n^{2}}(\iota+1)d^{d}, & m & \leq 2^{2(n^{2}+(\iota+1)d^{d})^{n^{2}}}.
\end{align*}
\end{lemma}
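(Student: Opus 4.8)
The plan is to realize $G/Y$ as a composition of two quotient constructions — first by the unipotent radical $R_u(G)$, then by the centre of the resulting reductive group — and to control all the degrees along the way using Lemma \ref{le:tudeg} and Proposition \ref{pr:quot}. First I would set $G' := G/R_u(G)$, which is connected reductive, and then set $Y$ to be the preimage in $G$ of $Z(G')$ under the quotient map $G \to G'$. Since $R_u(G) \csgp G$ and $Z(G') \csgp G'$, the composition gives $Y \csgp G$, and $G/Y \simeq G'/Z(G') \simeq (G')^{\mathrm{ad}}$, which is the claimed isomorphism (the second isomorphism in the statement being the definition of the adjoint quotient recalled in Section \ref{se:adjoint}, via $\mathrm{Ad}_{G'}$). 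The key point for the degree bound on the defining polynomials of $Y$ is that $R_u(G)$ is cut out inside $G$ by polynomials of degree $\leq d^d$ — this follows from Lemma \ref{le:tudeg}\eqref{le:tudeg-ru}, since a variety of degree $\leq d^d$ can be defined by polynomials of degree $\leq d^d$ — and that $Z(G')$ is cut out inside $G'$ by the linear equations $gx = xg$ pulled back, which after composing with the adjoint representation $\mathrm{Ad}_{G'}$ (of maximal degree $\leq \iota + 1$ by \eqref{eq:mdegad}) and the quotient morphism contribute the extra factor. Combining, $Y$ is defined by the $G$-equations together with polynomials of degree $\leq (\iota+1) d^d$ over $\overline{K}$.

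Next I would invoke Proposition \ref{pr:quot} with $H = Y$ and $\Delta = (\iota+1)d^d$ to produce the groups $\hat{G}, \hat{Y}$, the isogeny-like morphism $\lambda := \pi_{1,n+1}|_{\hat{G}} : \hat{G} \to G$ of maximal degree $1$ with rational inverse (and its restriction $\lambda|_{\hat{Y}} : \hat{Y} \to \hat{Y}$), and the morphism $\hat{\beta} : \hat{G} \to \hat{G}/\hat{Y} \leq \mathrm{GL}_m$. The bounds $\mathrm{mdeg}(\hat{\beta}) \leq 2M\Delta$ and $m \leq 2^{2M}$ from Proposition \ref{pr:quot}\eqref{pr:quot-deg}, with $M = \binom{n^2 + \Delta}{\Delta} \leq (n^2 + \Delta)^{\min\{n^2,\Delta\}} \leq (n^2 + (\iota+1)d^d)^{n^2}$, then specialize exactly to the stated inequalities
\[
\mathrm{mdeg}(\hat{\beta}) \leq 2(n^2 + (\iota+1)d^d)^{n^2}(\iota+1)d^d, \qquad m \leq 2^{2(n^2 + (\iota+1)d^d)^{n^2}}.
\]
The group isomorphisms $G(\overline{K}) \simeq \hat{G}(\overline{K})$ and $Y(\overline{K}) \simeq \hat{Y}(\overline{K})$ are part of the output of Proposition \ref{pr:quot}\eqref{pr:quot-iso}, and $\hat{G}/\hat{Y} \simeq G/Y$ follows from \eqref{pr:quot-q} together with the isomorphism $\hat{G}(\overline{K}) \simeq G(\overline{K})$ carrying $\hat{Y}$ to $Y$.

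The main obstacle I anticipate is not the bookkeeping of degrees — that is mechanical once the right $\Delta$ is identified — but rather pinning down cleanly that $Z(G')$ for $G' = G/R_u(G)$ really is defined by polynomials of degree only $O((\iota+1)d^d)$ when pulled all the way back to $G$, rather than something worse coming from composing with the quotient map $G \to G'$ (which a priori has large maximal degree by Proposition \ref{pr:quot}). The resolution is that one should \emph{not} first pass to an explicit embedding of $G' = G/R_u(G)$ and then compute $Z(G')$ there; instead one observes that $Z(G/R_u(G))$ corresponds, under the quotient map, to the subgroup $\{g \in G : [g, G] \subseteq R_u(G)\}$ of $G$, equivalently the set of $g$ for which $\mathrm{Ad}_{G,g}$ acts trivially on $\mathfrak{g}/\mathfrak{r}_u$, where $\mathfrak{r}_u$ is the Lie algebra of $R_u(G)$; this is a linear condition on $\mathrm{Ad}_{G,g}$ (degree $\leq \iota+1$ in $g$) intersected with the conditions defining $R_u(G)$ itself (degree $\leq d^d$), so the defining degree is $\leq (\iota+1)d^d$ as required. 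A secondary check is that $Y$ so defined is genuinely characteristic in $G$: both $R_u(G)$ and the centre-preimage are preserved by every automorphism of $G$ (the former by \cite[\S 11.21]{Bor91}, the latter because automorphisms of $G$ descend to automorphisms of $G/R_u(G)$ and preserve its centre), so $Y \csgp G$.
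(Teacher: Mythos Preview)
Your overall architecture matches the paper's: define $Y$ as the preimage in $G$ of $Z(G/R_u(G))$, i.e.\ $Y=\{y\in G:[x,y]\in R_u(G)\text{ for all }x\in G\}$, bound the degree of its defining polynomials, and then feed $\Delta=(\iota+1)d^d$ into Proposition~\ref{pr:quot}. The second half of your proposal (reading off $\hat G,\hat Y,\lambda,\hat\beta,m$ and the numerical bounds) is correct and is exactly what the paper does.

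The gap is in your degree argument for $Y$. You correctly write $Y=\{g:[g,G]\subseteq R_u(G)\}$, but then claim this is ``equivalently the set of $g$ for which $\mathrm{Ad}_{G,g}$ acts trivially on $\mathfrak g/\mathfrak r_u$'' and base the bound on that. Two problems. First, the equivalence between the group-theoretic condition $[g,G]\subseteq R_u(G)$ and the Lie-algebra condition $(\mathrm{Ad}_{G,g}-\mathrm{Id})(\mathfrak g)\subseteq\mathfrak r_u$ is not automatic in positive characteristic; you would need to justify that $\mathrm{Ker}(\mathrm{Ad}_{G'})=Z(G')$ for $G'=G/R_u(G)$ here, and you do not. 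Second, even granting the equivalence, your sentence ``a linear condition on $\mathrm{Ad}_{G,g}$ \ldots\ intersected with the conditions defining $R_u(G)$'' does not parse into the claimed bound: $\mathfrak r_u$ is a \emph{linear} subspace of $\mathfrak g$, so the Lie-algebra condition is already of degree $\leq\iota+1$ in $g$, and there is nothing to ``intersect'' with the degree-$d^d$ equations of $R_u(G)$ (those cut out a subvariety of $G$, not a subspace of $\mathfrak g$). Your argument, as written, does not produce $(\iota+1)d^d$.

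The paper avoids the Lie algebra entirely. For each fixed $x\in G(\overline K)$, the map $f_x:G\to G$, $f_x(y)=x^{-1}y^{-1}xy$, has $\mathrm{mdeg}(f_x)\leq\iota+1$ (only $y$ and $y^{-1}$ vary). Since $\deg(R_u(G))\leq d^d$ by Lemma~\ref{le:tudeg}\eqref{le:tudeg-ru}, Heintz's result \cite[Prop.~3]{Hei83} gives defining polynomials for $R_u(G)$ of degree $\leq d^d$; composing with $f_x$ yields polynomials in $y$ of degree $\leq(\iota+1)d^d$ cutting out $f_x^{-1}(R_u(G))$, and $Y=\bigcap_x f_x^{-1}(R_u(G))$. (Your aside that ``a variety of degree $\leq d^d$ can be defined by polynomials of degree $\leq d^d$'' is exactly this Heintz step, and deserves a citation rather than being asserted.) This gives the bound directly and works uniformly in all characteristics.
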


\begin{proof}
The group $Y$ can be defined as the set of $y\in G$ such that $x^{-1}y^{-1}xy\in R_{u}(G)$ for all $x\in G$. Unipotent radicals and centres are preserved by automorphisms, so $Y$ is characteristic. For any fixed $x\in G$, let $f_{x}:G\rightarrow G$ be defined by $f_{x}(y)=x^{-1}y^{-1}xy$, so that $\mathrm{mdeg}(f_{x})\leq\iota+1$. By Lemma~\ref{le:tudeg}\eqref{le:tudeg-ru} and \cite[Prop.~3]{Hei83}, $R_{u}(G)$ is defined by polynomials (over the algebraic closure $\overline{K}$) of degree $\leq d^{d}$. Thus, $Y$ is defined by the polynomials of $G$ and by polynomials of degree $\leq(\iota+1)d^{d}$.

Applying Proposition~\ref{pr:quot}, we obtain the various assertions on $\hat{G},\hat{Y},\lambda,\hat{\beta},m$.
\end{proof}

\subsection{Classification of almost simple adjoint groups}

The almost simple adjoint groups can all be defined very explicitly by taking quotients, identity components, and preimages of subgroups in appropriate matrix spaces. See \cite[Table~9.2]{MT11} for a list of groups for each type, where the simply connected and adjoint extremes are highlighted. The adjoint groups for each classical type are: $\mathrm{PGL}_{r+1}$ for type $A_{r}$, $\mathrm{SO}_{2r+1}$ for type $B_{r}$, $\mathrm{PCSp}_{2r}$ for type $C_{r}$, and $\mathrm{P}((\mathrm{CO}_{2r}^{+})^{\mathrm{o}})$ for type $D_{r}$. As for the exceptional types, the groups $E_{6}^{\mathrm{ad}},E_{7}^{\mathrm{ad}},E_{8}^{\mathrm{ad}},F_{4}^{\mathrm{ad}},G_{2}^{\mathrm{ad}}$ can be obtained via the adjoint representation on their own Lie algebras; note that for types $E_{8},F_{4},G_{2}$ there is a unique almost simple group of that type.

One can use the definition itself of each adjoint group $G$ above and combine it with Lemmas~\ref{le:zarimdeg}\eqref{le:zarimdeg-im}--\ref{le:rzdeg} to find a concrete way to write $G\leq\mathrm{GL}_{m}$ with bounds on $m$ and $\deg(G)$. However, every adjoint group $G$ is also the image of the adjoint representation $\mathrm{Ad}_{\tilde{G}}$ for any $\tilde{G}$ almost simple with the same Dynkin diagram as $G$. Thus, we can choose some suitable $\tilde{G}$ and define the adjoint groups as subgroups of matrices with more convenient bounds.

\begin{proposition}\label{pr:adjbounds}
Every connected almost simple adjoint algebraic group $G$ of dimension $d$ and rank $r$ can be written as a linear algebraic group $G\leq\mathrm{GL}_{d}$ with $\deg(G)\leq(2r)^{2^{16}r^{2}}$.
\end{proposition}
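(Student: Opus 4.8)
The plan is to realize every adjoint almost simple group $G$ inside $\mathrm{GL}_d$ (where $d=\dim G$) by using the adjoint representation, and to bound the degree by running the definition of $G$ through a bounded number of the operations controlled in Sections \ref{se:var}--\ref{se:linalg}. First, I would split into the exceptional and classical cases. For the exceptional types $E_6,E_7,E_8,F_4,G_2$ there are only finitely many groups (in fact a bounded list once we fix the type), each of dimension at most $248$; each is already presented as $\mathrm{Ad}_{\tilde G}(\tilde G)\leq\mathrm{GL}(\mathfrak g)=\mathrm{GL}_d$ for an explicit $\tilde G$ sitting in a matrix space of bounded size, so $\deg(G)$ is an absolute constant, and since $r$ is bounded below by $2$ the claimed bound $(2r)^{2^{16}r^2}$ is vacuously satisfied with enormous room to spare. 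So the real content is the classical types $A_r,B_r,C_r,D_r$.

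For the classical types, the strategy is: (1) write down a concrete $\tilde G$ almost simple of the right Dynkin type as a subgroup of $\mathrm{GL}_N$ with $N=O(r)$ and $\deg(\tilde G)$ controlled --- e.g.\ $\mathrm{SL}_{r+1}$, $\mathrm{Sp}_{2r}$, $\mathrm{SO}_{2r+1}$, $\mathrm{SO}_{2r}$, each cut out inside $\mathrm{GL}_N\subseteq\mathrm{Mat}_{N+1}$ by a bounded number of equations of degree $\leq N$, so that B\'ezout gives $\deg(\tilde G)\leq N^{O(N)}$ and $\iota=\mathrm{mdeg}({}^{-1})\leq N$; (2) form the adjoint representation $\mathrm{Ad}_{\tilde G}:\tilde G\to\mathrm{GL}(\tilde{\mathfrak g})$, which by \eqref{eq:mdegad} has $\mathrm{mdeg}(\mathrm{Ad}_{\tilde G})\leq\iota+1\leq N+1$; (3) apply Lemma~\ref{le:zarimdeg}\eqref{le:zarimdeg-im} to the morphism $\mathrm{Ad}_{\tilde G}$ on $\tilde G$ to get $\deg(\overline{\mathrm{Ad}_{\tilde G}(\tilde G)})\leq\deg(\tilde G)\,(N+1)^{\dim G}$; (4) note $\dim\tilde{\mathfrak g}=\dim\tilde G=\dim G=d$, so the image lives in $\mathrm{GL}_d$, as wanted. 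The one subtlety is that $\mathrm{Ad}_{\tilde G}(\tilde G)$ is the adjoint group only when $\tilde G$ is semisimple of that type (its image is $\tilde G/Z(\tilde G)$, which is adjoint by \cite[Cor.~17.62(e)]{Mil17}); the chosen $\mathrm{SL}_{r+1},\mathrm{Sp}_{2r},\mathrm{SO}_m$ are all semisimple of the correct type, so this is fine. Alternatively one can run Lemma~\ref{le:rzdeg} on $\tilde G$ to land on $(\tilde G)^{\mathrm{ad}}$ and then cite Proposition~\ref{pr:quot} for the degree bound; both routes work, and I would pick whichever gives the cleaner exponent.

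The main obstacle is purely bookkeeping of the exponent: one has to choose $\tilde G$, count its defining equations and their degrees carefully, track $\iota$, feed everything through Lemma~\ref{le:zarimdeg}\eqref{le:zarimdeg-im} (or Proposition~\ref{pr:quot}), and then verify that the resulting tower of exponentials collapses into $(2r)^{2^{16}r^2}$. Since $d\leq\dim(\mathrm{GL}_N)=O(r^2)$ and every intermediate bound is of the form (something polynomial in $r$) raised to (something $O(r^2)$), the final bound is of the shape $r^{O(r^2)}$, and the explicit constant $2^{16}$ in the exponent is chosen generously so that it dominates all the $O$-constants simultaneously across the four classical families. The only place where care is genuinely needed is type $D_r$: the naive adjoint group there is $\mathrm{P}((\mathrm{CO}_{2r}^+)^{\mathrm{o}})$, which is more awkward to present directly, so it is cleaner to take $\tilde G=\mathrm{SO}_{2r}$ (or $\mathrm{Spin}_{2r}$) and pass to its adjoint quotient via $\mathrm{Ad}$, sidestepping the conformal-orthogonal description entirely. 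I expect no conceptual difficulty --- every tool needed is already in hand --- but the write-up must be explicit enough that the reader can see the exponent $2^{16}r^2$ is not tight and absorbs all cases, including the exceptional ones where the bound is trivially true.
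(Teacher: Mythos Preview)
Your proposal is correct and follows essentially the same route as the paper: split into classical and exceptional types, for classical types realize the adjoint group as $\mathrm{Ad}_{\tilde G}(\tilde G)$ for the standard $\tilde G\in\{\mathrm{SL}_{r+1},\mathrm{SO}_{2r+1},\mathrm{Sp}_{2r},\mathrm{SO}_{2r}\}$ and bound the degree via Lemma~\ref{le:zarimdeg}\eqref{le:zarimdeg-im} and \eqref{eq:mdegad}, and for exceptional types absorb everything into an absolute constant dominated by $(2r)^{2^{16}r^2}$. One small correction: the number of defining equations for $\tilde G$ is not bounded but $O(N^2)$ (e.g.\ $x^\top\Omega x=\Omega$ gives $N^2$ quadratic equations), so $\deg(\tilde G)$ is of order $2^{O(r^2)}$ rather than $N^{O(N)}$; this is harmless for the final bound, and indeed you later write the correct shape $r^{O(r^2)}$.
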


\begin{proof}
Every connected almost simple adjoint $G$ is the image of $\mathrm{Ad}_{\tilde{G}}:\tilde{G}\rightarrow\mathrm{GL}(\mathfrak{\tilde{g}})$ for any $\tilde{G}$ with the same Dynkin diagram as $G$. We only need to choose a suitable $\tilde{G}$.

We start with the classical types $A_{r},B_{r},C_{r},D_{r}$. Going through the possibilities in \cite[\S 1.11]{Car93} or \cite[Table~9.2]{MT11} for each type, we may choose the following ($\Omega_{C},\Omega_{D}$ below are fixed constant matrices).
\begin{align*}
& \text{Type $A_{r}$:} & \tilde{G} & =\mathrm{SL}_{r+1}=\{x\in\mathrm{Mat}_{r+1}:\det(x)=1\}. \\
& \text{Type $B_{r}$:} & \tilde{G} & =G=\mathrm{SO}_{2r+1}. \\
& \text{Type $C_{r}$:} & \tilde{G} & =\mathrm{Sp}_{2r}=\{x\in\mathrm{Mat}_{2r}:x^{\top}\Omega_{C}x=\Omega_{C}\}. \\
& \text{Type $D_{r}$:} & \tilde{G} & =\mathrm{SO}_{2r}^{+}=\{x\in\mathrm{Mat}_{2r}:\det(x)=1,x^{\top}\Omega_{D}x=\Omega_{D}\}.
\end{align*}
A uniform degree bound for every $\tilde{G}$ as above is $\deg(\tilde{G})\leq(2r+1)2^{(2r+1)^{2}}$ (slightly better bounds appear in \cite[Table~1]{BDH21}). By Lemma~\ref{le:zarimdeg}\eqref{le:zarimdeg-im}, \eqref{eq:mdegad}, and the bound $\dim(G)\leq 2r^{2}+r$, we conclude that
\begin{equation*}
\deg(G)\leq\deg(\tilde{G})\mathrm{mdeg}(\mathrm{Ad}_{\tilde{G}})^{\dim(G)}\leq(2r+1)2^{(2r+1)^{2}}(2r+2)^{2r^{2}+r}<2^{17r^{2}}r^{2r^{2}+r+1}
\end{equation*}
for $G$ of classical type.

We pass now to the exceptional types $E_{6},E_{7},E_{8},F_{4},G_{2}$. We may act as above and use any $\tilde{G}$, for which $\deg(\tilde{G})$ is bounded by some absolute constant, but it is possible to describe $G$ very explicitly. Every $G$ adjoint is the automorphism group of its own finite-dimensional Lie algebra, and in some cases of other algebras too: as already observed in \cite[\S 6.1]{BDH24}, this means that we can define $G$ via quadratic equations
\begin{equation*}
\sum_{i,j}\lambda_{ija}g_{bi}g_{cj}-\sum_{k}\lambda_{bck}g_{ka}=0
\end{equation*}
for all triples $(a,b,c)$, where we have specified a basis $\{e_{i}\}_{i}$ for the non-associative algebra with $g(e_{i})=\sum_{j}g_{ij}e_{j}$ and $e_{i}\circ e_{j}=\sum_{k}\lambda_{ijk}e_{k}$. Hence, we conclude that $\deg(G)\leq 2^{\dim(G)^{3}}$, which for the exceptional types may be bounded uniformly by $(2r)^{2^{16}r^{2}}$, say. As this bound works for the classical types too, we obtain the result.
\end{proof}

\begin{remark}\label{re:a1b2g2}
A few observations about the result above.
\begin{enumerate}[(a)]
\item To bound $\deg(G)$, one could write $G$ directly as a quotient and use Proposition~\ref{pr:quot}. However, this route would give $G\leq\mathrm{Mat}_{m}$ with $m\leq C^{r^{2}}$ and $\deg(G)\leq C^{r^{4}}$ for some absolute constant $C$. Passing afterwards through the adjoint representation would give $G\leq\mathrm{GL}_{d}$, but $\deg(G)$ would grow even more.
\item\label{re:a1b2g2-yes} In order to deal with a specific computation later, i.e.\ \eqref{eq:ha1b2g2}, we now bound $\deg(G)$ more tightly for $G$ connected almost simple adjoint of type $A_{1},B_{2},G_{2}$. For $G=\mathrm{PGL}_{2}$ and $\tilde{G}=\mathrm{SL}_{2}$, we use $\deg(G)\leq\deg(\tilde{G})\mathrm{mdeg}(\mathrm{Ad}_{\tilde{G}})^{\dim(G)}$ and obtain $\deg(G)\leq 16$. For $G=\tilde{G}=\mathrm{SO}_{5}$, \cite[Thm.~1.1]{BBBKR17} yields $\deg(G)\leq 384$. For $G$ of type $G_{2}$, the proof of Proposition~\ref{pr:adjbounds} already shows the inequality $\deg(G)\leq 2^{7^{3}}$.
\item We give references for several algebras that can be used for the groups $G$ of exceptional type. The Lie algebras can be constructed rather explicitly from \cite[\S 3]{HRT01}; even more explicit resources are the multiplication table of $\mathfrak{g}_{2}$ from \cite[\S 22]{FH04}, and the complete bases of all five types from \cite[App.~A.1]{Don23} (based on the aforementioned references). Moreover, as pointed out before, for types $G_{2},F_{4},E_{8}$ there is only one connected almost simple $G$ per type; thus, other constructions not using Lie algebras will also give the same algebraic group. For $G_{2}$ we can use the octonion algebra, given in \cite[\S\S 4.3.2--4.3.4--4.4.3]{Wil09} in three bases with different conditions on the characteristic. For $F_{4}$ we can use the Albert algebra, or a quadratic and a cubic form based on its construction, as in \cite[\S 4.8]{Wil09}. For $E_{8}$ there are no smaller representations than the Lie algebra $\mathfrak{e}_{8}$.
\end{enumerate}
\end{remark}

\subsection{Weyl groups}

For $G$ connected and $T$ a maximal torus, the \textit{Weyl group} $\mathcal{W}$ is the quotient of the normalizer of $T$ by the centralizer of $T$ inside $G$ \cite[\S 1.9]{Car93}. The construction is independent from the choice of $T$, so we can refer to $\mathcal{W}$ as the Weyl group of $G$. For $G$ almost simple, $\mathcal{W}$ is a finite group determined by the Dynkin diagram: in fact, it can be equivalently defined using reflections  of the root system \cite[\S 14.7]{Bor91}. Below we present a list of $\mathcal{W}$ and their sizes, taken from \cite[\S 2.8.4, (3.22), (3.31), (3.39)]{Wil09}; the notation on group extensions $A\times B$, $A^{\displaystyle{\cdot}}B$, $A:B$ and on abelian groups $p^{m+n}$ is as in \cite[\S 1.6]{Wil09}, which follows \cite[p.~xx]{CCNPW85}.
\begin{align*}
& \text{Type $A_{r}$:} & \mathcal{W} & \simeq\mathrm{Sym}(r+1), & |\mathcal{W}| & =(r+1)!, \\
& \text{Type $B_{r}$:} & \mathcal{W} & \simeq\mathrm{Sym}(2)\wr\mathrm{Sym}(r), & |\mathcal{W}| & =2^{r}r!, \\
& \text{Type $C_{r}$:} & \mathcal{W} & \simeq\mathrm{Sym}(2)\wr\mathrm{Sym}(r), & |\mathcal{W}| & =2^{r}r!, \\
& \text{Type $D_{r}$:} & \mathcal{W} & \simeq H \text{ with } [\mathrm{Sym}(2)\wr\mathrm{Sym}(r):H]=2, & |\mathcal{W}| & =2^{r-1}r!, \\
& \text{Type $E_{6}$:} & \mathcal{W} & \simeq\mathrm{GO}_{6}^{-}(2)\simeq\mathrm{U}_{4}(2):2, & |\mathcal{W}| & =51840, \\
& \text{Type $E_{7}$:} & \mathcal{W} & \simeq\mathrm{GO}_{7}(2)\times 2\simeq\mathrm{Sp}_{6}(2)\times 2, & |\mathcal{W}| & =2903040, \\
& \text{Type $E_{8}$:} & \mathcal{W} & \simeq 2^{\displaystyle{\cdot}}\mathrm{GO}_{8}^{+}(2)\simeq2^{\displaystyle{\cdot}}\Omega_{8}^{+}(2):2, & |\mathcal{W}| & =696729600, \\
& \text{Type $F_{4}$:} & \mathcal{W} & \simeq\mathrm{GO}_{4}^{+}(3)\simeq 2^{1+4}:(\mathrm{Sym}(3)\times\mathrm{Sym}(3)), & |\mathcal{W}| & =1152, \\
& \text{Type $G_{2}$:} & \mathcal{W} & \simeq\mathrm{Dih}_{6}\simeq\mathrm{Sym}(3)\times 2, & |\mathcal{W}| & =12.
\end{align*}
As a uniform cruder bound for all $|\mathcal{W}|$ at once, we shall use
\begin{equation}\label{eq:weilg}
|\mathcal{W}|\leq(2r)^{r},
\end{equation}
where $r$ is the rank of $G$.

\subsection{Steinberg endomorphisms}\label{se:steinberg}

In this subsection we work over the field $\overline{\mathbb{F}_{p}}$.

Let $G$ be a linear algebraic group $G\leq\mathrm{GL}_{n}$ defined over $\overline{\mathbb{F}_{p}}$, and let $q$ be a power of $p$. The {\em Frobenius map (with respect to $\mathbb{F}_{q}$)} is the morphism $F_{q}:G\rightarrow G$ given by raising each entry of $g$ to the $q$-th power. A {\em Steinberg endomorphism} \cite[Def.~21.3]{MT11} is an automorphism of abstract groups $F:G(\overline{\mathbb{F}_{p}})\rightarrow G(\overline{\mathbb{F}_{p}})$ such that there are $q$ and $m$ for which $F^{m}$ is (the map on $\overline{\mathbb{F}_{p}}$-points induced by) the Frobenius map of $G$ with respect to $\mathbb{F}_{q}$.

This terminology comes from \cite{MT11}, but is not unanimously adopted in the literature: \cite{LP11} uses the locution ``Frobenius map'' to refer to what we call a Steinberg endomorphism, and ``standard Frobenius map'' to refer to what we call a Frobenius map.

A Steinberg endomorphism is not necessarily a morphism of linear algebraic groups. The classification of Steinberg endomorphisms of connected almost simple groups is known: see \cite[Thm.~22.5]{MT11}, which however is restricted to the simply connected case.

For a Steinberg endomorphism $F$ of $G$, we denote by $G^{F}$ the subgroup of $G(\overline{\mathbb{F}_{p}})$ of points fixed by $F$. The group $G^{F}$ is finite whenever $G$ is almost simple \cite[Thm.~21.5]{MT11}. The classification of all possible $G^{F}$ for any connected almost simple $G$ and any Steinberg endomorphism $F$ is given in \cite[\S 1.19]{Car93} and in \cite[\S 22.2]{MT11}.

\subsection{The representation $\rho$ of $G$ adjoint}

Finally, we must study a particular representation of $G$. We follow Pink's notation in \cite{Pin98}.

Let $G$ be connected almost simple adjoint defined over a field $K$, and let $\tilde{G}$ be the corresponding simply connected group. There is a natural isogeny $\pi:\tilde{G}\rightarrow G$, which induces a linear transformation $d\pi:\tilde{\mathfrak{g}}\rightarrow\mathfrak{g}$; the two Lie algebra have the same dimension, and the same is true for the kernel $\mathfrak{z}$ and cokernel $\mathfrak{z}^{*}$ of $d\pi$. We can write $\tilde{\mathfrak{g}}=\mathfrak{z}\oplus\bar{\mathfrak{g}}$ and $\mathfrak{g}=\bar{\mathfrak{g}}\oplus\mathfrak{z}^{*}$, so that $\bar{\mathfrak{g}}$ is the subalgebra whose copies inside $\tilde{\mathfrak{g}}$ and $\mathfrak{g}$ are identifiable with $\tilde{\mathfrak{g}}/\mathfrak{z}$ and $d\pi(\tilde{\mathfrak{g}})$ respectively; when restricted to these two copies of $\bar{\mathfrak{g}}$, the map $d\pi$ is given by some $\bar{\pi}\in\mathrm{GL}(\bar{\mathfrak{g}})$.

As we already showed in \eqref{eq:mdegad}, the adjoint representation $\mathrm{Ad}_{G}:G\rightarrow\mathrm{GL}(\mathfrak{g})$ has $\mathrm{mdeg}(\mathrm{Ad}_{G})\leq\iota+1$. Following \cite[(1.3)--(1.4)]{Pin98}, we can define $\kappa:\mathrm{Hom}(\mathfrak{g},\tilde{\mathfrak{g}})\rightarrow\mathrm{End}(\mathfrak{g})$ by $\kappa(f)=\mathrm{Id}_{\mathfrak{g}}+d\pi\circ f$, and a morphism $\widetilde{\mathrm{Ad}}_{G}:G\rightarrow\mathrm{Hom}(\mathfrak{g},\tilde{\mathfrak{g}})$ so that $\mathrm{Ad}_{G}=\kappa\circ\widetilde{\mathrm{Ad}}_{G}$. Therefore, $\mathrm{Ad}_{G,g}(v)=v+(d\pi\circ\widetilde{\mathrm{Ad}}_{G}(g))(v)$ for every $g\in G$ and $v\in\mathfrak{g}$; knowing $\mathrm{mdeg}(\mathrm{Ad}_{G})$ and using the fact that $d\pi:\mathfrak{z}\oplus\bar{\mathfrak{g}}\rightarrow\bar{\mathfrak{g}}\oplus\mathfrak{z}^{*}$ acts as the zero map on $\mathfrak{z}$ and as $\bar{\pi}$ on $\bar{\mathfrak{g}}$, we obtain $\mathrm{mdeg}(\widetilde{\mathrm{Ad}}_{G})\leq\iota+1$.

Now, following \cite[Prop.~1.10]{Pin98}, let $\hat{\mathfrak{g}}=\mathfrak{z}\oplus\bar{\mathfrak{g}}\oplus\mathfrak{z}^{*}$, let $di:\tilde{\mathfrak{g}}\rightarrow\hat{\mathfrak{g}}$ be the natural inclusion, and let $d\omega:\hat{\mathfrak{g}}\rightarrow\mathfrak{g}$ be the map induced by $d\pi$: namely, $d\omega$ acts as the zero map, as $\bar{\pi}$, and as the identity on $\mathfrak{z},\bar{\mathfrak{g}},\mathfrak{z}^{*}$ respectively. Define the representation $\hat{\rho}:G\rightarrow\mathrm{GL}(\hat{\mathfrak{g}})$ by $\hat{\rho}(g)=\mathrm{Id}_{\hat{\mathfrak{g}}}+di\circ\widetilde{\mathrm{Ad}}_{G}(g)\circ d\omega$. By the discussion above, $\mathrm{mdeg}(\hat{\rho})\leq\iota+1$. Furthermore, $\hat{\rho}$ is defined over $K$, since this is true for all the objects defined so far.

By \cite[Prop.~1.11(b)]{Pin98}, if $G$ does not have a non-standard isogeny then $\bar{\mathfrak{g}}$ is the unique simple $G$-submodule of $\mathfrak{g}$ and the unique simple quotient $G$-module of $\tilde{\mathfrak{g}}$, and the restriction of $\hat{\rho}$ on $\bar{\mathfrak{g}}$ is irreducible and non-constant. By \cite[Prop.~1.11(c)]{Pin98}, if $G$ has a non-standard isogeny then there is a unique simple $G$-submodule $\bar{\mathfrak{g}}_{s}$ of $\mathfrak{g}$ and there is a unique simple quotient $G$-module $\bar{\mathfrak{g}}_{\ell}$ of $\tilde{\mathfrak{g}}$, and the restrictions of $\hat{\rho}$ on them are irreducible, non-constant, and not equivalent to each other; moreover, we can decompose $\hat{\mathfrak{g}}$ so that $\bar{\mathfrak{g}}_{s}$ and $\bar{\mathfrak{g}}_{\ell}$ are transversal direct summands of $\hat{\mathfrak{g}}$ (see the graphs in \cite[Prop.~1.11(c)]{Pin98}). The restrictions of $\hat{\rho}$ to $\bar{\mathfrak{g}},\bar{\mathfrak{g}}_{s},\bar{\mathfrak{g}}_{\ell}$ in the various cases are denoted by $\alpha^{G},\alpha^{G}_{s},\alpha^{G}_{\ell}$. Up to a change of basis, which does not affect $\mathrm{mdeg}(\hat{\rho})$, we see again that the restrictions $\alpha^{G},\alpha^{G}_{s},\alpha^{G}_{\ell}$ have maximal degree $\leq\iota+1$.

It is time to define the representation $\rho$ of $G$, as given in \cite[p.~1142]{LP11}. Let $U$ be the unipotent part of a Borel subgroup $B$ of $G$; up to isomorphism, $U$ is independent from the choice of $B$. By \cite[Prop.~8.3]{Bor91}, the centre $Z(U)$ is of the form
\begin{align}\label{eq:zu}
Z(U)=\begin{cases}
U_{\alpha} & \begin{array}{l} \text{if all roots have the same length,} \\ \alpha=\text{highest positive root,} \end{array} \\ \\ U_{\alpha_{\ell}}U_{\alpha_{s}} & \begin{array}{l} \text{if roots have different lengths,} \\ \alpha_{\ell}=\text{highest long root,} \\ \alpha_{s}=\text{highest short root,} \end{array}
\end{cases}
\end{align}
so in particular $\dim(Z(U))\in\{1,2\}$. During the proof of Proposition~\ref{pr:findfield}, we will fix a $B$-invariant subgroup $V\leq Z(U)$ of minimal dimension among those with $|\Gamma\cap V(\overline{K})|>1$; based on that choice, we shall define
\begin{align}\label{eq:rho}
\rho:=\begin{cases}
\alpha^{G} & \text{if } Z(U)=V=U_{\alpha}, \\ (\alpha^{G}_{\ell},\alpha^{G}_{s}) & \text{if } Z(U)=V=U_{\alpha_{\ell}}U_{\alpha_{s}}, \\ \alpha^{G}_{\ell} & \text{if } Z(U)=U_{\alpha_{\ell}}U_{\alpha_{s}} \text{ and } V=U_{\alpha_{\ell}}, \\ \alpha^{G}_{s} & \text{if } Z(U)=U_{\alpha_{\ell}}U_{\alpha_{s}} \text{ and } V=U_{\alpha_{s}}.
\end{cases}
\end{align}
In all cases, $\rho$ is a representation over $K$, and when $\dim(V)=2$ it can be seen as a representation over $K^{2}$ as well. By what we argued so far, we can bound the maximal degree of $\rho$.

\begin{proposition}\label{pr:degrho}
Let $G$ be a connected almost simple adjoint linear algebraic group defined over a field $K$, with $d=\dim(G)$ and $\iota=\mathrm{mdeg}(^{-1})$, and let $\rho$ be defined as in \eqref{eq:rho} (for an appropriate choice of $V$). Then $\rho:G\rightarrow\mathrm{GL}_{m}$ is a representation defined over $K$, and as such we have
\begin{align*}
m & \leq d, & \mathrm{mdeg}(\rho) & \leq\iota+1\leq d.
\end{align*}
\end{proposition}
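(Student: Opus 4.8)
The plan is to observe that, in each of the four cases of \eqref{eq:rho}, $\rho$ is nothing more than the restriction of the representation $\hat\rho\colon G\to\mathrm{GL}(\hat{\mathfrak{g}})$ built above to a $\hat\rho$-invariant $K$-subspace $W\subseteq\hat{\mathfrak{g}}$: namely $W=\bar{\mathfrak{g}}$ when $\rho=\alpha^{G}$, $W=\bar{\mathfrak{g}}_{\ell}$ or $W=\bar{\mathfrak{g}}_{s}$ when $\rho=\alpha^{G}_{\ell}$ or $\alpha^{G}_{s}$, and $W=\bar{\mathfrak{g}}_{\ell}\oplus\bar{\mathfrak{g}}_{s}$ (an honest internal direct sum, by \cite[Prop.~1.11(c)]{Pin98}) when $\rho=(\alpha^{G}_{\ell},\alpha^{G}_{s})$. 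Since $\hat\rho$ and all of these subspaces are defined over $K$, this already exhibits $\rho$ as a representation over $K$ into $\mathrm{GL}(W)=\mathrm{GL}_{m}$ with $m=\dim W$; carefully matching \eqref{eq:rho} with the branches of \cite[Prop.~1.11]{Pin98} (whether $G$ has a non-standard isogeny, and whether $\dim V$ is $1$ or $2$) is the only genuine bookkeeping in the argument, and I expect that identification to be the main place where care is needed. Everything after it is soft.

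For $m\le d$, I would use that $\hat\rho$ acts trivially on $\ker(d\omega)$: this is immediate from $\hat\rho(g)=\mathrm{Id}_{\hat{\mathfrak{g}}}+di\circ\widetilde{\mathrm{Ad}}_{G}(g)\circ d\omega$, since $d\omega(v)=0$ forces $\hat\rho(g)(v)=v$. As $d\omega$ vanishes exactly on $\mathfrak{z}$ and is an isomorphism on $\bar{\mathfrak{g}}\oplus\mathfrak{z}^{*}$, we have $\ker(d\omega)=\mathfrak{z}$ and $\dim\hat{\mathfrak{g}}=\dim\mathfrak{z}+\dim\mathfrak{g}=\dim\mathfrak{z}+d$. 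On the other hand, by \cite[Prop.~1.11(b)--(c)]{Pin98} every simple constituent of $W$ is non-constant under $\hat\rho$; hence $W\cap\mathfrak{z}$, being a $\hat\rho$-submodule of $W$ on which $\hat\rho$ is trivial, contains no nonzero simple submodule, so it vanishes. From $W\cap\mathfrak{z}=0$ and $\dim\hat{\mathfrak{g}}=\dim\mathfrak{z}+d$ we conclude $m=\dim W\le d$.

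For $\mathrm{mdeg}(\rho)\le\iota+1$, I would extend a $K$-basis of $W$ to a $K$-basis of $\hat{\mathfrak{g}}$; with respect to it every matrix $\hat\rho(g)$ is block upper-triangular with $\rho(g)$ as its top-left block, so, since a fixed linear change of coordinates never raises maximal degree, each entry of $\rho(g)$ is a polynomial in $g$ of degree at most $\mathrm{mdeg}(\hat\rho)\le\iota+1$ --- the last inequality being precisely what was established in the discussion preceding \eqref{eq:rho}. The one subtlety is the extra coordinate recording $\det(\rho(g))^{-1}$ in the model $\mathrm{GL}_{m}\subseteq\mathrm{Mat}_{m+1}$ of Section~\ref{se:linalg-basic}: but $\det\circ\rho$ is a character of the semisimple group $G$ and hence trivial, so that coordinate is the constant $1$, and $\mathrm{mdeg}(\rho)\le\iota+1$ follows. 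Finally, triviality of characters also gives $\det\equiv 1$ on $G$; presenting $G$ inside $\mathrm{GL}_{d}$ as afforded by Proposition~\ref{pr:adjbounds}, inversion on $G$ is then realized by the adjugate map alone, of degree $d-1$, whence $\iota\le d-1$ and $\iota+1\le d$, completing the proof.
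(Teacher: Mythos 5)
Your proof is correct, and in places it is more thorough and more conceptual than the paper's. The paper dispatches the inequality $m\leq d$ in one line by ``comparing \cite[Prop.~1.11]{Pin98} with the values of $|\Lambda_{w}/\Lambda_{r}|$ for each type'' — a table lookup in the classification — to conclude $\dim(\bar{\mathfrak{g}}_{s})+\dim(\bar{\mathfrak{g}}_{\ell})\leq\dim(\mathfrak{g})$. Your argument avoids the case-by-case check entirely: you observe that $\hat\rho$ fixes $\ker(d\omega)=\mathfrak{z}$ pointwise, that $\dim\hat{\mathfrak{g}}=\dim\mathfrak{z}+d$, and that the non-constancy of the simple constituents of $W$ (Pink's Prop.~1.11(b)--(c)) forces $W\cap\mathfrak{z}=0$, whence $m=\dim W\leq d$. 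That is genuinely a different and cleaner route, uniform across all Dynkin types. For $\mathrm{mdeg}(\rho)\leq\iota+1$ and $\iota+1\leq d$ you take essentially the same route as the paper, but you make explicit two points the paper leaves implicit: the extra $\det(\rho(g))^{-1}$ coordinate in the model $\mathrm{GL}_{m}\subseteq\mathrm{Mat}_{m+1}$ is harmless because $\det\circ\rho$ is a character of the semisimple $G$ and hence trivial, and the bound $\iota\leq d-1$ (rather than the generic $\iota\leq d$ for a subgroup of $\mathrm{GL}_{d}$) follows by the same character argument applied to $\det$ on the adjoint image, which reduces inversion to the adjugate. The paper's bald assertion ``so $\iota\leq d-1$ as well'' relies on exactly this observation, which you have correctly supplied.
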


\begin{proof}
The bound we obtained above for $\alpha^{G},\alpha^{G}_{s},\alpha^{G}_{\ell}$ becomes directly $\mathrm{mdeg}(\rho)\leq\iota+1$. We started the construction from a representation $G\leq\mathrm{GL}_{d}$ as in Proposition~\ref{pr:adjbounds}, so $\iota\leq d-1$ as well. Comparing \cite[Prop.~1.11]{Pin98} with the values of $|\Lambda_{w}/\Lambda_{r}|$ for each type (readable from \cite[App.~(A.9)]{Hum95b} or \cite[Table~9.2]{MT11}), we see that $\dim(\bar{\mathfrak{g}}_{s})+\dim(\bar{\mathfrak{g}}_{\ell})\leq\dim(\mathfrak{g})$.
\end{proof}

\section{Dimensional estimates}\label{se:dimest}

In this section we deal with dimensional estimates. Our work relies on the explicit estimates for $|A\cap V(K)|$ contained in \cite{BDH24}, which we first adapt so as to replace a set $A$ generating $G(K)$ with a subgroup $\Gamma\leq G(\overline{K})$ not necessarily equal to $G(K)$. This first result is given in Theorem~\ref{th:dimest}. Then we provide a few additional variations that will be useful later.

We start with the following lemma, which allows us to grow in dimension using a generic element and the almost simplicity of $G$.

\begin{lemma}\label{le:skewness}
Let $G\leq\mathrm{GL}_{n}$ be an almost simple linear algebraic group over a field $K$. Let $\iota = \mathrm{mdeg}(^{-1})$, the maximum degree of the inversion map. Let $V,V'$ be subvarieties of $G$ defined over $\overline{K}$, with $\dim(V)<\dim(G)$ and $\dim(V')>0$.

Then, for every $g\in G(\overline{K})$ outside a variety $W=\{x\in\mathrm{Mat}_{n}:F(x)=0\}$ with $\deg(F)\leq 1+\min\{\iota,\dim(V)\}$ and $G\not\subseteq W$, the variety $\overline{VgV'}$ has dimension $>\dim(V)$.
\end{lemma}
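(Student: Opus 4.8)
The plan is to show that the "bad" set of $g$ for which $\dim(\overline{VgV'}) \leq \dim(V)$ is contained in a low-degree hypersurface. First I would reduce to the case where $V$ and $V'$ are irreducible: if the statement holds for each pair of components $(V_i, V'_j)$ with an exceptional hypersurface $W_{ij}$ of controlled degree, then taking $V_i$ of maximal dimension and noting that $\overline{VgV'} \supseteq \overline{V_i g V'_j}$ for every $j$, the union $\bigcup_j W_{ij}$ (or rather, a single hypersurface cutting out the locus where \emph{all} the $\overline{V_i g V'_j}$ fail to grow) does the job; one must be slightly careful to keep the degree of the defining polynomial bounded by $1 + \min\{\iota, \dim(V)\}$ rather than letting it blow up multiplicatively, so I would actually argue directly for a fixed irreducible $V$ of maximal dimension and a fixed irreducible $V'$, picking a single $v' \in V'(\overline{K})$, and reduce to controlling $\dim(\overline{Vgv'})$.

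The heart of the argument is this: since $\dim(\overline{Vgv'}) = \dim(\overline{Vg}) = \dim(V)$ always (right translation by the fixed element $gv'$ is an isomorphism of $G$), the map $g \mapsto Vgv'$ sweeps out translates of $V$, and I want a \emph{generic} $g$ to make $\overline{VgV'}$ strictly larger than a single translate. Consider the multiplication morphism $\mu : V \times G \times V' \to G$, $(v, g, v') \mapsto vgv'$; more precisely fix $v' \in V'$ and consider $\mu : V \times G \to G$. This is dominant onto $G$ (for instance because its image contains $VGv' = Gv' = G$), so a generic fibre has dimension $\dim(V)$. By Proposition~\ref{pr:degexc} applied to an appropriate slicing — or better, by considering for each $g$ the variety $\overline{VgV'}$ and asking when its dimension drops — the set of $g$ with $\dim(\overline{VgV'}) = \dim(V)$ is a constructible set, and I would show it is contained in a proper subvariety by exhibiting a \emph{single} $g_0$ (which exists by almost simplicity: a translate of $V$ by a suitable element of $G$ cannot be absorbed, since otherwise one builds a proper normal subgroup) for which $\overline{Vg_0V'}$ has dimension $> \dim(V)$. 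Almost simplicity enters precisely here: if $\overline{VgV'}$ had dimension exactly $\dim(V)$ for \emph{every} $g$, then the subgroup generated by the translates $V^{-1}Vg$ would be forced to be too small, contradicting that $G$ has no proper positive-dimensional normal subgroups (this is the standard "the stabilizer of the collection of translates is everything" argument, cf. Lemma~\ref{le:groupvar}'s use of stabilizers).

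Once a good $g_0$ is found, the degree bound comes from the following: the condition "$\dim(\overline{VgV'}) \leq \dim(V)$" can be phrased as the vanishing of certain $(\dim(V)+1)\times(\dim(V)+1)$ minors of the Jacobian of $\mu$ restricted appropriately, or more cleanly via Proposition~\ref{pr:degexc} and Lemma~\ref{le:zarimdeg}: the exceptional locus $Z$ where fibres jump has degree controlled by $\mathrm{mdeg}(\mu)^{\dim(\overline{f(V)})-1}\deg(V\times G)$, and since $\mathrm{mdeg}(\mu) \leq \iota + 1$ (multiplication has mdeg $2$ but inversion-based bookkeeping contributes $\iota$; one needs to be careful which map is being differentiated) one extracts a defining polynomial $F$ of degree $\leq 1 + \min\{\iota, \dim(V)\}$ — the $\min$ with $\dim(V)$ coming from the fact that only $\dim(V)+1$ coordinates of the target are being constrained at a time, so we never need more than $\dim(V)$ factors of the inversion contribution. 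I would record $G \not\subseteq W$ as the restatement of "a good $g_0$ exists." The main obstacle I anticipate is making the degree bound come out as the \emph{sharp} $1 + \min\{\iota, \dim(V)\}$ rather than something cruder like $2\dim(V)$ or $(\iota+1)^{\dim(V)}$: this will require choosing the minors/equations cutting out the bad locus with care, exploiting that a linear-algebra rank condition on a $(\dim(V)+1)$-subset of coordinates only involves polynomials of degree one more than the entries of the relevant matrix, and that those entries have degree at most $\min\{\iota, \dim(V)\} $ after the reduction to translates.
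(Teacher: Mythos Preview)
The paper does not prove this lemma; it simply cites \cite[Lem.~4.1]{BDH24}. Your outline has the right architecture --- reduce to $V,V'$ irreducible, use almost simplicity (via a stabilizer/normal-core argument) to produce a good $g_0$, and encode the bad locus by a rank condition --- and this is indeed how the argument in \cite{BDH24} runs. Two points deserve correction, though.

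First, your suggestion to extract the degree bound ``more cleanly via Proposition~\ref{pr:degexc}'' does not work: that proposition bounds the degree of the exceptional-fibre locus as a subvariety, producing a bound of the shape $\mathrm{mdeg}(\mu)^{\dim-1}\deg(V\times G)$, which is far larger than $1+\min\{\iota,\dim V\}$ and is not a bound on the degree of a \emph{single} defining polynomial. The Jacobian/minor route you mention first is the correct one, and here is how the $\min$ actually arises. After translating so that $v_0=e\in V$ and $v'_0=e\in V'$ are smooth, set $\mathfrak v:=T_eV$ and fix $0\neq\eta\in T_eV'$; one checks that if $g$ is bad then $g\eta g^{-1}\in\mathfrak v$. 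This last condition can be written in two equivalent ways:
\begin{itemize}
\item[(i)] as the vanishing of linear forms on the entries of $g\eta g^{-1}$, each a polynomial in $g$ of degree at most $1+\iota$;
\item[(ii)] after clearing the inverse, as $g\eta\in\mathfrak v\cdot g$, i.e.\ the vanishing of the $(\dim V+1)\times(\dim V+1)$ minors of the matrix with columns $g\eta,\,v_1g,\ldots,v_{\dim V}g$ (each entry linear in $g$), each such minor having degree at most $\dim V+1$.
\end{itemize}
Take $F$ to be one polynomial of whichever type has the smaller degree; this is exactly where $1+\min\{\iota,\dim V\}$ comes from, not from any ``$\dim V$ factors of the inversion contribution''.

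Second, your stabilizer argument shows only that some $g_0$ makes $\dim(\overline{Vg_0V'})>\dim V$; it does not yet show that the specific $F$ above is nonzero at $g_0$, because $F$ was built from a \emph{fixed} choice of $v_0,v'_0,\eta$. The fix is to choose these data \emph{after} finding $g_0$: since $\overline{Vg_0V'}$ has dimension $>\dim V$, the differential of $(v,v')\mapsto vg_0v'$ has rank $>\dim V$ at a generic smooth $(v_0,v'_0)$, which supplies an $\eta$ with $g_0\eta g_0^{-1}\notin\mathfrak v$ and hence $F(g_0)\neq 0$, giving $G\not\subseteq W$.
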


\begin{proof}
See~\cite[Lem.~4.1]{BDH24}.
\end{proof}

Now we write our main inductive step for Theorem~\ref{th:dimest}.

\begin{proposition}\label{pr:oberstair}
Let $G\leq\mathrm{GL}_{n}$ be a connected almost simple linear algebraic group of rank $r$ with $\dim(G)=d$ and $\deg(G)=D$, defined over a field $K$. Let $\Gamma\leq G(\overline{K})$ be a finite group. Then at least one of the following holds: 
\begin{enumerate}[(a)]
\item\label{th:dimest-small-oberstair} $|\Gamma|\leq \left( 2dD\right)^{d+1}$; 
\item\label{th:dimest-h-oberstair} $\Gamma\leq H(\overline{K})$ for some subgroup $H<G$ with $\dim(H)<d$ and $\deg(H)\leq \left( 2dD\right)^{d+1}$;
\item\label{th:dimest-ok-oberstair} for any irreducible subvariety $V\subsetneq G$ defined over $\overline{K}$ with $0<\dim(V)=d'<d$, there is an integer $\ell\leq d-d'+1$ and there are proper subvarieties $F_{1},\ldots,F_{\ell-1}$ of $V$ and a variety $E\subsetneq \mathrm{Mat}_n$
with
\begin{equation*}
|\Gamma \cap V(\overline{K})|^{\ell}\leq|\Gamma|\cdot\prod_{j=1}^{\ell-1}|\Gamma\cap F_{j}(\overline{K})|+\ell|\Gamma\cap E(\overline{K})||\Gamma\cap V(\overline{K})|^{\ell-1}
\end{equation*}
and $E$ not containing $V$, and satisfying the following properties as well: 
\begin{align}\label{eq:hodor}
\sum_{j=1}^{\ell-1}\dim(F_{j}) & =\ell d'-d, & \deg(F_{j}) & \leq 2^d \ell^{d-1} \deg(V)^{j+1},
& \deg(E) &\leq (2 \ell)^{d} \deg(V)^{\ell}.
\end{align}
\end{enumerate}
\end{proposition}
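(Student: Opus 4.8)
The plan is to prove this by induction on $d - d'$, with the skewness result Lemma~\ref{le:skewness} providing the crucial step to climb one dimension at a time. First I would set up the base: if $d' = d-1$, then $\ell = 1$ and the inequality reduces to $|\Gamma \cap V(\overline{K})| \leq |\Gamma| + |\Gamma \cap E(\overline{K})|$, which is trivial by taking $E$ to be any variety not containing $V$ (say $E = \{F = 0\}$ for $F$ as in Lemma~\ref{le:skewness} applied to suitable $V,V'$); the degree constraints in~\eqref{eq:hodor} become vacuous (empty product) or easily checkable. The real content is the inductive step. Given $V$ with $0 < \dim(V) = d' < d-1$, I would apply Lemma~\ref{le:skewness} with $V' = V$ itself (assuming $\Gamma$ is large enough to escape — this is where cases~\eqref{th:dimest-small-oberstair} and~\eqref{th:dimest-h-oberstair} come from, via Lemma~\ref{le:groupvar}): there is some $\gamma \in \Gamma$, escaping the bad variety $W$ of degree $\leq 1 + \min\{\iota,d'\} \leq 2d$, such that $\overline{V\gamma V}$ has dimension $> d'$. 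Taking an irreducible component $V''$ of $\overline{V\gamma V}$ of dimension $d'' \geq d'+1$ gives a variety to which the inductive hypothesis applies with a strictly smaller value of $d - d''$.

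The bridge between $V$ and $V''$ is a fibre-dimension argument: the multiplication-and-$\gamma$ map $\mu\colon V \times V \to G$, $\mu(x,y) = x\gamma y$, has image with closure containing $V''$; its generic fibre over $V''$ has dimension $2d' - d''$, and by Proposition~\ref{pr:degexc} there is an exceptional subvariety $Z \subsetneq V''$ of controlled degree (in terms of $\mathrm{mdeg}(\mu) \leq 2\iota + 2 \leq$ small, $\deg(V\times V) \leq \deg(V)^2$, and $\dim(V'')$) off of which all fibres have the minimal dimension. Counting $\Gamma$-points: each point of $(\Gamma \cap V'')(\overline{K}) \setminus Z$ has a preimage in $V \times V$ of dimension $2d'-d''$ cut out inside $\Gamma \times \Gamma$, and an incidence count (Cauchy–Schwarz-style, or a direct double-counting of pairs $(x,y) \in (\Gamma\cap V)^2$ with $x\gamma y \in \Gamma \cap V''$) relates $|\Gamma \cap V''|$ to $|\Gamma \cap V|^2 / |\Gamma|$ up to a term involving $|\Gamma \cap Z|$ and up to passing to an appropriate subvariety $F$ of $V$ (the fibre, or rather its projection) of dimension $2d'-d''$. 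Feeding the inductive inequality for $V''$ — which has exponent $\ell'' = d - d'' + 1 = \ell - (d'' - d')$ — and combining the fibre relation, one assembles the inequality for $V$ with exponent $\ell$, appending the new $F_j$'s (the fibre variety and those coming from the recursion on $V''$) and taking $E$ to be the union of the old exceptional variety, the pullback of $Z$, and the skewness variety $W$. The bookkeeping on $\sum \dim(F_j) = \ell d' - d$ follows because each recursive step contributes dimensions summing telescopically, and the new fibre contributes exactly $2d' - d''$.

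The main obstacle I expect is the degree bookkeeping in~\eqref{eq:hodor}: one must track how $\deg(V)$ propagates through each of $\sim \ell$ recursive applications of Lemma~\ref{le:zarimdeg}, Proposition~\ref{pr:degexc}, and Bézout, while keeping the final bounds polynomial in $\deg(V)$ with exponent $j+1$ (resp.\ $\ell$) and the combinatorial prefactor only $2^d \ell^{d-1}$ (resp.\ $(2\ell)^d$). The issue is that naive composition of image-degree bounds multiplies degrees, which would give a tower rather than $\deg(V)^{j+1}$; the trick — already used in Lemma~\ref{le:weighteddeg} and Corollary~\ref{co:inters} — is that the varieties $V''$ obtained by escape-and-multiply still have degree controlled \emph{linearly} in terms of $\deg(V)$ at each stage (one component of $\overline{V\gamma V}$, not the whole product), so that after $j$ steps the degree is only $\deg(V)^{O(j)}$ with small constants; pinning down the exact exponent $j+1$ and the constant $2^d\ell^{d-1}$ requires care but no new ideas. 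A secondary subtlety is ensuring $E \not\supseteq V$ throughout, i.e.\ that the exceptional and skewness varieties genuinely miss $V$ — this is exactly what Lemma~\ref{le:skewness}'s guarantee $G \not\subseteq W$ (hence $V \not\subseteq W$ after checking $V$ is not contained in the bad locus, which holds since $V$ is irreducible of positive dimension and $W \cap G$ is proper) is for, and one propagates this property through the union.
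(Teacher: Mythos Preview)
Your high-level ingredients are the right ones and match the paper's: the skewness Lemma~\ref{le:skewness} drives the dimension growth, Lemma~\ref{le:groupvar} replaces the generating-set escape argument of \cite[Prop.~4.4]{BDH24} and is exactly what produces cases~\eqref{th:dimest-small-oberstair} and~\eqref{th:dimest-h-oberstair}, and the fibre counting via Proposition~\ref{pr:degexc} is indeed how the inequality is assembled.

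There is, however, a structural gap in your induction scheme. You propose to apply the inductive hypothesis to $V''$ as a black box, which produces subvarieties $F''_1,\ldots,F''_{\ell''-1}$ of $V''$, and then ``append'' these together with the single fibre $F\subset V$ to form the list $F_1,\ldots,F_{\ell-1}$. But the statement requires every $F_j$ to be a proper subvariety of the \emph{original} $V$, and the recursive $F''_j$'s live in $V''$, not in $V$. There is no clean way to pull them back through $\mu\colon V\times V\to V''$ while preserving both the dimension identity $\sum_j\dim(F_j)=\ell d'-d$ and the multiplicative inequality, so the recursion does not close.

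The paper (following \cite[Prop.~4.4]{BDH24}) avoids this by working iteratively rather than recursively: one fixes the entire chain at once,
\[
V_1=V,\qquad V_{j+1}=\overline{V_j\,g_j\,V}\quad(1\leq j<\ell),
\]
always multiplying by the \emph{original} $V$ on the right. The escape argument (now via Lemma~\ref{le:groupvar}, applied to $G\cap W_j$ of degree $\leq D(1+d)\leq 2dD$) lets each $g_j$ be chosen in $\Gamma$ with $\dim(V_{j+1})>\dim(V_j)$, so some $\ell\leq d-d'+1$ reaches $V_\ell=G$. At step $j$ the map $V_j\times V\to V_{j+1}$ has generic fibre that, projected to the second factor, yields $F_j\subset V$ of dimension $d'_j+d'-d'_{j+1}$; telescoping gives $\sum_j\dim(F_j)=\ell d'-d$ and the product inequality. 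The degree of $V_j$ is bounded via Lemma~\ref{le:zarimdeg}\eqref{le:zarimdeg-im} by $\deg(V)^{j}j^{d'_j}$ (so polynomial in $\deg(V)$ with exponent growing along the chain, not ``linear'' as you suggest), from which the bounds on $\deg(F_j)$ and $\deg(E)$ in~\eqref{eq:hodor} follow as in \cite{BDH24}. The fix to your plan is thus simply to replace the black-box recursion on $V''$ with this chain construction; the rest of your outline is sound.
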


\begin{proof}
The statement and proof are very similar to~\cite[Prop.~4.4]{BDH24}, which differs from our result in that in lieu of $\Gamma$ it has a set $A$ that generates $G(K)$. The fact that $\langle A\rangle=G(K)$ was only needed in order to provide an escape argument, and obviously here we do not always have $\langle\Gamma\rangle=\Gamma=G(K)$. Hence, we have to replace the escape argument, which we are able to do by relying on Lemma~\ref{le:groupvar}.

Let us assume that $\Gamma$ does not satisfy the conditions~\eqref{th:dimest-small-oberstair} and~\eqref{th:dimest-h-oberstair}. Then, following Lemma~\ref{le:groupvar} we get that $\Gamma \nsubseteq W(\overline{K})$ for any subvariety $W$ of $G$ with $\deg(W) \leq 2dD$. We shall show that $\Gamma$ satisfies condition~\eqref{th:dimest-ok-oberstair}.

As in the statement, let $V\subsetneq G$ be any irreducible subvariety defined over $\overline{K}$ with $0<\dim(V)=d'<d$. Fix an integer $\ell$ and elements $g_{1},\ldots,g_{\ell-1}\in G(\overline{K})$ (to be chosen soon), and define
\begin{align*}
V_{1} & =V, & V_{j+1} & =\overline{V_{j}g_{j}V} \ \ \ \ \ (1\leq j<\ell).
\end{align*}   
For each $1\leq j\leq\ell$, let $d'_{j}=\dim(V_{j})$ and $D_{j}=\deg(V_{j})$. By Lemma~\ref{le:zarimdeg}\eqref{le:zarimdeg-im}, we have $D_{j}\leq\deg(V)^{j}j^{d'_j}$. We will show that the $g_{j}$ can be chosen so that they belong to $\Gamma$ and that we have $d'_{j+1}>d'_{j}$ at every step.

If $G$ is a connected almost simple linear algebraic group, by Lemma~\ref{le:skewness} we get $d'_{j+1}>d'_{j}$ for any choice of $g_{j}$ outside a variety $W_{j}$ with $\deg(W_{j})\leq 1+\min\{\iota,d'_{j}\}$ and $G\nsubseteq W_{j}$. Consider the subvariety $G \cap W_{j}$ of $G$, which is a proper subvariety of dimension $<d$ and of degree 
$$\deg (G \cap W_{j}) \leq D\deg(W_{j})\leq D(d+1) \leq 2dD.$$
Then, by what we said before, $\Gamma \nsubseteq \left( G \cap W_{j}\right)(\overline{K})$.

Hence, we can choose an element $g_{j}$ inside $\Gamma$ with $d'_{j+1} > d'_{j}$ at each step $j$, and let $\ell$ be the least index such that $V_{\ell}=G$. Clearly, $\ell\leq d-d'+1$, and $g_{1},\ldots,g_{\ell-1}\in \Gamma$. The rest of the proof follows, mutatis mutandis, the steps in the proof of~\cite[Prop.~4.4]{BDH24}. 
\end{proof}

\begin{theorem}\label{th:dimest}
Let $G\leq\mathrm{GL}_{n}$ be a connected almost simple linear algebraic group of rank $r$ with $\dim(G)=d$ and $\deg(G)=D$, defined over a field $K$, and let $\Gamma \leq G(\overline{K})$ be a finite subgroup. Then at least one of the following holds: 
\begin{enumerate}[(a)]
\item\label{th:dimest-small} $|\Gamma|\leq \left( 2dD\right)^{d+1}$; 
\item\label{th:dimest-h} $\Gamma\leq H(\overline{K})$ for some subgroup $H<G$ with $\dim(H)<d$ and $\deg(H)\leq \left( 2dD\right)^{d+1}$;
\item\label{th:dimest-ok} for any proper subvariety $V\subsetneq G$, we have 
\begin{align}\label{eq:boundC}
|\Gamma\cap V(\overline{K})|\leq C|\Gamma|^{\dim(V)/d}\quad \text{with} \quad C\leq \left( 2 d\deg(V)\right)^{d^{\dim(V)}}.
\end{align}
\end{enumerate}
\end{theorem}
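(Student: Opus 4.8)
The plan is to establish \eqref{th:dimest-ok} under the assumption that neither \eqref{th:dimest-small} nor \eqref{th:dimest-h} holds, by strong induction on $\dim V$, treating irreducible $V$ first. The reduction to the irreducible case is immediate: writing $V=\bigcup_i V_i$ for the irreducible components, one has $|\Gamma\cap V(\overline{K})|\leq\sum_i|\Gamma\cap V_i(\overline{K})|$, and since $\deg V=\sum_i\deg V_i$ and $\dim V_i\leq\dim V$, the bounds $|\Gamma\cap V_i(\overline{K})|\leq(2d\deg V_i)^{d^{\dim V_i}}|\Gamma|^{\dim V_i/d}$ combine through $\sum_i t_i^{k}\leq(\sum_i t_i)^{k}$ (for $t_i\geq0$, $k\geq1$) and $|\Gamma|\geq1$ into $|\Gamma\cap V(\overline{K})|\leq(2d\deg V)^{d^{\dim V}}|\Gamma|^{\dim V/d}$. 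The base case $\dim V=0$ holds since then $|\Gamma\cap V(\overline{K})|\leq\deg V\leq2d\deg V$.

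For the inductive step let $V$ be irreducible with $0<\dim V=d'<d$. As \eqref{th:dimest-small-oberstair} and \eqref{th:dimest-h-oberstair} fail, Proposition~\ref{pr:oberstair}\eqref{th:dimest-ok-oberstair} yields $\ell\leq d-d'+1$, proper subvarieties $F_1,\ldots,F_{\ell-1}\subsetneq V$ and a variety $E\subsetneq\mathrm{Mat}_n$ with $V\not\subseteq E$, satisfying the stated inequality and the estimates \eqref{eq:hodor}. Put $N=|\Gamma|$, $x=|\Gamma\cap V(\overline{K})|$, $y=N^{d'/d}$. Since $\dim F_j<d'$ and each $F_j\subseteq G$, the inductive hypothesis gives $|\Gamma\cap F_j(\overline{K})|\leq(2d\deg F_j)^{d^{d'-1}}N^{\dim F_j/d}$, and because $\sum_j\dim F_j=\ell d'-d$ the powers of $N$ combine with the leading $|\Gamma|$ into $y^{\ell}$, so that $|\Gamma|\prod_j|\Gamma\cap F_j(\overline{K})|\leq\big(\prod_j(2d\deg F_j)\big)^{d^{d'-1}}y^{\ell}$. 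Now distinguish cases by which term on the right of the inequality is larger. If it is the first, then $x^{\ell}\leq2\big(\prod_j(2d\deg F_j)\big)^{d^{d'-1}}y^{\ell}$; taking $\ell$-th roots and inserting $\deg F_j\leq2^d\ell^{d-1}\deg(V)^{j+1}$ and $\ell\leq d$ from \eqref{eq:hodor}, a routine computation gives $x\leq(2d\deg V)^{d^{d'}}y$. Here the two saving features are that a large $\ell$ is possible only when the $F_j$ have small dimension (dimension $0$ when $d'=1$), and that the $\ell$-th root divides the accumulated exponent by $\ell$; the tower exponent $d^{d'}$ appears because the exponent $d^{d'-1}$ of the inductive hypothesis is effectively multiplied by at most $\ell-1\leq d-1$ before the root.

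The remaining case, where the second term dominates so that $x\leq2\ell\,|\Gamma\cap E(\overline{K})|$, is the crux, and the one point where the argument genuinely departs from \cite{BDH24}: there $\Gamma$ would be a generating set from which one simply escapes $E$, a luxury unavailable here, which is why Lemma~\ref{le:groupvar} has to enter (it already does, inside Proposition~\ref{pr:oberstair}). Since $\Gamma\subseteq G(\overline{K})$ we have $|\Gamma\cap E(\overline{K})|=|\Gamma\cap(G\cap E)(\overline{K})|$ with $G\cap E\subsetneq G$ proper (as $V\subseteq G$, $V\not\subseteq E$) and of degree $\leq D\deg E\leq D(2\ell)^d\deg(V)^{\ell}$ by B\'ezout \eqref{eq:bezout}. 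The key point, to be extracted from the construction of $E$ in Proposition~\ref{pr:oberstair}, is that the $\Gamma$-elements counted by $|\Gamma\cap E(\overline{K})|$ are confined to a proper subvariety of $V$ — equivalently, $G\cap E$ may be taken with $\dim(G\cap E)<d'$ — whence the inductive hypothesis bounds $|\Gamma\cap E(\overline{K})|\leq(2d\deg(G\cap E))^{d^{d'-1}}N^{(d'-1)/d}\leq(2d\deg(G\cap E))^{d^{d'-1}}y$; combined with $x\leq2\ell\,|\Gamma\cap E(\overline{K})|$ and the degree bound this again produces $x\leq(2d\deg V)^{d^{d'}}y$ after a routine computation. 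This closes the induction for irreducible $V$, and with the first paragraph we obtain \eqref{eq:boundC}. I expect the control of $|\Gamma\cap E(\overline{K})|$ in this last case — making the escape argument work with a subgroup in place of a generating set — to be the main obstacle; everything else is bookkeeping of degrees and exponents along the recursion.
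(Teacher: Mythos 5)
Your overall strategy matches the paper's: assume (a) and (b) fail, reduce to irreducible $V$, and run a strong induction on $\dim(V)$ with Proposition~\ref{pr:oberstair}\eqref{th:dimest-ok-oberstair} as the inductive step. This is exactly what the paper does, although it mostly defers to \cite[Thm.~4.5]{BDH24} for the bookkeeping: the only new ingredient in the present paper is precisely Proposition~\ref{pr:oberstair}, which replaces escape-from-subvarieties (valid for a generating set) by Lemma~\ref{le:groupvar} (valid for a trapped subgroup), so that the two extra alternatives \eqref{th:dimest-small} and \eqref{th:dimest-h} appear.

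There is, however, a concrete imprecision in your handling of the $E$-dominant case. You write that the $\Gamma$-elements counted by $|\Gamma\cap E(\overline{K})|$ are confined to a proper subvariety of $V$, and then assert that this is ``equivalently'' the statement that $\dim(G\cap E)<d'$, which is the hypothesis you actually feed into the inductive bound. These two assertions are not equivalent: Proposition~\ref{pr:oberstair} only guarantees $E\subsetneq\mathrm{Mat}_n$ and $V\not\subseteq E$, so while irreducibility of $V$ does give $\dim(V\cap E)<d'$, the variety $G\cap E$ could perfectly well have dimension up to $d-1$, in which case the inductive hypothesis on $\dim$ would not apply to it. What the argument genuinely needs is that the count in the inequality reduces to $\Gamma\cap V\cap E$ (or to an $E$ that is, by construction, a subvariety of $V$, as happens in the analogous Lemma~\ref{le:dimest-k} where $E=\pi^{-1}(Z)\subsetneq V$). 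That is plausible and likely how \cite{BDH24} sets things up, but it is a claim about the internal construction in the proof of Proposition~\ref{pr:oberstair}, not a consequence of its statement, and stating it as ``$\dim(G\cap E)<d'$'' is simply wrong. You should instead use $\dim(V\cap E)<d'$ with $\deg(V\cap E)\leq\deg(V)\deg(E)$ by B\'ezout. Finally, be aware that the ``routine computations'' you invoke to land on $C\leq(2d\deg V)^{d^{\dim V}}$ are not innocuous: the $E$-case has the factor $(2\ell)^d\deg(V)^{\ell}$ with $\ell$ as large as $d-d'+1$, and making the exponents align exactly with the paper's $d^{\dim V}$ tower (rather than picking up stray factors of $(2d)^{d^{d'-1}}$ and extra powers of $\deg V$) is not visibly immediate; the paper outsources this calculation to \cite{BDH24} and you should actually carry it out before trusting it.
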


\begin{proof}
The statement and proof are almost as in~\cite[Thm.~4.5]{BDH24}. The inductive step here is provided by Proposition~\ref{pr:oberstair}, which introduces cases~\eqref{th:dimest-small} and~\eqref{th:dimest-h}. After replacing $A$ with the group $\Gamma$ in the conclusion, we can ignore the exponent $C_{2}$ since $\Gamma=\langle\Gamma\rangle$, and the computation of $C_{1}$, which is denoted by $C$ here, is the same.
\end{proof}

The dimensional estimate of Theorem~\ref{th:dimest} can be extended to cover more general $G$. If one is not concerned about the correct exponent for $|\Gamma|$, this is quite easy to achieve in the group $G^{k}$ (the direct product of $k$ copies of $G$).

\begin{corollary}\label{co:gkbaddim}
Let $G\leq\mathrm{GL}_{n}$ be a connected almost simple linear algebraic group of rank $r$ with $\dim(G)=d$ and $\deg(G)=D$, defined over a field $K$, and let $\Gamma\leq G(\overline{K})$ be a finite subgroup. Then at least one of the following holds: 
\begin{enumerate}[(a)]
\item\label{co:gkbaddim-small} $|\Gamma|\leq(2dD)^{d+1}$;
\item\label{co:gkbaddim-h} $\Gamma\leq H(\overline{K})$ for some subgroup $H<G$ with $\dim(H)<d$ and $\deg(H)\leq (2dD)^{d+1}$;
\item\label{co:gkbaddim-ok} for any $k\geq 1$ and any proper subvariety $V\subsetneq G^{k}$, we have 
\begin{equation*}
|\Gamma^{k}\cap V(\overline{K})|\leq k(2d\deg(V))^{d^{d-1}}|\Gamma|^{k-\frac{1}{d}}.
\end{equation*}
\end{enumerate}
\end{corollary}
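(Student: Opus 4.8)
The plan is to reduce the statement for $G^{k}$ back to the single-copy Theorem~\ref{th:dimest}, paying only a modest price in the exponent of $|\Gamma|$. First, I would dispose of cases \eqref{co:gkbaddim-small} and \eqref{co:gkbaddim-h}: if $\Gamma$ falls into one of those, we are immediately done, so from now on assume $\Gamma$ satisfies neither. By Theorem~\ref{th:dimest} applied to $\Gamma\leq G(\overline{K})$, we then have the dimensional estimate \eqref{eq:boundC}: for every proper subvariety $V'\subsetneq G$,
\begin{equation*}
|\Gamma\cap V'(\overline{K})|\leq\left(2d\deg(V')\right)^{d^{\dim(V')}}|\Gamma|^{\dim(V')/d}.
\end{equation*}
The task is to bootstrap this into a bound on $|\Gamma^{k}\cap V(\overline{K})|$ for $V\subsetneq G^{k}$.

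The key geometric input is a fibration argument over the product structure. Write $\pi_{i}:G^{k}\to G$ for the projection onto the $i$-th coordinate. Since $V\subsetneq G^{k}$ is proper, some coordinate is "constrained": more precisely, one proceeds by slicing. For each point $\vec{g}=(g_{1},\ldots,g_{k-1})\in G^{k-1}(\overline{K})$, let $V_{\vec{g}}\subseteq G$ be the fibre $\{h\in G:(\vec{g},h)\in V\}$. Then
\begin{equation*}
|\Gamma^{k}\cap V(\overline{K})|=\sum_{\vec{g}\in\Gamma^{k-1}}|\Gamma\cap V_{\vec{g}}(\overline{K})|,
\end{equation*}
and one separates the sum according to whether $V_{\vec{g}}=G$ or $V_{\vec{g}}\subsetneq G$. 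For the fibres equal to $G$, the set of such $\vec{g}$ forms (the $\overline{K}$-points of) a proper subvariety $V^{*}\subsetneq G^{k-1}$ — proper because $V\neq G^{k}$ — and one recurses on $k$. For the proper fibres $V_{\vec{g}}\subsetneq G$, Theorem~\ref{th:dimest}\eqref{th:dimest-ok} gives $|\Gamma\cap V_{\vec{g}}(\overline{K})|\leq(2d\deg(V_{\vec{g}}))^{d^{\dim(V_{\vec{g}})}}|\Gamma|^{\dim(V_{\vec{g}})/d}\leq(2d\deg(V_{\vec{g}}))^{d^{d-1}}|\Gamma|^{(d-1)/d}$, using $\dim(V_{\vec{g}})\leq d-1$. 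The degrees $\deg(V_{\vec{g}})$ are uniformly bounded in terms of $\deg(V)$ (slicing by a linear subspace and applying B\'ezout, as in Corollary~\ref{co:inters}, or Proposition~\ref{pr:degexc}), and the number of relevant $\vec{g}$ is at most $|\Gamma^{k-1}|=|\Gamma|^{k-1}$. Combining, the "proper fibre" contribution is at most $|\Gamma|^{k-1}\cdot(2d\deg(V))^{d^{d-1}}|\Gamma|^{(d-1)/d}=(2d\deg(V))^{d^{d-1}}|\Gamma|^{k-1/d}$, which is exactly the shape we want.

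The recursion on $k$ then runs as follows: set $N(k)$ to be the best bound on $|\Gamma^{k}\cap V(\overline{K})|$ over all proper $V\subsetneq G^{k}$ of degree $\leq\deg(V)$. The base case $k=1$ is Theorem~\ref{th:dimest} itself (with the cruder exponent $d^{d-1}$ in place of $d^{\dim(V)}$, which is weaker and hence fine). The inductive step gives $N(k)\leq N(k-1)+(2d\deg(V))^{d^{d-1}}|\Gamma|^{k-1/d}$; since $N(k-1)\leq (k-1)(2d\deg(V))^{d^{d-1}}|\Gamma|^{k-1-1/d}\leq (k-1)(2d\deg(V))^{d^{d-1}}|\Gamma|^{k-1/d}$ (using $|\Gamma|\geq 1$), we obtain $N(k)\leq k(2d\deg(V))^{d^{d-1}}|\Gamma|^{k-1/d}$, proving \eqref{co:gkbaddim-ok}. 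Throughout one must track that $\deg(V^{*})$ and $\deg(V_{\vec{g}})$ stay bounded by $\deg(V)$ (or a mild function of it absorbed into the constant) — this is the only real bookkeeping, and it is exactly the kind of degree-propagation handled by the tools of Section~\ref{se:var}.

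\textbf{Main obstacle.} The delicate point is the slicing step: one must argue that, after fixing $g_{1},\ldots,g_{k-1}\in\Gamma$, either the fibre $V_{\vec{g}}$ is all of $G$ (which happens only on a proper subvariety of $G^{k-1}$, so that coordinate-$k$ is genuinely constrained "generically") or it is a proper subvariety of controlled degree. Making "generically proper" precise — i.e. identifying the right proper subvariety $V^{*}\subsetneq G^{k-1}$ of parameters over which the fibre degenerates to $G$, and bounding its degree — is where Proposition~\ref{pr:degexc} (on the locus of exceptional fibres) does the heavy lifting. Once that is in place the rest is a straightforward geometric induction, and the loss from $d^{\dim(V)}$ to $d^{d-1}$ in the exponent, plus the factor $k$ and the shift from $|\Gamma|^{\dim(V)/d}$ to $|\Gamma|^{k-1/d}$, are exactly the slack one expects from this cruder product argument.
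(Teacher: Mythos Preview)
Your proposal is correct and essentially unpacks the same argument the paper uses. The paper's proof is two lines: it invokes the ``crumbling'' lemma \cite[Lem.~4.3]{BDH24} to obtain $|\Gamma^{k}\cap V(\overline{K})|\leq k|\Gamma|^{k-1}|\Gamma\cap V'(\overline{K})|$ for some proper $V'\subsetneq G$ with $\deg(V')\leq\deg(V)$, and then applies Theorem~\ref{th:dimest}\eqref{th:dimest-ok} to $V'$ with $\dim(V')\leq d-1$. Your slicing-and-induction on $k$ is precisely the content of that lemma, rederived from scratch.

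One simplification is worth pointing out. You flag the degree bound on the degenerate locus $V^{*}=\{\vec{g}:V_{\vec{g}}=G\}$ as the main obstacle and reach for Proposition~\ref{pr:degexc}, but this is heavier than needed. Since $V\neq G^{k}$, there exists $h_{0}\in G(\overline{K})$ for which the single slice $V_{h_{0}}:=\{\vec{g}:(\vec{g},h_{0})\in V\}$ is proper in $G^{k-1}$; trivially $V^{*}\subseteq V_{h_{0}}$, and by B\'ezout (intersection with the affine subspace $G^{k-1}\times\{h_{0}\}$) one has $\deg(V_{h_{0}})\leq\deg(V)$. Recursing on $V_{h_{0}}$ in place of $V^{*}$ keeps the degree bound at $\deg(V)$ throughout, and your induction closes exactly with the constant $k(2d\deg(V))^{d^{d-1}}$. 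No appeal to the exceptional-fibre machinery of Proposition~\ref{pr:degexc} is necessary here.
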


\begin{proof}
Apply the ``crumbling'' lemma given as~\cite[Lem.~4.3]{BDH24}, so that
\begin{equation*}
|\Gamma^{k}\cap V(\overline{K})|\leq k|\Gamma|^{k-1}|\Gamma\cap V'(\overline{K})|
\end{equation*}
for some subvariety $V'\subsetneq G$ with $\deg(V')\leq\deg(V)$, and then apply Theorem~\ref{th:dimest} to bound $|\Gamma\cap V'(\overline{K})|$.
\end{proof}

If we want to achieve the correct exponent for $|\Gamma^{k}\cap V(\overline{K})|$, we can set up an appropriate induction on both $k$ and $\dim(V)$. We do so in full generality in the following lemma, and then we show below an application to orbits that will yield a useful lower bound for centralizers of subsets of $\Gamma$.

\begin{lemma}\label{le:dimest-k}
Let $G\leq\mathrm{GL}_{n}$ be a connected almost simple linear algebraic group of rank $r$ with $\dim(G)=d$, $\deg(G)=D$ and $\mathrm{mdeg}(^{-1})=\iota$, defined over a field $K$, and let $\Gamma\leq G(\overline{K})$ be a finite subgroup. Then at least one of the following holds:
\begin{enumerate}[(a)]
\item\label{le:dimest-k-small} $|\Gamma|\leq(2dD)^{d+1}$;
\item\label{le:dimest-k-h} $\Gamma\leq H(\overline{K})$ for some subgroup $H<G$ with $\dim(H)<d$ and $\deg(H)\leq(2dD)^{d+1}$;
\item\label{le:dimest-k-ok} for any $k\geq 1$ and any proper subvariety $V\subsetneq G^{k}$, we have 
\begin{equation*}
|\Gamma^{k}\cap V(\overline{K})|\leq(2d\deg(V))^{kd^{\dim(V)}}|\Gamma|^{\frac{\dim(V)}{d}}.
\end{equation*}
\end{enumerate}
\end{lemma}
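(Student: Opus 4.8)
The plan is to run a double induction: an outer induction on $\dim(V)$ and, for each value of the dimension, an inner induction on $k$. As in Theorem~\ref{th:dimest}, we may dispose of cases~\eqref{le:dimest-k-small} and~\eqref{le:dimest-k-h} at the outset: if neither holds, then by Lemma~\ref{le:groupvar} the group $\Gamma$ escapes from every subvariety of $G$ of degree $\leq 2dD$, and moreover $\Gamma$ itself satisfies the hypotheses of Proposition~\ref{pr:oberstair} with its case~\eqref{th:dimest-ok-oberstair} in force (and of Theorem~\ref{th:dimest} with its case~\eqref{th:dimest-ok} in force, applied to $G$ rather than $G^{k}$). So from now on we assume~\eqref{le:dimest-k-ok} is what we must prove.

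The base of the outer induction is $\dim(V)=0$: then $V(\overline{K})$ is a finite set of $\leq\deg(V)$ points, so $|\Gamma^{k}\cap V(\overline{K})|\leq\deg(V)\leq(2d\deg(V))^{kd^{0}}$ trivially. For the inner induction, the base case $k=1$ is exactly Theorem~\ref{th:dimest}\eqref{th:dimest-ok} applied to $\Gamma\leq G(\overline{K})$: it gives $|\Gamma\cap V(\overline{K})|\leq(2d\deg(V))^{d^{\dim(V)}}|\Gamma|^{\dim(V)/d}$, which is the claimed bound with $k=1$. For the inductive step with $k\geq 2$, I would write $G^{k}=G^{k-1}\times G$ and use the ``crumbling'' lemma \cite[Lem.~4.3]{BDH24} (the same tool as in Corollary~\ref{co:gkbaddim}, but applied in a more refined way): slicing $V\subseteq G^{k-1}\times G$ along the last coordinate produces, for each point $x$ in the last factor, a fibre $V_{x}\subseteq G^{k-1}$, and crumbling controls $|\Gamma^{k}\cap V(\overline{K})|$ by a bound of the shape (number of relevant last coordinates) $\times$ (worst fibre count), where the last coordinate ranges over $\Gamma\cap\pi(V)(\overline{K})$ for $\pi$ the projection, and each fibre is a subvariety of $G^{k-1}$ of degree $\leq\deg(V)$ and dimension $\leq\dim(V)$. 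One then distinguishes whether a generic fibre has dimension $\dim(V)$ (so the image under $\pi$ is $0$-dimensional, hence finite and bounded by $\deg(V)$, and we apply the inductive hypothesis on $k$ to the fibres) or the image is positive-dimensional (so the generic fibre has strictly smaller dimension $\dim(V)-\dim(\overline{\pi(V)})<\dim(V)$, the non-generic fibres live over a proper subvariety $Z$ of $\overline{\pi(V)}$ of controlled degree by Proposition~\ref{pr:degexc}, and we apply the inductive hypothesis on $\dim(V)$ to those, and the $k=1$ estimate and the inductive hypothesis on $k$ for the generic part). Combining the two estimates, using $|\Gamma|\geq 1$ and Lemma~\ref{le:zarimdeg} / B\'ezout to keep the degree of $Z$, of $\overline{\pi(V)}$, and of the fibres all $\leq$ a fixed polynomial in $\deg(V)$, and summing the exponents of $|\Gamma|$ via $\dim(\overline{\pi(V)})/d+(\dim(V)-\dim(\overline{\pi(V)}))/d=\dim(V)/d$, yields the exponent $\dim(V)/d$ on $|\Gamma|$ and an accumulated constant that one checks is absorbed into $(2d\deg(V))^{kd^{\dim(V)}}$.

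\textbf{Main obstacle.} The bookkeeping of the multiplicative constant is the delicate part: one needs the constants produced at the two ends (the $k$-step, which multiplies by roughly $(2d\deg(V))^{(k-1)d^{\dim(V)}}$, and the $\dim(V)$-step, which must pay for the degree of $Z$ and the degrees of fibres, all of which are themselves bounded only by products of several copies of $\deg(V)$ through B\'ezout) to telescope into exactly $(2d\deg(V))^{kd^{\dim(V)}}$, with the factor $d^{\dim(V)}$ in the exponent being precisely what gives the slack to absorb the extra $\deg(V)$ powers at each descent in dimension. I would set this up by proving, by the same induction, the slightly stronger bookkeeping statement that the constant is $\leq(2d\deg(V))^{kd^{\dim(V)}}$ with room to spare at lower dimensions, so that the inductive step never saturates; this is exactly the kind of estimate where the exponent tower $d^{\dim(V)}$ (rather than, say, a single power of $d$) is forced, and checking that the chosen exponent survives the worst branch of the fibre dichotomy is where the real care is needed. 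Everything else is a direct transcription of the argument in \cite[\S 4]{BDH24}, now with $\Gamma$ in place of a generating set and with the escape step supplied by Lemma~\ref{le:groupvar} as in Proposition~\ref{pr:oberstair} and Theorem~\ref{th:dimest}.
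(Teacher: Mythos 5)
Your proposal takes essentially the same route as the paper: a double induction on $\dim(V)$ (outer) and $k$ (inner), disposing of cases \eqref{le:dimest-k-small}--\eqref{le:dimest-k-h} up front, using Theorem~\ref{th:dimest} as the $k=1$ base, projecting $V$ to one factor of $G^{k}$, and splitting into a generic-fibre part and an exceptional part controlled by Proposition~\ref{pr:degexc}, with exactly the observation that the exponent $d^{\dim(V)}$ provides the slack needed to absorb the B\'ezout degree growth. Two minor points: you project onto the last copy of $G$ while the paper projects onto $G^{k-1}$ (a symmetric variant; both work with the same bookkeeping), and the tool you are actually describing is the refined fibre lemma \cite[Lem.~4.2]{BDH24}, not the cruder ``crumbling'' lemma \cite[Lem.~4.3]{BDH24} used in Corollary~\ref{co:gkbaddim}.
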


\begin{proof}
Assume that \eqref{le:dimest-k-small} and \eqref{le:dimest-k-h} do not hold. We prove the inequality in \eqref{le:dimest-k-ok} by applying a double induction on $k$ and $\dim(V)$. The base case $k=1$ follows from Theorem~\ref{th:dimest}, and the other base case $\dim(V)=0$ is vacuously true (since $|\Gamma^{k}\cap V(\overline{K})|\leq\deg(V)$). Assume that \eqref{le:dimest-k-ok} holds for all pairs $(k',V')$ that either have $k'<k$ and $\dim(V')\leq\dim(V)$ or have $k'\leq k$ and $\dim(V')<\dim(V)$. We may assume that $V$ is irreducible, since the bound is more than linear in $\deg(V)$.
    
Consider the projection map $\pi : V \rightarrow G^{k-1}$ defined by $$(x_1, \cdots, x_{k-1}, x_{k}) \mapsto (x_1, \cdots, x_{k-1}).$$
Since $V$ is irreducible, by \cite[Lem.~4.2]{BDH24}
\begin{equation}\label{eq:Gamma-k}
|\Gamma^{k}\cap V(\overline{K})|\leq|\Gamma^{k-1}\cap\overline{\pi(V)}(\overline{K})||\Gamma^{k}\cap W(\overline{K})|+|\Gamma^{k}\cap E(\overline{K})|,
\end{equation}
where $W=\pi^{-1}(y)\subsetneq G^{k}$ for some point $y\in\Gamma^{k-1}\cap\pi(V)$ with $\dim(W)=\dim(V)-\dim(\overline{\pi(V)})$, and where $E=\pi^{-1}(Z)\subsetneq V$ for some subvariety $Z\subsetneq\overline{\pi(V)}$ with $\deg(Z)\leq\mdeg(\pi)^{\dim(\overline{\pi(V)})-1}\deg(V)$. The projection $\pi$ has $\mdeg(\pi)=1$, so by Lemma~\ref{le:zarimdeg}\eqref{le:zarimdeg-im} we obtain the bounds
\begin{align*}
\deg(\overline{\pi(V)}) & \leq\deg(V), & \deg(W) & \leq\deg(V), & \deg(E) & \leq\deg(V)^{2}.
\end{align*}
We have also $\dim(E)<\dim(V)$ since $V$ is irreducible, so by induction on the dimension
\begin{align}\label{eq:dim-E}
|\Gamma^{k}\cap E(\overline{K})| & \leq (2d\deg(V)^{2})^{k d^{\dim(E)}}|\Gamma|^{\frac{\dim(E)}{d}}\leq (2d\deg(V))^{2k d^{\dim(V)-1}}|\Gamma|^{\frac{\dim(V)-1}{d}},
\end{align}
and by induction on $k$
\begin{equation}\label{eq:dim-piV}
|\Gamma^{k-1}\cap\overline{\pi(V)}(\overline{K})|\leq(2d\deg(V))^{(k-1)d^{\dim(\overline{\pi(V)})}} |\Gamma|^{\frac{\dim(\overline{\pi(V)})}{d}}.
\end{equation}

We now estimate $|\Gamma^{k}\cap V(\overline{K})|$ for all three different possibilities:
\begin{enumerate}
\item $\dim(\overline{\pi(V)})=0$.
\item $\dim(W)=0$.
\item $ 1 \leq \dim(W), \dim(\overline{\pi(V)}) \leq \dim(V)-1 $.
\end{enumerate}

Consider first $\dim(\overline{\pi(V)})=0$. Since $V$ is irreducible, $\overline{\pi(V)}$ is irreducible, therefore $\overline{\pi(V)}=\{y\}$ for some $y=(y_1,\cdots,y_{k-1})$. Hence $V=\pi^{-1}(y)$, which means that $V$ is in fact a variety of the form $\{y_1\} \times \cdots \times \{y_{k-1}\} \times V$ sitting inside a copy of $G$ itself. The result follows from Theorem~\ref{th:dimest}.

Now consider $\dim(W)=0$. In this case $\dim(\overline{\pi(V)})=\dim(V)$, and $W$ is a set of $\deg(W)$ points so $|\Gamma^{k}\cap W(\overline{K})|\leq\deg(W)$. Therefore, following~\eqref{eq:Gamma-k} along with~\eqref{eq:dim-E} and \eqref{eq:dim-piV}, for $k\geq 1$, $\dim(V)\geq 1$, and $d\geq 3$, we write that
\begin{align*}
|\Gamma^{k}\cap V(\overline{K})| & \leq (2d\deg(V))^{(k-1)d^{\dim(V)}+1}|\Gamma|^{\frac{\dim(V)}{d}}+(2d\deg(V))^{2kd^{\dim(V)-1}}|\Gamma|^{\frac{\dim(V)-1}{d}} \\
& \leq  (2d \deg(V))^{kd^{\dim(V)}-2} |\Gamma|^{\frac{\dim(V)}{d}}+ (2d\deg(V))^{k d^{\dim(V)}-1}|\Gamma|^{\frac{\dim(V)}{d}} \\
 & \leq (2d\deg(V))^{kd^{\dim(V)}}|\Gamma|^{\frac{\dim(V)}{d}}.
\end{align*}

Finally consider the case $1\leq\dim(W),\dim(\overline{\pi(V)})\leq\dim(V)-1$. Applying induction on the dimension, we know that
\begin{align}\label{eq:dim-W}
|\Gamma^k\cap W(\overline{K})|\leq(2d\deg(V))^{kd^{\dim(W)}}|\Gamma|^{\frac{\dim(W)}{d}}.
\end{align}
Following \eqref{eq:dim-piV} and~\eqref{eq:dim-W}, we find that
\begin{align}
|\Gamma^{k-1}\cap\overline{\pi(V)}(\overline{K})||\Gamma^{k}\cap W(\overline{K})| & \leq  (2 d\deg(V))^{kd^{\dim(W)}+(k-1)d^{\dim(\overline{\pi(V)})}} |\Gamma|^{\frac{\dim(W)+\dim(\overline{\pi(V)})}{d}} \nonumber \\
& \leq  (2 d\deg(V))^{2 k d^{\dim(V)-1}} |\Gamma|^{\frac{\dim(V)}{d}}, \label{eq:dim-W+piV}
\end{align}
since $\dim(W) +\dim(\overline{\pi(V)})=\dim(V)$. Using~\eqref{eq:dim-E} and~\eqref{eq:dim-W+piV} in~\eqref{eq:Gamma-k}, we get 
\begin{align*}
|\Gamma^{k}\cap V(\overline{K})| & \leq 2(2d\deg(V))^{2kd^{\dim(V)-1}}|\Gamma|^{\frac{\dim(V)}{d}}\\
& \leq (2d\deg(V))^{kd^{\dim(V)}}|\Gamma|^{\frac{\dim(V)}{d}},
\end{align*}
since $d\geq 3$ and $\dim(V)\geq 1$.
\end{proof}

Thanks to the previous lemma, we are able to obtain not only an upper bound but also a lower bound for centralizers. This is due essentially to the {\em orbit-stabilizer theorem}: if a finite group $G$ acts on a set $X$ and $C_{G}(x),Gx$ are respectively the stabilizer and the orbit of a point $x\in X$ under this action, then $|C_{G}(x)||Gx|=|G|$. If $G$ is an algebraic group, as in our case, in certain cases an alternative version of the theorem still holds. If $G$ is a connected semisimple algebraic group acting on itself by conjugation, $C_{G}(x)$ is the centralizer as defined in Section~\ref{se:linalg-basic}, and $\mathrm{Cl}_{G}(x)$ (the {\em conjugacy class} of $x$) is the image of $G$ under the map $\sigma_{x}:G\rightarrow G$ given by $\sigma_{x}(g)=gxg^{-1}$, then
\begin{equation}\label{eq:osthm}
\dim(C_{G}(x))+\dim(\overline{\mathrm{Cl}_{G}(x)})=\dim(G)
\end{equation}
(see for instance \cite[\S 1.5]{Hum95a}). Moreover, \eqref{eq:osthm} still holds if $G$ acts by conjugation on $G^{k}$ as follows: $\left( g, (x_1, \cdots, x_k)\right) \mapsto \left(gx_1g^{-1}, \cdots, gx_kg^{-1} \right)$. In fact, we can see $C_{G}(x)$ as the fibre of $x$ through $\sigma_{x}:G\rightarrow G^{k}$, and then \eqref{eq:osthm} follows from fundamental results of algebraic geometry (see \cite[\S I.8]{Mumford}, used also in \cite[\S 4]{BDH24} and when we say $\dim(W)=\dim(V)-\dim(\overline{\pi(V)})$ in the proof of Lemma~\ref{le:dimest-k}) and from the fact that every fibre through $\sigma_{x}$ must have the same dimension (as they are images of each other through appropriate conjugation maps).

\begin{corollary}\label{co:estcentr}
Let $G\leq\mathrm{GL}_{n}$ be a connected almost simple linear algebraic group of rank $r$ with $\dim(G)=d$, $\deg(G)=D$ and $\mathrm{mdeg}(^{-1})=\iota$, defined over a field $K$, and let $\Gamma\leq G(\overline{K})$ be a finite subgroup. Then at least one of the following holds:
\begin{enumerate}[(a)]
\item\label{co:estcentr-small} $|\Gamma|\leq(2dD)^{d+1}$;
\item\label{co:estcentr-h} $\Gamma\leq H(\overline{K})$ for some subgroup $H<G$ with $\dim(H)<d$ and $\deg(H)\leq(2dD)^{d+1}$;
\item\label{co:estcentr-ok} for any subset $\Lambda\subseteq\Gamma$, the centralizer $C_{G}(\Lambda)$ satisfies the bounds
\begin{equation*}
\frac{1}{\varphi(d-d')}|\Gamma|^{\frac{d'}{d}}\leq|\Gamma\cap C_{G}(\Lambda)(\overline{K})|\leq \varphi(d')|\Gamma|^{\frac{d'}{d}}
\end{equation*}
\end{enumerate}
where $d'=\dim(C_{G}(\Lambda))$ and $\varphi(x)=(2dD(\iota+1))^{x(d+1)d^{x}}$.
\end{corollary}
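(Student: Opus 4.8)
The plan is to assume that neither \eqref{co:estcentr-small} nor \eqref{co:estcentr-h} holds and to deduce \eqref{co:estcentr-ok}. Under this assumption the conclusion \eqref{th:dimest-ok} of Theorem~\ref{th:dimest} and the conclusion \eqref{le:dimest-k-ok} of Lemma~\ref{le:dimest-k} are both available for the same $G$ and $\Gamma$, and $G$ is semisimple since it is almost simple. Fix $\Lambda\subseteq\Gamma$ and put $d'=\dim(C_{G}(\Lambda))$. The extreme values of $d'$ are handled directly: if $d'=d$ then $C_{G}(\Lambda)\supseteq C_{G}(\Lambda)^{\mathrm{o}}=G$, so $C_{G}(\Lambda)=G$ and the claimed inequalities reduce to $\frac{1}{\varphi(0)}|\Gamma|\leq|\Gamma|\leq\varphi(d)|\Gamma|$, which holds since $\varphi(0)=1$; the value $d'=0$, where $C_{G}(\Lambda)$ is finite, is treated directly (the lower bound being then trivial). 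So from now on assume $1\leq d'\leq d-1$.

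\textbf{Upper bound.} In this range $C_{G}(\Lambda)$ is a nonempty proper subvariety of $G$ of dimension $d'$, and by Corollary~\ref{co:centralizer} it has degree at most $D$. Applying \eqref{th:dimest-ok} of Theorem~\ref{th:dimest} with $V=C_{G}(\Lambda)$ gives
\[
|\Gamma\cap C_{G}(\Lambda)(\overline{K})|\leq(2dD)^{d^{d'}}|\Gamma|^{d'/d}\leq\varphi(d')|\Gamma|^{d'/d},
\]
where the last step uses $2dD\leq 2dD(\iota+1)$ and $d^{d'}\leq d'(d+1)d^{d'}$.

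\textbf{Lower bound.} By Corollary~\ref{co:centralizer} there is a nonempty $\Lambda'=\{\lambda_{1},\ldots,\lambda_{k}\}\subseteq\Lambda$ with $k\leq d+1$ and $C_{G}(\Lambda')=C_{G}(\Lambda)$; set $x=(\lambda_{1},\ldots,\lambda_{k})\in\Gamma^{k}$. Let $G$ act on $G^{k}$ by simultaneous conjugation, with orbit map $\sigma_{x}:G\rightarrow G^{k}$, $g\mapsto(g\lambda_{1}g^{-1},\ldots,g\lambda_{k}g^{-1})$, so that $\mathrm{Cl}_{G}(x)=\sigma_{x}(G(\overline{K}))$ and $C_{G}(x)=C_{G}(\Lambda)$. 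Each coordinate of $\sigma_{x}$ is, up to a constant factor, a product of an entry of $g$ with an entry of $g^{-1}$, so $\mathrm{mdeg}(\sigma_{x})\leq\iota+1$. By \eqref{eq:osthm} we have $\dim(\overline{\mathrm{Cl}_{G}(x)})=d-d'$, which lies between $1$ and $d-1$, hence $\overline{\mathrm{Cl}_{G}(x)}$ is a proper subvariety of $G^{k}$; and by Lemma~\ref{le:zarimdeg}\eqref{le:zarimdeg-im}, $\deg(\overline{\mathrm{Cl}_{G}(x)})\leq D(\iota+1)^{d-d'}$. Now the $\Gamma$-orbit of $x$ equals $\{\gamma x\gamma^{-1}:\gamma\in\Gamma\}\subseteq\Gamma^{k}\cap\overline{\mathrm{Cl}_{G}(x)}(\overline{K})$ and its stabilizer in $\Gamma$ is precisely $\Gamma\cap C_{G}(\Lambda)(\overline{K})$, so combining the orbit--stabilizer theorem for the finite group $\Gamma$ with \eqref{le:dimest-k-ok} of Lemma~\ref{le:dimest-k} yields
\[
\frac{|\Gamma|}{|\Gamma\cap C_{G}(\Lambda)(\overline{K})|}\leq|\Gamma^{k}\cap\overline{\mathrm{Cl}_{G}(x)}(\overline{K})|\leq\bigl(2d\,D(\iota+1)^{d-d'}\bigr)^{(d+1)d^{d-d'}}|\Gamma|^{(d-d')/d}\leq\varphi(d-d')|\Gamma|^{(d-d')/d},
\]
and rearranging gives $|\Gamma\cap C_{G}(\Lambda)(\overline{K})|\geq\frac{1}{\varphi(d-d')}|\Gamma|^{d'/d}$, as desired.

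\textbf{The main obstacle.} The crux is that the $\Gamma$-orbit to be bounded naturally sits inside $G^{k}$ with $k$ possibly as large as $d+1$; this is exactly why Lemma~\ref{le:dimest-k} was proved over the powers $G^{k}$ with the sharp exponent $\dim(V)/d$, rather than merely in the cruder shape of Corollary~\ref{co:gkbaddim}, which would not recover $|\Gamma|^{d'/d}$. The remaining work is bookkeeping: using Corollary~\ref{co:centralizer} to keep $k\leq d+1$ so that $\varphi$ depends only on $d,D,\iota$; estimating $\deg(\overline{\mathrm{Cl}_{G}(x)})$ through $\mathrm{mdeg}(\sigma_{x})\leq\iota+1$ and Lemma~\ref{le:zarimdeg}; invoking the orbit--stabilizer dimension identity \eqref{eq:osthm} in the form $\dim(\overline{\mathrm{Cl}_{G}(x)})=d-d'$, which relies on $G$ being semisimple and on all fibres of $\sigma_{x}$ having equal dimension; and checking the constant comparisons against $\varphi$ as indicated above, together with the two boundary cases $d'\in\{0,d\}$.
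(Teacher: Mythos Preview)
Your proof is correct and follows essentially the same route as the paper's: reduce $\Lambda$ to at most $d+1$ elements via Corollary~\ref{co:centralizer}, obtain the upper bound from Theorem~\ref{th:dimest} applied to $C_{G}(\Lambda)$ with $\deg\leq D$, and obtain the lower bound by combining the orbit--stabilizer theorem with Lemma~\ref{le:dimest-k} applied to $\overline{\mathrm{Cl}_{G}(x)}\subseteq G^{k}$, using \eqref{eq:osthm} and the degree bound $\deg(\overline{\mathrm{Cl}_{G}(x)})\leq D(\iota+1)^{d-d'}$ from Lemma~\ref{le:zarimdeg}. Your explicit treatment of the boundary cases $d'\in\{0,d\}$ and your verification of the constant comparisons against $\varphi$ are slightly more careful than the paper's presentation, but the argument is the same.
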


This result corresponds to \cite[Thm.~6.2]{LP11}. We are more pedantic with the constants, using the function $\varphi(x)$ instead of a unique constant $c_{0}$ as in \cite{LP11}, since otherwise the tower in the bound of Theorem~\ref{th:main}\eqref{th:main-small} has one more floor (due to the factorial in \eqref{eq:gfgfup}).

\begin{proof}
Our strategy is to use the upper bounds coming from the previous theorems on both centralizers and conjugacy classes, and transform the upper bound for the latter into a lower bound for the former via the orbit-stabilizer theorem and \eqref{eq:osthm}. Since we have a set $\Lambda$ instead of a single element $x$ we must work in the direct product $G^{k}$, which is why we needed Lemma~\ref{le:dimest-k} and we had to explain why \eqref{eq:osthm} works for the action of $G$ on $G^{k}$.

Assume that \eqref{co:estcentr-small} and \eqref{co:estcentr-h} do not hold. By Corollary~\ref{co:centralizer} we may assume that $\Lambda=\{\gamma_1,\cdots,\gamma_k\} \subseteq G(\overline{K})$ with $k\leq d+1$, and also $\deg(C_{G}(\Lambda))\leq D$. Call $\vec{\lambda}=(\gamma_1,\cdots,\gamma_k)\in G^{k}$. $G$ acts on $G^{k}$ by conjugation as follows: $\left( g, (g_1, \cdots, g_k)\right) \mapsto \left(gg_1g^{-1}, \cdots, gg_kg^{-1} \right)$. Under this action $C_{G}(\vec{\lambda})=C_{G}(\Lambda)$, and following Theorem~\ref{th:dimest}\eqref{th:dimest-ok}
\begin{align}\label{eq:centralizer}
|\Gamma \cap C_{G}(\Lambda)(\overline{K})| & \leq C_1 |\Gamma|^{\frac{\dim(C_{G}(\Lambda))}{d}}, & C_1 & \leq (2d \deg(C_{G}(\Lambda)))^{d^{\dim(C_{G}(\Lambda))}},
\end{align}
so that $C_{1}\leq\varphi(\dim(C_{G}(\Lambda)))$. Similarly, Lemma~\ref{le:dimest-k} gives
\begin{align}\label{eq:conjclass}
|\Gamma^{k}\cap\overline{\mathrm{Cl}_{G}(\vec{\lambda})}(\overline{K})| & \leq C_2 |\Gamma|^{\frac{\dim(\overline{\mathrm{Cl}_{G}(\vec{\lambda}))}}{d}}, & C_2 & \leq (2d\deg(\overline{\mathrm{Cl}_{G}(\vec{\lambda}))})^{k d^{\dim(\overline{\mathrm{Cl}_{G}(\vec{\lambda})})}}.
\end{align}
By \eqref{eq:osthm} we have $\dim(C_{G}(\Lambda))+\dim(\overline{\mathrm{Cl}_{G}(\vec{\lambda})})=d$, and by Lemma~\ref{le:zarimdeg}\eqref{le:zarimdeg-im} $\deg(\overline{\mathrm{Cl}_{G}(\vec{\lambda})})\leq D(\iota+1)^{\dim(\overline{\mathrm{Cl}_{G}(\vec{\lambda})})}$, so that $C_{2}\leq\varphi(\dim(\overline{\mathrm{Cl}_{G}(\vec{\lambda})}))$.

To make \eqref{eq:conjclass} into a lower bound, we now need to pass to stabilizers and orbits in $\Gamma$ itself. The equality $C_{\Gamma}(\Lambda)=\Gamma\cap C_{G}(\Lambda)(\overline{K})$ is trivial by definition, and since $\Lambda\subseteq\Gamma$ we also have the inclusion $\mathrm{Cl}_{\Gamma}(\vec{\lambda})\subseteq\Gamma^{k}\cap\overline{\mathrm{Cl}_{G}(\vec{\lambda})}(\overline{K})$. By the orbit-stabilizer theorem then
\begin{equation}\label{eq:orbit}
|\Gamma\cap C_{G}(\Lambda)(\overline{K})|=\frac{|\Gamma|}{|\mathrm{Cl}_{\Gamma}(\vec{\lambda})|} \geq\frac{1}{C_2}|\Gamma|^{1-\frac{\dim(\overline{\mathrm{Cl}_{G}(\vec{\lambda}))}}{d}}=\frac{1}{C_2}|\Gamma|^{\frac{\dim(C_{G}(\Lambda))}{d}}.
\end{equation}
 
We get the desired inequalities in \eqref{co:estcentr-ok} by combining~\eqref{eq:centralizer} and~\eqref{eq:orbit}.
\end{proof}

\section{Finding a simple group of Lie type}\label{se:findgf}

The present section is devoted to giving an explicit version of \cite[Thm.~0.5]{LP11}. In brief, we are in a situation where $\Gamma$ is a finite subgroup of $G(\overline{K})$, where $G$ is already assumed to be connected almost simple adjoint. Under these conditions, we shall conclude that either $\Gamma$ is not ``sufficiently general'' (meaning that either $|\Gamma|$ is bounded or $\Gamma$ is contained in $H(\overline{K})$ with $H$ of bounded degree and strictly smaller dimension) or $\Gamma$ is, up to a small index, a finite simple group of Lie type in the same characteristic as $K$ (meaning that $[G^{F},G^{F}]\leq\Gamma\leq G^{F}$ for some Steinberg endomorphism $F$).

All the results of this section work under the same assumptions, except that we may need to use two different explicit versions of the ``sufficiently general'' condition. We collect the assumptions here and reference the appropriate choices throughout the section to make the statements less cumbersome.


\begin{assumption}
$G\leq\mathrm{GL}_{n}$ is a connected almost simple adjoint linear algebraic group of rank $r$ defined over an arbitrary field $K$, with $d=\dim(G)$, $D=\deg(G)$, and $\iota=\mathrm{mdeg}(^{-1})$. $\Gamma\leq G(\overline{K})$ is a finite subgroup, either satisfying

\textit{(Assumption~\customlabel{ass-large1}{A1})}\ \ $|\Gamma|>(2dDrn\iota)^{(2dDr\iota)^{10d^{4}}}$, and

\textit{(Assumption~\customlabel{ass-h1}{A2})}\ \ $\Gamma\not\leq H(\overline{K})$ for any linear algebraic subgroup $H\lneq G$ with
\begin{align*}
\dim(H)<d \quad \text{and} \quad \deg(H)\leq(2dD)^{4d},
\end{align*}
or satisfying

\textit{(Assumption~\customlabel{ass-large2}{B1})}\ \ $|\Gamma|>(2dDrn\iota)^{(2dDr\iota)^{11d^{4}}}$, and

\textit{(Assumption~\customlabel{ass-h2}{B2})}\ \ $\Gamma\not\leq H(\overline{K})$ for any linear algebraic subgroup $H\lneq G$ with
\begin{align*}
\dim(H)<d \quad \text{and} \quad \deg(H)\leq(2dDr)^{4d^{2}}.
\end{align*}
\end{assumption}

The reader should not be alarmed by the salad of letters appearing in the bound for $|\Gamma|$: what is really important is the number of exponential floors we use, and we include all the parameters with the sole intention of covering the many ways in which the assumption plays its role in the rest of the paper. In the final theorem, everything can be expressed just in terms of the rank.

\subsection{Finding the finite field $\mathbb{F}_{q}$}

The first goal is to find the ``correct'' field $\mathbb{F}_{q}$ that underlies the structure of $\Gamma$: if the final objective is to have $[G^{F},G^{F}]\leq\Gamma\leq G^{F}$ for some Steinberg endomorphism $F$ (as defined in Section~\ref{se:steinberg}), we need to know to what field the endomorphism is referred.

First, we shall find a regular unipotent element $u\in\Gamma$. Its existence proves incidentally that $\mathrm{char}(K)\neq 0$, since otherwise any nontrivial unipotent element generates an infinite group. Then we shall use $u$ to find a unipotent algebraic subgroup $V$ for which $\Gamma\cap V(\overline{K})$ is isomorphic (as a group) to $\mathbb{F}_{q}$. The group $V$ is essentially made of ``minimal'' elements: for instance, if $G=\mathrm{PGL}_{n}$ and $u$ is upper triangular, $V$ is the variety of unipotent elements having their only nonzero off-diagonal entry in the upper right corner; then, the values taken in that entry by elements of $\Gamma$ form the field $\mathbb{F}_{q}$.

Following \cite[p.~1129]{LP11}, we call $\Lambda\subseteq G(\overline{K})$ a {\em toric subset} if it is contained in an algebraic torus of $G$. The preliminary result below shows that centralizers of toric subsets of $\Gamma$ contain mostly regular semisimple elements of $\Gamma$.

\begin{lemma}\label{le:rss}
Let $G,n,r,K,d,D,\iota,\Gamma$ be as in Assumption~\ref{ass-large1}--\ref{ass-h1}. Then, for any toric subset $\Lambda\subseteq\Gamma$ we have
\begin{equation*}
|(\Gamma\cap C_{G}(\Lambda)^{\mathrm{o}})^{\mathrm{rss}}|\geq\left(1-\frac{1}{2(2r)^{r}}\right)|\Gamma\cap C_{G}(\Lambda)^{\mathrm{o}}|.
\end{equation*}
\end{lemma}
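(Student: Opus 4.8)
The plan is to bound the complement set $(\Gamma \cap C_G(\Lambda)^{\mathrm{o}}) \setminus G^{\mathrm{rss}}$ by a dimensional estimate. Write $C := C_G(\Lambda)^{\mathrm{o}}$; this is a connected algebraic group, and $\deg(C) \leq D$ by Corollary~\ref{co:centralizer} (the identity component has degree at most that of the whole centralizer). The elements of $C$ that fail to be regular semisimple form a subvariety $C \setminus G^{\mathrm{rss}}$, which is contained in $C^{\mathrm{irr}} \cup C^{\mathrm{un}'}$ where $C^{\mathrm{un}'}$ is the set of elements whose semisimple part is not regular — but actually the cleanest description is: $g \notin G^{\mathrm{rss}}$ means either $g$ is not regular, or $g$ is regular but not semisimple. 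In both cases $g_s$ satisfies $\dim(C_G(g_s)) > r$ OR $g$ has a nontrivial unipotent part. The key point is that this locus is a \emph{proper} subvariety of $C$: a generic element of a connected reductive-or-not group... here we must be slightly careful since $C$ need not be semisimple. However, $C = C_G(\Lambda)^{\mathrm{o}}$ with $\Lambda$ toric, so $\Lambda$ lies in a torus $T_0 \leq G$; then $C \supseteq C_G(T)^{\mathrm{o}}$ for some maximal torus $T \supseteq T_0$, i.e.\ $C$ contains a maximal torus of $G$ and hence has $\mathrm{rk}(C) = r$. A generic element of a connected group $C$ containing a maximal torus of $G$ is regular semisimple \emph{in $G$}: this is the standard fact that regular semisimple elements are dense (the relevant locus $\{g : g \text{ reg.\ ss.\ in } G\}$ meets $C$ in a dense open subset, because it meets the maximal torus of $C$ densely). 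So $C \setminus G^{\mathrm{rss}} = V_0(\overline{K})$ for a proper subvariety $V_0 \subsetneq C$.

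Next I would bound $\deg(V_0)$. The non-regular-semisimple locus in $\mathrm{GL}_n$ (or in $\mathrm{Mat}_{n+1}$) is cut out by explicit low-degree equations: an element is not regular semisimple iff its characteristic polynomial has a repeated root, i.e.\ the discriminant of the characteristic polynomial vanishes, which is a single polynomial of degree $O(n^2)$ in the matrix entries, together with conditions handling the non-semisimple-but-regular case (which is already inside the discriminant locus, since a non-semisimple element has repeated eigenvalues). So $C \setminus G^{\mathrm{rss}}$ is the intersection of $C$ with a hypersurface of degree at most roughly $n^2$, giving $\deg(V_0) \leq D \cdot n^2$ by B\'ezout \eqref{eq:bezout}, and $\dim(V_0) \leq \dim(C) - 1 =: d' - 1$ where $d' = \dim(C)$.

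Now apply the dimensional estimate. The hypotheses of Theorem~\ref{th:dimest} (equivalently Corollary~\ref{co:estcentr}) are met for $\Gamma \leq G(\overline{K})$: Assumption~\ref{ass-large1} ensures $|\Gamma|$ is too large for case~\eqref{th:dimest-small}, and Assumption~\ref{ass-h1} ensures case~\eqref{th:dimest-h} fails (the degree bound $(2dD)^{d+1} \leq (2dD)^{4d}$ is comfortably covered). Hence case~\eqref{th:dimest-ok} holds and, applying it to $V_0 \subsetneq G$,
\[
|\Gamma \cap V_0(\overline{K})| \leq \bigl(2d\deg(V_0)\bigr)^{d^{\dim(V_0)}} |\Gamma|^{\dim(V_0)/d} \leq \bigl(2d \cdot Dn^2\bigr)^{d^{d'-1}} |\Gamma|^{(d'-1)/d}.
\]
On the other hand, the lower bound from Corollary~\ref{co:estcentr}\eqref{co:estcentr-ok} (again using Assumptions~\ref{ass-large1}--\ref{ass-h1}) gives $|\Gamma \cap C(\overline{K})| = |\Gamma \cap C_G(\Lambda)^{\mathrm{o}}(\overline{K})| \geq \frac{1}{\varphi(d - d')} |\Gamma|^{d'/d}$ — strictly, Corollary~\ref{co:estcentr} is stated for $C_G(\Lambda)$, not its identity component, so I would either note that $|\Gamma \cap C_G(\Lambda)^{\mathrm{o}}| \geq |\Gamma \cap C_G(\Lambda)| / |C_G(\Lambda)/C_G(\Lambda)^{\mathrm{o}}|$ with the component group bounded by $\deg(C_G(\Lambda)) \leq D$, or re-run the orbit–stabilizer argument of Corollary~\ref{co:estcentr} directly with $C_G(\Lambda)^{\mathrm{o}}$ in place of $C_G(\Lambda)$ (the only change is replacing $\overline{\mathrm{Cl}_G(\vec\lambda)}$ by the closure of the orbit of $\vec\lambda$ under $C_G(\Lambda)^{\mathrm{o}}$... actually it is cleaner to keep $\mathrm{Cl}_G$ and just divide by the component-group size). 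Dividing the two estimates,
\[
\frac{|\Gamma \cap V_0(\overline{K})|}{|\Gamma \cap C_G(\Lambda)^{\mathrm{o}}(\overline{K})|} \leq \bigl(2dDn^2\bigr)^{d^{d'-1}} \varphi(d-d') \cdot |\Gamma|^{-1/d},
\]
and since $d' \leq d$ and $\varphi(d - d') = (2dD(\iota+1))^{(d-d')(d+1)d^{d-d'}}$, the whole prefactor is bounded by something like $(2dDrn\iota)^{(2dDr\iota)^{9d^4}}$ or so — crucially one exponential floor \emph{below} the threshold in Assumption~\ref{ass-large1}, namely $(2dDrn\iota)^{(2dDr\iota)^{10d^4}}$. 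Therefore $|\Gamma|^{1/d}$ dominates: the ratio is at most $\frac{1}{2(2r)^r}$, which rearranges exactly to the claimed inequality.

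\medskip

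The main obstacle I expect is the bookkeeping in the last paragraph: verifying that the product of the dimensional-estimate constant $(2d\deg(V_0))^{d^{\dim(V_0)}}$ and the centralizer lower-bound constant $\varphi(d-d')$ (plus the component-group factor $D$) really does sit below $|\Gamma|^{1/d}$ with room to spare for the factor $2(2r)^r$, using only Assumption~\ref{ass-large1}. This is purely a matter of comparing towers of exponentials — one needs $\log\log$ of the constant to be at most roughly $\log\log |\Gamma|^{1/d}$ minus a little — and it should go through because the constants here involve $d^{d^{O(1)}}$-type bounds while the threshold for $|\Gamma|$ involves an extra exponential floor $(\cdot)^{(\cdot)^{10d^4}}$. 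The secondary (but genuine) subtlety is the geometric claim that $C_G(\Lambda)^{\mathrm{o}} \setminus G^{\mathrm{rss}}$ is a proper subvariety with a controlled defining equation; the honest way is to observe that $\Lambda$ toric forces $C_G(\Lambda)^{\mathrm{o}}$ to contain a maximal torus of $G$, so it meets $G^{\mathrm{rss}}$ (because some regular semisimple element lies in that torus — every maximal torus of $G$ contains regular semisimple elements since $G$ is semisimple, cf.\ $G^{\mathrm{rss}}$ open dense in $G$ intersected with the torus), hence $G^{\mathrm{rss}} \cap C_G(\Lambda)^{\mathrm{o}}$ is open and nonempty in the irreducible variety $C_G(\Lambda)^{\mathrm{o}}$, thus dense, so its complement is a proper closed subvariety cut out inside $C_G(\Lambda)^{\mathrm{o}}$ by the discriminant of the characteristic polynomial (degree $O(n^2)$ in the matrix entries).
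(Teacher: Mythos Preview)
Your overall architecture is exactly the paper's: show that the non-regular-semisimple locus meets $C:=C_G(\Lambda)^{\mathrm{o}}$ in a proper subvariety of controlled degree, apply Theorem~\ref{th:dimest}\eqref{th:dimest-ok} for the upper bound, apply Corollary~\ref{co:estcentr}\eqref{co:estcentr-ok} (plus the component-group factor $\leq D$) for the lower bound, and check that Assumption~\ref{ass-large1} swallows the resulting constants. All of that is fine.

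The one genuine gap is your choice of defining equation. The discriminant of the characteristic polynomial in the ambient $\mathrm{GL}_n$ detects ``regular semisimple in $\mathrm{GL}_n$'', not ``regular semisimple in $G$''. You correctly observe that the former implies the latter, so the discriminant locus \emph{contains} $C\setminus G^{\mathrm{rss}}$; but for the dimensional estimate you need the discriminant locus to be \emph{proper} in $C$, and this can fail. Concretely: for $r\geq 2$ and $G\leq\mathrm{GL}_d$ the adjoint embedding (exactly the one Proposition~\ref{pr:adjbounds} produces), every $t$ in a maximal torus $T$ acts on $\mathfrak{g}$ with eigenvalue $1$ of multiplicity $r\geq 2$ (on the Cartan subalgebra), so the discriminant of the characteristic polynomial vanishes on all of $T$, hence on all of $G$ by density. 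Your $C\cap\{\mathrm{disc}=0\}$ is then all of $C$, and the bound $\deg(V_0)\leq Dn^2$ does not follow (containment does not bound degree).

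The paper fixes this by using a different equation: with $p_g(t)$ the characteristic polynomial of $\mathrm{Ad}_{G,g}$, the non-regular-semisimple locus is exactly $\{g:(t-1)^{r+1}\mid p_g(t)\}=\{g:p_g^{(r)}(1)=0\}$ (Hasse derivative), a single equation of degree $\leq d(\iota+1)$ in the entries of $g$. This is intrinsic to $G$, and intersected with $C$ it is proper because regular semisimple elements are dense in the maximal torus $T\subseteq C$. With this replacement your argument goes through verbatim, and the numerical bookkeeping is as you outlined.
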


\begin{proof}
We follow \cite[Prop.~7.2]{LP11}. For $g\in G(\overline{K})$, by \eqref{eq:mdegad} the matrix $\mathrm{Ad}_{G,g}\in\mathrm{Gl}(\mathfrak{g})(\overline{K})$ given by the adjoint representation has entries of degree $\leq\iota+1$ in the entries of $g$. We know that $g$ is not regular semisimple when the characteristic polynomial of $\mathrm{Ad}_{g}$ (call it $p_{g}(t)$) is divisible by $(t-1)^{r+1}$; using Hasse derivatives, this condition becomes $p_{g}^{(r)}(1)=0$, which is an equation of degree $\leq d(\iota+1)$ in the entries of $g$. Let $V$ be the variety of such $g$.

Since $\Lambda$ is toric, let $T$ be a maximal torus containing $\Lambda$. Every torus is connected \cite[\S 8.5]{Bor91} and its elements commute with all elements of $\Lambda$ by definition, therefore $C_{G}(\Lambda)^{\mathrm{o}}\supseteq T$. Since regular semisimple elements are dense in $T$ (see \cite[\S 2.3]{Hum95a}), the intersection $V\cap T$ is proper inside $T$, so $V\cap C_{G}(\Lambda)^{\mathrm{o}}\subsetneq C_{G}(\Lambda)^{\mathrm{o}}$ as well, and in particular $\dim(V\cap C_{G}(\Lambda)^{\mathrm{o}})\leq\dim(C_{G}(\Lambda)^{\mathrm{o}})-1$.

We also have $\deg(V\cap C_{G}(\Lambda)^{\mathrm{o}})\leq\deg(V)\deg(C_{G}(\Lambda))\leq d(\iota+1)D$ by Corollary~\ref{co:centralizer}. By Assumption~\ref{ass-large1}--\ref{ass-h1}, Theorem~\ref{th:dimest}\eqref{th:dimest-ok}, and Corollary~\ref{co:estcentr}\eqref{co:estcentr-ok}, 
\begin{align*}
|\Gamma\cap(V\cap C_{G}(\Lambda)^{\mathrm{o}})(\overline{K})| & \leq (2d^{2}D(\iota+1))^{d^{d-1}}|\Gamma|^{\frac{\dim(C_{G}(\Lambda)^{\mathrm{o}})-1}{d}} \\
 & \leq\frac{1}{(2dD(d+1))^{(d+1)d^{d+1}}\cdot 2D(2r)^{r}}|\Gamma|^{\frac{\dim(C_{G}(\Lambda)^{\mathrm{o}})}{d}} \\
 & \leq\frac{1}{2D(2r)^{r}}|\Gamma\cap C_{G}(\Lambda)(\overline{K})|\leq\frac{1}{2(2r)^{r}}|\Gamma\cap C_{G}(\Lambda)^{\mathrm{o}}(\overline{K})|,
\end{align*}
using the obvious equality $\dim(C_{G}(\Lambda))=\dim(C_{G}(\Lambda)^{\mathrm{o}})$ and the fact that by Corollary~\ref{co:centralizer} we have $|\Gamma\cap C_{G}(\Lambda)(\overline{K})|\leq\deg(C_{G}(\Lambda))|\Gamma\cap C_{G}(\Lambda)^{\mathrm{o}}(\overline{K})|\leq D|\Gamma\cap C_{G}(\Lambda)^{\mathrm{o}}(\overline{K})|$.
\end{proof}

Under the same conditions, the same bound as in Lemma~\ref{le:rss}, namely
\begin{equation}\label{eq:rss-mts}
|\Theta^{\mathrm{rss}}|\geq\left(1-\frac{1}{2(2r)^{r}}\right)|\Theta|,
\end{equation}
holds for any maximal toric subgroup $\Theta\leq\Gamma\cap C_{G}(\Lambda)^{\mathrm{o}}$ as well: see \cite[Prop.~7.3]{LP11}.

Now we prove that there is a regular unipotent $u\in\Gamma$.

\begin{proposition}\label{pr:run}
Let $G,n,r,K,d,D,\iota,\Gamma$ be as in Assumption~\ref{ass-large1}--\ref{ass-h1}.

Then $|\Gamma^{\mathrm{run}}|\geq\frac{1}{2}|\Gamma^{\mathrm{un}}|\geq 1$ and $\mathrm{char}(K)\neq 0$.
\end{proposition}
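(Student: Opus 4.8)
The plan is to argue by a dimension/degree count that non-regular unipotent elements are too sparse in $\Gamma$ to account for even half of the unipotent elements, so that regular unipotent elements must dominate; and to deduce positive characteristic from the existence of a nontrivial unipotent element inside the finite group $\Gamma$. The key geometric input is that $G^{\mathrm{un}}$ is closed irreducible of dimension $d-r$ (since $G$ is semisimple), while $G^{\mathrm{irr}}$ (the non-regular locus) is a proper closed subvariety, so $G^{\mathrm{un}}\cap G^{\mathrm{irr}}$ has dimension at most $d-r-1$. Combined with a degree bound on this variety and with the dimensional estimate of Theorem~\ref{th:dimest}\eqref{th:dimest-ok}, we get that $|\Gamma\cap(G^{\mathrm{un}}\cap G^{\mathrm{irr}})(\overline{K})|$ is smaller than $|\Gamma|^{(d-r)/d}$ by a large factor, whereas we must show $|\Gamma^{\mathrm{un}}|$ itself is at least comparable to $|\Gamma|^{(d-r)/d}$.

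First I would record that, under Assumption~\ref{ass-large1}--\ref{ass-h1}, neither conclusion \eqref{th:dimest-small} nor \eqref{th:dimest-h} of Theorem~\ref{th:dimest} can hold (the assumed lower bound on $|\Gamma|$ beats $(2dD)^{d+1}$, and Assumption~\ref{ass-h1} explicitly forbids the subgroup case once one checks $(2dD)^{d+1}\leq(2dD)^{4d}$), so the dimensional estimate \eqref{th:dimest-ok} is available for every proper subvariety $V\subsetneq G$. Next I would find a regular unipotent element: $G^{\mathrm{run}}$ is nonempty (\cite[\S 4.5]{Hum95a}), and one needs at least one regular unipotent \emph{in $\Gamma$}. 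Here the cleanest route is the lower-bound half of Corollary~\ref{co:estcentr}\eqref{co:estcentr-ok}, or a direct escape-type argument via Lemma~\ref{le:groupvar}: since $\Gamma\not\subseteq W(\overline{K})$ for any subvariety $W$ of $G$ of small degree, $\Gamma$ cannot avoid $G^{\mathrm{run}}$ entirely — but $G^{\mathrm{run}}$ is not closed, so one should instead show $\Gamma\cap G^{\mathrm{un}}(\overline{K})$ is large and then subtract off the non-regular part. I would therefore lower-bound $|\Gamma^{\mathrm{un}}|$: the unipotent variety $G^{\mathrm{un}}$ of dimension $d-r$ is a union of conjugates of $U=R_u(B)$, and using the orbit–stabilizer machinery around \eqref{eq:osthm} together with the fact that $\dim C_G(u)=r$ for $u$ regular unipotent, the conjugacy class of a regular unipotent has dimension $d-r$; feeding this into Lemma~\ref{le:dimest-k} / Corollary~\ref{co:estcentr} as in the proof of Lemma~\ref{le:rss} gives $|\Gamma^{\mathrm{un}}|\gg|\Gamma|^{(d-r)/d}$ provided $\Gamma$ contains at least one regular unipotent — which brings us back to the same chicken-and-egg point. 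The resolution is the standard one in \cite[Prop.~7.3]{LP11}: one argues that \emph{if} $\Gamma^{\mathrm{run}}$ were empty then $\Gamma^{\mathrm{un}}\subseteq(G^{\mathrm{un}}\cap G^{\mathrm{irr}})(\overline{K})$, a variety of dimension $\leq d-r-1$ and degree bounded polynomially in $d,D,\iota$ (it is cut out inside $G^{\mathrm{un}}$ by the vanishing of a Hasse derivative of the characteristic polynomial of $\mathrm{Ad}$, exactly as in the proof of Lemma~\ref{le:rss}), so by Theorem~\ref{th:dimest}\eqref{th:dimest-ok} its intersection with $\Gamma$ has size at most $C|\Gamma|^{(d-r-1)/d}$; on the other hand every nontrivial element of $\Gamma$ has a unipotent part, and a counting of $\Gamma\cap C_G(\Lambda)^{\mathrm{o}}$ over toric $\Lambda$ as in Lemma~\ref{le:rss} (or simply the observation that a Sylow $p$-subgroup of $\Gamma$ is nontrivial and unipotent once $\mathrm{char}(K)=p>0$, and has size forcing $|\Gamma^{\mathrm{un}}|\geq|\Gamma|^{(d-r)/d}$ up to constants) contradicts this once $|\Gamma|$ is as large as assumed.

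More precisely, the cleanest organization I would adopt is: (i) show $\mathrm{char}(K)=p>0$ — if $\mathrm{char}(K)=0$, any unipotent $u\neq e$ generates an infinite cyclic-like group inside the finite $\Gamma$, impossible, so it remains to exhibit one nontrivial unipotent in $\Gamma$, which follows because $\Gamma$ is large and a $p'$-group would be forced into a torus-normalizer–type subgroup of bounded degree contradicting Assumption~\ref{ass-h1}; more directly, escape via Lemma~\ref{le:groupvar} out of the proper subvariety $\{g: g_u=e\}=G^{\mathrm{ss}}$-type locus — but $G^{\mathrm{ss}}$ is not a subvariety, so I would instead note that the identity component of a Sylow subgroup argument is circular and simply cite that $\Gamma$ large forces a nontrivial unipotent by the dimensional estimate applied to the closed proper subvariety of elements whose characteristic polynomial (of $\mathrm{Ad}$) has no root at $1$ with the right multiplicity being the \emph{complement} — the honest statement is that semisimple elements of $G$ with all eigenvalues roots of unity of bounded order lie in finitely many conjugacy classes each of dimension $<d$, so $\Gamma$ large has an element of infinite order unless $\mathrm{char}>0$; (ii) with $p>0$ fixed, the unipotent variety $G^{\mathrm{un}}$ is closed irreducible of dimension $d-r$ and $G^{\mathrm{un}}\cap G^{\mathrm{irr}}$ is a proper closed subvariety of it, of dimension $\leq d-r-1$ and of degree $\leq D\cdot d(\iota+1)$ by Corollary~\ref{co:centralizer} and B\'ezout applied to the Hasse-derivative equation from Lemma~\ref{le:rss}; (iii) Theorem~\ref{th:dimest}\eqref{th:dimest-ok} gives $|\Gamma\cap(G^{\mathrm{un}}\cap G^{\mathrm{irr}})(\overline K)|\leq (2d\cdot d(\iota+1)D)^{d^{d-r-1}}|\Gamma|^{(d-r-1)/d}$; (iv) independently, $|\Gamma^{\mathrm{un}}|\geq|\Gamma|^{(d-r)/d}/\varphi(r)$ by Corollary~\ref{co:estcentr}\eqref{co:estcentr-ok} applied to $\Lambda=\{u\}$ with $u$ a nontrivial unipotent whose centralizer has dimension $\geq r$, combined with the orbit–stabilizer identity \eqref{eq:osthm}; comparing (iii) and (iv), the ratio is at least $|\Gamma|^{1/d}$ divided by a quantity polynomial in $d,D,r,\iota$, which exceeds $2$ under Assumption~\ref{ass-large1}. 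Hence $|\Gamma^{\mathrm{run}}|=|\Gamma^{\mathrm{un}}|-|\Gamma\cap(G^{\mathrm{un}}\cap G^{\mathrm{irr}})(\overline K)|\geq\frac12|\Gamma^{\mathrm{un}}|\geq\frac12$, and since $|\Gamma^{\mathrm{run}}|$ is a nonnegative integer it is $\geq 1$, so in particular $|\Gamma^{\mathrm{un}}|\geq 1$ and a nontrivial unipotent exists, forcing $\mathrm{char}(K)\neq 0$.

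The main obstacle is the circularity around step (i)/(iv): to lower-bound $|\Gamma^{\mathrm{un}}|$ via Corollary~\ref{co:estcentr} I need to already know $\Gamma$ contains a nontrivial unipotent element, and to know that I seem to need $\mathrm{char}>0$. I expect the paper (following \cite[Prop.~7.3]{LP11}) breaks this by a slightly different bookkeeping: one shows directly that the \emph{closed} subvariety $G\setminus$(regular locus) together with the semisimple-with-bounded-order locus cannot contain all of $\Gamma$, equivalently that $\Gamma$ must meet $G^{\mathrm{run}}(\overline K)$, by an escape argument from the \emph{subgroup} side — every proper subgroup of $G$ containing no regular unipotent has bounded degree, which is exactly the content forbidden by Assumption~\ref{ass-h1} after invoking Lemma~\ref{le:groupvar} on the Zariski closure $\overline{\langle\Gamma^{\mathrm{un}}\rangle}$ or on a suitable parabolic. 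So the delicate point to get right is the precise subvariety/subgroup one escapes from, and the verification that its degree fits under the $(2dD)^{4d}$ threshold of Assumption~\ref{ass-h1}; the rest is the routine degree arithmetic already modeled in the proof of Lemma~\ref{le:rss}.
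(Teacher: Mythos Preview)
Your upper bound on $|\Gamma^{\mathrm{un}}\setminus\Gamma^{\mathrm{run}}|$ via the dimensional estimate on a proper subvariety of $G^{\mathrm{un}}$ of dimension $\leq d-r-1$ is exactly the right idea and matches the paper (the paper bounds the degree via Proposition~\ref{pr:degexc} applied to the conjugation map $G\times G^{\mathrm{un}}\to G^{\mathrm{un}}$ rather than via the Hasse-derivative equation, but either works). The genuine gap is the lower bound on $|\Gamma^{\mathrm{un}}|$, and you have correctly diagnosed the circularity: your step~(iv) as written does not produce a lower bound on $|\Gamma^{\mathrm{un}}|$ at all --- Corollary~\ref{co:estcentr} applied to $\Lambda=\{u\}$ bounds $|\Gamma\cap C_G(u)|$, not $|\Gamma^{\mathrm{un}}|$, and in any case it presupposes a unipotent $u\in\Gamma$. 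Your proposed escape-type resolutions do not work either: $G^{\mathrm{run}}$ is not closed, the semisimple locus is not a variety, and there is no obvious subgroup of bounded degree that must contain $\Gamma$ if $\Gamma^{\mathrm{run}}=\emptyset$.

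The paper breaks the circularity by an \emph{exact} combinatorial identity from \cite[\S 7]{LP11}, not by an inequality. Lemma~\ref{le:rss} forces the sum $\sum_{\Theta\in\mathcal{S}(\emptyset)}[N_\Gamma(\Theta):\Theta]^{-1}$ (over representatives of $\Gamma$-conjugacy classes of maximal toric subgroups of $\Gamma$) to lie in an interval of width $<1/|\mathcal{W}|$ around $1$; since each summand has denominator dividing $|\mathcal{W}|$, the sum is exactly $1$. This yields the exact formula $|\Gamma^{\mathrm{un}}|=|\mathcal{T}(\emptyset)|=\sum_{\Theta\in\mathcal{S}(\emptyset)}|\Gamma|/|N_\Gamma(\Theta)|$. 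Now each $\Theta$ equals $\Gamma\cap C_G(\Theta)^{\mathrm{o}}$ with $\dim C_G(\Theta)^{\mathrm{o}}=r$, so the \emph{upper} bound of Corollary~\ref{co:estcentr} on $|\Theta|$, together with the sum being $1$, gives $|\Gamma^{\mathrm{un}}|\geq|\Gamma|^{1-r/d}/\varphi(r)$ with no prior knowledge of any unipotent in $\Gamma$. That is the missing step; once you have it, your (iii) and Assumption~\ref{ass-large1} finish the proof, and $\mathrm{char}(K)\neq 0$ follows \emph{a posteriori} from $\Gamma^{\mathrm{run}}\neq\emptyset$. You gestured at ``counting over toric $\Lambda$ as in Lemma~\ref{le:rss}'' but did not use the crucial exactness of the Weyl-group sum, which is what makes the argument non-circular.
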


\begin{proof}
We follow the proof leading to \cite[Cor.~7.10]{LP11}; we skip the theoretical details, focusing on the explicit bounds.

Fix any toric subset $\Lambda\subseteq\Gamma$, let $\mathcal{T}(\Lambda)$ be the set of all maximal toric subgroups inside $\Gamma\cap C_{G}(\Lambda)^{\mathrm{o}}(\overline{K})$, and pick a set $\mathcal{S}(\Lambda)\subseteq\mathcal{T}(\Lambda)$ of representatives of $(\Gamma\cap C_{G}(\Lambda)^{\mathrm{o}}(\overline{K}))$-conjugacy classes of elements of $\mathcal{T}(\Lambda)$.
Using Lemma~\ref{le:rss}, it is shown in the proof of \cite[Prop.~7.4]{LP11} that
\begin{equation}\label{eq:torsumtrap}
1-\frac{1}{2(2r)^{r}}\leq\sum_{\Theta\in\mathcal{S}(\Lambda)}\frac{1}{[N_{\Gamma\cap C_{G}(\Lambda)^{\mathrm{o}}(\overline{K})}(\Theta):\Theta]}\leq\frac{1}{1-\frac{1}{2(2r)^{r}}}.
\end{equation}
Furthermore, each denominator in the sum above divides the size of the Weyl group of $G$, so by \eqref{eq:weilg} the sum in \eqref{eq:torsumtrap} must be equal to $1$. This implies
\begin{align}\label{eq:torsumexact}
\sum_{\Theta\in\mathcal{T}(\Lambda)}|\Theta| & =|\Gamma\cap C_{G}(\Lambda)^{\mathrm{o}}(\overline{K})|, & \sum_{\Theta\in\mathcal{T}(\Lambda)}1 & =|(\Gamma\cap C_{G}(\Lambda)^{\mathrm{o}}(\overline{K}))^{\mathrm{un}}|.
\end{align}
For all $\Theta\in\mathcal{T}(\Lambda)$, we also see from the proof of \cite[Prop.~7.3]{LP11} that 
\begin{align}\label{eq:thetafacts}
\Theta & =\Gamma\cap C_{G}(\Theta)^{\mathrm{o}}(\overline{K}), & \dim(C_{G}(\Theta)^{\mathrm{o}}) & =r.
\end{align}
We can apply the second equality of \eqref{eq:torsumexact} to $\Lambda=\emptyset$, for which $C_{G}(\emptyset)^{\mathrm{o}}=G^{\mathrm{o}}=G$, then the first equality of \eqref{eq:thetafacts}, then the upper bound of Corollary~\ref{co:estcentr} to $C_{G}(\Theta)\supseteq C_{G}(\Theta)^{\mathrm{o}}$, and finally the second equality of \eqref{eq:thetafacts} and the fact that the sum in \eqref{eq:torsumtrap} is equal to $1$. We conclude that
\begin{align}
|\Gamma^{\mathrm{un}}| & \overset{\text{\eqref{eq:torsumexact}}}{=}\sum_{\Theta\in\mathcal{T}(\emptyset)}1=\sum_{\Theta\in\mathcal{S}(\emptyset)}\frac{|\Gamma|}{|N_{\Gamma}(\Theta)|} \overset{\text{\eqref{eq:thetafacts}}}{=}\sum_{\Theta\in\mathcal{S}(\emptyset)}\frac{|\Gamma|}{[N_{\Gamma}(\Theta):\Theta]\cdot|\Gamma\cap C_{G}(\Theta)^{\mathrm{o}}|} \nonumber \\
 & \overset{\text{Cor.~\ref{co:estcentr}}}{\geq} \!\! \sum_{\Theta\in\mathcal{S}(\emptyset)}\frac{|\Gamma|^{1-\frac{\dim(C_{G}(\Theta)^{\mathrm{o}})}{d}}}{[N_{\Gamma}(\Theta):\Theta]\cdot(2dD(\iota+1))^{(d+1)d^{d+1}}}\overset{\text{\eqref{eq:torsumtrap}--\eqref{eq:thetafacts}}}{=}\frac{|\Gamma|^{1-\frac{r}{d}}}{(2dD(\iota+1))^{(d+1)d^{d+1}}}. \label{eq:gunall}
\end{align}

Now we bound $|\Gamma^{\mathrm{un}}\setminus\Gamma^{\mathrm{run}}|=|\Gamma\cap(G^{\mathrm{un}}\cap G^{\mathrm{irr}})(\overline{K})|$. As mentioned in Section~\ref{se:linalg-basic}, $G^{\mathrm{un}}$ is an irreducible variety of dimension $d-r$, and by Corollary~\ref{co:inters}\eqref{co:inters-extra} it has degree
\begin{equation*}
\deg(G^{\mathrm{un}})=\deg(G\cap\{x\in\mathrm{Mat}_{n+1}:(x|_{n\times n}-\mathrm{Id}_{n})^{n}=0\})\leq Dn^{d+1}.
\end{equation*}
Then, $G^{\mathrm{un}}\cap G^{\mathrm{irr}}$ is a proper subvariety of $G^{\mathrm{un}}$ \cite[\S 4.13]{Hum95a}, and it can be defined as the set of $g\in G^{\mathrm{un}}$ for which, if $f:G\times G^{\mathrm{un}}\rightarrow G^{\mathrm{un}}$ is the map given by $f(x,y)=xyx^{-1}$, the fibre $f^{-1}(g)$ is larger than the generic one \cite[\S 1.4]{Hum95a}. By Proposition~\ref{pr:degexc}, there is some variety $Z$ with $G^{\mathrm{un}}\cap G^{\mathrm{irr}}\subseteq Z\subsetneq G^{\mathrm{un}}$ (so in particular $\dim(Z)<d-r$) and $\deg(Z)\leq(\iota+2)^{d-r-1}D^{2}n^{d+1}$. Then, Theorem~\ref{th:dimest}\eqref{th:dimest-ok} gives
\begin{equation}\label{eq:gunirr}
|\Gamma^{\mathrm{un}}\setminus\Gamma^{\mathrm{run}}|\leq|\Gamma\cap Z(\overline{K})|\leq\left(2d(\iota+2)^{d-r-1}D^{2}n^{d+1}\right)^{d^{d-r-1}}|\Gamma|^{1-\frac{r+1}{d}},
\end{equation}
since the alternatives \eqref{th:dimest-small} and \eqref{th:dimest-h} of Theorem~\ref{th:dimest} do not hold by Assumption~\ref{ass-large1}--\ref{ass-h1}. Combining \eqref{eq:gunall} and \eqref{eq:gunirr} with the condition on $|\Gamma|$ coming from  Assumption~\ref{ass-large1}, we obtain the claim on $\Gamma^{\mathrm{run}}$.

Finally, the existence of a regular unipotent element implies that $\mathrm{char}(K)$ divides its order, so $\mathrm{char}(K)\neq 0$ (see \cite[Cor.~7.11]{LP11}).
\end{proof}

We conclude the subsection by finding the desired algebraic subgroup $V$ of minimal unipotent elements. In the following, the normalizer $N_{\Gamma}(V)=\{\gamma\in\Gamma:\gamma V=V\gamma\}$ is a subgroup of $\Gamma\leq G(\overline{K})\leq\mathrm{GL}_{n}(\overline{K})$ with $\mathrm{char}(K)=p$, so it makes sense to talk about the group ring $\mathbb{F}_{p}[N_{\Gamma}(V)]$.

\begin{proposition}\label{pr:findfield}
Let $G,n,r,K,d,D,\iota,\Gamma$ be as in Assumption~\ref{ass-large1}--\ref{ass-h1}.

Then $\mathrm{char}(K)=p>0$, there is some $q=p^{e}$, and there is some abelian unipotent subgroup $V=V^{(G)}\leq G$ of dimension $1\leq\dim(V)\leq\min\{2,r\}$ and degree $\deg(V)\leq D$ such that $\Gamma\cap V(\overline{K})\simeq\mathbb{F}_{q}$ (as abelian groups), $\mathbb{F}_{q}\leq K^{\dim(V)}$ (as $\mathbb{F}_{p}$-algebras), $\mathbb{F}_{q}\simeq\mathbb{F}_{p}[N_{\Gamma}(V)]$ (as rings), and
\begin{align*}
\frac{1}{\varphi(d-1)}|\Gamma|^{\frac{\dim(V)}{d}} & \leq|\Gamma\cap V(\overline{K})|=q\leq\varphi(2)|\Gamma|^{\frac{\dim(V)}{d}}, & \varphi(x) & =(2dD(\iota+1))^{x(d+1)d^{x}}.
\end{align*}
\end{proposition}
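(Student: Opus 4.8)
\emph{Setup.} By Proposition~\ref{pr:run} we already know $\mathrm{char}(K)=p>0$ and that $\Gamma$ contains a regular unipotent element $u$; such a $u$ lies in a unique Borel subgroup $B$ of $G$, fixed by every automorphism of $G$ fixing $u$, and this is the Borel feeding into \eqref{eq:zu}--\eqref{eq:rho}. Write $U=B^{\mathrm{un}}$. By \eqref{eq:zu}, $Z(U)$ is a single root subgroup $U_{\alpha}$ or a product $U_{\alpha_{\ell}}U_{\alpha_{s}}$ of a long- and a short-root subgroup, hence abelian, unipotent, of dimension $1$ or $2$; being a sum of root subgroups it has $\dim Z(U)\le r$. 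A maximal torus $T\subseteq B$ acts on $Z(U)$ through the character $\alpha$, resp.\ through the two distinct characters $\alpha_{\ell}\ne\alpha_{s}$, and $U$ acts trivially (it centralizes $Z(U)$), so the only $B$-invariant algebraic subgroups of $Z(U)$ are $Z(U)$ itself when $\dim Z(U)=1$, and $U_{\alpha_{\ell}},U_{\alpha_{s}},Z(U)$ when $\dim Z(U)=2$. Each of these is abelian unipotent and is conjugate inside $\mathrm{GL}_{n}$ to a subgroup of the upper-triangular matrices cut out by linear equations, so it has degree $\le D$ by B\'ezout, exactly as in Lemma~\ref{le:tudeg}\eqref{le:tudeg-t}--\eqref{le:tudeg-b}.

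\emph{Choosing $V$ and the basic structure of $\Delta$.} The first real step is to show that for some Borel $B'$ — which after conjugation we may take to be $B$ — the group $\Gamma$ contains a nontrivial element of $Z(R_{u}(B'))(\overline{K})$. This is where Assumption~\ref{ass-large1}--\ref{ass-h1} enters: the locus of minimal (root-type) unipotent elements is a proper closed subvariety of $G$ of dimension $<d$ with controlled degree, and feeding the lower bound for $|\Gamma^{\mathrm{un}}|$ obtained in the proof of Proposition~\ref{pr:run} into Theorem~\ref{th:dimest}\eqref{th:dimest-ok} and Corollary~\ref{co:estcentr}\eqref{co:estcentr-ok}, as in the corresponding passage of \cite{LP11}, forces $\Gamma$ to meet one of the relevant conjugate subgroups above beyond the identity. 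Granting this, let $V=V^{(G)}$ be a $B$-invariant subgroup of $Z(U)$ of smallest dimension with $|\Gamma\cap V(\overline{K})|>1$; then $1\le\dim V\le\min\{2,r\}$, $\deg V\le D$, $V$ is abelian unipotent, and $\Delta:=\Gamma\cap V(\overline{K})$ is nontrivial. Over the algebraically closed field $\overline{K}$ of characteristic $p$, a connected abelian unipotent group that is a product of root subgroups is a vector group, so $V(\overline{K})\cong\overline{K}^{\dim V}$ as additive groups and $\Delta$ is a finite $\mathbb{F}_{p}$-subspace; hence $|\Delta|=q=p^{e}$ for some $e\ge 1$ and $\Delta\simeq\mathbb{F}_{q}$ as abelian groups.

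\emph{The field structure.} Since $V$ is abelian we have $\Delta\subseteq N:=N_{\Gamma}(V)$, and $N$ acts on $V$ by conjugation; this action is algebraic, hence factors through the algebraic automorphism group of $V$. Because long and short root elements of $G$ are never conjugate, conjugation preserves each of $U_{\alpha_{\ell}},U_{\alpha_{s}}$ separately, so $N$ acts on $\Delta$ through a homomorphism $\chi\colon N\to(\overline{K}^{\times})^{\dim V}$ by componentwise multiplication, and $\Delta$ is stable under multiplication by the subring $\mathcal{R}\subseteq\overline{K}^{\dim V}$ generated by $\chi(N)$; this $\mathcal{R}$ is exactly the image of the group ring $\mathbb{F}_{p}[N_{\Gamma}(V)]$ in $\mathrm{End}(\Delta)$. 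One then checks — using the minimality of $\dim V$ together with the fact that all nonzero elements of a root subgroup are conjugate under $T$ — that $\mathcal{R}$ is a field $\mathbb{F}_{q}$ with $|\mathcal{R}|=|\Delta|=q$ and that $\Delta$ is one-dimensional over it; since the whole construction (and the representation $\rho$ of \eqref{eq:rho}) is defined over $K$, this $\mathbb{F}_{q}$ embeds into $K^{\dim V}$ as an $\mathbb{F}_{p}$-algebra. This yields all the isomorphism assertions, in particular $\mathbb{F}_{q}\simeq\mathbb{F}_{p}[N_{\Gamma}(V)]$ as rings.

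\emph{Size bounds, and the main obstacle.} The upper bound is immediate: Assumption~\ref{ass-large1}--\ref{ass-h1} excludes alternatives \eqref{th:dimest-small}--\eqref{th:dimest-h} of Theorem~\ref{th:dimest}, and \eqref{th:dimest-ok} applied to the subvariety $V$ (of dimension $\dim V\le 2<d$) gives $q=|\Gamma\cap V(\overline{K})|\le(2d\deg V)^{d^{\dim V}}|\Gamma|^{\dim V/d}\le(2dD)^{d^{2}}|\Gamma|^{\dim V/d}\le\varphi(2)|\Gamma|^{\dim V/d}$. For the matching lower bound one invokes the upper bound again, now for every $G$-conjugate of $V$ (each a subvariety of dimension $\le\dim V$), together with the fact that $G$ is a product of boundedly many conjugates of $V$; counting the points of $\Gamma$ through this product, or alternatively comparing with the maximal toric subgroups via \eqref{eq:thetafacts} and Corollary~\ref{co:estcentr}, produces $|\Gamma|\le\varphi(d-1)^{d/\dim V}q^{d}$, whence $q\ge\varphi(d-1)^{-1}|\Gamma|^{\dim V/d}$ after the (harmless) bookkeeping with $\varphi(\cdot)$. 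I expect the two genuinely non-formal points to be (i) that $\Gamma$ meets some $Z(R_{u}(B'))$ nontrivially and (ii) that $\Delta$ has dimension $1$ over $\mathbb{F}_{p}[\chi(N)]$, i.e.\ that the field read off from the unipotent data has the correct size; these are where \cite{LP11} does real work, and the task here is to carry them out with explicit constants using Sections~\ref{se:var}--\ref{se:dimest}.
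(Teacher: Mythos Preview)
Your proposal misses the single structural fact that drives almost the entire proof in the paper: $Z(U)$, and then $V$, are realized as \emph{centralizers of subsets of $\Gamma$}. The paper first shows, following \cite[Lem.~8.9]{LP11}, that $Z(U)=C_{G}(f(\Gamma\cap T(\overline{K})))$ where $f(t)=tut^{-1}$; this is a nontrivial dimensional-estimate argument (comparing \eqref{eq:boundtlow}--\eqref{eq:boundthigh}) and it is precisely what replaces your vague ``locus of minimal unipotent elements'' step~(i). Then \cite[Prop.~8.4]{LP11} gives $V=C_{G}(V)=C_{G}(C_{G}(V))$ for every nontrivial $B$-invariant $V\le Z(U)$. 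Once $V$ is a centralizer, Corollary~\ref{co:centralizer} yields $\deg(V)\le D$, and --- crucially --- Corollary~\ref{co:estcentr}\eqref{co:estcentr-ok} gives the matched lower \emph{and} upper bounds $\varphi(d-1)^{-1}|\Gamma|^{\dim(V)/d}\le|\Gamma\cap V(\overline{K})|\le\varphi(2)|\Gamma|^{\dim(V)/d}$ in one stroke. Your route to the lower bound (``$G$ is a product of boundedly many conjugates of $V$; count points through the product'') is neither substantiated nor easily made precise here: with $\dim(V)\le 2$ you would need on the order of $d/2$ factors, the product map need not be dominant without further input, and threading the constants to land exactly at $\varphi(d-1)^{-1}$ is not the harmless bookkeeping you suggest.

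For your point~(ii) (that $\Delta$ is $1$-dimensional over the field $\mathbb{F}_{p}[N_{\Gamma}(V)]$), the paper does not argue via ``minimality of $\dim V$ plus $T$-transitivity on nonzero root vectors''. Instead it uses the quantitative input just obtained: from Assumption~\ref{ass-large1} and the lower bound above one gets $|\Gamma\cap V(\overline{K})|\ge\varphi(d-1)^{4}$, and this is fed together with \cite[Thm.~6.6]{LP11} into \cite[Lem.~8.16]{LP11} to force the $\mathbb{F}_{q}$-dimension to be $1$. Your sketch of the field structure via a character $\chi\colon N\to(\overline{K}^{\times})^{\dim V}$ is morally the content of \cite[Prop.~8.15]{LP11}, but the conclusion that $|\mathcal{R}|=|\Delta|$ (rather than $\Delta$ being a higher-dimensional $\mathcal{R}$-space) genuinely needs the size comparison, not just the minimality of $\dim V$.
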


The function $\varphi(x)$ above is the same as in Corollary~\ref{co:estcentr}.

\begin{proof}
We follow the proof leading to \cite[Thm.~8.17]{LP11}, skipping the theoretical details.

By Proposition~\ref{pr:run}, $\mathrm{char}(K)=p>0$ and there exists some $u\in\Gamma^{\mathrm{run}}$. Pick a Borel subgroup $B$ of $G$ so that its unipotent part $U$ contains $u$: $\Gamma\cap U(\overline{K})$ is a normal Sylow $p$-subgroup of $\Gamma\cap B(\overline{K})$, so by the Schur--Zassenhaus theorem $\Gamma\cap B(\overline{K})$ is a semidirect product of $\Gamma\cap U(\overline{K})$ and the corresponding quotient; then, either \cite[\S 19.4, Prop.~(a)]{Hum95b} or \cite[Thm.~10.6(5)]{Bor91} guarantee that there is a maximal torus $T$ in $B$ such that
\begin{equation}\label{eq:but}
\Gamma\cap B(\overline{K})=(\Gamma\cap U(\overline{K}))\rtimes(\Gamma\cap T(\overline{K})).
\end{equation}

First, we need a lower bound on $|\Gamma\cap T(\overline{K})|$: the torus $T$ is also a maximal torus of $G$ \cite[\S 21.3, Cor.~A]{Hum95b} and is its own centralizer \cite[\S 26.2, Cor.~A(b)]{Hum95b}, but it may not necessarily be the centralizer of a subset of $\Gamma$, so we cannot apply Corollary~\ref{co:estcentr}\eqref{co:estcentr-ok} directly. We follow instead the proof of \cite[Lem.~8.7]{LP11}. By \eqref{eq:but} and the orbit-stabilizer theorem, we know that
\begin{equation}\label{eq:three-87}
|\Gamma\cap T(\overline{K})|=|\Gamma\cap C_{G}(u)(\overline{K})|\cdot\frac{|\mathrm{Cl}_{\Gamma\cap B(\overline{K})}(u)|}{|(\Gamma\cap U(\overline{K}))^{\mathrm{run}}|}\cdot\frac{|(\Gamma\cap U(\overline{K}))^{\mathrm{run}}|}{|\Gamma\cap U(\overline{K})|}
\end{equation}
since $C_{G}(u)=C_{B}(u)$ for regular unipotent elements in semisimple groups \cite[p.~54]{Hum95a}. For the first term on the right-hand side of \eqref{eq:three-87} we can use now the lower bound in Corollary~\ref{co:estcentr}\eqref{co:estcentr-ok}, since Assumption~\ref{ass-large1}--\ref{ass-h1} forbids the other cases of Corollary~\ref{co:estcentr}. Without loss of generality, we may assume that $u$ has the largest possible class $\mathrm{Cl}_{\Gamma\cap B(\overline{K})}(u)$ and follow \cite[Thm.~6.6 and Lem.~8.7]{LP11} to deal with the second term: then, this term is bounded from below by the product of the inverses of two constants $C$ as in Theorem~\ref{th:dimest}\eqref{th:dimest-ok}, one for $V=C_{G}(u)$ (with $\deg(V)\leq D$ by Corollary~\ref{co:centralizer}) and one for $V=\overline{\mathrm{Cl}_{G}(u)}$ (with $\deg(V)\leq D(\iota+1)^{d-r}$ by Lemma~\ref{le:zarimdeg}\eqref{le:zarimdeg-im}), since Assumption~\ref{ass-large1}--\ref{ass-h1} forbids the other cases of Theorem~\ref{th:dimest}. Lastly, the third term of \eqref{eq:three-87} can be bounded using \cite[Lem.~8.6]{LP11}: the assumption ``$\Gamma$ is sufficiently general'' therein is fulfilled when $U^{\mathrm{run}}\neq\emptyset$, which we have. Putting everything together, \eqref{eq:three-87} yields
\begin{equation}\label{eq:boundtlow}
|\Gamma\cap T(\overline{K})|\geq\frac{|\Gamma|^{\frac{r}{d}}}{(2dD(\iota+1))^{(d-r)(d+1)d^{d-r}}}\cdot\frac{1}{(2dD)^{d^{r}}(2dD(\iota+1)^{d-r})^{d^{d-r}}}\cdot\frac{1}{2^{r^{r}}}
\end{equation}

Next, following the steps of \cite[Lem.~8.9]{LP11}, we prove that $Z(U)$ is the centralizer of some subset of $\Gamma$ so as to use Corollary~\ref{co:estcentr} on it. We have a lower bound for $|\Gamma\cap T(\overline{K})|$ in \eqref{eq:boundtlow}, and now we look for an appropriate upper bound. If $f:T\rightarrow U$ is the conjugation map $f(t)=tut^{-1}$, then for any given element $g\in C_{G}(f(\Gamma\cap T(\overline{K})))$ we get $\Gamma\cap T(\overline{K})\subseteq f^{-1}(C_{U}(g))(\overline{K})$, and in particular
\begin{equation}\label{eq:tft}
|\Gamma\cap T(\overline{K})|\leq|\Gamma\cap f^{-1}(C_{U}(g))(\overline{K})|.
\end{equation}
On the other hand,
\begin{align*}
\deg(f^{-1}(C_{U}(g))) & \leq\deg(T)\deg(C_{U}(g))\mathrm{mdeg}(f)^{\dim(C_{U}(g))} \\
 & \leq\deg(T)\deg(U)\mathrm{mdeg}(f)^{\dim(U)}\leq D^{2}(\iota+1)^{d-r}
\end{align*}
using Lemma~\ref{le:zarimdeg}\eqref{le:zarimdeg-pre}, the inequality $\deg(C_{U}(g))\leq\deg(U)$ (proved as in Corollary~\ref{co:centralizer}), Lemma~\ref{le:tudeg}\eqref{le:tudeg-t}--\eqref{le:tudeg-u}, and $\dim(U)\leq d-r$ (see for instance \cite[\S 4.2]{Hum95a}). Therefore, calling $r'=\dim(f^{-1}(C_{U}(g)))$, we have
\begin{equation}\label{eq:boundthigh}
|\Gamma\cap f^{-1}(C_{U}(g))(\overline{K})|\leq(2dD^{2}(\iota+1)^{d-r})^{d^{r'}}|\Gamma|^{\frac{r'}{d}}
\end{equation}
by Theorem~\ref{th:dimest}\eqref{th:dimest-ok}. If $r'<\dim(T)=r$, Assumption~\ref{ass-large1} becomes incompatible with \eqref{eq:boundtlow}--\eqref{eq:tft}--\eqref{eq:boundthigh}, so we must have $r'=\dim(T)$. This implies $f^{-1}(C_{U}(g))=T$ since $T$ is irreducible by definition, and thus $f(T)\subseteq C_{U}(g)$ for any $g\in C_{G}(f(\Gamma\cap T(\overline{K})))$. Since $U$ is also irreducible and $f(T)$ is made of regular elements, the image $f(T)/[U,U]$ of $f(T)\subseteq U$ in the quotient $U/[U,U]$ is the quotient's unique open $T$-orbit (see the description in terms of root subgroups in \cite[\S 4.1]{Hum95a}). Thus $f(T)/[U,U]$ generates $U/[U,U]$ as an algebraic group, and because $U$ is nilpotent similarly $f(T)$ generates $U$. Hence, since $f(T)\subseteq C_{U}(g)$ and the latter is an algebraic group too, we have $U=C_{U}(g)$, or in other words $g\in Z(U)$. By our choice of $g$ then $C_{G}(f(\Gamma\cap T(\overline{K})))\subseteq Z(U)$, and since the reverse inclusion is trivial we conclude that $Z(U)$ is indeed the centralizer of $f(\Gamma\cap T(\overline{K}))\subseteq\Gamma$. In particular, by Corollary~\ref{co:estcentr}\eqref{co:estcentr-ok} we obtain
\begin{equation}\label{eq:boundzu}
\frac{1}{\varphi(d-1)}|\Gamma|^{\dim(Z(U))/d}\leq|\Gamma\cap Z(U)(\overline{K})|\leq\varphi(2)|\Gamma|^{\dim(Z(U))/d}
\end{equation}
with $\varphi(x)$ as in the statement, using the fact that $1\leq\dim(Z(U))\leq 2$ by \eqref{eq:zu}.

Now we search for our $V$. The group $Z(U)$ is nontrivial since $U$ is nilpotent, and every nontrivial $B$-invariant subgroup $V\subseteq Z(U)$ (including $V=Z(U)$ itself) is connected with $V=C_{G}(V)=C_{G}(C_{G}(V))$ by \cite[Prop.~8.4]{LP11}. As $\dim(Z(U))\geq 1$, \eqref{eq:boundzu} and Assumption~\ref{ass-large1} imply that there is some $v\in(\Gamma\cap Z(U)(\overline{K}))\setminus\{e\}$. From now on, fix a $V=V^{(G)}$ of minimal dimension among all possible $B$-invariant subgroups of $Z(U)$ with $|\Gamma\cap V(\overline{K})|>1$, where $B$ also runs among all possible Borel subgroups of $G$. We warn the reader that, although here we just write $V$ instead of $V^{(G)}$, the dependence on $G$ is important in later subsections.

By what we said before, such a $V$ exists, it has $\dim(V)\geq 1$, and there is some $v\in(\Gamma\cap Z(U)(\overline{K}))\setminus\{e\}$ which we can fix. Moreover, $V$ is abelian since $Z(U)$ contains it, which implies also $\dim(V)\leq\min\{2,r\}$. We claim that $v$ sits in the unique open $T$-orbit in $V$: following the discussion in \cite[p.~1135]{LP11}, if $v$ is not in that orbit then either $v=e$ for $\dim(V)=1$, contradicting the choice of $v$, or $v\in U_{\alpha_{\ell}}\cup U_{\alpha_{s}}$ for $\dim(V)=2$, contradicting the minimality of $\dim(V)$. Since $v$ is in the open orbit, by \cite[Prop.~8.4(b)]{LP11} we have $V=C_{G}(v)$: in other words, $V$ is the centralizer of an element of $\Gamma$, and Corollary~\ref{co:estcentr} applies. Now Corollary~\ref{co:centralizer} yields $\deg(V)\leq D$ and Corollary~\ref{co:estcentr}\eqref{co:estcentr-ok} yields
\begin{equation}\label{eq:boundzuv}
\frac{1}{\varphi(d-1)}|\Gamma|^{\dim(V)/d}\leq|\Gamma\cap V(\overline{K})|\leq\varphi(2)|\Gamma|^{\dim(V)/d}
\end{equation}
with $\varphi(x)$ as in the statement, since Assumption~\ref{ass-large1}--\ref{ass-h1} forbids the other cases. Combining again the above with Assumption~\ref{ass-large1}, in which the bound is larger than $\varphi(d-1)^{5d}$, we have $|\Gamma\cap V(\overline{K})|\geq\varphi(d-1)^{4}$. Recall now that $\mathrm{char}(K)=p$, and consider the ring $\mathbb{F}_{p}[N_{\Gamma}(V)]$: by \cite[Prop.~8.4(a)-8.15]{LP11}, it is in fact a finite field $\mathbb{F}_{q}$ and at the same time a finite $\mathbb{F}_{p}$-subalgebra of $K^{\dim(V)}$, and $\Gamma\cap V(\overline{K})$ is a nontrivial finite-dimensional $\mathbb{F}_{q}$-vector space. Then we plug \cite[Thm.~6.6]{LP11} and the bound $|\Gamma\cap V(\overline{K})|\geq\varphi(d-1)^{4}$ into \cite[Lem.~8.16]{LP11}, and we obtain that the dimension of $\Gamma\cap V(\overline{K})$ as a vector space over $\mathbb{F}_{q}$ is $1$. The various claims on $\mathbb{F}_{q}$ and $V$ follow from this conclusion and from \eqref{eq:boundzuv}.
\end{proof}

\subsection{Finding the simple group $[G^{F},G^{F}]$}

Now that we have identified the correct field $\mathbb{F}_{q}$, a copy of which occurs inside $\Gamma$ as a subgroup $\Gamma\cap V(\overline{K})$ of minimal unipotent elements, we must show that $\mathbb{F}_{q}$ is a good choice throughout the whole $\Gamma$. More rigorously, as said before, our goal is now to prove that $[G^{F},G^{F}]\leq\Gamma\leq G^{F}$ for some Steinberg endomorphism $F$ with respect to $\mathbb{F}_{q}$; additionally, $[G^{F},G^{F}]$ shall be a finite simple group of Lie type.

The procedure is covered in \cite[\S\S 9--10--11]{LP11}. In this subsection we follow the theoretical proof with just enough details to allow the reader to understand the process, and focus instead on the quantitative bookkeeping. Overall, the results here are arranged differently than in \cite{LP11}. The correspondence is in general terms as follows:
\medskip
\begin{center}
\begin{longtable}{ccc}
\textbf{Larsen--Pink:} & & \textbf{This paper:} \\ \\
\begin{tabular}{c}Thm.~9.1, basic case, \\ proved in \S 10\end{tabular} & $\longleftrightarrow$ & \begin{tabular}{c}Prop.~\ref{pr:91base}, under \\ Assumption~\ref{ass-large1}--\ref{ass-h1}\end{tabular} \\
$\Big\Downarrow$ & & \\
\begin{tabular}{c}Thm.~0.5, basic case, \\ proved in \S 9\end{tabular} & & {\color{white}(Prop.~5.5)} $\Big\Downarrow$ (Prop.~\ref{pr:9105}) \\
$\Big\Downarrow$ & & \\
\begin{tabular}{c}Thm.~9.1, general case, \\ proved in \S 11\end{tabular} & $\longleftrightarrow$ & \begin{tabular}{c}Prop.~\ref{pr:91all}, under \\ Assumption~\ref{ass-large2}--\ref{ass-h2}\end{tabular} \\
$\Big\Downarrow$ & & {\color{white}(Prop.~5.5)} $\Big\Downarrow$ (Prop.~\ref{pr:9105}) \\
\begin{tabular}{c}Thm.~0.5, general case, \\ proved in \S 9\end{tabular} & $\longleftrightarrow$ & \begin{tabular}{c}Thm.~\ref{th:05}, under \\ Assumption~\ref{ass-large2}--\ref{ass-h2}\end{tabular} 
\end{longtable}
\end{center}
\medskip
The fact that the basic case of \cite[Thm.~9.1]{LP11} is used to prove the general case of the same result is the reason why we have two versions of the quantitative assumptions.

We start by showing that, under the weaker quantitative Assumption~\ref{ass-large1}--\ref{ass-h1} and the stronger condition $\dim(V)=r$, we can build a representation of $G$ onto a module that, up to changing the base field from $K$ to $\mathbb{F}_{q}$, is also $\Gamma$-invariant. One can imagine that this fact gets us close to saying $\Gamma=G(\mathbb{F}_{q})$; the last statement is too strong, but it shall be relaxed to saying $[G^{F},G^{F}]\leq\Gamma\leq G^{F}$ instead.

\begin{proposition}\label{pr:91base}
Let $G,n,r,K,d,D,\iota,\Gamma$ be as in Assumption~\ref{ass-large1}--\ref{ass-h1}. Since Proposition~\ref{pr:findfield} holds, let $p,q=p^{e},V=V^{(G)}$ be as given therein. Assume that $\dim(V)=r$.

Then, there is a $K^{\dim(V)}$-module $M$, there is a nontrivial representation $\sigma:G\rightarrow\mathrm{GL}(M)$ with $\mathrm{mdeg}(\sigma)\leq d^{2}$ ($\sigma$ defined over $K$), and there is an $\mathbb{F}_{q}$-submodule $M_{0}$ of $M$ with $M_{0}\otimes_{\mathbb{F}_{q}}K^{\dim(V)}\simeq M$ and $\sigma(\gamma)(M_{0})=M_{0}$ for every $\gamma\in\Gamma$.
\end{proposition}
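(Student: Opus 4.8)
The plan is to follow the proof of the basic case of \cite[Thm.~9.1]{LP11} carried out in \cite[\S 10]{LP11}, importing the qualitative module-theoretic argument and supplying only the (minimal) degree bookkeeping the statement requires. First, the hypothesis $\dim(V)=r$ together with the bound $\dim(V)\le\min\{2,r\}$ from Proposition~\ref{pr:findfield} forces $r\in\{1,2\}$: either $r=1$, so $G$ is of type $A_{1}$ and $V=U=Z(U)$; or $r=2$ with $\dim(Z(U))=2$, so the roots have two lengths, $V=Z(U)=U_{\alpha_{\ell}}U_{\alpha_{s}}$, and $G$ is of type $B_{2}$ or $G_{2}$ in the non-standard isogeny situation of \cite[Prop.~1.11(c)]{Pin98} (where $\alpha^{G}_{\ell}$ and $\alpha^{G}_{s}$ are Frobenius twists of one another, and so have equal dimension). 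I would then take $\sigma:=\rho$, Pink's representation attached to $(G,V)$ as in \eqref{eq:rho}: a nontrivial representation on a $K$-module $M$ when $\dim(V)=1$, and $\sigma=(\alpha^{G}_{\ell},\alpha^{G}_{s})$ on $M=M_{\ell}\oplus M_{s}$ when $\dim(V)=2$, with the two coordinate copies of $K$ inside $K^{2}$ acting on the two summands; either way $M$ is a free $K^{\dim(V)}$-module. By Proposition~\ref{pr:degrho}, $\sigma$ is nontrivial, defined over $K$, has image in $\GL_{m}$ with $m\le d$, and satisfies $\mdeg(\sigma)\le\iota+1\le d\le d^{2}$, which settles everything the statement asks about $\sigma$.

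It then remains to produce the $\mathbb{F}_{q}$-submodule $M_{0}$, and this is the content of \cite[\S 10]{LP11}. The ingredients it uses are all available: a regular unipotent $u\in\Gamma$ (Proposition~\ref{pr:run}); a Borel $B=U\rtimes T$ with $\Gamma\cap B(\overline{K})=(\Gamma\cap U(\overline{K}))\rtimes(\Gamma\cap T(\overline{K}))$ and $\Gamma\cap T(\overline{K})$ large; and the minimal unipotent $V\le Z(U)$ with $\Gamma\cap V(\overline{K})\simeq\mathbb{F}_{q}$ realized as a one-dimensional $\mathbb{F}_{q}$-vector space, with $\mathbb{F}_{q}\simeq\mathbb{F}_{p}[N_{\Gamma}(V)]$ and $\mathbb{F}_{q}\le K^{\dim(V)}$ (Proposition~\ref{pr:findfield}). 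Pink's representation is built precisely so that the restriction $\sigma|_{V}$ realizes this $\mathbb{F}_{q}$-structure on a highest-weight line of $M$: the assignment $v\mapsto\sigma(v)-\mathrm{Id}$ carries $\Gamma\cap V(\overline{K})$ onto an $\mathbb{F}_{q}$-line of nilpotent operators, and applied to a highest-weight vector it produces some $m_{0}\in M$; one sets $M_{0}$ to be the $\mathbb{F}_{q}$-span of the orbit $\sigma(\Gamma)\cdot m_{0}$. Then $M_{0}$ is $\sigma(\Gamma)$-stable by construction; the natural map $M_{0}\otimes_{\mathbb{F}_{q}}K^{\dim(V)}\to M$ is surjective because each simple constituent of $M$ is an irreducible $G$-module (\cite[Prop.~1.11]{Pin98}, cf.\ Proposition~\ref{pr:degrho}), so the $G(\overline{K})$-orbit of $m_{0}$ already spans $M$; and it is injective because the $\mathbb{F}_{q}$-dimension of $M_{0}$ cannot exceed what is forced by $\dim_{\mathbb{F}_{q}}(\Gamma\cap V(\overline{K}))=1$.

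The main obstacle — and the step I would be content to inherit from \cite[\S 10]{LP11} rather than re-derive — is the verification that the $\mathbb{F}_{q}$-structure produced above is genuinely $\Gamma$-invariant and has exactly the expected size, i.e.\ that $\Gamma$ does not secretly see a coarser or a finer $\mathbb{F}_{p}$-rational structure than the one coming from $\Gamma\cap V(\overline{K})$; this is where $r\le 2$ is used crucially, since it keeps the root combinatorics small enough that $M$ is generated by a single weight vector and the relevant identifications (including the twisted embedding $\mathbb{F}_{q}\hookrightarrow K^{2}$ when $\dim(V)=2$, which is the germ of the eventual twist in the Steinberg endomorphism $F$) can be checked directly. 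Since none of this requires new quantitative estimates beyond the $\mdeg(\sigma)$ bound already recorded, the proof amounts to a careful restatement of \cite[\S 10]{LP11} in the present notation.
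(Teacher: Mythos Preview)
Your proposal misidentifies both the representation $\sigma$ and the construction of $M_{0}$ used in \cite[\S 10]{LP11} (and hence in the paper). In the actual argument one does \emph{not} set $\sigma=\rho$: instead $M$ is the matrix algebra of $K^{\dim(V)}$-linear endomorphisms of the representation space of $\rho$, and $\sigma:G\to\mathrm{GL}(M)$ is the conjugation action $\sigma(g)(m)=\rho(g)\,m\,\rho(g)^{-1}$; this is exactly why one only claims $\mathrm{mdeg}(\sigma)\le d^{2}$ rather than $\le d$. The submodule $M_{0}$ is then defined by trace conditions, namely $M_{0}=\{m\in M:\mathrm{Tr}(\rho(\gamma_{i}\gamma_{0})m)\in\mathbb{F}_{q}\text{ for all }i\}$ for a carefully chosen tuple $(\gamma_{i})\in\Gamma^{d^{2}}$ whose images $\rho(\gamma_{i})$ span $M$. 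The heart of the argument is showing $\mathrm{Tr}(\rho(\Lambda))\subseteq\mathbb{F}_{q}$ for a large set $\Lambda\subseteq\Gamma$ of conjugates of regular semisimple torus elements; this is where the case analysis $r=1$ versus $r=2$ (types $A_{1}$ vs.\ $B_{2},G_{2}$, with the delicate $2f+1=e$ computation in the latter) actually enters, and it is not bypassed.

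Your claim that ``none of this requires new quantitative estimates'' is also incorrect and is a genuine gap. The paper uses dimensional estimates at several essential points: Lemma~\ref{le:rss} to get $|\Lambda|\ge|\Gamma|/2|\mathcal{W}|$; Corollary~\ref{co:gkbaddim} on the variety $X\subsetneq G^{d^{2}}$ of non-spanning tuples to find $(\gamma_{i})\in\Omega\setminus X$; and Theorem~\ref{th:dimest} on $\rho^{-1}(N)$ for proper submodules $N$ to force $\sigma(\gamma)(M_{0})=M_{0}$ via a stabilizer-index argument. These are precisely the steps that consume Assumption~\ref{ass-large1}--\ref{ass-h1}. Your alternative ``orbit of a highest-weight vector'' construction of $M_{0}$ is not what \cite[\S 10]{LP11} does, and your one-line justification for injectivity of $M_{0}\otimes_{\mathbb{F}_{q}}K^{\dim(V)}\to M$ (``cannot exceed what is forced by $\dim_{\mathbb{F}_{q}}(\Gamma\cap V(\overline{K}))=1$'') is not an argument: the difficulty is exactly to rule out that distinct $\Gamma$-translates of $m_{0}$ satisfy unexpected $\mathbb{F}_{q}$-linear relations, and this is what the trace machinery and the dimensional estimates accomplish.
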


\begin{proof}
The proof follows chiefly \cite[\S 10]{LP11}.

The process of constructing $V$ involves fixing $B,U,T$, respectively a Borel subgroup, its unipotent radical, and a maximal torus $T$ satisfying \eqref{eq:but}, so let them be fixed here as well. Since $\dim(V)=r$ we must have $\dim(Z(U))=r$ (by construction $V\leq Z(U)\leq T$), which can happen only for $r=1,2$ by \eqref{eq:zu}. If $\Lambda\subseteq\Gamma$ is the set of elements conjugate to an element of $(\Gamma\cap T(\overline{K}))^{\mathrm{rss}}$, then
\begin{equation}\label{eq:lambdagamma}
|\Lambda|=\frac{|(\Gamma\cap T(\overline{K}))^{\mathrm{rss}}|}{|\Gamma\cap T(\overline{K})|}\cdot\frac{1}{[N_{\Gamma}(T):\Gamma\cap T(\overline{K})]}\cdot|\Gamma|\geq\left(1-\frac{1}{2(2r)^{r}}\right)\cdot\frac{1}{|\mathcal{W}|}\cdot|\Gamma|\geq\frac{|\Gamma|}{2|\mathcal{W}|}
\end{equation}
using \eqref{eq:rss-mts} on $\Theta=\Gamma\cap T(\overline{K})\leq\Gamma\cap C_{G}(\Lambda)^{\mathrm{o}}(\overline{K})$.

Let $\rho$ be given as in \eqref{eq:rho}. We know that $\mathrm{Tr}(\rho(\Gamma))\subseteq K^{r}$ by definition of $\rho$, and that by Proposition~\ref{pr:findfield} $\mathbb{F}_{q}$ is a subalgebra of $K^{r}$ and isomorphic to $\mathbb{F}_{p}[N_{\Gamma}(V)]$ as a ring. We want to prove that $\mathrm{Tr}(\rho(\Lambda))\subseteq\mathbb{F}_{q}$. The case $r=1$ is immediate (see \cite[p.~1143]{LP11}), so assume $r=2$.

By \eqref{eq:but} and the fact that $\Gamma\cap U(\overline{K})$ acts trivially on $V\subseteq Z(U)$, $\Gamma\cap T(\overline{K})$ acts faithfully by conjugation on $V$ and therefore maps isomorphically to a subgroup of $\mathbb{F}_{q}^{*}=(\mathbb{F}_{p}[N_{\Gamma}(V)])^{*}$. Thus $\Gamma\cap T(\overline{K})$ is cyclic, generated by some element $\gamma$ that maps to some $(x,x^{p^{f}})\in\mathbb{F}_{q}^{*}$ (recall that $\mathbb{F}_{q}$ is also a subalgebra of $K^{2}$). Fix such $\gamma,x,f$.
Under Assumption~\ref{ass-large1}--\ref{ass-h1}, one can prove that $2f+1=e$: this is not a surprise, since the cases with $\dim(Z(U))=r=2$ are those of type $B_{2},G_{2}$ with characteristic $p=2,3$ respectively, and the automorphisms $(\cdot)^{p^{f}}$ with $q=p^{2f+1}$ provide the twists that yield Suzuki--Ree groups. The proof is contained in \cite[Lemmas~10.5 to 10.9]{LP11}; here we just sum up the computational details.

There exists some $u\in\Gamma^{\mathrm{run}}$ by Proposition~\ref{pr:run}. Then we can use \eqref{eq:boundtlow} for $r=2$ and Assumption~\ref{ass-large1} to conclude that
\begin{equation*}
|\Gamma\cap T(\overline{K})|\geq\frac{|\Gamma|^{\frac{2}{d}}}{(2dD(\iota+1))^{(d-2)(d+1)d^{d-2}}}\cdot\frac{1}{(2dD)^{d^{2}}(2dD(\iota+1)^{d-2})^{d^{d-2}}}\cdot\frac{1}{16}>1.
\end{equation*}
The subgroup $T'\leq T$ yielding a scalar action on $U/[U,U]$ has dimension $1$, so the bound above and Theorem~\ref{th:dimest} imply that there is some element $\gamma'\in\Gamma\cap T(\overline{K})$ whose action on $U/[U,U]$ is non-scalar. We also need the size of the subgroup of $\mathbb{F}_{q}^{*}$ to which $\Gamma\cap T(\overline{K})$ maps isomorphically to be at least $(q-1)/\varphi(d-1)^{2}$ (true by Assumption~\ref{ass-large1}--\ref{ass-h1} and \cite[Thm.~6.6]{LP11}), and we need the bound $(q-1)/\varphi(d-1)^{2}\geq 2q^{3/4}$
(true by Assumption~\ref{ass-large1} and Proposition~\ref{pr:findfield}). The existence of $u$ and $\gamma'$, the minimality of $V$ as defined inside the proof of Proposition~\ref{pr:findfield}, and the two bounds above force the equality $2f+1=e$. A case-by-case analysis for the possible groups $G$ and the corresponding roots then gives $\mathrm{Tr}(\rho_{s}(\gamma^{i}))=\mathrm{Tr}(\rho_{\ell}(\gamma^{i}))^{p^{f}}$ for all $i$, yielding the case $r=2$ of the claim $\mathrm{Tr}(\rho(\Lambda))\subseteq\mathbb{F}_{q}$.

It is time to define the objects that we look for. By Proposition~\ref{pr:degrho}, $\rho$ is a representation over $K$ of the group $G$ in a space of dimension $\leq d$, and it has $\mathrm{mdeg}(\rho)\leq d$; the same can be said for $\rho$ seen as a representation over $K^{\dim(V)}$. Let $M\leq\mathrm{Mat}_{d}(K^{\dim(V)})$ be the ring of $K^{\dim(V)}$-linear transformations of the representation space of $\rho$. Call $X\subseteq G^{d^{2}}$ the set of $(g_{i})_{i\leq d^{2}}$ for which $(\rho(g_{i}))_{i\leq d^{2}}$ does not span $M$ as a $K^{\dim(V)}$-vector subspace of $(K^{\dim(V)})^{d^{2}}$. The set $X$ is a proper subvariety of $G^{d^{2}}$ (thus of dimension $<d^{3}$) defined as the set of zeros of the $(\dim(M)\times\dim(M))$-minors of the matrix whose rows are the $\rho(g_{i})$. By Corollary~\ref{co:inters}\eqref{co:inters-deg} we have
\begin{equation*}
\deg(X)\leq(\mathrm{mdeg}(\rho)\dim(M))^{d^{3}}\leq d^{3d^{3}},
\end{equation*}
so Corollary~\ref{co:gkbaddim} gives $|\Gamma^{d^{2}}\cap X(\overline{K})|\leq d^{2}(2d^{3d^{3}+1})^{d^{d-1}}|\Gamma|^{d^{2}-\frac{1}{d}}$. On the other hand, if we call $\Omega\subseteq\Gamma^{d^{2}}$ the set of $(\gamma_{i})_{i\leq d^{2}}$ for which
\begin{equation*}
\left|\bigcap_{i=1}^{d^{2}}\gamma_{i}^{-1}\Lambda\right|\geq\frac{|\Gamma|}{2(2|\mathcal{W}|)^{d^{2}}},
\end{equation*}
then by \eqref{eq:lambdagamma} and \cite[Lem.~10.10]{LP11} we also have $|\Omega|\geq|\Gamma|^{d^{2}}/2(2|\mathcal{W}|)^{d^{2}}$.
By \eqref{eq:weilg}, Assumption~\ref{ass-large1}, and the two bounds above, there must be some $(\gamma_{i})_{i\leq d^{2}}\in\Omega\setminus X$. Fix such a tuple, fix $\gamma_{0}\in\bigcap_{i=1}^{d^{2}}\gamma_{i}^{-1}\Lambda$, and define
\begin{equation*}
M_{0}:=\{m\in M:\mathrm{Tr}(\rho(\gamma_{i}\gamma_{0})m)\in\mathbb{F}_{q} \ \ (1\leq i\leq d^{2})\}.
\end{equation*}
This is an $\mathbb{F}_{q}$-vector space contained in $M$ that by definition of $X$ spans the whole $M$ over $K^{\dim(V)}$. Now, \cite[Lem.~10.12]{LP11} shows that $|\Gamma\cap\rho^{-1}(M_{0})(\overline{K})|\geq|\Gamma|/2(2|\mathcal{W}|)^{d^{2}}$. In turn, this implies that the left stabilizer $\Delta:=\{\gamma\in\Gamma:\rho(\gamma)M_{0}=M_{0}\}$ is large, in the sense that $[\Gamma:\Delta]\leq 4(2|\mathcal{W}|)^{d^{2}}$: if it were not, for some left $\Gamma$-translates $M'_{0},M''_{0}$ of $M_{0}$ we would have a large intersection $|\Gamma\cap\rho^{-1}(M'_{0}\cap M''_{0})(\overline{K})|$, meaning at least $|\Gamma|/(4(2|\mathcal{W}|)^{d^{2}})^{2}$ in size (see the proof of \cite[Lem.~10.13]{LP11} for details), but this is not possible because every proper submodule $N\subsetneq M$ must have
\begin{align}\label{eq:nupbound}
\deg(\rho^{-1}(N)) & \leq Dd^{d}, & |\Gamma\cap\rho^{-1}(N)(\overline{K})| & \leq(2Dd^{d+1})^{d^{d}}|\Gamma|^{1-\frac{1}{d}}
\end{align}
by Lemma~\ref{le:zarimdeg}\eqref{le:zarimdeg-pre} and Theorem~\ref{th:dimest}\eqref{th:dimest-ok},
contradicting the previous lower bound by \eqref{eq:weilg} and Assumption~\ref{ass-large1}. Let $\sigma:G\rightarrow\mathrm{GL}(M)$ be the representation defined by
\begin{equation}\label{eq:defsigma}
\sigma(g)(m)=\rho(g)m\rho(g)^{-1}.
\end{equation}
By definition of $\rho$, we have $\mathrm{mdeg}(\sigma)\leq d^{2}$ (as $\rho$ comes from the adjoint representation $\mathrm{mdeg}(\sigma)$ can be shown to be linear in $d$, but the improvement is negligible), and $\sigma$ is defined over $K$. We need to show that $M_{0}=\sigma(\gamma)(M_{0})$ for every $\gamma\in\Gamma$. We know that $\rho(\Delta)\subseteq M_{0}$, and the large index of $\Delta$ implies that
\begin{equation*}
|\Gamma\cap\rho^{-1}(M_{0}\cap\sigma(\gamma)(M_{0}))|\geq|\Delta\cap\gamma\Delta\gamma^{-1}|\geq\left(\frac{1}{4(2|\mathcal{W}|)^{d^{2}}}\right)^{2}|\Gamma|,
\end{equation*}
while if $M_{0}\neq\sigma(\gamma)(M_{0})$ we would get an upper bound on $|\Gamma\cap\rho^{-1}(M_{0}\cap\sigma(\gamma)(M_{0}))(\overline{K})|$ like in \eqref{eq:nupbound},
contradicting again Assumption~\ref{ass-large1}. Therefore $M_{0}=\sigma(\gamma)(M_{0})$, proving the result.
\end{proof}

From the $\Gamma$-invariant module $M_{0}$ we build the desired Steinberg endomorphism in Proposition~\ref{pr:9105} below. We do not assume any condition on $\dim(V)$, so that we may be able to apply the result again later in the paper.

\begin{proposition}\label{pr:9105}
Let $G,n,r,K,d,D,\iota,\Gamma$ be as in Assumption~\ref{ass-large1}--\ref{ass-h1}. Since Proposition~\ref{pr:findfield} holds, let $p,q=p^{e},V=V^{(G)}$ be as given therein. Assume that there exist $\sigma,M,M_{0}$ satisfying the conclusions of Proposition~\ref{pr:91base} (unlike in that result, we do not necessarily assume that $\dim(V)=r$).

Then, there is a Steinberg endomorphism $F:\mathrm{Aut}_{K}(M)\rightarrow\mathrm{Aut}_{K}(M)$ (where $M$ is seen as a module over $K$, instead of over $K^{\dim(V)}$) such that $[G^{F},G^{F}]\leq\Gamma\leq G^{F}$ with $[G^{F},G^{F}]$ simple.
\end{proposition}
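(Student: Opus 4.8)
The plan is to follow \cite[\S 9]{LP11} and make it quantitative. Since $\mathrm{char}(K)=p>0$ by Proposition~\ref{pr:run}, we have $\overline{K}\supseteq\overline{\mathbb{F}_{p}}$, and as $G$ is adjoint almost simple we may take $G$ — and hence Pink's representation $\rho$ underlying $\sigma$, which is built from $\mathfrak{g}$ — to be defined over $\mathbb{F}_{p}$. Because $\sigma$ is nontrivial and $G$ almost simple adjoint, $\ker\sigma$ is trivial and (by the properties of $\rho$ recalled in Section~\ref{se:almsim}) $\sigma$ is an isomorphism onto $\sigma(G)$; we identify $G$ with $\sigma(G)\leq\mathrm{GL}_{K}(M)\leq\mathrm{Aut}_{K}(M)\simeq\mathrm{GL}_{N}$, where $N=\dim_{K}M$. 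As $\sigma(G)$ is defined over $\mathbb{F}_{p}$ it is stable under the standard Frobenius $F_{q}^{0}$ of $\mathrm{GL}_{N}$ attached to any power $q$ of $p$; when $\dim(V)=2$ one instead uses for $F_{q}^{0}$ the twisted endomorphism built from the exceptional graph automorphism realised as a block-swap $M_{\ell}\leftrightarrow M_{s}$ and from the relation $q=p^{2f+1}$ established in Proposition~\ref{pr:91base}, so that $(F_{q}^{0})^{2}=F_{q}$ and still $F_{q}^{0}(\sigma(G))=\sigma(G)$.

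Now I would build $F$ from the datum $M_{0}$. The $\Gamma$-invariant $\mathbb{F}_{q}$-form $M_{0}$, satisfying $M_{0}\otimes_{\mathbb{F}_{q}}K^{\dim(V)}\simeq M$, differs from the standard $\mathbb{F}_{q}$-structure of $M$ by conjugation by a single matrix $c\in\mathrm{GL}_{N}(K)$, so the associated Steinberg endomorphism of $\mathrm{Aut}_{K}(M)$ is $F=\mathrm{Int}(c)\circ F_{q}^{0}\circ\mathrm{Int}(c)^{-1}$, with $F(\sigma(G))=c\,\sigma(G)\,c^{-1}$ and $\mathrm{Aut}_{K}(M)^{F}=c\,\mathrm{GL}_{N}(\mathbb{F}_{q})\,c^{-1}$ (or its twisted analogue). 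The hypothesis $\sigma(\gamma)(M_{0})=M_{0}$ for all $\gamma\in\Gamma$ means $\sigma(\Gamma)\subseteq\mathrm{Aut}_{K}(M)^{F}$, hence $\sigma(\Gamma)\subseteq\sigma(G)\cap c\,\sigma(G)\,c^{-1}$. If $c$ did not normalise $\sigma(G)$ this intersection would be a proper algebraic subgroup of $\sigma(G)$, of degree at most $\deg(\sigma(G))^{2}$ since conjugation by the constant $c$ preserves degrees; pulling back along $\sigma^{-1}$ would then trap $\Gamma$ in a proper subgroup of $G$ of degree $\leq(2dD)^{4d}$, contradicting Assumption~\ref{ass-h1}. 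Therefore $F$ stabilises $G=\sigma(G)$ and restricts to a Steinberg endomorphism of it, so $G^{F}$ is a finite group of Lie type of characteristic $p$ — an (un)twisted Chevalley group, or a Suzuki/Ree group when $\dim(V)=2$ — and $\Gamma\leq G^{F}$.

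It remains to prove $[G^{F},G^{F}]\leq\Gamma$. Since $G$ is adjoint and $q$ is enormous (Assumption~\ref{ass-large1}), $S:=[G^{F},G^{F}]$ is a nonabelian simple group (this holds for all $q$ outside a short explicit list of small values). I would bound $[G^{F}:\Gamma]$ in terms of $d,D,\iota$ only: $|G^{F}|\leq C\,q^{d/\dim(V)}$ for an absolute constant $C$ (with exponent $d$ if $\dim(V)=1$, and $d/2$ if $\dim(V)=2$, read off from the orders of ${}^{2}B_{2},{}^{2}G_{2},{}^{2}F_{4}$), while $q=|\Gamma\cap V(\overline{K})|\leq\varphi(2)|\Gamma|^{\dim(V)/d}$ by Proposition~\ref{pr:findfield}, so $|\Gamma|\geq(q/\varphi(2))^{d/\dim(V)}$ and therefore $[G^{F}:\Gamma]\leq C\varphi(2)^{d}=:c$. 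Passing to the normal core $N=\bigcap_{g\in G^{F}}g\Gamma g^{-1}\unlhd G^{F}$ gives $[G^{F}:N]\leq c!$; then $S\cap N\unlhd S$ has index $\leq c!$ in $S$, whereas $|S|>c!$ by Assumption~\ref{ass-large1} (again because $q$ is large), so simplicity of $S$ forces $S\cap N=S$, that is $S=[G^{F},G^{F}]\leq N\leq\Gamma$. This completes the proof.

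The descent step — showing $F$ genuinely stabilises $G=\sigma(G)$ — is, as seen above, absorbed by the bounded-degree Assumption~\ref{ass-h1}; the two points that need real care are the twisted case $\dim(V)=2$, where one must correctly realise the exceptional graph automorphism as a linear map on $M$ and check $(F_{q}^{0})^{2}=F_{q}$ compatibly with the $q=p^{2f+1}$ coming from Proposition~\ref{pr:91base}, and the calibration of the quantitative hypothesis: the final step needs $q$ — equivalently $|\Gamma|$ — larger than $c!\approx\varphi(2)^{\varphi(2)^{d}}$, which is precisely why Assumption~\ref{ass-large1} carries the number of exponential floors that it does. Everything else — the order comparisons, the normal-core computation, and listing the small exceptional $q$ — is routine bookkeeping.
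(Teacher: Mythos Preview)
Your proposal follows the paper's route closely: build $F$ from the $\mathbb{F}_{q}$-form $M_{0}$, show that $F$ stabilises $\sigma(G)$ by a degree bound and Assumption~\ref{ass-h1}, then deduce $[G^{F},G^{F}]\leq\Gamma$ from the bounded index $[G^{F}:\Gamma]$ via a normal-core argument and simplicity. The one point where you diverge is in positing that $\sigma(G)$ is defined over $\mathbb{F}_{p}$ so that $F(\sigma(G))=c\,\sigma(G)\,c^{-1}$; the paper does not use (or justify) this, and instead bounds $\deg(\sigma^{-1}(F(\sigma(G))))\leq D^{2}d^{4d}$ directly from $\mathrm{mdeg}(F)=1$ and $\mathrm{mdeg}(\sigma)\leq d^{2}$, which works regardless of the field of definition of $\sigma(G)$ --- a cleaner way to reach the same conclusion.
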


\begin{proof}
We follow \cite[\S 9]{LP11}, skipping the theoretical details and focusing on explicit bounds.

By construction $M$ is a $K^{\dim(V)}$-module, where $\dim(V)\in\{1,2\}$ by \eqref{eq:zu}. As $\sigma$ in nontrivial and $G$ is almost simple adjoint, $\mathrm{Ad}_{\sigma(G)}\circ\sigma$ is a totally inseparable isogeny on its image, hence injective (as follows from its definition, see \cite[p.~448]{Pin98}), and since all maps are defined over $K$ then $\sigma$ must be injective on $K$-points.

As $M$ is a module and $\sigma(G)\leq\mathrm{GL}(M)$ (over $K$), we may define our desired Steinberg endomorphism $F:\mathrm{GL}(M)\rightarrow\mathrm{GL}(M)$ as a linear transformation of the vector space $\mathrm{Mat}(M)$, albeit only over $\mathbb{F}_{p}$. As such, $F$ preserves degrees by the original definition, since linear subspaces are sent to linear subspaces, and in particular $\deg(F(\sigma(G)))=\deg(\sigma(G))$.

The details of the choice of $F$ are contained in \cite[p.~1141]{LP11}. If $\dim(V)=1$, we take $F$ to be the Frobenius map $F:\mathrm{Aut}_{K}(M)\rightarrow\mathrm{Aut}_{K}(M)$ with respect to $\mathbb{F}_{q}\subseteq K$. If $\dim(V)=2$, we can write $M=M_{\ell}\oplus M_{s}$ and $\mathrm{Aut}_{K^{2}}(M)=\mathrm{Aut}_{K}(M_{\ell})\times\mathrm{Aut}_{K}(M_{s})$ and take isogenies on the two components so that their product is a map $F$ whose square is the Frobenius map with respect to $\mathbb{F}_{q}\subseteq K^{2}$. By construction, $\sigma^{-1}(F(\sigma(G)))\leq G$, and the conclusion of Proposition~\ref{pr:91base} gives $F(\sigma(\gamma))=\sigma(\gamma)$ for all $\gamma\in\Gamma$, meaning that $\Gamma\subseteq\sigma^{-1}(F(\sigma(G)))$.

The bound $\mathrm{mdeg}(\sigma)\leq d^{2}$, the degree-preserving property of $F$, and Lemma~\ref{le:zarimdeg}\eqref{le:zarimdeg-im}--\eqref{le:zarimdeg-pre} give
\begin{equation*}
\deg(\sigma^{-1}(F(\sigma(G))))\leq Dd^{2d}\deg(F(\sigma(G)))=Dd^{2d}\deg(\sigma(G))\leq D^{2}d^{4d}.
\end{equation*}
Hence, Assumption~\ref{ass-h1} forces $\sigma^{-1}(F(\sigma(G)))=G$. The map $F$ becomes naturally an isogeny $F:G\rightarrow G$ (see \cite[Lem.~9.4]{LP11}, which uses \cite[Thm.~1.7]{Pin98}) for which $F^{\dim(V)}$ is the Frobenius map with respect to $\mathbb{F}_{q}$. On one hand we have $\Gamma\subseteq G^{F}$, using the injectivity of $\sigma$ on $K$-points. On the other hand,
Assumption~\ref{ass-large1} and \cite[Thm.~3.4]{LP11} imply that $[G^{F},G^{F}]$ is simple with index $\leq r+1$ inside $G^{F}$ by \eqref{eq:indexcartan}, and that
\begin{equation}\label{eq:gfbounds}
\frac{|\Gamma|}{2\varphi(d-1)^{\frac{d}{\dim(V)}}}\leq(q^{\frac{1}{\dim(V)}}-1)^{d}\leq|G^{F}|\leq q^{\frac{d}{\dim(V)}}\leq\varphi(2)^{\frac{d}{\dim(V)}}|\Gamma|
\end{equation}
by Proposition~\ref{pr:findfield}. Let $H_{1}:=\Gamma\cap[G^{F},G^{F}]$, and let $H_{2}$ be the normal core of $H_{1}$ inside $[G^{F},G^{F}]$. The upper bound in \eqref{eq:gfbounds} gives
\begin{align}
|[G^{F},G^{F}]/H_{2}| & \leq[[G^{F},G^{F}]:H_{1}]!\leq[G^{F}:\Gamma]!\leq\left(\varphi(2)^{\frac{d}{\dim(V)}}\right)! \nonumber \\
 & \leq\left((2dD(\iota+1))^{2(d+1)d^{3}}\right)!\leq(2dD\iota)^{(2dD\iota)^{5d^{4}}}, \label{eq:gfgfup}
\end{align}
and the lower bound of \eqref{eq:gfbounds} gives
\begin{equation}\label{eq:gfgfdown}
[G^{F},G^{F}]\geq\frac{|G^{F}|}{r+1}\geq\frac{|\Gamma|}{2(r+1)(2dD(\iota+1))^{d^{d+1}}}.
\end{equation}
Putting together \eqref{eq:gfgfup}, \eqref{eq:gfgfdown}, Assumption~\ref{ass-large1}, and the simplicity of $[G^{F},G^{F}]$, we are forced to have $H_{2}=[G^{F},G^{F}]$, concluding that $\Gamma\supseteq[G^{F},G^{F}]$.
\end{proof}

Next we prove the existence of a $\Gamma$-invariant module $M_{0}$ as in Proposition~\ref{pr:91base}, this time without the assumption $\dim(V)=r$. The construction relies on finding a smaller subgroup $H\leq G$ for which $\dim(V^{(H)})=\mathrm{rk}(H)$, and use the field and the representation resulting from $H$ to build the representation of $G$. Passing from $G$ to $H$ worsens the conditions on $\Gamma$, hence we need to impose the stronger Assumption~\ref{ass-large2}--\ref{ass-h2} on $\Gamma\leq G(\overline{K})$ to ensure that a second finite group $\Delta\leq H(\overline{K})$ related to $\Gamma$ satisfies Assumption~\ref{ass-large1}--\ref{ass-h1}.

\begin{proposition}\label{pr:91all}
Let $G,n,r,K,d,D,\iota,\Gamma$ be as in Assumption~\ref{ass-large2}--\ref{ass-h2}. Since Proposition~\ref{pr:findfield} holds, let $p,q=p^{e},V=V^{(G)}$ be as given therein.

Then, there is a $K^{\dim(V)}$-module $M$, there is a nontrivial representation $\sigma:G\rightarrow\mathrm{GL}(M)$ with $\mathrm{mdeg}(\sigma)\leq d^{2}$ ($\sigma$ defined over $K$), and there is an $\mathbb{F}_{q}$-submodule $M_{0}$ of $M$ with $M_{0}\otimes_{\mathbb{F}_{q}}K^{\dim(V)}\simeq M$ and $\sigma(\gamma)(M_{0})=M_{0}$ for every $\gamma\in\Gamma$.
\end{proposition}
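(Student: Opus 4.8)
The plan is to follow \cite[\S 11]{LP11}, bootstrapping from the basic case already proved in Proposition~\ref{pr:91base}. First, if $\dim(V)=r$ then Assumption~\ref{ass-large2}--\ref{ass-h2} implies Assumption~\ref{ass-large1}--\ref{ass-h1} a fortiori, so Proposition~\ref{pr:91base} gives the conclusion directly; hence one may assume $\dim(V)<r$. By \eqref{eq:zu} and the minimality in the construction of $V=V^{(G)}$, only two possibilities remain: $\dim(V)=1$ with $V=U_{\beta}$ for a single root $\beta$ (the highest root, or the highest long or short root), or $\dim(V)=2$ with $V=Z(U)=U_{\alpha_{\ell}}U_{\alpha_{s}}$ and $r\geq 3$. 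In either case I would let $H$ be the adjoint quotient of the connected semisimple subgroup of $G$ generated by the root subgroups $U_{\pm\gamma}$ over the roots $\gamma$ supporting $V$; thus $H$ is connected almost simple adjoint of type $A_{1}$ or $B_{2}$, with $\mathrm{rk}(H)=\dim(V)$, and $V$ is still $B_{H}$-invariant inside $Z(U_{H})$ and minimal of that kind, that is $V^{(H)}=V$. Using Proposition~\ref{pr:adjbounds}, Remark~\ref{re:a1b2g2}\eqref{re:a1b2g2-yes} and Lemma~\ref{le:rzdeg}, I would fix a concrete realization of $H$ over $\overline{K}$ all of whose invariants $d_{H},D_{H},r_{H},\iota_{H}$ and ambient dimension $n_{H}$ are bounded by absolute constants.

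Next I would produce the auxiliary group $\Delta\leq H(\overline{K})$ related to $\Gamma$. Since $V$ is $B$-invariant for some Borel $B$, the normalizer $N_{G}(V)$ contains $B$, so the $G$-orbit of $V$ (inside the variety of subgroups of $\mathrm{GL}_{n}$, which has bounded degree because $\deg(V)\leq D$) has dimension at most $d-\dim(B)=(d-r)/2<d$; by the orbit argument behind Corollary~\ref{co:estcentr} and Theorem~\ref{th:dimest}\eqref{th:dimest-ok} this gives $[\Gamma:N_{\Gamma}(V)]\leq\varphi((d-r)/2)\,|\Gamma|^{(d-r)/(2d)}$, so $N_{\Gamma}(V)$ is still an enormous tower under Assumption~\ref{ass-large2}. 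I would then take $\Delta$ to be the image of (a suitable finite overgroup of $N_{\Gamma}(V)$ inside $G(\overline{K})$ that normalizes $H$) under the conjugation homomorphism into $\mathrm{Aut}(H)$, composed with the adjoint identification; it satisfies $\Delta\cap V(\overline{K})=\Gamma\cap V(\overline{K})\simeq\mathbb{F}_{q}$, so Proposition~\ref{pr:findfield} applied to $(H,\Delta)$ recovers the same field $\mathbb{F}_{q}$ and the same minimal subgroup $V$. One then checks that $(H,\Delta)$ satisfies Assumption~\ref{ass-large1}--\ref{ass-h1}: Assumption~\ref{ass-large1} because $|\Delta|$ beats the fixed threshold attached to the bounded-parameter group $H$; Assumption~\ref{ass-h1} because a containment of $\Delta$ in a proper subgroup of $H$ of bounded degree and smaller dimension pulls back, using Assumption~\ref{ass-h2} and the bound on $\deg(H)$, to a forbidden containment for $\Gamma$. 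The slack between the exponents $10d^{4}$ and $11d^{4}$ in Assumptions~\ref{ass-large1} and~\ref{ass-large2} is exactly what pays for these losses.

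With $\dim(V^{(H)})=r_{H}$, Proposition~\ref{pr:91base} (reinforced by Proposition~\ref{pr:9105}) applies to $(H,\Delta)$ and produces a $K^{\dim(V)}$-module, a representation, and a $\Delta$-stable $\mathbb{F}_{q}$-form; what I actually extract from its proof is that $\mathrm{Tr}(\rho^{(H)}(\lambda))\in\mathbb{F}_{q}$ for all $\lambda$ in a subset of $\Delta$ of size at least $|\Delta|$ divided by a polynomial in the parameters, together with a Steinberg endomorphism of $H$ whose $\dim(V)$-th power is the Frobenius with respect to $\mathbb{F}_{q}$ and a large simple subgroup of Lie type inside $\Delta$. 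The transfer to $G$ is then the point: restricting $\rho=\rho^{(G)}$ of \eqref{eq:rho} to a conjugate of $H$ and invoking Pink's description of $\bar{\mathfrak{g}},\bar{\mathfrak{g}}_{s},\bar{\mathfrak{g}}_{\ell}$ in \cite[Prop.~1.11]{Pin98}, one identifies $\rho^{(H)}$ with the relevant composition factor of $\rho|_{H}$ while the remaining factors are controlled (their weights lie in the root lattice and are pinned down by the action on $V$, which takes values in $\mathbb{F}_{q}$). Combined with the large Lie-type subgroup of $\Delta$ sitting, through conjugation, inside $\Gamma$, this should yield $\mathrm{Tr}(\rho(\lambda))\in\mathbb{F}_{q}$ for all $\lambda$ in a subset $\Lambda\subseteq\Gamma$ with $|\Lambda|\geq|\Gamma|$ over a polynomial in $d,D,\iota,|\mathcal{W}|$ — precisely the input used in the proof of Proposition~\ref{pr:91base} just after \eqref{eq:lambdagamma}.

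Once $\mathrm{Tr}(\rho(\Lambda))\subseteq\mathbb{F}_{q}$ is available for such a large $\Lambda$, the remainder of the proof of Proposition~\ref{pr:91base} runs verbatim: take $M$ to be the algebra of $K^{\dim(V)}$-endomorphisms of the $\rho$-space, pick a tuple $(\gamma_{i})_{i\leq d^{2}}$ outside the bad locus $X$ and inside the set $\Omega$ of tuples with many common translates of $\Lambda$ (possible by Corollary~\ref{co:gkbaddim}, \eqref{eq:weilg}, and Assumption~\ref{ass-large1}), set $M_{0}=\{m\in M:\mathrm{Tr}(\rho(\gamma_{i}\gamma_{0})m)\in\mathbb{F}_{q}\text{ for all }i\}$, bound the index of the left stabilizer $\{\gamma\in\Gamma:\rho(\gamma)M_{0}=M_{0}\}$ via \eqref{eq:nupbound}, and with $\sigma(g)(m)=\rho(g)m\rho(g)^{-1}$ conclude $\sigma(\gamma)(M_{0})=M_{0}$ for all $\gamma\in\Gamma$; the bounds $\dim(M)\leq d$ and $\mathrm{mdeg}(\sigma)\leq d^{2}$ come along exactly as in Proposition~\ref{pr:91base}. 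I expect the main obstacle to be the transfer in the previous paragraph — deducing control of $\rho^{(G)}$ on a positive proportion of $\Gamma$ from control of $\rho^{(H)}$ on $\Delta$, via Pink's analysis of the representations attached to subsystem subgroups and the fact that $\Gamma$ is "seen" through its conjugates of $H$ — together with the bookkeeping that keeps every degree and index loss within the room afforded by Assumption~\ref{ass-large2}; everything else is of the same nature as the computations already carried out in Propositions~\ref{pr:91base}--\ref{pr:9105}.
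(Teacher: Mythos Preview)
Your outline follows \cite[\S 11]{LP11} in spirit, but it diverges from the paper at the three linked points where your acknowledged obstacle sits, and that obstacle is not a detail to be filled in but a sign that the construction must be different.

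The paper's auxiliary data are simpler than yours. It picks $\gamma\in\Gamma$ in the big Bruhat cell $B\dot wB$ (found via Theorem~\ref{th:dimest} and the degree bound \eqref{eq:degbb}), sets $H_{(\gamma)}=\langle V,\gamma V\gamma^{-1}\rangle$ (type $A_{1},B_{2}$ or $G_{2}$), $H=H_{(\gamma)}^{\mathrm{ad}}$ via $\pi=\mathrm{Ad}_{H_{(\gamma)}}$, and takes $\Delta:=\pi(\Gamma\cap H_{(\gamma)}(\overline K))$. Your normalizer-based $\Delta$, pushed into $\mathrm{Aut}(H)$, does not obviously land in $H(\overline K)$, and the size bound you sketch does not survive that passage. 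In the paper, $|\Delta|\geq\frac{1}{2}|\Gamma\cap V(\overline K)|$ already suffices for Assumption~\ref{ass-large1} because the parameters of $H$ are absolute constants, and Assumption~\ref{ass-h1} is checked by showing that any proper $L\leq H$ of bounded degree cannot swallow both $V$ and $\gamma V\gamma^{-1}$, hence $\Gamma\cap V\not\subseteq\pi^{-1}(L)$ by a direct dimensional estimate.

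More importantly, the paper does \emph{not} try to prove $\mathrm{Tr}(\rho(\Lambda))\subseteq\mathbb{F}_{q}$ for a positive-proportion $\Lambda\subseteq\Gamma$ and then rerun Proposition~\ref{pr:91base}. What it extracts from Propositions~\ref{pr:91base}--\ref{pr:9105} applied to $(H,\Delta)$, together with the fact that all $H_{(\gamma)}$ are conjugate and yield the same $F$, is only that $\mathrm{Tr}(\rho(v\gamma v\gamma^{-1}))\in\mathbb{F}_{q}$ for one fixed $v\in(\Gamma\cap V)\setminus\{e\}$ and all $\gamma\in\Gamma\cap(B\dot wB)(\overline K)$ (this is \cite[Prop.~11.7]{LP11}). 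Accordingly the module is built differently: $\bar M'$ is the smallest $G$-invariant $K^{\dim(V)}$-submodule of $\mathrm{End}(\rho)$ containing $\rho(v)$, $M:=\bar M'/(\bar M'\cap\bar M'^{\perp})$ with respect to the trace form, $m_{0}$ is the image of $\rho(v)$, and $M_{0}$ is the $\mathbb{F}_{q}$-span of the $\Gamma$-orbit of $m_{0}$. Then $\sigma(\gamma)(M_{0})=M_{0}$ is automatic, and the content is the isomorphism $M_{0}\otimes_{\mathbb{F}_{q}}K^{\dim(V)}\simeq M$, proved by bounding $|\Gamma\cap X(N)|$ for the preimage varieties $X(N)=\{g:\sigma(g)(m_{0})\in N\}$ of proper submodules $N\subsetneq M$ and invoking Assumption~\ref{ass-h2}; this last step is precisely where the stronger degree threshold $(2dDr)^{4d^{2}}$ is spent. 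Your ``transfer'' from $\rho^{(H)}$-traces to $\rho^{(G)}$-traces on generic elements of $\Gamma$ is thus not needed, and attempting it would require controlling all composition factors of $\rho|_{H}$ and then propagating to $\Gamma$, neither of which you supply.
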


\begin{proof}
If $\dim(V)=r$ we are done by Proposition~\ref{pr:91base}, so we may assume $\dim(V)<r$. We follow \cite[\S 11]{LP11}. The first step is to find a suitable $H\leq G$ for which $\dim(V^{(H)})=\mathrm{rk}(H)$ and apply Propositions~\ref{pr:91base}--\ref{pr:9105} to it.

By \cite[\S\S 28.3--28.5]{Hum95b}, we can decompose $G$ as a disjoint union
\begin{equation*}
G=\coprod_{w\in\mathcal{W}}BwB=B\dot{w}B\sqcup\coprod_{w\neq\dot{w}}\overline{BwB},
\end{equation*}
where $B\dot{w}B$ is open and dense. If $X$ is the union over $w\neq\dot{w}$ on the right-hand side, by Lemmas~\ref{le:zarimdeg}\eqref{le:zarimdeg-im}--\ref{le:tudeg}\eqref{le:tudeg-b} and \eqref{eq:weilg} we have
\begin{equation}\label{eq:degbb}
\deg(X)\leq(|\mathcal{W}|-1)\deg(B)^{2}2^{d}\leq((2r)^{r}-1)D^{2}2^{d}.
\end{equation}
Hence, by Assumption~\ref{ass-large2}--\ref{ass-h2} and Theorem~\ref{th:dimest}\eqref{th:dimest-ok} applied to $X$, there is some $\gamma\in\Gamma\cap(B\dot{w}B)(\overline{K})$. From now on, fix such a $\gamma$. Call $H_{(\gamma)}$ the algebraic group generated by $V$ and $\gamma V\gamma^{-1}$: $H_{(\gamma)}$ is connected almost simple of type $A_{1},B_{2},G_{2}$, and is in fact the product of root subgroups normalized by $T$ \cite[Prop.~11.1(a)--(b)]{LP11}. Thus, we can write $H_{(\gamma)}$ as the image of the map from $T\times T$ to $G$ given by $f(t_{1},t_{2})=t_{1}g_{1}t_{1}^{-1}t_{2}g_{2}t_{2}^{-1}$ for some appropriate $g_{1},g_{2}$, which by Lemma~\ref{le:zarimdeg}\eqref{le:zarimdeg-im} gives $\deg(H_{(\gamma)})\leq D^{2}(2\iota+2)^{2}$. Write also $\pi$ for the adjoint representation $\mathrm{Ad}_{H_{(\gamma)}}$ and $H$ for its image $H_{(\gamma)}^{\mathrm{ad}}$: the map $\pi:H_{(\gamma)}\rightarrow H$ has $\mathrm{mdeg}(\pi)\leq\iota+1$ by \eqref{eq:mdegad}, and every fibre has size $\leq 2$ (checking the types case by case). Most notably, $H$ is a connected almost simple adjoint group of a type for which $\dim(V^{(H)})=\mathrm{rk}(H)$, so Propositions~\ref{pr:91base}--\ref{pr:9105} apply to $H$, provided that we also have a suitable finite group inside $H(\overline{K})$.

Define $\Delta:=\pi(\Gamma\cap H_{(\gamma)}(\overline{K}))$. By the bound on fibre sizes and Proposition~\ref{pr:findfield}, we have
\begin{equation*}
|\Delta|\geq\frac{1}{2}|\Gamma\cap H_{(\gamma)}(\overline{K})|\geq\frac{1}{2}|\Gamma\cap V(\overline{K})|\geq\frac{1}{2(2dD(\iota+1))^{d^{d+1}}}|\Gamma|^{\frac{\dim(V)}{d}},
\end{equation*}
and if $\Gamma$ satisfies Assumption~\ref{ass-large2} then $\Delta$ must satisfy Assumption~\ref{ass-large1}. Now, let $L$ be any algebraic subgroup of $H$ that is proper (so $\dim(L)<\dim(H)$ automatically) and that satisfies
\begin{equation}\label{eq:ha1b2g2}
\deg(L)\leq(2\dim(H)\deg(H))^{4\dim(H)}\leq 2^{14000},
\end{equation}
where we used $\dim(H)\leq 10$ and $\deg(H)\leq 2^{7^{3}}$ by Remark~\ref{re:a1b2g2}\eqref{re:a1b2g2-yes}. Since $L$ is proper, by definition of $H_{(\gamma)}$ we cannot have $V\subseteq\pi^{-1}(L)$ and $\gamma V\gamma^{-1}\subseteq\pi^{-1}(L)$ at the same time. Suppose that $V\not\subseteq\pi^{-1}(L)$. Lemma~\ref{le:zarimdeg}\eqref{le:zarimdeg-pre} implies that $\deg(V\cap\pi^{-1}(L))\leq 2^{14000}D(\iota+1)^{2}$. Combining Assumption~\ref{ass-large2} with Theorem~\ref{th:dimest}\eqref{th:dimest-ok} and Proposition~\ref{pr:findfield},
\begin{align*}
|\Gamma\cap(V\cap\pi^{-1}(L))(\overline{K})| & \leq(2d\cdot 2^{14000}D(\iota+1)^{2})^{d^{2}}|\Gamma|^{\frac{\dim(V)-1}{d}} \\
 & <\frac{|\Gamma|^{\frac{\dim(V)}{d}}}{(2dD(\iota+1))^{d^{d+1}}}\leq|\Gamma\cap V(\overline{K})|,
\end{align*}
implying that $\Gamma\cap V(\overline{K})\not\subseteq\pi^{-1}(L)(\overline{K})$. Analogously, $\gamma V\gamma^{-1}\not\subseteq\pi^{-1}(L)$ gives $\Gamma\cap\gamma V\gamma^{-1}(\overline{K})\not\subseteq\pi^{-1}(L)(\overline{K})$. In either case, we obtain $\Delta\not\subseteq L(\overline{K})$, which means that $\Delta$ satisfies Assumption~\ref{ass-h1}. Hence, we can apply Proposition~\ref{pr:9105} to $\Delta$ and $H$, we have $[H^{F},H^{F}]\leq\Delta\leq H^{F}$ for some Steinberg endomorphism with $[H^{F},H^{F}]$ simple.

Finally, by Assumption~\ref{ass-large2} and \eqref{eq:boundzuv} there is some $v\in(\Gamma\cap V(\overline{K}))\setminus\{e\}$, which we fix. If $\gamma$ runs through the elements of $\Gamma\cap (B\dot{w}B)(\overline{K})$ the group $H_{(\gamma)}$ may change, but all such $H_{(\gamma)}$ are conjugate by \cite[Prop.~11.1(c)]{LP11}; therefore $H,F$ do not depend on the choice of $\gamma$. The proof of \cite[Prop.~11.7]{LP11} then shows that $\mathbb{F}_{q}$ and the field underlying the map $F$ are the same field, and that, for $\rho$ as in \eqref{eq:rho}, $\mathrm{Tr}(\rho(v\gamma v\gamma^{-1}))\in\mathbb{F}_{q}$ for all $\gamma\in\Gamma\cap(B\dot{w}B)(\overline{K})$ (and $v$ fixed).

Now we use the result above about traces to define the appropriate $\sigma,M,M_{0}$. Let $\bar{M}$ be the ring of $K^{\dim(V)}$-linear transformations of the representation space of $\rho$, and let $\bar{M}'$ be the smallest $G$-invariant $K^{\dim(V)}$-submodule of $\bar{M}$ containing $\rho(v)$. Just as in the proof of Proposition~\ref{pr:91base}, there is a representation $\bar{\sigma}:G\rightarrow\mathrm{GL}(\bar{M})$ given as in \eqref{eq:defsigma}, which in particular is defined over $K$ and satisfies $\mathrm{mdeg}(\bar{\sigma})\leq d^{2}$. Its restriction $\bar{\sigma}':G\rightarrow\mathrm{GL}(\bar{M}')$ is again defined over $K$, because $\bar{\sigma}$ and $M'$ are, and has $\mathrm{mdeg}(\bar{\sigma}')\leq d^{2}$. Since $\bar{\sigma}(g)$ is conjugation by $\rho(g)$ it preserves traces, so we may take the quotient $M:=\bar{M}'/(\bar{M}'\cap\bar{M}'^{\perp})$ (the orthogonal complement is taken with respect to the trace form) and still obtain a representation $\sigma:G\rightarrow\mathrm{GL}(M)$ with $\mathrm{mdeg}(\sigma)\leq d^{2}$. Moreover $\sigma$ is nontrivial by \cite[Prop.~11.5, (11.9)]{LP11}, and it is defined over $K$ because $\bar{M}'^{\perp}$ is. Finally, let $m_{0}\in M$ be the element corresponding to $\rho(v)\in\bar{M}'$, and call $M_{0}$ the $\mathbb{F}_{q}$-submodule generated by the orbit $O_{\Gamma}(m_{0})$. By construction, $\sigma(\gamma)(M_{0})=M_{0}$ for any $\gamma\in\Gamma$.

It remains to prove that $M_{0}\otimes_{\mathbb{F}_{q}}K^{\dim(V)}\simeq M$. See the proof of \cite[Lem.~11.12]{LP11}; we only present the computational details. Let $N$ be a proper $K^{\dim(V)}$-submodule of $M$, and define
\begin{equation*}
X(N):=\{g\in G:\sigma(g)(m_{0})\in N\}
\end{equation*}
as in \cite[(11.13)]{LP11}. The variety $X(N)$ is proper inside $G$, and it is the preimage of $N$ through the map $\sigma(\cdot)(m_{0})$, so
\begin{equation*}
\deg(X(N))\leq\deg(G)\deg(N)\mdeg(\sigma)^{\dim(N)}\leq Dd^{2d}
\end{equation*}
by Lemma~\ref{le:zarimdeg}\eqref{le:zarimdeg-pre}. If the natural map $M_{0}\otimes_{\mathbb{F}_{q}}K^{\dim(V)}\rightarrow M$ is not surjective then $\Gamma\subseteq X(N)(\overline{K})$ for some $N$, and if it is not injective then
\begin{equation*}
\Gamma\subseteq X(N)(\overline{K})\cup\bigcup_{i=1}^{\ell}\bigcup_{w\neq\dot{w}}(\gamma_{i}BwB)(\overline{K}),
\end{equation*}
where $\ell$ is at most one more than the dimension of $M$ as a $K^{\dim(V)}$-vector space. Recalling the degree bound of \eqref{eq:degbb}, in both cases we obtain that $\Gamma$ is contained in a variety of degree $\leq 2^{d+r+2}r^{r}D^{2}d^{2d}\leq(2dDr)^{2d+r+1}$; applying Lemma~\ref{le:groupvar},
we contradict either Assumption~\ref{ass-large2} or \ref{ass-h2}, so the map must be an isomorphism, and we are done.
\end{proof}

Combining Propositions~\ref{pr:9105} and~\ref{pr:91all}, we reach the finite simple group $[G^{F},G^{F}]$ that we are looking for. Below we write a self-contained statement, but the conditions on $G$ and $\Gamma$ coincide with Assumption~\ref{ass-large2}--\ref{ass-h2}.

\begin{theorem}\label{th:05}
Let $G\leq\mathrm{GL}_{n}$ be a connected almost simple adjoint group of rank $r$ defined over $K$, with $d=\dim(G)$, $D=\deg(G)$, and $\iota=\mathrm{mdeg}(^{-1})$. Let $\Gamma\leq G(\overline{K})$ be finite. Assume the following:
\begin{enumerate}[(a)]
\item\label{th:05-small} $|\Gamma|>(2dDrn\iota)^{(2dDr\iota)^{11d^{4}}}$;
\item\label{th:05-h} $\Gamma\not\leq H(\overline{K})$ for any subgroup $H\lneq G$ with $\dim(H)<d$ and $\deg(H)\leq(2dDr)^{4d^{2}}$.
\end{enumerate}
Then $\mathrm{char}(K)=p>0$, there is some $q=p^{e}$ such that $\mathbb{F}_{q}$ is an $\mathbb{F}_{p}$-subalgebra of either $K$ or $K^{2}$, and there is a Steinberg endomorphism $F:G\rightarrow G$ such that either $F$ or $F^{2}$ is the Frobenius map with respect to $\mathbb{F}_{q}$, with
\begin{equation*}
[G^{F},G^{F}]\leq\Gamma\leq G^{F}
\end{equation*}
and with $[G^{F},G^{F}]$ simple.
\end{theorem}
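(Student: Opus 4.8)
The plan is to obtain Theorem~\ref{th:05} by chaining the results already proved in this section, so the only genuine task is to check that the quantitative hypotheses line up. First I would observe that hypotheses~\eqref{th:05-small}--\eqref{th:05-h} are exactly Assumption~\ref{ass-large2}--\ref{ass-h2}, and that these in turn imply Assumption~\ref{ass-large1}--\ref{ass-h1}: since $2dDr\iota\geq1$ we have $(2dDr\iota)^{11d^{4}}\geq(2dDr\iota)^{10d^{4}}$ and $2dDrn\iota\geq1$, so the lower bound on $|\Gamma|$ in Assumption~\ref{ass-large2} is stronger than that in Assumption~\ref{ass-large1}; and since $(2dDr)^{4d^{2}}\geq(2dD)^{4d}$, any subgroup excluded by Assumption~\ref{ass-h2} is also excluded by Assumption~\ref{ass-h1}. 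Consequently every result of this section stated under Assumption~\ref{ass-large1}--\ref{ass-h1} is available to us as well.

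Next I would invoke Proposition~\ref{pr:findfield} to get $\mathrm{char}(K)=p>0$, a prime power $q=p^{e}$, and an abelian unipotent subgroup $V=V^{(G)}\leq G$ with $1\leq\dim(V)\leq\min\{2,r\}$ such that $\Gamma\cap V(\overline{K})\simeq\mathbb{F}_{q}$ and $\mathbb{F}_{q}$ is an $\mathbb{F}_{p}$-subalgebra of $K^{\dim(V)}$. Since $\dim(V)\in\{1,2\}$, this already establishes that $\mathrm{char}(K)>0$ and that $\mathbb{F}_{q}$ embeds as an $\mathbb{F}_{p}$-algebra into $K$ or into $K^{2}$.

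Then I would apply Proposition~\ref{pr:91all}, whose standing hypothesis is precisely Assumption~\ref{ass-large2}--\ref{ass-h2}, to produce a $K^{\dim(V)}$-module $M$, a nontrivial representation $\sigma:G\rightarrow\mathrm{GL}(M)$ over $K$ with $\mathrm{mdeg}(\sigma)\leq d^{2}$, and an $\mathbb{F}_{q}$-submodule $M_{0}$ with $M_{0}\otimes_{\mathbb{F}_{q}}K^{\dim(V)}\simeq M$ and $\sigma(\gamma)(M_{0})=M_{0}$ for every $\gamma\in\Gamma$; no case distinction on whether $\dim(V)=r$ is needed here, since that split is absorbed inside Proposition~\ref{pr:91all} (which in the case $\dim(V)=r$ merely invokes Proposition~\ref{pr:91base}). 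These $\sigma,M,M_{0}$ are exactly the input required by Proposition~\ref{pr:9105}, which—using Assumption~\ref{ass-large1}--\ref{ass-h1}, available by the first paragraph—yields a Steinberg endomorphism that its proof identifies with an isogeny $F:G\rightarrow G$ satisfying $[G^{F},G^{F}]\leq\Gamma\leq G^{F}$, with $[G^{F},G^{F}]$ simple and with $F^{\dim(V)}$ equal to the Frobenius map with respect to $\mathbb{F}_{q}$. As $\dim(V)\in\{1,2\}$, either $F$ or $F^{2}$ is the Frobenius map with respect to $\mathbb{F}_{q}$, which closes the argument.

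I do not expect any real obstacle at this level, since all the substance—locating $\mathbb{F}_{q}$, realizing the representation over $\mathbb{F}_{q}$, and upgrading the corresponding change of basis to an isogeny of $G$ with the correct fixed-point subgroup—has already been carried out in Propositions~\ref{pr:findfield}, \ref{pr:91base}, \ref{pr:91all}, and~\ref{pr:9105}. The only point worth care is the bookkeeping in the first paragraph: two tiers of quantitative assumptions are unavoidable precisely because Proposition~\ref{pr:91all} routes through a smaller group $H\leq G$ and an associated finite group $\Delta\leq H(\overline{K})$ that must itself satisfy the weaker Assumption~\ref{ass-large1}--\ref{ass-h1}, which is what forces $\Gamma$ to satisfy the stronger Assumption~\ref{ass-large2}--\ref{ass-h2}.
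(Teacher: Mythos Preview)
Your proposal is correct and follows exactly the paper's own route: identify hypotheses \eqref{th:05-small}--\eqref{th:05-h} with Assumption~\ref{ass-large2}--\ref{ass-h2}, apply Proposition~\ref{pr:91all} to obtain $\sigma,M,M_{0}$, and then feed these into Proposition~\ref{pr:9105} (which only needs the weaker Assumption~\ref{ass-large1}--\ref{ass-h1}) to extract $F$ and the inclusions $[G^{F},G^{F}]\leq\Gamma\leq G^{F}$. One small wording slip: in your first paragraph the implication runs the other way---every subgroup $H$ covered by Assumption~\ref{ass-h1} (degree $\leq(2dD)^{4d}$) is also covered by Assumption~\ref{ass-h2} (degree $\leq(2dDr)^{4d^{2}}$), which is what makes \ref{ass-h2} the stronger hypothesis; your conclusion that \ref{ass-large2}--\ref{ass-h2} imply \ref{ass-large1}--\ref{ass-h1} is nonetheless correct.
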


\begin{proof}
Since the conditions on $G$ and $\Gamma$ are exactly the ones in Assumption~\ref{ass-large2}--\ref{ass-h2}, we obtain Proposition~\ref{pr:91all}. Then we apply Proposition~\ref{pr:9105}, whose hypothesis is the weaker Assumption~\ref{ass-large1}--\ref{ass-h1}, and the result follows.
\end{proof}

\section{Proof of the main theorem}\label{se:final}

The main result of the previous section is that, for $G$ almost simple adjoint and $\Gamma$ sufficiently general, we have $[G^{F},G^{F}]\leq\Gamma\leq G^{F}$ for some Steinberg endomorphism such that $[G^{F},G^{F}]$ is a finite simple group of Lie type. This assertion lies at the the core of \eqref{th:main-lie} in Theorem~\ref{th:main}. In the current section we build the rest of the theorem around this initial core.

The proof relies mainly on a descent process: if $\Gamma$ is not sufficiently general (meaning that it violates either \eqref{th:05-small} or \eqref{th:05-h} in Theorem~\ref{th:05}), then it is trapped in a smaller subgroup, for some definition of ``smaller''; repeating the procedure enough times, eventually the subgroup becomes zero-dimensional, giving a bound on $|\Gamma|$ in terms of $n$ only. Being connected almost simple adjoint is not preserved at every step though, so we need a few more steps in between. If $G$ is an algebraic group, we first pass to a small-index connected subgroup $G^{\mathrm{o}}$ (and the effect on $\Gamma$ is absorbed by \eqref{th:main-small} in the main theorem), then we get to a reductive group via taking quotient by $R_{u}(G)$ (which is absorbed by \eqref{th:main-p}), then to a semisimple group via quotient by $Z(G)$ (which is absorbed by \eqref{th:main-ab}). A semisimple group is a product of almost simple pieces, which gives either a product of finite simple groups as in \eqref{th:main-lie}, up to a small index contributing to \eqref{th:main-small}, or a descent as before. The potential case of $\dim(G)=0$ and bounded $|\Gamma|$ is again dealt with by \eqref{th:main-small}.

Let us specify what parameter we use to track the descent. Let $\{G_{i}\}_{i\in\mathcal{I}(G)}$ be the collection of the connected almost simple adjoint factors of the semisimple group $(G^{\mathrm{o}}/R_{u}(G^{\mathrm{o}}))^{\mathrm{ad}}$; the set $\mathcal{I}(G)$ could be empty, and the $G_{i}$ could be defined over $\overline{K}$ instead of $K$. Define
\begin{equation*}
d_{\mathrm{ad}}(G):=\sum_{i\in\mathcal{I}(G)}\dim(G_{i})=\dim((G^{\mathrm{o}}/R_{u}(G^{\mathrm{o}}))^{\mathrm{ad}}).
\end{equation*}
Trivially $d_{\mathrm{ad}}(G)\leq\dim(G)$, and if $G$ is almost simple then equality holds. It is also easy to show that if $H\leq G$ then $d_{\mathrm{ad}}(H)\leq d_{\mathrm{ad}}(G)$. We set up an induction using $d_{\mathrm{ad}}(G)$.

\begin{lemma}\label{le:badadj}
Let $G\leq\mathrm{GL}_{n}$ be an algebraic group of rank $r$ defined over $K$, with $d=\dim(G)$, $D=\deg(G)$, and $\iota=\mathrm{mdeg}(^{-1})$. Let $\Gamma\leq G(\overline{K})$ be finite.

Assume $\mathcal{I}=\mathcal{I}(G)\neq\emptyset$ (i.e.\ $d_{\mathrm{ad}}(G)\geq 1$). Then at least one of the following holds:
\begin{enumerate}[(a)]
\item\label{le:badadj-h} $\Gamma\leq H(\overline{K})$ for some subgroup $H\lneq G$ with $d_{\mathrm{ad}}(H)<d_{\mathrm{ad}}(G)$ and
\begin{equation*}
\deg(H)\leq(2dn\iota)^{(2dn\iota)^{2^{20}d^{4}r^{2}+2dn^{2}}}D^{d+1};
\end{equation*}
\item\label{le:badadj-ok} $\mathrm{char}(K)=p>0$, and for every $i\in\mathcal{I}$ there is a Steinberg endomorphism $F_{i}:G_{i}\rightarrow G_{i}$ with
\begin{equation*}
[G_{i}^{F_{i}},G_{i}^{F_{i}}]\leq\pi_{i}(\Gamma\cap G^{\mathrm{o}}(\overline{K}))\leq G_{i}^{F_{i}}
\end{equation*}
and with $[G_{i}^{F_{i}},G_{i}^{F_{i}}]$ simple.
\end{enumerate}
\end{lemma}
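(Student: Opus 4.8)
The plan is to apply Theorem~\ref{th:05} to each almost simple factor of $(G^{\mathrm{o}}/R_{u}(G^{\mathrm{o}}))^{\mathrm{ad}}$ and, whenever it fails somewhere, to trap $\Gamma$ inside a subgroup of smaller $d_{\mathrm{ad}}$ by intersecting conjugates of a preimage. Write $G^{\mathrm{o}}$ for the identity component and let $\pi\colon G^{\mathrm{o}}\to\bar{G}:=(G^{\mathrm{o}}/R_{u}(G^{\mathrm{o}}))^{\mathrm{ad}}=\prod_{i\in\mathcal{I}}G_{i}$ be the natural quotient, so that $\pi_{i}$ is $\pi$ followed by the projection onto $G_{i}$. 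Put $\Gamma_{0}:=\Gamma\cap G^{\mathrm{o}}(\overline{K})$ and $\Delta_{i}:=\pi_{i}(\Gamma_{0})\leq G_{i}(\overline{K})$. Since $G$ is a group, each of its connected components is a translate of $G^{\mathrm{o}}$, so their number is $\leq\deg(G)=D$; hence $\Gamma_{0}\unlhd\Gamma$ with $[\Gamma:\Gamma_{0}]\leq D$. By Proposition~\ref{pr:adjbounds} we may take $G_{i}\leq\mathrm{GL}_{d_{i}}$ with $d_{i}=\dim(G_{i})\leq d$, $r_{i}=\mathrm{rk}(G_{i})\leq r$, $\deg(G_{i})\leq(2r)^{2^{16}r^{2}}$ and $\mathrm{mdeg}({}^{-1}\colon G_{i}\rightarrow G_{i})\leq d_{i}$; consequently the two thresholds that appear in Theorem~\ref{th:05} for this data, namely the lower bound $T_{0}$ on $|\Gamma|$ and the degree bound $T_{1}$ for a trapping subgroup, depend only on $d$ and $r$, with $T_{1}\leq T_{0}$.

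Now run the dichotomy. Suppose first that for \emph{every} $i\in\mathcal{I}$ the hypotheses of Theorem~\ref{th:05} hold for $(G_{i},\Delta_{i})$, i.e.\ $|\Delta_{i}|>T_{0}$ and $\Delta_{i}$ is not contained in the $\overline{K}$-points of any proper subgroup of $G_{i}$ of degree $\leq T_{1}$. Then, using $\mathcal{I}\neq\emptyset$, Theorem~\ref{th:05} yields $\mathrm{char}(K)=p>0$ and, for each $i$, a Steinberg endomorphism $F_{i}\colon G_{i}\to G_{i}$ with $[G_{i}^{F_{i}},G_{i}^{F_{i}}]\leq\Delta_{i}=\pi_{i}(\Gamma\cap G^{\mathrm{o}}(\overline{K}))\leq G_{i}^{F_{i}}$ and $[G_{i}^{F_{i}},G_{i}^{F_{i}}]$ simple; this is conclusion~\eqref{le:badadj-ok}. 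Otherwise fix $i\in\mathcal{I}$ for which Theorem~\ref{th:05} fails. If $|\Delta_{i}|\leq T_{0}$, its Zariski closure $\overline{\Delta_{i}}$ is a finite, hence $0$-dimensional, hence proper subgroup of $G_{i}$ with $\deg(\overline{\Delta_{i}})=|\Delta_{i}|\leq T_{0}$; if instead $\Delta_{i}\leq L(\overline{K})$ for a proper subgroup $L\lneq G_{i}$ of degree $\leq T_{1}$, then $\deg(L)\leq T_{0}$ too. In either case we obtain a proper subgroup $L_{i}\lneq G_{i}$ with $\deg(L_{i})\leq T_{0}$ and $\Delta_{i}\subseteq L_{i}(\overline{K})$, whence $\Gamma_{0}\subseteq\pi_{i}^{-1}(L_{i})(\overline{K})$.

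It remains to trap all of $\Gamma$. As $\Gamma_{0}\unlhd\Gamma$, for every $\gamma\in\Gamma$ we have $\Gamma_{0}=\gamma^{-1}\Gamma_{0}\gamma\subseteq\pi_{i}^{-1}(L_{i})(\overline{K})$, i.e.\ $\Gamma_{0}\subseteq(\gamma\,\pi_{i}^{-1}(L_{i})\,\gamma^{-1})(\overline{K})$; set $H_{0}:=\bigcap_{\gamma\in\Gamma}\gamma\,\pi_{i}^{-1}(L_{i})\,\gamma^{-1}$, an algebraic subgroup of $G^{\mathrm{o}}$ normalized by $\Gamma$, with $\Gamma_{0}\subseteq H_{0}(\overline{K})$ and $H_{0}\subseteq\pi_{i}^{-1}(L_{i})\subsetneq G^{\mathrm{o}}$. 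Since $\pi(\pi_{i}^{-1}(L_{i}))=L_{i}\times\prod_{j\neq i}G_{j}$ and $\pi$ has solvable kernel, $d_{\mathrm{ad}}(H_{0})\leq\dim(L_{i})+\sum_{j\neq i}\dim(G_{j})<\sum_{j}\dim(G_{j})=d_{\mathrm{ad}}(G)$. Finally set $H:=\Gamma\cdot H_{0}(\overline{K})$; since $\Gamma$ normalizes $H_{0}$ and $[\Gamma:\Gamma\cap H_{0}(\overline{K})]\leq[\Gamma:\Gamma_{0}]\leq D$, the set $H$ is the set of $\overline{K}$-points of the variety $\bigcup_{k}\gamma_{k}H_{0}$ over a set of $\leq D$ coset representatives $\gamma_{k}$, hence an algebraic subgroup of $G$; it contains $\Gamma$, it is proper in $G$ (its identity component is $H_{0}^{\mathrm{o}}\subsetneq G^{\mathrm{o}}$), and $d_{\mathrm{ad}}(H)=d_{\mathrm{ad}}(H_{0}^{\mathrm{o}})=d_{\mathrm{ad}}(H_{0})<d_{\mathrm{ad}}(G)$. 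For the degree, $\deg(H)\leq D\deg(H_{0})$; the conjugates $\gamma\,\pi_{i}^{-1}(L_{i})\,\gamma^{-1}$ all have degree $\deg(\pi_{i}^{-1}(L_{i}))$ (conjugation by a fixed element preserves degrees) and dimension $<d$, so Corollary~\ref{co:inters}\eqref{co:inters-deg} gives $\deg(H_{0})\leq\deg(\pi_{i}^{-1}(L_{i}))^{d}$, while Lemma~\ref{le:zarimdeg}\eqref{le:zarimdeg-pre} gives $\deg(\pi_{i}^{-1}(L_{i}))\leq\deg(G^{\mathrm{o}})\deg(L_{i})\,\mathrm{mdeg}(\pi_{i})^{d_{i}}\leq D\cdot T_{0}\cdot\mathrm{mdeg}(\pi_{i})^{d}$. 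Substituting and absorbing constants yields conclusion~\eqref{le:badadj-h}.

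The heart of the matter is the last step: bounding $\mathrm{mdeg}(\pi_{i})$ and the ambient dimension of $\bar{G}$. One realizes $\pi$ through Lemma~\ref{le:rzdeg} (which, via Proposition~\ref{pr:quot}, furnishes a morphism to $(G^{\mathrm{o}}/R_{u}(G^{\mathrm{o}}))^{\mathrm{ad}}$ of maximal degree $\leq 2(n^{2}+(\iota+1)d^{d})^{n^{2}}(\iota+1)d^{d}$ inside $\mathrm{GL}_{m}$ with $m\leq 2^{2(n^{2}+(\iota+1)d^{d})^{n^{2}}}$), then extracts the factor $G_{i}$ by a further quotient (Proposition~\ref{pr:quot}) matched against the realization of Proposition~\ref{pr:adjbounds}; it is this tower that is responsible for the summand $2dn^{2}$ in the exponent of the stated bound, the summand $2^{20}d^{4}r^{2}$ coming from $T_{0}$. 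A minor point: the $G_{i}$ may only be defined over $\overline{K}$, so Theorem~\ref{th:05} is invoked over the pertinent field of definition, whose algebraic closure is still $\overline{K}$; this is harmless for the parts of its conclusion that we use.
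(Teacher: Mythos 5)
Your proof follows essentially the same route as the paper: realize the projections $\pi_i$ via the quotient machinery of Lemma~\ref{le:rzdeg} and Proposition~\ref{pr:quot}, apply Theorem~\ref{th:05} to each $\pi_i(\Gamma\cap G^{\mathrm{o}}(\overline{K}))$, and when it fails for some $i$, pull back the trapping subgroup $L_i$, take the normal core over $\Gamma$, and enlarge by coset representatives to obtain $H$. The dichotomy, the trapping argument, the $d_{\mathrm{ad}}$ descent via the solvability of $\ker\pi$, and the degree bookkeeping all match the paper's proof. One small remark on presentation: in the body of your argument you treat $\pi_i^{-1}(L_i)$ directly as a subvariety of $G^{\mathrm{o}}$, whereas strictly speaking $\pi_i$ is a morphism from the auxiliary group $\hat{G}$ of Lemma~\ref{le:rzdeg} (not from $G^{\mathrm{o}}$, where it is only rational), so the preimage lives in $\hat{G}$ and must be transported to $G^{\mathrm{o}}$ via $\overline{\lambda(\cdot)}$ before forming $H_0$; you acknowledge exactly this in your closing paragraph, and the paper handles it by setting $P=\overline{\lambda(\pi_i^{-1}(H))}$. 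Since $\mathrm{mdeg}(\lambda)=1$, this does not affect your degree bounds, and your estimates reproduce the stated exponents.
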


\begin{proof}
Since $\mathrm{GL}_{1}$ is abelian we may assume $n\geq 2$, or else $\mathcal{I}=\emptyset$. Take $Y=Y(G^{\mathrm{o}})$ and the corresponding $\hat{G},\hat{Y},\lambda,\hat{\beta},m$ as in Lemma~\ref{le:rzdeg}. Since $\hat{G}(\overline{K})\simeq G^{\mathrm{o}}(\overline{K})$, we may consider $\Gamma\cap G^{\mathrm{o}}(\overline{K})\leq\hat{G}(\overline{K})$. The quotient $\hat{G}/\hat{Y}$ (possibly defined over $\overline{K}$) is connected adjoint, so it is isomorphic to the direct product of the $G_{i}$: the isomorphism can be taken to be the adjoint representation $\mathrm{Ad}_{\hat{G}/\hat{Y}}$, whose image sits in $\mathrm{GL}(\mathfrak{x})$ (where $\mathfrak{x}$ is the Lie algebra of $\hat{G}/\hat{Y}$) and which has maximum degree $\leq m+1$ by \eqref{eq:mdegad}. Then, up to a change of basis of $\mathfrak{x}$ (which does not affect the degree), the projection to any almost simple factor has maximum degree $\leq 1$.

For every $i\in\mathcal{I}$, call $\pi_{i}:\hat{G}\rightarrow G_{i}$ the natural epimorphism given by composing the quotient map $\hat{\beta}$, the adjoint representation $\mathrm{Ad}_{\hat{G}/\hat{Y}}$, the change of basis of $\mathfrak{x}$, and the projection to $G_{i}$. By the facts above,
\begin{equation}\label{eq:degsimplefac}
\mathrm{mdeg}(\pi_{i})\leq 2(n^{2}+(\iota+1)d^{d})^{n^{2}}(\iota+1)d^{d}\cdot \left(2^{2(n^{2}+(\iota+1)d^{d})^{n^{2}}}+1\right)\leq(2dn\iota)^{(2dn\iota)^{2dn^{2}}}.
\end{equation}
Furthermore, by Proposition~\ref{pr:adjbounds} we have the following bounds on the parameters of the $G_{i}$ depending only on $d=\dim(G)=\dim(\hat{G})$ and $r=\mathrm{rk}(G)=\mathrm{rk}(\hat{G})$:
\begin{align}\label{eq:parameters}
\mathrm{rk}(G_{i}) & \leq r\leq d, & \dim(G_{i}) & \leq d, & \iota|_{G_{i}} & \leq n|_{G_{i}}\leq d, & \deg(G_{i}) & \leq(2r)^{2^{16}r^{2}}.
\end{align}

For each $i\in\mathcal{I}$, apply Theorem~\ref{th:05} to the finite group $\pi_{i}(\Gamma\cap G^{\mathrm{o}}(\overline{K}))$ contained in $G_{i}(\overline{K})$. If conditions \eqref{th:05-small}--\eqref{th:05-h} are both satisfied for all $i$, we obtain case \eqref{le:badadj-ok} of the present result. Assume then that either \eqref{th:05-small} or \eqref{th:05-h} in Theorem~\ref{th:05} is violated for some $i$. As a matter of fact, we may rewrite case \eqref{th:05-small} to look like an instance of case \eqref{th:05-h}: to do so, interpret $\pi_{i}(\Gamma\cap G^{\mathrm{o}}(\overline{K}))$ as the zero-dimensional algebraic group $H$ defined as the union $\bigcup_{\gamma\in\pi_{i}(\Gamma\cap G^{\mathrm{o}}(\overline{K}))}\{\gamma\}$. Hence, in both cases, $\Gamma\cap G^{\mathrm{o}}(\overline{K})\leq\pi_{i}^{-1}(H)(\overline{K})$ for some algebraic subgroup $H\leq G_{i}$ such that either $\dim(H)<\dim(G_{i})$ and $\deg(H)\leq(2d)^{2^{19}d^{2}r^{2}}$, or $\dim(H)=0$ and $\deg(H)\leq(2d)^{(2d)^{2^{20}d^{4}r^{2}}}$.

Set $P:=\overline{\lambda(\pi_{i}^{-1}(H))}$, which is an algebraic group. As $G^{\mathrm{o}}$ is irreducible we have $P\lneq G^{\mathrm{o}}\leq G$ and $\dim(P)<d$, while $d_{\mathrm{ad}}(H)\leq\dim(H)<\dim(G_{i})=d_{\mathrm{ad}}(G_{i})$ implies that $d_{\mathrm{ad}}(P)<d_{\mathrm{ad}}(G)$. Lemma~\ref{le:zarimdeg}\eqref{le:zarimdeg-im}--\eqref{le:zarimdeg-pre} gives $\deg(P)\leq D\deg(H)\mathrm{mdeg}(\pi_{i})^{d}$. Let $L=\bigcap_{\gamma\in\Gamma}\gamma P\gamma^{-1}$. Again $d_{\mathrm{ad}}(L)\leq d_{\mathrm{ad}}(P)<d_{\mathrm{ad}}(G)$, and Corollary~\ref{co:inters}\eqref{co:inters-deg} yields $\deg(L)\leq\deg(P)^{d}$.

Since $\Gamma$ is finite, $\Gamma L=\bigcup_{\gamma\in\Gamma}\gamma L$ is a variety. Moreover, the normalizer of $L$ in $G$ contains $\Gamma$, so $\Gamma L$ is an algebraic subgroup of $G$ containing $\Gamma$. We have $(\Gamma L)^{\mathrm{o}}=L^{\mathrm{o}}$, giving $d_{\mathrm{ad}}(\Gamma L)=d_{\mathrm{ad}}(L)<d_{\mathrm{ad}}(G)$. It remains to bound $\deg(\Gamma L)$. By definition, $\Gamma\cap L(\overline{K})$ is the normal core of $\Gamma\cap P(\overline{K})=\Gamma\cap G^{\mathrm{o}}(\overline{K})$ inside $\Gamma$, but $\Gamma\cap G^{\mathrm{o}}(\overline{K})\unlhd\Gamma$ already, therefore
\begin{equation*}
|\Gamma L(\overline{K})/L(\overline{K})|=|\Gamma/(\Gamma\cap L(\overline{K}))|=|\Gamma/(\Gamma\cap G^{\mathrm{o}}(\overline{K}))|\leq|G/G^{\mathrm{o}}|\leq D.
\end{equation*}
Hence, using \eqref{eq:degsimplefac},
\begin{equation*}
\deg(\Gamma L)\leq D\deg(L)\leq D^{d+1}\deg(H)^{d}\mathrm{mdeg}(\pi_{i})^{d^{2}}\leq(2dn\iota)^{(2dn\iota)^{2^{20}d^{4}r^{2}+2dn^{2}}}D^{d+1},
\end{equation*}
so the algebraic subgroup $\Gamma L$ satisfies the conditions in \eqref{le:badadj-h}.
\end{proof}

With the induction step of Lemma~\ref{le:badadj} at hand, we can prove the main theorem.

\begin{proof}[Proof of Theorem~\ref{th:main}]
If $\Gamma$ is abelian then the result holds by taking $\Gamma_{2}=\Gamma$ and $\Gamma_{3}$ its Sylow $p$-subgroup. Thus we may assume $\Gamma$ non-abelian, and thus $n\geq 2$.

Start with $G=\mathrm{GL}_{n}$, and apply Lemma~\ref{le:badadj} to it. If case \eqref{le:badadj-h} of the lemma holds, repeat the process with the subgroup of $G$ found in this way, and repeat this step until either $\mathcal{I}(G)=\emptyset$ or we reach case \eqref{le:badadj-ok}. Since $d_{\mathrm{ad}}(G)$ strictly decreases at each step, the process ends in at most $d$ steps. By the bound on $\deg(H)$ inside Lemma~\ref{le:badadj}\eqref{le:badadj-h} and the natural bounds on $\dim(\mathrm{GL}_{n}),\deg(\mathrm{GL}_{n}),\mathrm{rk}(\mathrm{GL}_{n}),\mathrm{mdeg}(^{-1})$ in terms of $n$, at the last step we have $\deg(G)\leq n^{n^{(2^{23}-1)n^{10}}}$.

Let $Y=Y(G^{\mathrm{o}})$ be as in Lemma~\ref{le:rzdeg}. Since $Y\csgp G^{\mathrm{o}}\csgp G$ and $R_{u}(G^{\mathrm{o}})\csgp G^{\mathrm{o}}\csgp G$, we have $Y\csgp G$ and $R_{u}(G^{\mathrm{o}})\csgp G$. Call $\Gamma_{2}:=\Gamma\cap Y(\overline{K})$ and $\Gamma_{3}:=\Gamma\cap R_{u}(G^{\mathrm{o}})(\overline{K})$: we have $\Gamma_{3}\unlhd\Gamma_{2}\unlhd\Gamma$ and $\Gamma_{3}\unlhd\Gamma$.

By construction, $Y/R_{u}(G^{\mathrm{o}})$ is the centre of $G^{\mathrm{o}}/R_{u}(G^{\mathrm{o}})$; in particular, its finite subgroup $\Gamma_{2}/\Gamma_{3}$ is contained in a torus of the reductive group $G^{\mathrm{o}}/R_{u}(G^{\mathrm{o}})$, thus it is abelian of order not divisible by $\mathrm{char}(K)$. Since $R_{u}(G^{\mathrm{o}})$ is unipotent, there is a central normal series whose quotients are isomorphic to algebraic subgroups of $\mathbb{G}_{a}$ \cite[Prop.~14.21]{Mil17}; hence, since $\Gamma_{3}\leq R_{u}(G^{\mathrm{o}})$ is finite, either it is trivial (if $\mathrm{char}(K)=0$) or it is a $p$-group (if $\mathrm{char}(K)=p>0$).

It remains to deal with $\Gamma/\Gamma_{2}$. If $\mathcal{I}(G)=\emptyset$, by definition $G^{\mathrm{o}}/R_{u}(G^{\mathrm{o}})$ is equal to its own centre, so we set $\Gamma_{1}:=\Gamma_{2}$ and the quotient $\Gamma/\Gamma_{2}$ has size $\leq|G/G^{\mathrm{o}}|\leq\deg(G)$. Assume from now on that we have fallen into case \eqref{le:badadj-ok} of Lemma~\ref{le:badadj}. Therefore, $\mathrm{char}(K)=p>0$ and $(\Gamma\cap G^{\mathrm{o}}(\overline{K}))/\Gamma_{2}$ is a subgroup of the nonempty product $R=\prod_{i\in\mathcal{I}}\pi_{i}(\Gamma\cap G^{\mathrm{o}}(\overline{K}))$, for which we know that
\begin{equation*}
\prod_{i\in\mathcal{I}}[G_{i}^{F_{i}},G_{i}^{F_{i}}]\leq R\leq\prod_{i\in\mathcal{I}}G_{i}^{F_{i}}
\end{equation*}
and that each factor $[G_{i}^{F_{i}},G_{i}^{F_{i}}]$ is a finite simple group of Lie type of characteristic $p$. The commutator $[R,R]$ must then be equal to the product of the $[G_{i}^{F_{i}},G_{i}^{F_{i}}]$ and, by a folklore consequence of Goursat's lemma (see \cite[Prop.~3.3]{BS92}), if we call $\Gamma_{1}:=[\Gamma\cap G^{\mathrm{o}}(\overline{K}),\Gamma\cap G^{\mathrm{o}}(\overline{K})]\Gamma_{2}$ then $\Gamma_{1}/\Gamma_{2}$ must be isomorphic to the product of some of the $[G_{i}^{F_{i}},G_{i}^{F_{i}}]$. By construction $\Gamma_{1}\unlhd\Gamma$, since $\Gamma\cap G^{\mathrm{o}}$, its commutator, and $\Gamma_{2}$ are all normal in $\Gamma$. Finally,
\begin{equation*}
|\Gamma/\Gamma_{1}|\leq|G/G^{\mathrm{o}}|\prod_{i\in\mathcal{I}}|G_{i}^{F_{i}}/[G_{i}^{F_{i}},G_{i}^{F_{i}}]|\leq\deg(G)(r+1)^{d}\leq n^{n^{2^{23}n^{10}}}
\end{equation*}
by \eqref{eq:indexcartan}.
\end{proof}

\section*{Acknowledgements}

The second author was funded by a Young Researcher Fellowship from the HUN-REN Alfr\'ed R\'enyi Institute of Mathematics, and by the Leibniz Fellowship 2405p from the Mathematisches Forschungsinstitut Oberwolfach (MFO).

The authors would like to thank the MFO for providing a wonderful environment for the development of this article.

\nocite{}
\bibliographystyle{abbrv}
\bibliography{Biblio}

\begin{thebibliography}{10}

\bibitem{BS92}
L.~Babai and A.~Seress.
\newblock On the diameter of permutation groups.
\newblock {\em European J. Combin.}, 13(4):231--243, 1992.

\bibitem{BDH21}
J.~Bajpai, D.~Dona, and H.~A. Helfgott.
\newblock Growth estimates and diameter bounds for classical {C}hevalley
  groups.
\newblock \texttt{arXiv:2110.02942}, 2021.

\bibitem{BDH24}
J.~Bajpai, D.~Dona, and H.~A. Helfgott.
\newblock New dimensional estimates for subvarieties of linear algebraic
  groups.
\newblock {\em Vietnam J. Math.}, 52(2):479--518, 2024.

\bibitem{Bor91}
A.~Borel.
\newblock {\em Linear Algebraic Groups}, volume 126 of {\em Graduate Texts in
  Mathematics}.
\newblock Springer, New York (USA), second enlarged edition, 1991.

\bibitem{BBBKR17}
M.~Brandt, J.~Bruce, T.~Brysiewicz, R.~Krone, and E.~Robeva.
\newblock The degree of $\mathrm{SO}(n,\mathbb{C})$.
\newblock In G.~G. Smith and B.~Sturmfels, editors, {\em Combinatorial
  algebraic geometry}, volume~80 of {\em Fields Institute Communications},
  pages 229--246. Springer, New York (USA), 2017.

\bibitem{Breuillard2023}
E.~Breuillard.
\newblock An exposition of {J}ordan's original proof of his theorem on finite
  subgroups of {${\rm GL}_n(\mathbb{C})$}.
\newblock {\em Model Theory}, 2:429--447, 2023.

\bibitem{BGT11}
E.~Breuillard, B.~Green, and T.~Tao.
\newblock Approximate subgroups of linear groups.
\newblock {\em Geom. Funct. Anal.}, 21(4):774--819, 2011.

\bibitem{Car93}
R.~W. Carter.
\newblock {\em Finite groups of Lie type: Conjugacy classes and complex
  characters}.
\newblock John Wiley \& Sons, Chichester (UK), reprinted edition, 1993.

\bibitem{Collins2007}
M.~J. Collins.
\newblock On {J}ordan's theorem for complex linear groups.
\newblock {\em J. Group Theory}, 10(4):411--423, 2007.

\bibitem{Col08}
M.~J. Collins.
\newblock Modular analogues of {J}ordan's theorem for ﬁnite linear groups.
\newblock {\em J. Reine Angew. Math.}, 624:143--171, 2008.

\bibitem{CCNPW85}
J.~H. Conway, R.~T. Curtis, S.~P. Norton, R.~A. Parker, and R.~A. Wilson.
\newblock {\em {ATLAS} of Finite Groups: Maximal Subgroups and Ordinary
  Characters for Simple Groups}.
\newblock Clarendon Press, Oxford (UK), 1985.

\bibitem{DS98}
V.~I. Danilov and V.~N. Shokurov.
\newblock {\em Algebraic Geometry I: Algebraic Curves, Algebraic Manifolds and
  Schemes}, volume~23 of {\em Encyclopaedia of mathematical sciences}.
\newblock Springer-Verlag, Berlin, 1998.

\bibitem{Don23}
D.~Dona.
\newblock A sum-bracket theorem for simple {L}ie algebras.
\newblock {\em J. Algebra}, 631:658--694, 2023.

\bibitem{EMO05}
A.~Eskin, S.~Mozes, and H.~Oh.
\newblock On uniform exponential growth for linear groups.
\newblock {\em Invent. Math.}, 160(1):1--30, 2005.

\bibitem{Ful84}
W.~Fulton.
\newblock {\em Intersection theory}, volume~2 of {\em Ergebnisse der Mathematik
  und ihrer Grenzgebiete : a series of modern surveys in mathematics. Folge 3}.
\newblock Springer, Berlin, 1984.

\bibitem{FH04}
W.~Fulton and J.~Harris.
\newblock {\em Representation theory: a first course}, volume 129 of {\em
  Graduate Texts in Mathematics}.
\newblock Springer, New York (USA), 2004.

\bibitem{Gar10}
S.~Garibaldi.
\newblock What is... a linear algebraic group?
\newblock {\em Notices Amer. Math. Soc.}, 57(9):1125--1126, 2010.

\bibitem{Hei83}
J.~Heintz.
\newblock Definability and fast quantifier elimination in algebraically closed
  fields.
\newblock {\em Theoret. Comput. Sci.}, 24(3):239--277, 1983.

\bibitem{HRT01}
R.~B. Howlett, L.~J. Rylands, and D.~E. Taylor.
\newblock Matrix generators for exceptional groups of {L}ie type.
\newblock {\em J. Symbolic Comput.}, 31(4):429--445, 2001.

\bibitem{Hum95a}
J.~E. Humphreys.
\newblock {\em Conjugacy classes in semisimple algebraic groups}, volume~43 of
  {\em Mathematical Surveys and Monographs}.
\newblock American Mathematical Society, Providence, RI, 1995.

\bibitem{Hum95b}
J.~E. Humphreys.
\newblock {\em Linear algebraic groups}, volume~21 of {\em Graduate Texts in
  Mathematics}.
\newblock Springer-Verlag, New York (USA), fourth edition, 1995.

\bibitem{Isa76}
I.~M. Isaacs.
\newblock {\em Character theory of finite groups}, volume~69 of {\em Pure and
  Applied Mathematics}.
\newblock Academic Press, New York (USA), 1976.

\bibitem{Jor78}
C.~Jordan.
\newblock M\'emoire sur les \'equations diff\'erentielles lin\'eaires \`a
  int\'egrale alg\'ebrique.
\newblock {\em J. Reine Angew. Math.}, 84:89--215, 1878.
\newblock In French.

\bibitem{LP11}
M.~J. Larsen and R.~Pink.
\newblock Finite subgroups of algebraic groups.
\newblock {\em J. Amer. Math. Soc.}, 24(4):1105--1158, 2011.

\bibitem{MT11}
G.~Malle and D.~Testerman.
\newblock {\em Linear algebraic groups and finite groups of {L}ie type}, volume
  133 of {\em Cambridge Studies in Advanced Mathematics}.
\newblock Cambridge University Press, Cambridge (UK), 2011.

\bibitem{Mil17}
J.~S. Milne.
\newblock {\em Algebraic groups}, volume 170 of {\em Cambridge Studies in
  Advanced Mathematics}.
\newblock Cambridge University Press, Cambridge, 2017.

\bibitem{Mumford}
D.~Mumford.
\newblock {\em The red book of varieties and schemes}, volume 1358 of {\em
  Lecture Notes in Mathematics}.
\newblock Springer-Verlag, Berlin, expanded edition, 1999.
\newblock Includes the Michigan lectures (1974) on curves and their Jacobians,
  With contributions by Enrico Arbarello.

\bibitem{Pin98}
R.~Pink.
\newblock Compact subgroups of linear algebraic groups.
\newblock {\em J. Algebra}, 206(2):438--504, 1998.

\bibitem{Spr08}
T.~A. Springer.
\newblock {\em Linear algebraic groups}.
\newblock Modern Birkh{\"a}user Classics. Birkh{\"a}user Boston, Boston (USA),
  reprint of the second edition, 2008.

\bibitem{zbMATH03880868}
W.~Vogel.
\newblock {\em Lectures on results on {Bezout}'s theorem. {Notes} by {D}. {P}.
  {Patil}}, volume~74 of {\em Lect. Math. Phys., Math., Tata Inst. Fundam.
  Res.}
\newblock Springer, Berlin; Tata Inst. of Fundamental Research, Bombay, 1984.

\bibitem{Wei84}
B.~Weisfeiler.
\newblock Post-classification version of {J}ordan's theorem on finite linear
  groups.
\newblock {\em Proc. Natl. Acad. Sci. USA}, 81(16):5278--5279, 1984.

\bibitem{Wei12}
B.~Weisfeiler.
\newblock On the size and structure of finite linear groups.
\newblock \texttt{arXiv:1203.1960}, 2012.

\bibitem{Wil09}
R.~A. Wilson.
\newblock {\em The Finite Simple Groups}, volume 251 of {\em Graduate Texts in
  Mathematics}.
\newblock Springer, London (UK), 2009.

\end{thebibliography}

\end{document}